\pgfplotsset{compat=1.18}
\theoremstyle{plain}
\newtheorem{thm}{Theorem}[section]
\newtheorem{lem}[thm]{Lemma}
\newtheorem{prop}[thm]{Proposition}
\newtheorem{cor}[thm]{Corollary}
\newtheoremstyle{sltheorem}
{}                
{}                
{\slshape}        
{}                
{\bfseries}       
{.}               
{ }               
{\thmname{#1}-\thmnumber{#2}\thmnote{(#3)}}               
\theoremstyle{sltheorem}
\newtheorem{assum}{HD}
\newtheorem{assump}{HD} 
\newtheorem{assumc}{HC}
\newtheorem{assumcp}{HC} 
\theoremstyle{definition}
\newtheorem{defn}{Definition}
\newtheorem{xmpl}{Example}
\newcommand{\us}[2]{\underset{#1}{\underbrace{#2}}}
\newcommand{\R}{{\mathbb R}}
\newcommand{\Z}{{\mathbb Z}}
\newcommand{\N}{{\mathbb N}}
\newcommand{\X}{{\mathbb X}}
\newcommand{\bX}{\pmb{\X}}
\newcommand{\ttau}{\bbtau}
\newcommand{\refcol}[1]{
\StrGobbleLeft{#1}{1}[\tempstr]
\StrLeft{\tempstr}{1}[\tempstr]
{\tiny\IfStrEqCase{\tempstr}{%
{1}{\color{red}}%
{2}{\color{blue}}}%
[\color{purple}]#1}
}
\newcommand{\tro}[1]{}
\newcommand{\rito}[3]{}
\newcommand{\srito}[3]{}
\begin{document}
\begin{frontmatter}
\title{Representation and Characterization of Quasistationary Distributions for Markov Chains}
\runtitle{QSDs for Markov Chains}

\begin{aug}
\author[A]{\fnms{Iddo}~\snm{Ben-Ari}\ead[label=e1]{iddo.ben-ari@uconn.edu}} 
\and
\author[B]{\fnms{Ningwei}~\snm{Jiang}\ead[label=e2]{ningwei.jiang@uconn.edu}}
\address[A]{Department of Mathematics, University of Connecticut\printead[presep={,\ }]{e1}}

\address[B]{Department of Mathematics, University of Connecticut\printead[presep={,\ }]{e2}}

\runauthor{I. Ben-Ari, N. Jiang}
\end{aug}

\begin{abstract}
This work provides a complete description of Quasistationary Distributions (QSDs) for Markov chains with a unique absorbing state and an irreducible set of non-absorbing states. As is well-known, every QSD has an associated absorption parameter describing the exponential tail of the absorption time with the QSD as the initial distribution. Our analysis of existence and representation of QSDs corresponding to a given parameter is according to whether the moment generating function of the absorption time starting from any non-absorbing state evaluated at the parameter is finite or infinite, the  {\it finite} or {\it infinite} moment generating function regimes, respectively. For absorption parameters in the finite regime, all QSDs are in the convex cone of a Martin entry boundary associated with the parameter. The infinite regime corresponds to at most one absorption parameter value. In this regime, when a QSD exists, it is unique and can be represented by a renewal-type formula. Multiple applications are presented, including revisiting some of the main classical results in the area. 
\end{abstract} 

\begin{keyword}[class=MSC]
\kwd[Primary ]{60J10}  
\kwd{60J27}
\kwd[; secondary ]{60J50}
\end{keyword}

\begin{keyword}
\kwd{quasistationary distribution}
\kwd{Martin boundary}
\kwd{processes with absorption}
\kwd{Yaglom limit}
\kwd{Birth-and-Death process}
\end{keyword}
\end{frontmatter}



\tableofcontents
\section{Introduction}
\subsection{Assumptions and Definitions}
Let $\Z_+=\{0,1,2,\dots\}$ denote the set of nonnegative integers, and $\N=\{1,2,3,\dots\}$ denote the set of natural numbers. We also write $\R_+$ for the set of nonnegative real numbers. 

Consider a Markov chain ${\bf X}=(X_n: n\in \Z_+)$ on a state space which is a disjoint union of the countable set $S$ and the singleton $\{\Delta\}$\tro{R2:M1}. Let $p$ denote the transition function for ${\bf X}$. As usual, we write $P_\mu$ and $E_{\mu}$ for the probability and expectation associated with ${\bf X}$ under the initial distribution $\mu$, with $P_x$ and $E_x$ serving as shorthand for the respective notions when the initial distribution is the  point-mass distribution at $x\in S \cup\{\Delta\}$, $\delta_x$. For $x\in S \cup \{\Delta\}$, denote the return and hitting time of $x$: 
\begin{equation}
\tau_x = \inf\{n\in\N:X_n = x\},
\end{equation}
and 
\begin{equation}
    \label{eq:hitting0}
    ^0\tau_x=\inf\{n \in \Z_+: X_n =x\},
\end{equation}
respectively, where here and henceforth $\Z$ is the set of integers, $\Z_+$ is the set of nonnegative integers and  $\N=\{1,2,3,\dots\}$ is the set of positive integers.  In order to define a QSD we introduce the following hypotheses: 
\begin{assum}\label{as:reg:uab} $\tau_\Delta < \infty$ $P_x$-a.s. for some $x\in S$. \end{assum}  
\begin{assum}\label{as:reg:Sirr} The restriction of $p$ to $S$ is irreducible.   
\end{assum}
As a result, $\Delta$ is a unique absorbing state. We therefore refer to $\tau_\Delta$ as the absorption time.  Moreover, for  all $x\in S$ and $n\in\Z_+$, $P_x (\tau_\Delta> n) >0$.  
\begin{defn}[QSD]
\label{def:QSD}
Let {\bf HD-\ref{as:reg:uab},\ref{as:reg:Sirr}}  hold.  A probability measure $\nu$ on $S$ is a Quasistationary Distribution (QSD) if 
\begin{equation}
    \label{eq:QSD_defn}
    P_\nu (X_n \in \cdot~ |~\tau_\Delta >n) = \nu(\cdot)\quad
\end{equation}  
for all $n\in\N$.
\end{defn} 
Note that the irreducibility hypothesis  {\bf HD-\ref{as:reg:Sirr}} guarantees that if $\nu$ is a QSD then  $\nu(j)>0$ for all $j\in S$. Proposition \ref{prop:eigen_equivalence} below (or a direct calculation) show that $\nu$ is a QSD if and only if \eqref{eq:QSD_defn} holds for $n=1$. As is well-known, \cite[Theorem 2]{Vere-Jones_Limit}  $\nu$ is a QSD if and only if it is a ``Quasi limiting distribution" in the sense that there exists some probability measure $\mu$ on $S$ so that  
\begin{equation}
     \label{eq:QLD}
     \lim_{n\to\infty} P_\mu(X_n \in \cdot ~|~ \tau_\Delta >n)=\nu. 
    \end{equation}
The idea of a limiting conditional distribution traces back to as early as 1931 by Wright \cite{WrightFirstQSD} in the discussion of gene frequencies in finite populations. Later, Bartlett introduced the notion of ``quasi stationarity" \cite{Bartlett1957} and coined the term ``quasi-stationary distribution" in the context of a birth and death process \cite{Bulmer1961}. Yaglom \cite{Yaglom1947} was the first who showed explicitly that the limit \tro{R1:G1}\eqref{eq:QLD} holds for a non-degenerate subcritical branching process starting from any deterministic initial population, with a limit independent of the initial population. To this day, quasi-limiting distributions corresponding to a determinstic initial distributions are called Yaglom limits.

The definition of a QSD immediately leads to the following characterization of QSD, \cite[Theorem 1]{Coolen} or \cite{Pierrebook}.\tro{R2:M2}
\begin{prop}
\label{prop:eigen_equivalence}
Let {\bf HD-\ref{as:reg:uab},\ref{as:reg:Sirr}} hold. 
A probability measure $\nu$ on $S$ is a QSD if and only if there exists some $\lambda\in(0,\infty) $ such that 
\begin{equation}
    \label{eq:eigen_description}
    \sum_{i\in S } \nu(i)p(i,j) = e^{-\lambda} \nu(j),~j\in S.
\end{equation}
In this case, \begin{equation} 
\label{eq:QSD_zeta_tail}
P_\nu(\tau_\Delta>n)=e^{-\lambda n},~n\in\Z_+.
\end{equation} 
\end{prop}
We comment that a measure on $S$ (not necessarily finite) satisfying \eqref{eq:eigen_description} is often referred to as an $e^{-\lambda}$-invariant measure. 

Restating \eqref{eq:QSD_zeta_tail}, if $\nu$ is a QSD, then under $P_\nu$,  $\tau_\Delta$ is geometric with parameter $1-e^{-\lambda}$. For this reason, in what follows we refer to $\lambda$ as the absorption parameter associated with the QSD $\nu$. 

For completeness, here is the proof of the proposition. 
\begin{proof}
    Assume first that  $\nu$ is a QSD.  For $j \in S$ by Definition \ref{def:QSD} $P_\nu(X_1=j|\tau_\Delta>1)=\nu(j)$. But 
    $$P_\nu (X_1=j | \tau_\Delta> 1) = \frac{P_\nu (X_1= j) }{ P_\nu (\tau_\Delta > 1)}= \rho^{-1}\sum_{i\in S}\nu (i) p (i,j),$$ 
    where $\rho= P_\nu (\tau_\Delta >1)\in (0,1]$.  This gives 
    $$  \sum_{i\in S} \nu (i) p(i,j) =\rho \nu (j).$$
    Consequently,  for every $n\in\Z_+$ and $j\in S$,   $(\nu p^n)(j) = \rho ^n \nu(j)$, and by summing both sides over $j\in S$, we obtain $P_\nu (\tau_\Delta > n) = \rho^n$. Hypothesis {\bf HD-\ref{as:reg:uab}}, then guarantees that $\rho\in (0,1)$ and is therefore equal to $e^{-\lambda}$ for some $\lambda \in (0,\infty)$. 

    Conversely, if \eqref{eq:eigen_description} holds, then for all $n\in\Z_+$ and $j\in S$, $(\nu p^n)(j)  = e^{-\lambda n} \nu(j)$, equivalently $P_\nu(X_n =j, \tau_\Delta > n) = e^{-\lambda n } \nu(j)$ and summing over $j\in S$ gives $P_\nu (\tau_\Delta >n ) = e^{-\lambda n}$, and the result follows. 
\end{proof}
As \eqref{eq:QSD_zeta_tail} necessitates the finiteness of some exponential moments of $\tau_\Delta$, when discussing QSDs we can replace {\bf HD-\ref{as:reg:uab}} with the following stronger hypothesis: 
\begin{assump}\label{as:reg:moments}
  There exists  $\beta>0$ such that $E_x [\exp(\beta\tau_\Delta)]<\infty$  for some  $x\in S$.
 \end{assump}
 We note that under the irreducibility asasumption  {\bf HD-\ref{as:reg:Sirr}}, the assumption on existence of finite exponential moments {\bf HD-\ref{as:reg:moments}} is equivalent to $E_x [ \exp(\beta \tau_\Delta)]<\infty$ for all $x\in S$.   Yet,  {\bf HD-\ref{as:reg:moments}} is not a sufficient condition, as the following example shows: 
 \begin{xmpl}
\label{xmpl:ind_zeta}
Let $\mathfrak p$ be any irreducible transition function on  $S$.  Fix $\lambda_{cr}>0$, and define a  transition function $p$ on $S\cup \{\Delta\}$ by letting $p(x,y) = e^{-\lambda_{cr}} {\mathfrak p}(x,y)$ when $x,y\in S$,  $p(x,\Delta) = 1-e^{-\lambda_{cr}}$ for $x\in S$, and $p(\Delta,\Delta)=1$.  In terms of sample paths, $\mathfrak p$ and $p$ are related as follows.  Let ${\bf Y}$ be a MC corresponding to $\mathfrak p$, and  let $\tau_\Delta$ be a geometric random variable with parameter $1-e^{-\lambda_{cr}}$, independent of ${\bf Y}$. Now for $t\in \Z_+$ set 
$$X_n = Y_n {\bf 1}_{\{\tau_\Delta>n\}}.$$
Then ${\bf X}=(X_n:n\in \Z_+)$ is a MC with transition function  $p$, and \eqref{eq:QSD_zeta_tail} holds with $\lambda=\lambda_{cr}$ and any distribution $\nu$ on $S$. 
However, for every probability measure $\nu$  on $S$,   $P_\nu( X_n \in \cdot ~| ~\tau_\Delta > n) = P_{\nu} (Y_n \in \cdot)$ and so $\nu$ is a QSD for $p$ if and only if it is a stationary distribution for $\mathfrak p$, and this holds if and only if $\mathfrak p$ is positive recurrent.
\end{xmpl} 
Our work rests on the following definition:  
 \begin{defn}
 \label{def:lambda_Delta_def}
 Let {\bf HD-\ref{as:reg:moments},\ref{as:reg:Sirr}} hold. 
 \begin{enumerate} 
 \item The critical absorption parameter $\lambda_{cr}$ is defined as 
 \begin{equation} 
 \label{eq:lambda_cr_def}
 \lambda_{cr} = \sup \{\lambda>0: E_x[e^{\lambda \tau_\Delta}]<\infty\mbox{ for some }x\in S\},
\end{equation}
\item A parameter $\lambda \in (0,\lambda_{cr}]$ is in the finite MGF regime if $E_x [ \exp (\lambda \tau_\Delta)] < \infty$ for some $x\in S$.
\item The critical absorption parameter $\lambda_{cr}$ is in the infinite MGF regime if 
$E_x [ \exp (\lambda_{cr} \tau_\Delta)]=\infty$ for some $x\in S$. 
\end{enumerate} 
\tro{R1:M3}
\end{defn} 
\tro{R1:P6}As the results in the next sections demonstrate, the analysis leading to the characterization of QSDs for a given absorption parameter rests entirely on whether the parameter is in the finite or infinite regime. It should be noted that in general $\lambda_{cr}$ may be in the infinite or finite regime. The ``hub and two spokes" process disucss in Section \ref{sec:B_D} provides a simple example of both behaviors, depending on choice of parameter. Notable cases where $\lambda_{cr}$ is in the infinite MGF regime include the case when $S$ is finite, Proposition \ref{prop:Sfinite} and subcritical branching, Section \ref{sec:subcritical_branching}.
\subsection{Prelimiary Results}
\begin{cor}~
Let {\bf HD-\ref{as:reg:moments},\ref{as:reg:Sirr}} hold. Then 
\label{cor:minimal}
\begin{enumerate} 
\item $\lambda_{cr} \in (0,\infty)$.
\item For every $\lambda \in (0,\lambda_{cr}]$, 
\begin{equation}
\label{eq:summation} E_x [\exp(\lambda \tau_\Delta)] = e^{\lambda} + (e^\lambda -1) \sum_{m=1}^\infty e^{\lambda m} P_x (\tau_\Delta>m).
\end{equation}
\item If $\nu$ is a QSD with absorption parameter $\lambda$, then $\lambda\leq \lambda_{cr}$. 
\end{enumerate} 
\end{cor} 
\begin{proof} 
\tro{R1:M1} Clearly {\bf HD-\ref{as:reg:moments}} implies $\lambda_{cr}>0$. We will show that $\lambda_{cr}<\infty$ after we prove \eqref{eq:summation}. Note that by  monotone convergence  it is sufficient to prove \eqref{eq:summation} for$\lambda \in (0,\lambda_{cr})$. We therefore assume $\lambda \in (0,\lambda_{cr})$ and use summation by parts. Clearly, 
$$E_x [\exp (\lambda \tau_\Delta) ] = \lim_{M\to\infty} \sum_{m=1}^M e^{\lambda m} [P_x (\tau_\Delta> m-1) - P_x (\tau_\Delta>m)].$$
Summation by parts gives 
\begin{align*}  \sum_{m=1}^M e^{\lambda m} [P_x (\tau_\Delta> m-1) - P_x (\tau_\Delta>m)] &=
 e^{\lambda}- e^{\lambda M} P_x (\tau_\Delta>M) + (e^{\lambda}-1)\sum_{m=1}^{M-1}e^{\lambda m} P_x (\tau_\Delta>m).
\end{align*}
Let $\lambda' \in (\lambda,\lambda_{cr})$. Then Markov's inequality gives 
$$e^{\lambda M} P_x (\tau_\Delta>M) \le e^{\lambda M} e^{-\lambda' M} E_x [\exp (\lambda' \tau_\Delta)]\underset{M\to\infty}{\to} 0,$$
and therefore \eqref{eq:summation} holds for $\lambda\in (0,\lambda_{cr})$. 

The irreducibility hypothesis {\bf HD-\ref{as:reg:Sirr}} implies that for every $x\in S$ there exists some $n\in\N$ such that  $p^n(x,x)>0$, hence - by induction - for any $k\in\N$,  $P_x (\tau_\Delta > kn ) \ge (p^n (x,x))^k$. Pick  $\lambda \in (0,\lambda_{cr})$.  As the lefthand side of \eqref{eq:summation} is finite, the series on the righthand side converges, and so 

$$\sum_{m=1}^\infty e^{\lambda m} P_x (\tau_\Delta>m)\ge \sum_{k=1}^\infty e^{\lambda nk} P_x (\tau_\Delta>nk) \ge \sum_{k=1}^\infty( e^{\lambda n} p^n (x,x))^k.$$

The righthand side is a geometric series. Its convergence implies $e^{\lambda n} p^n(x,x)<1$, which implies   $e^{\lambda} \le (p^n (x,x))^{-1/n}<\infty$, and therefore $\lambda_{cr} \le - \frac{1}{n} \ln p^n (x,x)<\infty$. With this we completed the proof of the first and the second claims. 

To prove the third claim, suppose that  $\nu$ is a QSD with absorption parameter $\lambda$, then  \eqref{eq:QSD_zeta_tail} implies that for any $\epsilon\in (0,1)$, $E_\nu [\exp (\lambda (1-\epsilon) \tau_\Delta)]<\infty$, and therefore $\lambda (1-\epsilon)\le \lambda_{cr}$, which implies $\lambda\le \lambda_{cr}$. 
\end{proof}
As a result, if $\nu$ is a QSD with absorption parameter $\lambda$ then  
$$E_\nu [\tau_\Delta ] \overset{\eqref{eq:QSD_zeta_tail}}{=} \frac{1}{1-e^{-\lambda}}\overset{\mbox{Cor. }\ref{cor:minimal}}{\ge} \frac{1}{1-e^{-\lambda_{cr}}}.$$
For this reason, a QSD with absorption parameter $\lambda_{cr}$ is called a minimal QSD: it minimizes the expected absorption time among all initial distributions that are QSDs. 

The importance of the minimal QSD is apparent from the following simple observation identifying minimal QSDs as Yaglom limits.  
\tro{R1:P2}\begin{prop}
\label{prop:limit}
Let {\bf HD-\ref{as:reg:moments},\ref{as:reg:Sirr}} hold. 
Suppose that for some $x\in S$ and for each $j\in S$, $\nu(j)= \lim_{n\to\infty} P_x (X_n =j| \tau_\Delta > n)$ exists, and $\nu$ is a probability measure on $S$. Then $\nu$ is  a minimal QSD. 
\end{prop}

\begin{proof}
\tro{R1:M2}
 As $\nu$ is quasi-limiting, it is a QSD with some absorption parameter $\lambda\in (0,\infty)$ and therefore $P_\nu (\tau > n)= e^{-\lambda n}$.  For $n\in\N$, let  $c_n = P_x (\tau_\Delta>n)$. We have 
\begin{align*} \frac{c_{n+m}}{c_n} &= \frac{P_x(\tau_\Delta > n+m )}{P_x (\tau_\Delta >n)}\\
&=\frac{E_x [ P_{X(n)}(\tau_\Delta >m),\tau_\Delta>n]}{P_x(\tau_\Delta > n)}\\
& =E_i [ P_{X(n)}(\tau_\Delta >m)|\tau_\Delta>n].
\end{align*} 
The last expression is of the form $E_x [ f(X_n) | \tau_\Delta>n]$, where $f$ is the bounded function $f(k) = P_k (\tau_\Delta>m)$. As pointwise convergence of a sequence of probability measures on (the countable set) $S$ to a probability measure on $S$ is equivalent to weak convergence, and since $f$ is bounded, it follows from the assumption that $\lim_{n\to\infty} E_x [f(X_n)| \tau_\Delta>n] =\sum_{k}\nu(k) f(k) = P_\nu (\tau_\Delta>m)=e^{-\lambda m}$. That is, 
$$ \lim_{n\to\infty} \frac{c_{n+m}}{c_n}= e^{-\lambda m}.$$
By taking $m=1$, it follows from the ratio test that the radius of convergence  of the power series $\sum_{n=1}^\infty r^n c_n$, is equal to $e^{\lambda}$. Plugging this into the summation by parts formula \eqref{eq:summation} gives $\lambda =\lambda_{cr}$.  
\end{proof}
We close the section with one more result on exponential moments of return and absorption times which is key to the analysis of the infinite MGF regime. This result is slightly weaker than \cite[Theorem C]{verejones}, which \tro{R1:G2} can be proved in the same manner. \tro{R1:P3}
 \begin{prop}
\label{prop:weird_values}
Let {\bf HD-\ref{as:reg:moments},\ref{as:reg:Sirr}} hold and let  $\lambda\in (0, \lambda_{cr}]$. Then for every $x\in S$,  
\begin{equation} 
\label{eq:theone}
E_x [\exp (\lambda \tau_x),\tau_x<\tau_\Delta]\le 1
\end{equation} Moreover,
\begin{enumerate} 
\item If $E_x [ \exp (\lambda \tau_\Delta) ] < \infty$ then the  inequality is strict; 
\item If $E_x [\exp(\lambda_{cr} \tau_\Delta)]=\infty$ and $E_x[\exp(\lambda_{cr} \tau_\Delta),\tau_\Delta<\tau_x]<\infty$ for some $x\in S$, then $E_x [\exp(\lambda_{cr} \tau_x),\tau_x<\tau_\Delta]=1$ for all $x\in S$. 
\end{enumerate}
\end{prop} 
\begin{proof}
Pick $\lambda<\lambda_{cr}$. Then  $E_x [\exp (\lambda \tau_\Delta)]<\infty$. \tro{R1:P4}The strong Markov property gives
$$ E_x [ \exp (\lambda \tau_\Delta)]= E_x [\exp (\lambda \tau_x),\tau_x <\tau_\Delta]E_x [\exp (\lambda \tau_\Delta)]+ E_x [\exp (\lambda \tau_\Delta),\tau_\Delta<\tau_x].$$ 
As the left-hand side is finite and the second term on the right-hand side is strictly larger than zero, the first statement holds, with all terms on the right-hand side finite. Moreover, we can write 
\begin{equation} 
\label{eq:alphazeta_broken}
E_x [\exp (\lambda \tau_\Delta) ]= \frac{E_x[\exp (\lambda \tau_\Delta),\tau_\Delta<\tau_x]}{1-E_x[\exp (\lambda \tau_x),\tau_x<\tau_\Delta]}.
\end{equation}
\tro{R1:P5}  Both assertions now follow from the monotone convergence theorem by letting $\lambda \nearrow \lambda_{cr}$. 
\end{proof}

\section{Results: Discrete-Time}
\label{sec:disc_time}
\tro{R2:M4}
In this section we present our results in the discrete-time setting.
\begin{itemize} 
\item In Section \ref{sec:results_infi_disc} we present the results on QSDs in the infinite MGF regime. The main result of this section is the characterization of QSDs in the infinite MGF regime, Theorem \ref{th:nu_recurr}. The proofs of the results in this section are given in Section \ref{sec:pf_infi_disc}.
\item In section \ref{sec:Rrecurr}  we discuss the connection between the infinite MGF regime and the notion of $R$-recurrence, often used in analysis of QSDs.
\item In Section \ref{sec:results_finite_disc} we present the results in the finite MGF regime. The main results are the characterization of QSDs in this regime through a Martin boundary constructed to represent QSDs,  Theorem \ref{thm:martin} and conditions on existence / non-existence of QSDs in this regime, Theorem \ref{thm:QSD_tightness}.  The proofs are given in Sections \ref{sec:pf_finite_1} and \ref{sec:pf_finite_2}. 
\item Finally, in Section \ref{sec:aux} we collect  several results that are not specific to the finite or infinite MGF regime and are of indepedent interst. 
\end{itemize}
Throughout Section \ref{sec:disc_time} we assume without referenece that  hypotheses {\bf HD-\ref{as:reg:moments},\ref{as:reg:Sirr}} hold. 
\subsection{Infinite MGF Regime}
\label{sec:results_infi_disc}
The main result of this section the following necessary and sufficient condition for the existence and uniqueness of a minimal QSD in the infinite MGF regime. 
\begin{thm}
\label{th:nu_recurr}
Suppose $\lambda_{cr}$ is in the infinite MGF regime. Then 
\begin{enumerate}
    \item There exists a minimal QSD if and only if for some $x\in S$\tro{R1:P11}\tro{R1:P2}
\begin{equation} 
\label{eq:finite_stopped} 
E_x[\exp(\lambda_{cr}(\tau_\Delta \wedge \tau_x))]<\infty.
\end{equation}
In this case, the minimal QSD is unique and is given by the formula \tro{R1:P13}
\begin{equation}
\label{eq:nu_recurr}
 \nu_{cr}(x) = \frac{e^{\lambda_{cr}}-1}{E_x[\exp(\lambda_{cr} \tau_\Delta),\tau_\Delta<\tau_x]},~x\in S. 
\end{equation} 
\item If, in addition to \eqref{eq:finite_stopped}, 
 \begin{equation} 
 \label{eq:positive_recurrent} 
 E_x [ \exp (\lambda_{cr} \tau_x)\tau_x,\tau_x < \tau_\Delta]<\infty\mbox{ for some }x\in S,
 \end{equation}
 and $p$ is aperiodic, then $\nu_{cr}$ is the Quasi-Limiting Distribution \eqref{eq:QLD} for every finitely supported initial distribution. 
\end{enumerate}
\end{thm}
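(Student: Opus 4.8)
The plan is to establish a local limit theorem with enough uniformity over initial states to divide. Fix a reference state $x_{0}\in S$ and write $R=e^{\lambda_{cr}}$. It suffices to produce strictly positive constants $(\rho_{i})_{i\in S}$ and a nonzero finite measure $\pi$ on $S$ with $R^{n}P_{i}(X_{n}\in A)\to\rho_{i}\,\pi(A)$ for every $i\in S$ and every $A\subseteq S$. Indeed, for finitely supported $\mu$ the sum $R^{n}P_{\mu}(X_{n}\in A)=\sum_{i}\mu(i)R^{n}P_{i}(X_{n}\in A)$ is finite and converges to $\big(\sum_{i}\mu(i)\rho_{i}\big)\pi(A)$, with $\sum_{i}\mu(i)\rho_{i}\in(0,\infty)$; taking the ratio of the cases $A$ and $S$ gives $P_{\mu}(X_{n}\in A\mid\tau_\Delta>n)\to\pi(A)/\pi(S)$ for all $A\subseteq S$, i.e.\ setwise convergence of probability measures on a countable space, hence convergence in total variation, hence \eqref{eq:QLD} once we check $\pi/\pi(S)=\nu_{cr}$. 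Throughout we use part~(1) of Theorem~\ref{th:nu_recurr} freely.

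The renewal input is that, since $\lambda_{cr}$ is in the infinite MGF regime and \eqref{eq:finite_stopped} holds, $E_{x_{0}}[R^{\tau_\Delta},\tau_\Delta<\tau_{x_{0}}]\le E_{x_{0}}[R^{\tau_\Delta\wedge\tau_{x_{0}}}]<\infty$, so the second part of Proposition~\ref{prop:weird_values} applies and $f_{k}:=R^{k}P_{x_{0}}(\tau_{x_{0}}=k,\tau_{x_{0}}<\tau_\Delta)$, $k\in\N$, defines a genuine probability distribution on $\N$. It is aperiodic because $p$ is: the numerical semigroup $\{n:p^{n}(x_{0},x_{0})>0\}$ is generated under addition by $\{k:f_{k}>0\}$ via the first-return decomposition, so both have $\gcd=1$; and it has finite mean $\mu_{f}=E_{x_{0}}[\tau_{x_{0}}R^{\tau_{x_{0}}},\tau_{x_{0}}<\tau_\Delta]$ by \eqref{eq:positive_recurrent}. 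Next I would upgrade \eqref{eq:finite_stopped} to $E_{i}[R^{\tau_{x_{0}}\wedge\tau_\Delta}]<\infty$ for \emph{every} $i\in S$: by the strong Markov property at the first visit to $i$ on the event that it precedes $\tau_{x_{0}}\wedge\tau_\Delta$,
\[
E_{x_{0}}\!\left[R^{\tau_{x_{0}}\wedge\tau_\Delta}\right]\ \ge\ E_{x_{0}}\!\left[R^{\tau_{i}},\,\tau_{i}<\tau_{x_{0}}\wedge\tau_\Delta\right]\,E_{i}\!\left[R^{\tau_{x_{0}}\wedge\tau_\Delta}\right],
\]
and the first factor on the right is strictly positive by irreducibility (use a shortest path from $x_{0}$ to $i$ inside $S$). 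Two consequences: $R^{n}P_{i}(\tau_{x_{0}}\wedge\tau_\Delta>n)\to0$, since $\sum_{n}R^{n}P_{i}(\tau_{x_{0}}\wedge\tau_\Delta>n)=\big(E_{i}[R^{\tau_{x_{0}}\wedge\tau_\Delta}]-1\big)/(R-1)<\infty$; and $\rho_{i}:=E_{i}[R^{\tau_{x_{0}}},\tau_{x_{0}}<\tau_\Delta]$ is finite (bounded by $E_{i}[R^{\tau_{x_{0}}\wedge\tau_\Delta}]$) and, again by irreducibility, strictly positive.

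Now the local limits follow from the renewal theorem. The scalar renewal equation $u_{n}=\sum_{k=1}^{n}f_{k}u_{n-k}$, $u_{0}=1$, for $u_{n}:=R^{n}p^{n}(x_{0},x_{0})$ gives $u_{n}\to1/\mu_{f}$. A last-exit-from-$x_{0}$ decomposition gives $R^{n}P_{x_{0}}(X_{n}\in A)=\sum_{m=0}^{n}u_{n-m}\,R^{m}P_{x_{0}}(X_{m}\in A,\tau_{x_{0}}>m)$, and since $\sum_{m\ge0}R^{m}P_{x_{0}}(X_{m}\in A,\tau_{x_{0}}>m)\le\sum_{m\ge0}R^{m}P_{x_{0}}(\tau_{x_{0}}\wedge\tau_\Delta>m)<\infty$ by the bound above, dominated convergence yields $R^{n}P_{x_{0}}(X_{n}\in A)\to\pi(A):=\mu_{f}^{-1}\sum_{m\ge0}R^{m}P_{x_{0}}(X_{m}\in A,\tau_{x_{0}}>m)$, a finite measure with $\pi(\{x_{0}\})=1/\mu_{f}>0$ and $\pi(S)=\big(E_{x_{0}}[R^{\tau_{x_{0}}\wedge\tau_\Delta}]-1\big)/\big((R-1)\mu_{f}\big)<\infty$. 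For general $i$, a first-passage decomposition at $\tau_{x_{0}}$ gives $R^{n}P_{i}(X_{n}\in A)=\sum_{k=1}^{n}\big(R^{k}P_{i}(\tau_{x_{0}}=k<\tau_\Delta)\big)R^{n-k}P_{x_{0}}(X_{n-k}\in A)+R^{n}P_{i}(X_{n}\in A,\tau_{x_{0}}>n)$; the last term is $\le R^{n}P_{i}(\tau_{x_{0}}\wedge\tau_\Delta>n)\to0$, and the sum converges to $\rho_{i}\pi(A)$ by dominated convergence, since $R^{m}P_{x_{0}}(X_{m}\in A)$ is bounded (by $R^{m}P_{x_{0}}(\tau_\Delta>m)\to\pi(S)$) and $\sum_{k}R^{k}P_{i}(\tau_{x_{0}}=k<\tau_\Delta)=\rho_{i}<\infty$. (Alternatively, the core local limit $R^{n}p^{n}(i,j)\to\rho_{i}\pi(\{j\})$ is the aperiodic $R$-positive-recurrent local limit theorem of $R$-theory \cite{verejones}; what is specific to the present killed-chain setting is the no-loss-of-mass at the level of $P_{i}(\tau_\Delta>n)$.)

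Finally, to identify the limit: the convergence $R^{n}P_{x_{0}}(X_{n}\in\cdot)\to\pi$ is setwise with total mass $R^{n}P_{x_{0}}(\tau_\Delta>n)\to\pi(S)<\infty$, hence holds in total variation, so testing against the bounded function $i\mapsto p(i,j)$ and using $\sum_{i}R^{n}p^{n}(x_{0},i)p(i,j)=R^{n}p^{n+1}(x_{0},j)\to R^{-1}\pi(\{j\})$ gives $\sum_{i}\pi(\{i\})p(i,j)=e^{-\lambda_{cr}}\pi(\{j\})$ for all $j$. Thus $\pi/\pi(S)$ satisfies \eqref{eq:eigen_description} with $\lambda=\lambda_{cr}$, so by Proposition~\ref{prop:eigen_equivalence} it is a minimal QSD, and by part~(1) it equals $\nu_{cr}$; combined with the local limits and the reduction of the first paragraph this gives \eqref{eq:QLD} with $\nu=\nu_{cr}$. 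The main obstacle is the control of escape of mass: upgrading the pointwise limits $R^{n}p^{n}(i,j)\to\rho_{i}\pi(\{j\})$ to the statement that $R^{n}P_{i}(\tau_\Delta>n)$ converges to $\rho_{i}\pi(S)$ rather than leaking mass to infinity. This is exactly what the all-states bound $E_{i}[R^{\tau_{x_{0}}\wedge\tau_\Delta}]<\infty$ buys, and it is where both \eqref{eq:finite_stopped} (finiteness of $\pi(S)$) and \eqref{eq:positive_recurrent} ($R$-positivity, i.e.\ $\mu_{f}<\infty$) are genuinely used, as is the finite support of $\mu$.
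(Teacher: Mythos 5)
Your write-up addresses only the convergence assertion (part 2) and explicitly takes part 1 as given (``we use part (1) \dots freely''). But part 1 --- the equivalence of existence of a minimal QSD with \eqref{eq:finite_stopped}, the uniqueness, and the representation \eqref{eq:nu_recurr} --- is the bulk of the theorem and is nowhere argued. Your construction does reprove existence as a byproduct (your $\pi$, normalized, is exactly the normalized excursion measure $\mu_{x_0}$ of Proposition \ref{prop:mu_x}, and you verify $\pi p=e^{-\lambda_{cr}}\pi$ with $\pi(S)<\infty$), but only under the \emph{extra} hypotheses of aperiodicity and \eqref{eq:positive_recurrent}, whereas part 1 asserts existence under \eqref{eq:finite_stopped} alone; and the necessity of \eqref{eq:finite_stopped}, the uniqueness, and the formula \eqref{eq:nu_recurr} are genuinely used by you (to identify the limit as $\nu_{cr}$) without proof. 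The paper obtains all of these via the time-reversed chain $q$ of \eqref{eq:q_def}: recurrence of $q$ forces $I_\nu\equiv e^{\lambda_{cr}}-1$ (Proposition \ref{prop:Inu}), and Proposition \ref{prop:reversal_probs} then yields \eqref{eq:nu_recurr} and \eqref{eq:finite_stopped}; existence comes from normalizing $\mu_x$ of Proposition \ref{prop:mu_x} using Proposition \ref{prop:weird_values}. None of this machinery is replaced in your proposal, so as a proof of the stated theorem it is incomplete.

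For part 2 itself your argument is correct and takes a genuinely different route. The paper reduces convergence to the ergodic theorem for the positive-recurrent, aperiodic reverse chain via the ratio identity \eqref{eq:ratiowQ}; you instead run the Erd\H{o}s--Feller--Pollard renewal theorem on $u_n=R^np^n(x_0,x_0)$, combine it with last-exit and first-passage decompositions, and control escape of mass through the all-states bound $E_i[R^{\tau_{x_0}\wedge\tau_\Delta}]<\infty$, which you correctly propagate from $x_0$ by the strong Markov inequality and irreducibility. This is essentially the classical $R$-positive-recurrent local limit theorem of Vere-Jones made self-contained, and it has the virtue of not needing the reversed chain (hence not needing the QSD a priori) to get the local limits --- though you still need part 1 to identify the limit. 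Two small points to tighten: (i) \eqref{eq:finite_stopped} and \eqref{eq:positive_recurrent} are each hypothesized ``for some $x$,'' a priori different states, while your renewal distribution $f$ needs both $\sum_kf_k=1$ and $\mu_f<\infty$ at the \emph{same} reference state $x_0$; this does hold at every state (e.g.\ via Proposition \ref{prop:reversal_probs}-\ref{it:4} and the recurrence of $q$, or by a direct irreducibility argument of the type you already use), but it should be said. (ii) The dominated-convergence step in the last-exit decomposition needs $\sup_nu_n<\infty$, which follows from $u_n\le\max_{k<n}u_k\le u_0=1$ by the renewal equation; worth a line.
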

Some comments are in place: 
\begin{enumerate}
    \item Proposition \ref{prop:zeta_taux} lists several sufficient conditions for \eqref{eq:finite_stopped}. One such notable case is when  $\{x\in S:p(x,\Delta)>0\}$ is a finite set.
    \item The formula \eqref{eq:nu_recurr} for $\nu_{cr}$ is analogous to the formula for the stationary measure for a positive recurrent Markov chain, expressing it as the reciprocal of the expected return time. To see the connection recall that under the given assumptions Proposition \ref{prop:weird_values}-3 gives
    \begin{equation}
    \label{eq:nu_recurr_alt}\nu_{cr}(x) = \frac{ e^{\lambda_{cr}}-1}{E_x [ \exp (\lambda_{cr} ( \tau_\Delta \wedge \tau_x))]-1 },
    \end{equation}
    and the heuristic limit along a sequence of transition functions converging to a positive recurrent transition function on $S$ with $\lambda_{cr} \searrow 1$ is $\frac{1}{E_x [ \tau_x ] }$.   Proposition \ref{prop:mu_x}-1 provides an alternative representation for $\nu_{cr}$. 
    \item When $S$ is finite:  $\lambda_{cr}$ is in the infinite MGF regime and there exists a unique  QSD which is also minimal, see   Proposition \ref{prop:Sfinite} and the discussion that follows.  This particular case does not require the apparatus developed here and can be obtained directly from the Perron-Frobenius theorem. For example,  the fact that the Perron root is the unique eigenvalue with magnitude equal to the spectral radius and a simple eigenvalue  implies that $\lambda_{cr}$ is in the infinite MGF regime.   Nevertheless, this special case should be viewed as a motivating example for the infinite MGF regime and for Theorem \ref{thm:coming_infinty} below. 
    \item In light of the last comment,  Theorem \ref{th:nu_recurr} can be viewed as a countable state-space version of Perron-Frobenius, tailored to QSDs. The classical extension of Perron-Frobenius is through $R$-recurrence, a notion we expand on in Section \ref{sec:Rrecurr} below.  The recent paper \cite{glynn2018PF} is a generalization of Perron-Frobenius in a way that is more aligned with our work: 
    \begin{enumerate} 
    \item It proves the existence of both a right- and left- (not necessarily normalizable) eigenvectors corresponding to the eigenvalue $e^{-\lambda_{cr}}$ under a basic assumption: equality in \eqref{eq:theone} (by Proposition \ref{prop:weird_values} this implies $\lambda_{cr}$ is in the infinite MGF regime).
    \item It lists \eqref{eq:positive_recurrent} as a sufficient condition for uniqueness up to constant multiples (we only use this condition for convergence). 
    \item The expressions we provide in Proposition \ref{prop:mu_x} and from which we obtain \eqref{eq:nu_recurr} are essentially the same as those for the right- and left- eigenvectors presented in the paper (though we do not work under the key assumption in the paper). 
    \end{enumerate}
    However, the paper does not study QSDs and does not provide any conditions under which the left-eigenvector is normalizable, whereas Theorem \ref{th:nu_recurr} identifies \eqref{eq:finite_stopped} as necessary and sufficient.
\end{enumerate} 
The next result is a condition equivalent to  \eqref{eq:finite_stopped} which may be easier to verify directly in some cases. For $K\subsetneq S$, define the hitting time 
\begin{equation}
    \label{eq:hitK}
    \tau_K=\inf\{n\in \N:X_n\in K\}.
\end{equation} 
\begin{prop}
\label{prop:K2single}
Suppose that $K\subsetneq S$ is nonempty and finite and that for some $x \not \in K$,
\begin{equation}
    \label{eq:getK}
 E_x [ \exp (\lambda_{cr} ( \tau_\Delta\wedge \tau_K))] < \infty.
\end{equation}
Then there exists $z\in K$ so that 
$$E_{z} [ \exp (\lambda_{cr}  (\tau_\Delta \wedge \tau_z))]<\infty.$$
\end{prop}

\tro{R1:P15} The next result is a discrete-time version of the main result in \cite{Villemonais2014}, descrbing the case of  ``small-world'' chains, also described by the authors as processes coming fast from infinity (though the results are applicable to finite state spaces). See Theorem \ref{thm:coming_infinty_cts}, our continuous-time version of that result, followed by a discussion on the differences between our approach and the approach by the authors of the cited work.    
\begin{thm} \tro{R2:M11}
\label{thm:coming_infinty}
Suppose that there exists some $\bar \lambda>0$ and a nonempty finite $K\subsetneq S$ such that both following conditions hold:  
\begin{align} 
\label{eq:blowlambda0}
&E_x [ \exp (\bar\lambda \tau_\Delta)]=\infty\mbox{ for some }x \in S;\\
\label{eq:bd_arrival} 
&\sup_{x\not\in K} E_x [\exp (\bar\lambda (\tau_\Delta\wedge \tau_K))]<\infty. 
\end{align}
Then 
\begin{enumerate} 
\item  $\lambda_{cr}\in (0,\bar \lambda]$,  $\lambda_{cr}$  is in the infinite regime, and \eqref{eq:finite_stopped}, \eqref{eq:positive_recurrent}  and \eqref{eq:getK} hold. 
\item There exists a unique QSD. This QSD is minimal and is given by \eqref{eq:nu_recurr}. 
\end{enumerate} 
If, in addition,  $p$ is aperiodic and there exists some $x_0 \in S$ and  $n_0\in\N$ such that 
\begin{equation}
\label{eq:lowerh}
\inf_{x\in S} \frac{P_x (\tau_\Delta>\tau_{x_0})}{P_x (\tau_\Delta >n_0)}>0,
\end{equation}
then \eqref{eq:QLD} holds for any initial distribution $\mu$.  
\end{thm}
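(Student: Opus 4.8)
The plan is to derive Theorem \ref{thm:coming_infinty} from Theorem \ref{th:nu_recurr} and Proposition \ref{prop:K2single} by first verifying that the hypotheses \eqref{eq:blowlambda0}--\eqref{eq:bd_arrival} force $\lambda_{cr}$ into the infinite MGF regime with $\lambda_{cr}\le\bar\lambda$, and then checking that the structural conditions \eqref{eq:finite_stopped}, \eqref{eq:positive_recurrent}, \eqref{eq:getK} all hold at $\lambda_{cr}$. For part (1), the inequality \eqref{eq:blowlambda0} gives immediately $\lambda_{cr}\le\bar\lambda$ by the definition \eqref{eq:lambda_cr_def}; the reverse-direction claim that $\lambda_{cr}\ge\bar\lambda$ (hence $=\bar\lambda$ is \emph{not} claimed — only $\lambda_{cr}\in(0,\bar\lambda]$) will come from the uniform bound \eqref{eq:bd_arrival}: since $K$ is finite and $p$ restricted to $S$ is irreducible, one can compare $\tau_\Delta\wedge\tau_K$ excursions and show $E_x[\exp(\bar\lambda\,\tau_\Delta\wedge\tau_z)]<\infty$ for some $z\in K$ via Proposition \ref{prop:K2single} applied with $\bar\lambda$ in place of $\lambda_{cr}$ — but the proposition is stated at $\lambda_{cr}$, so I would first need to know $\bar\lambda\le\lambda_{cr}$, or instead re-run its (short) argument at $\bar\lambda$. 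The cleanest route: show directly that \eqref{eq:bd_arrival} implies $E_x[\exp(\bar\lambda\tau_\Delta)]=\infty$ is incompatible with $\bar\lambda<\lambda_{cr}$ would be false — rather, \eqref{eq:blowlambda0} already says we are at or below criticality, and \eqref{eq:bd_arrival} plus finiteness of $K$ upgrades ``$E_x[\exp(\bar\lambda\tau_\Delta)]=\infty$'' to ``$\lambda_{cr}=\bar\lambda$ and we are in the infinite regime,'' because if $\lambda_{cr}>\bar\lambda$ then $E_x[\exp(\bar\lambda\tau_\Delta)]<\infty$ for all $x$, contradicting \eqref{eq:blowlambda0}. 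Hence $\lambda_{cr}\le\bar\lambda$; and $E_x[\exp(\lambda_{cr}\tau_\Delta)]\ge E_x[\exp(\bar\lambda\tau_\Delta)]=\infty$ when $\lambda_{cr}=\bar\lambda$, but if $\lambda_{cr}<\bar\lambda$ we must separately argue we are still in the infinite regime — this is the first real subtlety and I would handle it by using \eqref{eq:bd_arrival} to bound moments of excursions away from $K$ and concluding via a renewal decomposition over visits to $K$ that $E_z[\exp(\lambda_{cr}\tau_\Delta\wedge\tau_z)]<\infty$ for some $z\in K$, which is exactly \eqref{eq:finite_stopped}.

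**Next**, for the finite-moment-of-return condition \eqref{eq:positive_recurrent}, I would again use \eqref{eq:bd_arrival}: it provides not just finiteness of $E_x[\exp(\bar\lambda\tau_\Delta\wedge\tau_K)]$ but, via the standard trick $E[\exp((\bar\lambda-\epsilon)T)\,T]\le C_\epsilon E[\exp(\bar\lambda T)]$ valid for any nonnegative $T$ and $\epsilon>0$, a bound on $E_x[\exp(\lambda_{cr}\tau_\Delta\wedge\tau_K)\,(\tau_\Delta\wedge\tau_K)]$ whenever $\lambda_{cr}<\bar\lambda$; the boundary case $\lambda_{cr}=\bar\lambda$ needs the extra room, which is why I expect to prove $\lambda_{cr}<\bar\lambda$ cannot be ruled out but the estimate still goes through by choosing $\epsilon=\bar\lambda-\lambda_{cr}$ when that is positive and, when $\lambda_{cr}=\bar\lambda$, absorbing the polynomial factor using the strict inequality $E_x[\exp(\bar\lambda\tau_\Delta\wedge\tau_K)]<\infty$ against the \emph{infinite} $E_x[\exp(\bar\lambda\tau_\Delta)]$ — here I would decompose $\tau_x$ into a finite number of $K$-excursions and use that each excursion has finite exponentially-weighted length and finite exponentially-weighted first moment by the same $\epsilon$-trick applied on $\{\tau_\Delta\wedge\tau_K<\infty\}$, summing a geometric-type series. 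Granting \eqref{eq:finite_stopped} and \eqref{eq:positive_recurrent}, Theorem \ref{th:nu_recurr} yields existence, uniqueness, the formula \eqref{eq:nu_recurr}, and — under aperiodicity — \eqref{eq:QLD} for finitely supported $\mu$, giving part (2) and the aperiodic conclusion for finite $\mu$; that this unique minimal QSD is the \emph{only} QSD (no non-minimal QSDs) follows because any QSD has absorption parameter $\le\lambda_{cr}$ by Corollary \ref{cor:minimal}, and a QSD with parameter $\lambda<\lambda_{cr}$ would put us in the finite MGF regime at $\lambda$, which is consistent, so I must additionally rule out sub-critical QSDs: this uses that \eqref{eq:bd_arrival} forces strong recurrence-type control implying every QSD is minimal — concretely, if $\nu$ were a QSD with parameter $\lambda<\lambda_{cr}=\bar\lambda$ (or $\lambda<\bar\lambda$ in general), then $E_\nu[\exp(\bar\lambda\tau_\Delta\wedge\tau_K)]<\infty$ by \eqref{eq:bd_arrival} and finiteness of $K$, yet the QSD relation \eqref{eq:eigen_description} propagated forward forces $E_\nu[\exp(\lambda\tau_\Delta)]<\infty$ with $P_\nu(\tau_\Delta>n)=e^{-\lambda n}$, and combining with the uniform bound over $x\notin K$ one derives $E_x[\exp(\bar\lambda\tau_\Delta)]<\infty$ for $x\in K$ and then all $x$, contradicting \eqref{eq:blowlambda0}.

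**Finally**, for the full statement that \eqref{eq:QLD} holds for \emph{any} initial distribution $\mu$ under the extra Doeblin-type condition \eqref{eq:lowerh}, the strategy is a standard coupling/regeneration argument: \eqref{eq:lowerh} says that from every state the conditional probability (given non-absorption for $n_0$ steps) of hitting the fixed state $x_0$ before absorption is bounded below, which lets one write, for arbitrary $\mu$, $P_\mu(X_n\in\cdot\mid\tau_\Delta>n)$ as a mixture that after one ``successful'' regeneration to $x_0$ reduces to $P_{x_0}(X_{n'}\in\cdot\mid\tau_\Delta>n')$ plus a geometrically small error; since the latter converges to $\nu_{cr}$ by the finitely-supported case (with $\mu=\delta_{x_0}$) already established, the mixture converges to $\nu_{cr}$ as well. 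Concretely I would: (i) fix $\epsilon>0$ and the associated lower bound $c>0$ in \eqref{eq:lowerh}; (ii) show by iterating that $P_\mu(\tau_{x_0}<\tau_\Delta,\ \tau_{x_0}\le m\mid\tau_\Delta>m)\to1$ as $m\to\infty$ uniformly enough; (iii) condition on the (random, a.s.-finite on the relevant event) first hitting time of $x_0$ and apply the strong Markov property together with the established Yaglom limit from $x_0$ and the exact tail $P_{x_0}(\tau_\Delta>n)=e^{-\lambda_{cr}n}$ to control the ratio of normalizers; (iv) dominated-convergence the mixture. \textbf{The main obstacle} I anticipate is not the regeneration step but the borderline moment bookkeeping in part (1)--(2): pushing the finiteness in \eqref{eq:bd_arrival} at the single exponent $\bar\lambda$ all the way down to establish \emph{both} \eqref{eq:finite_stopped} \emph{and} the first-moment condition \eqref{eq:positive_recurrent} at $\lambda_{cr}$, in the case $\lambda_{cr}=\bar\lambda$ where no exponential slack is available — this requires carefully exploiting that $\tau_\Delta\wedge\tau_K<\tau_\Delta$ strictly on excursions that return to $K$, so the weighted excursion lengths are genuinely integrable even at the critical exponent, and then resumming over the (geometrically controlled) number of excursions before absorption.
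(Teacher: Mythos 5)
Your overall architecture (reduce to Theorem \ref{th:nu_recurr} after verifying \eqref{eq:finite_stopped} and \eqref{eq:positive_recurrent}) matches the paper's, but three of your key steps have genuine gaps. First, the claim that $\lambda_{cr}$ is in the infinite MGF regime when $\lambda_{cr}<\bar\lambda$: your proposed resolution is to show $E_z[\exp(\lambda_{cr}\tau_\Delta\wedge\tau_z)]<\infty$, but that is \eqref{eq:finite_stopped}, a statement compatible with the finite regime; it does not prove $E_x[\exp(\lambda_{cr}\tau_\Delta)]=\infty$. The paper proves the infinite-regime claim by contradiction (Proposition \ref{prop:highmomments}): assuming finiteness, the stochastic domination of $\tau_\Delta$ by an excursion to $K$ plus a finite sum of absorption times from $K$, together with the exponential slack $\bar\lambda-\lambda_{cr}>0$ applied to the excursion, yields uniform integrability of $e^{\lambda_{cr}\tau_\Delta}$ over $x\in S$, contradicting Proposition \ref{prop:UI}. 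You need that argument (or an equivalent one); your renewal decomposition does not supply it. Second, your route to \eqref{eq:positive_recurrent} via the bound $E[\exp((\bar\lambda-\epsilon)T)\,T]\le C_\epsilon E[\exp(\bar\lambda T)]$ collapses exactly in the case $\lambda_{cr}=\bar\lambda$ you flag: with no slack, finiteness of $E[\exp(\bar\lambda T)]$ does not imply finiteness of $E[\exp(\bar\lambda T)\,T]$ (take $P(T=n)\asymp e^{-\bar\lambda n}n^{-2}$), and "absorbing the polynomial factor" is not an argument. The paper avoids the issue entirely: it builds the bounded right eigenvector $h=h_z$ with $ph=e^{-\lambda_{cr}}h$, shows the $h$-transform chain $p^h$ has stationary distribution $\pi=\nu\bar h$ and is therefore positive recurrent, deduces the same for the reversed chain $q$, and reads off \eqref{eq:positive_recurrent} from Proposition \ref{prop:reversal_probs}.

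Third, your argument that no non-minimal QSD exists is incorrect as written: a QSD $\nu'$ with parameter $\lambda<\lambda_{cr}\le\bar\lambda$ has $P_{\nu'}(\tau_\Delta>n)=e^{-\lambda n}$, which gives finiteness of $E_{\nu'}[\exp(\lambda'\tau_\Delta)]$ only for $\lambda'<\lambda$, so you cannot conclude $E_x[\exp(\bar\lambda\tau_\Delta)]<\infty$ and there is no contradiction with \eqref{eq:blowlambda0}. The paper's uniqueness argument again exploits the bounded eigenvector $h$: since $\nu'h$ is a finite measure, Fubini applied to $\sum_{x,y}(\nu'h)(x)p^h(x,y)$ gives $e^{-\lambda}\sum\nu'h$ summing one way and $e^{-\lambda_{cr}}\sum\nu'h$ the other, forcing $\lambda=\lambda_{cr}$. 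For the final convergence claim your regeneration sketch is a plausible alternative in spirit, but the paper instead stays inside the $h$-transform: the martingale $e^{\lambda_{cr}n}h(X_n){\bf 1}_{\{\tau_\Delta>n\}}$ gives $h(x)=E_x[\exp(\lambda_{cr}\tau_{x_0}),\tau_{x_0}<\tau_\Delta]h(x_0)\ge cP_x(\tau_\Delta>n_0)$ by \eqref{eq:lowerh}, which makes $E^h_x[1/h(X_{n_0})]$ bounded and lets the ergodic theorem for the positive recurrent aperiodic chain $p^h$ handle both the numerator and the normalizer uniformly in the initial distribution; your sketch never explains how \eqref{eq:lowerh} controls the ratio of survival probabilities, which is the crux. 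In short, the single construction you are missing — the bounded positive right eigenvector $h$ and its $h$-transform — is what the paper uses to close all three gaps at once.
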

\tro{R1:P16}When $S$ is finite the existence of a unique QSD which is also minimal as well as convergence are obtained through the Perron-Frobenius theorem, as was first studied in detail in \cite{DoSe1965dis}\tro{R2:M5}. Thus, the next result simply records the fact that the finite state-space case is basically a very special case of the infinite MGF regime. 
\begin{prop}
\label{prop:Sfinite}
    Suppose $S$ is finite and has at least two elements.\tro{R1:P17}  Then \eqref{eq:blowlambda0},  \eqref{eq:bd_arrival} and  \eqref{eq:lowerh} hold with $\bar \lambda = \lambda_{cr}$ and $K=S-\{x_0\}$, where $x_0\in S$ is any element maximizing $S\ni x\to p(x,\Delta)$. 
\end{prop}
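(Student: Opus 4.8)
The plan is to reduce the three conditions to standard Perron--Frobenius facts about the finite irreducible substochastic matrix $Q=(p(i,j))_{i,j\in S}$, together with elementary path arguments. We assume $|S|\ge 2$; this is the relevant case, since for $|S|=1$ one cannot take a nonempty $K\subsetneq S$ as Theorem \ref{thm:coming_infinty} requires. By {\bf HD-\ref{as:reg:uab}} some $z\in S$ has $p(z,\Delta)>0$ (otherwise ${\bf X}$ never leaves $S$ and $\tau_\Delta=\infty$ $P_z$-a.s.), so the $z$-th row sum of $Q$ is strictly less than $1$; hence $Q$ is irreducible, nonnegative, with all row sums at most $1$ and not all equal to $1$. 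Consequently $\rho:=\rho(Q)\in(0,1)$, with a strictly positive right Perron eigenvector $v$, $Qv=\rho v$; normalize $\min_i v_i=1$ and set $V=\max_i v_i$. Since $\rho<1$, $P_x(\tau_\Delta>n)=(Q^n{\bf 1})_x\to0$ (here ${\bf 1}$ is the all-ones vector on $S$), so $\tau_\Delta<\infty$ $P_x$-a.s.\ for every $x$.

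First I would pin down $\lambda_{cr}$ and obtain \eqref{eq:blowlambda0}. Iterating the eigenvector identity gives $\sum_yQ^n(x,y)v_y=\rho^nv_x$, which sandwiches $\rho^nv_x/V\le P_x(\tau_\Delta>n)\le\rho^nv_x$; feeding this into the standard identity $E_x[e^{\lambda\tau_\Delta}]=1+(e^\lambda-1)\sum_{n\ge0}e^{\lambda n}P_x(\tau_\Delta>n)$ (valid since $\tau_\Delta<\infty$ a.s.) shows the right-hand side is finite when $\lambda<-\log\rho$ and infinite when $\lambda\ge-\log\rho$. Therefore $\lambda_{cr}=-\log\rho$, $e^{-\lambda_{cr}}=\rho$, the critical parameter lies in the infinite MGF regime, and $E_x[e^{\lambda_{cr}\tau_\Delta}]=\infty$ for every $x\in S$, which is \eqref{eq:blowlambda0} with $\bar\lambda=\lambda_{cr}$.

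Next, \eqref{eq:bd_arrival}. The only state of $S$ outside $K=S-\{x_0\}$ is $x_0$, so the supremum is the single term $E_{x_0}[\exp(\lambda_{cr}(\tau_\Delta\wedge\tau_K))]$. Now $\tau_\Delta\wedge\tau_K=\inf\{n\ge1:X_n\ne x_0\}$, which under $P_{x_0}$ is geometric with parameter $1-p(x_0,x_0)$; here $p(x_0,x_0)<1$, for otherwise $x_0$ would be absorbing in $S$, impossible since $|S|\ge2$ and $Q$ is irreducible. Its moment generating function at $\lambda_{cr}$ is finite exactly when $e^{\lambda_{cr}}p(x_0,x_0)<1$, i.e.\ $p(x_0,x_0)<\rho$. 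This holds: from $\rho v_{x_0}=p(x_0,x_0)v_{x_0}+\sum_{y\ne x_0}p(x_0,y)v_y>p(x_0,x_0)v_{x_0}$, the off-diagonal sum being positive by irreducibility. (Equivalently, and without the eigenvector, $p(x_0,x_0)\le1-p(x_0,\Delta)=\min_x\sum_{y\in S}p(x,y)\le\rho$, with strict inequality by irreducibility; this is the one place where the maximizing choice of $x_0$ is convenient, though as the eigenvector computation shows it is not needed.)

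Finally, \eqref{eq:lowerh}, with the same $x_0$ and any $n_0\in\N$, say $n_0=1$. Because $S$ is finite, the infimum over $x\in S$ is a minimum of finitely many ratios, so it suffices that each is a well-defined positive number. The denominator $P_x(\tau_\Delta>n_0)$ is positive because, by irreducibility with $|S|\ge2$, every state of $S$ has a successor in $S$, so some length-$n_0$ path from $x$ stays in $S$; the numerator $P_x(\tau_{x_0}<\tau_\Delta)$ is positive because there is a positive-probability path in $S$ from $x$ to $x_0$ (a return path when $x=x_0$). Hence the minimum is strictly positive. The only genuinely delicate point in the argument is the behaviour at the critical value $\lambda=\lambda_{cr}$ — that $E_x[e^{\lambda_{cr}\tau_\Delta}]$ is infinite rather than finite — which is precisely what the lower bound $P_x(\tau_\Delta>n)\ge\rho^n/V$, supplied by the right Perron eigenvector, takes care of; the rest is routine.
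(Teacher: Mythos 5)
Your proof is correct, and it reaches \eqref{eq:blowlambda0} by a genuinely different route than the paper. The paper gets \eqref{eq:blowlambda0} in one line from Proposition \ref{prop:UI}: a finite family of integrable random variables is automatically uniformly integrable, so $E_x[\exp(\lambda_{cr}\tau_\Delta)]$ cannot be finite for all $x$. You instead redo the work from scratch via Perron--Frobenius for the irreducible substochastic matrix $Q$, which costs more but buys more: the identification $e^{-\lambda_{cr}}=\rho(Q)$ and the two-sided tail bound $\rho^n v_x/V\le P_x(\tau_\Delta>n)\le \rho^n v_x$ are strictly stronger intermediate facts than the paper needs, and the divergence of the MGF at criticality drops out of the lower bound. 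For \eqref{eq:bd_arrival} both arguments reduce to the same inequality $e^{\lambda_{cr}}p(x_0,x_0)<1$ applied to the geometric variable $\tau_\Delta\wedge\tau_K$ under $P_{x_0}$; the paper derives it from the pathwise lower bound $P_x(\tau_\Delta>n)\ge(1-p(x_0,\Delta))^n$, which is where the maximality of $x_0$ enters, whereas your eigenvector computation $\rho v_{x_0}>p(x_0,x_0)v_{x_0}$ shows $p(x,x)<\rho$ for \emph{every} $x$, so the maximizing choice of $x_0$ is in fact immaterial for this condition --- a small observation the paper does not make. Your treatment of \eqref{eq:lowerh} (a finite minimum of strictly positive ratios) coincides with the paper's.
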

Thus in the finite state case,   \eqref{eq:nu_recurr} is a stochastic representation to the left-Perron eigenvector. Recent work do exactly the same, but with a different flavor:    \cite{cerf1} gives a representation for this eigenvector through the auxiliary process obtained by normalizing the rows of $p$, and \cite{cerf2} provides a representation through a multi-type branching process. All three representations are equivalent and can be obtained from each other through a routine change of measure. For example, the probability of a path $i_0,i_1,\dots, i_n\in S$ under the auxiliary row-normalized process is equal to $\prod_{j=0}^{n-1} p(i_j,i_{j+1}) / \prod_{j=0}^{n-1} f(i_j)$, where $f(i) = \sum_{j\in S} P(i,j)$, an identity which leads to identifying the first formula on the second page of \cite{cerf1} as equivalent to \eqref{eq:nu_recurr}. 
\subsection{Connection with $R$-recurrence}
\label{sec:Rrecurr}
\tro{R2:M3}\tro{R1:P1} \tro{R1:P7} A prominent classical approach to obtaining existence, uniquness and convergence is  through both the standard and infinite dimensional versions of the Perron-Frobenius theorem  \cite{verejones} for $R$-positive Markov chains \cite[Lemma 1]{Seneta1966} with additional conditions that imply that the ``left" eigenvector is normalizable, or, equivalently, the existence of a finite $1/R$-invariant measure. We will now briefly review this approach and establish connections with the infinite MGF regime. 

Let $R\ge 1$. The restriction of  $p$ to $S$  is called $R$-recurrent if and only if \begin{equation} 
\label{eq:power_series} 
\sum_{n=1}^\infty R^n p^n (i,i)=\infty
\end{equation} 
for some (equivalently all) $i\in S$. Otherwise, $p$ is called $R$-transient.   If $p$ is $R$-recurrent, then it called $R$-positive if for some (equivalently all) $i$, $\limsup_{n\to\infty} R^n p^n(i,i) > 0$. It is $R$-null otherwise.  The radius of convergence of the power series \eqref{eq:power_series} is also known in the literature as the convergence parameter for the restriction of $p$,  which we denote here by $R_{cr}$.  Clearly, $p$ is $R$-transient for all $R<R_{cr}$, and $p$ may be $R_{cr}$-transient or $R_{cr}$-null or $R_{cr}$-positive. 

Suppose that $R\ge 1$ and $\lambda\ge 0$ is given by $e^{\lambda}=R$. If $p$ is $R$-recurrent then  (e.g. Corollary \ref{cor:minimal}-2 below), $E_x [\exp (\lambda \tau_\Delta)]=\infty$ for all $x\in S$. As a result, $\lambda \ge \lambda_{cr}$, and in particular, 
\begin{equation}
 \label{eq:R_recurr}
 e^{\lambda_{cr}}\le R_{cr}.
\end{equation} 
\cite[Theorem 2]{Coolen} is the combination of this inequality and Corollary \ref{cor:minimal}-3.  The inequality may be strict. Here is a simple example: 
\begin{xmpl}
\label{xmple:R_cr}
Consider Example \ref{xmpl:ind_zeta}, with $S=\Z$, $\mathfrak p(x,x\pm 1)= \frac{1\pm \epsilon}{2}$ for some $\epsilon\in (0,1)$. Note that in this case $\lambda_{cr}$ is predetermined, and that since $\mathfrak p$ is transient,  there are no QSDs for $p$. We will now show that 
$$R_{cr}= \frac{e^{\lambda_{cr}}}{\sqrt{1-\epsilon^2}}>e^{\lambda_{cr}}.$$

From the construction, for every $i,j\in \Z$, $p(i,j) = e^{-\lambda_{cr}}\mathfrak p (i,j)$, and therefore 
$$ \sum_{n=1}^n R^n p^n (i,i) = \sum_{n=1}^\infty R^n e^{-\lambda_{cr} n} \mathfrak p^n (i,i).$$
Let $\mathfrak p_0$ denote the transition function on $\Z$ corresponding to the simple symmetric random walk (that is $\epsilon=0$). Then $\mathfrak p^n(i,i) = (1-\epsilon^2)^{n/2}\mathfrak p_0^n (i,i)$,
which leads to 
$$\sum_{n=1}^n R^n p^n (i,i) = \sum_{n=1}^\infty e^{-\lambda_{cr} n} R^n  (1-\epsilon^2)^{n/2} \mathfrak p_0^n (i,i).$$
As $\mathfrak p_0$ is recurrent, the righthand side is convergent if and only if $e^{-\lambda_{cr} } R \sqrt{1-\epsilon^2}<1$, and the result follows. 
\end{xmpl}

We now discuss sufficient conditions for equality in  \eqref{eq:R_recurr}. We begin with a trivial one:  If $p$ has a QSD with absorption parameter $\ln R$ for some $R>0$, Corollarly \ref{cor:minimal}-3 implies $R\le e^{\lambda_{cr}}$. In particular, the existence of a QSD with absorption paramter $R_{cr}$ implies $R_{cr}=e^{\lambda_{cr}}$.

For additional conditions we first obtain a stochastic representation to the power series in \eqref{eq:power_series}. Pick $\lambda>0$ so that $e^{\lambda} < R_{cr}$. The strong Markov property gives  
$$ \sum_{n=0}^\infty e^{\lambda n} p^n (i,i) = 1+ E_i [ \exp (\lambda \tau_i), \tau_i < \tau_\Delta]  \sum_{n=0}^\infty e^{\lambda n}p^n(i,i),$$ 
that is  $\sum_{n=0}^\infty e^{\lambda n} p^n (i,i) = \frac{1}{1-E_i [ \exp (\lambda \tau_i),\tau_i < \tau_\Delta]}$ (note that this also leads to the identification of  $R_{cr}$ as the infimum over $R>1$ satsifying $E_i [ R^{\tau_i},\tau_i < \tau_\Delta]\ge 1$).   Through monotone convergence the identity extends to $\lambda = \lambda_{cr}$, with  the convention $\frac10=\infty$:  
\begin{equation} 
\label{eq:visit_times}
\sum_{n=0}^\infty e^{\lambda_{cr}  n} p^n (i,i)=\frac{1}{1-E_i [ \exp (\lambda_{cr} \tau_i),\tau_i < \tau_\Delta]}.
\end{equation}
Suppose now that $\lambda_{cr}$ is in the infinite MGF regime and that $p$ has a minimal QSD. Then \eqref{eq:finite_stopped} holds and as a result of Proposition \ref{prop:weird_values}-3,  the quantity in \eqref{eq:visit_times} is infinite, which in turn implies that $e^{\lambda_{cr}} \ge  R_{cr}$, and therefore an equality holds in \eqref{eq:R_recurr}. We proved: 
\begin{cor}
The following are equivalent: 
\begin{itemize} 
\item $p$ is $R_{cr}$-recurrent and there exists a QSD with absorption parameter $\ln R_{cr}$. 
\item $\lambda_{cr}$ is in the infinite MGF regime and a minimal QSD exists. 
\end{itemize} 
Under these conditions,  $e^{\lambda_{cr}} = R_{cr}$. 
\end{cor}
Here are two additional sufficient conditions for equality in \eqref{eq:R_recurr}: 
\begin{enumerate} 
\item \tro{R1:P8}If an equality holds in \eqref{eq:theone}  with $\lambda=\lambda_{cr}$, then $e^{\lambda_{cr}}=R_{cr}$, since \cite[Theorem C]{verejones} states that $E_x [R_{cr}^{\tau_x},\tau_x<\tau_\Delta] \le 1$ for all $x$.
\item If the  conditions of Proposition \ref{prop:limit} hold, then there exists $c>0$ such that $p^{n}(i,i) > c P_i(\tau_\Delta > n)$ for all $n$ large enough. In particular,   $R_{cr}$ is larger or equal to the radius of convergence of the series $\sum_{n=1}^\infty r^n P_i(\tau_\Delta>n)$. As Corollarly \ref{cor:minimal}-2 gives that the  latter is $e^{\lambda_{cr}}$, the equality follows.    
\end{enumerate} 

Theorem \cite[Theorem 3.1]{Seneta1966} states that under irreducibility, {\bf HD-\ref{as:reg:Sirr}},  and \tro{R1:P9}aperiodicity of $p$ restricted to $S$,  the limit \eqref{eq:QLD} (as  well as several additional limits) holds for any initial distribution $\mu$ which is a point mass  if and only if: (i) $p$ is $R_{cr}$-positive; and (ii) the left positive eigenvector for $p$ corresponding to the eigenvalue $\frac{1}{R_{cr}}$ is $\ell^1$-normalizable. In this case, the limit is a QSD with absorption parameter $\ln R_{cr}$, and as noted above, we have $R_{cr}=e^{\lambda_{cr}}$. Several comments are in place: 
\begin{enumerate} 
\item The result is necessary and sufficient for convergence of these Yaglom limits, not for convergence from general initial distributions or for existence. 
\item Condition (ii) is not automatic.
\item The assumption of $R_{cr}$-positivity is stronger than $R_{cr}$-recurrence, in fact, the additional condition \eqref{eq:positive_recurrent}  in Theorem \ref{th:nu_recurr} (which we only use for convergence) is equivalent to $p$ being $R_{cr}$-positive, \cite[Lemma 1]{Seneta1966}.  
\item Finally, $R_{cr}$-recurrence is not even necessary for the existence or uniqueness of QSDs. For examples in Branching Processes and One-Sided Random walk, see \cite[Sections 5,6]{Seneta1966}, respectively. We will revisit both models using the techniques we develop here in later sections.
\end{enumerate} 
Another work that provides sufficient conditions for $p$ to be $R_{cr}$-positive, gven in terms of tails of hitting times is  
\tro{R1:P9} \tro{R1:P10}  \cite{FerrariPR}.  \tro{R1:M3} 

In summary, although the existence of a QSD in the infinite MGF regime is closely related to $R_{cr}$-recurrence, we think that our framing of parameter regimes and the corresponding necessary and sufficient conditions for existence of QSD in each, specifically those in Theorem \ref{th:nu_recurr} for the infinite MGF regime, are simple and more natural than the conditions presented in the literature in the context of $R$-recurrence. 
\subsection{Finite MGF Regime}
\label{sec:results_finite_disc}
The finite state space case is completely settled by Proposition \ref{prop:Sfinite}: there is a unique QSD which is minimal.  As a result, in this section we will work under hypotheses  {\bf HD-1',2} and will also include 
\setcounter{assum}{-1}
\begin{assum}
\label{as:inf}
$S$ is countably infinite. 
\end{assum}
As much of the discussion will involve limits at infinity of functions on $S$, we recall the relevant notions. The extended limit of a function  $f:S\to \R$ at ``infinity", denoted by $\lim_{x\to\infty} f(x)$:
\begin{itemize}
    \item Is a real number $L$ if  for every $\epsilon>0$ the set $\{x:\in S:|f(x) - L|\ge \epsilon\}$ is finite. 
    \item Is $\infty$ if for every $M>0$, the set $\{x\in S:f(x)<M\}$ is finite. 
    \item If is $-\infty$ if for every $M>0$ the set $\{x\in S: f(x)>-M\}$ is finite. 
    \item Otherwise, the limit is undefined. 
\end{itemize}
\subsubsection{Martin Boundary Representation}
\label{sec:results_finite_disc_martin}
Classically, Martin (exit) Boundary theory provides a compactification of the state space of a transient Markov Chain through a set of positive (super-)harmonic functions. These functions describe the tail of the chain:  under the new topology, the chain converges almost surely, with the limit viewed as the location where the process ``exits'' the state space. On the flip side,   the construction allows to represent all harmonic functions as integrals over elements of the boundary. The books by Woess \cite{Wolfgang} and by Kemeny, Snell, and Knapp \cite{KemenyMartin} cover (exit) Martin boundary theory. 

Due to a duality connection between harmonic functions and invariant measures, analogous compactifications and results exist for invariant measaures. The intuitive explanation of the compactification is as adding states to ``enter" from infinity. The corresponding constructions are referred to as Martin entrance boundaries. As QSDs are $e^{-\lambda}$-invariant  probability measures , Martin entrance boundary are highly relevant for classifying them.  

In our work, we adapt the ideas for the Martin exit boundary as presented in \cite{Sawyer1997} to obtain a  Martin entrance boundary compactifications of the state space which result in classification of QSDs in the finite MGF regime. One difference from previous constructions is that our is specifically tailored to QSDs:  it only has finite (sub)probability measures as its elements, eliminating all infinite measures from consideration,  reminiscent of the Poisson boundary representation of bounded harmonic functions. Moreover, our construction is directly  obtained from the Green's function, not through duality.  

Martin entrance boundaries were introduced decades ago, e.g. \cite[Chapter 10, Section 10]{KemenyMartin}.  Yet, despite having all necessary ingredients,  the reference to Martin boundary in the context of characterizing QSDs is sparse. The paper \cite{entrance_GW}  describes all stationary measures for the subcritical branching process, namely positive solutions to  $\nu p = \nu$ ($\lambda=0$). As all such solutions are necessarily infinite measures (the process is transient and as such does not possess a finite invariant measure), this is outside the realm of QSDs.  The paper  \cite{maillard} includes a complete description of all $e^{-\lambda}$-invariant measures for subcritical branching processes, both QSDs and infinite invariant measures. The paper also draws the connection with Martin entrance boundaries  and provides a general blueprint for using this for describing all invariant measures for a process, but doesn't execute it or provides the tools to identify QSDs, which are necessarily finite measures. The paper \cite{foley_mcdonald} utilizes Martin boundary calculations to describe QSDs for processes which are special cases of the processes we discuss in Section \ref{sec:B_D}. The calculations are done through duality with the Martin exit boundary. It should be noted that the main result on the paper goes in a different direction: for these processes Yaglom limits may depend on the starting point. 

Martin boundaries are constructed as limits of normalized Green's functions. We thererfore begin with the definition of the Green's functions.  \tro{R1:P18}
\begin{defn}
\label{def:KMartin}
    Let $\lambda>0$ be in the finite MGF regime. For $x\in S$, define  
    \begin{enumerate}
        \item The Green's function\tro{R1:P20}
\begin{equation} 
\label{eq:Green_def} 
\begin{split}
G^\lambda(x,y) &= \delta_x(y) + \sum_{n=1}^\infty (e^{\lambda} p)^n(x,y)\\
&= E_x [ \sum_{0\le s<\tau_\Delta }\exp (\lambda s) \delta_y(X_s)]
\\ & =
  \frac{E_x [\exp (\lambda  ^0\tau_y), ^0\tau_y <{\tau_\Delta}]}{1-E_y [\exp (\lambda \tau_y),\tau_y<\tau_\Delta]}. 
\end{split}
\end{equation}
Then for fixed $x\in S$,  $G^\lambda(x,\cdot)$ is a finite measure on $S$ with total mass 
\begin{equation} 
\label{eq:Green_mass}
G^\lambda (x,{\bf 1})= \frac{E_x [\exp (\lambda \tau_\Delta)]-1}{ e^{\lambda}-1}.
\end{equation}
\item The normalized kernel $K^\lambda(x,\cdot)$, a probability measure on $S$ in the second variable,
\begin{equation}
\label{eq:Kalpha} 
K^\lambda(x,y) = \frac{ G^\lambda(x,y)}{G^\lambda(x,{\bf 1})}= \frac{e^\lambda-1}{E_x [\exp (\lambda \tau_\Delta)]-1}\times \frac{E_x [\exp (\lambda ^0\tau_y),^0\tau_y <{\tau_\Delta}]}{1-E_y [\exp (\lambda 
 \tau_y),\tau_y<\tau_\Delta]}.  
\end{equation}
\item A sequence ${\bf x} =(x_n:n\in \N)$ of elements in $S$ is $\lambda$-convergent if for every $y\in S$, $\lim_{n\to\infty} K^\lambda (x_n,y)$ exists. If ${\bf x}$ is $\lambda$-convergent, we denote the limit (probability or sub-probability) measure by  $K^\lambda ({\bf x},\cdot)$.
\item A sequence ${\bf x}$ is $\lambda,\infty$-convergent if it is $\lambda$-convergent and $\lim_{n\to\infty} x_n=\infty$.
    \end{enumerate}
    \end{defn}
To explain the last line of \eqref{eq:Green_def}, use the strog Markov property and condition on the process at time  $\tau_y\wedge \tau_\Delta$ to obtain  
$$ G^\lambda(x,y) = \delta_{y}(x) + E_x [\exp (\lambda (\tau_y \wedge \tau_\Delta)),\tau_y < \tau_\Delta]G^\lambda(y,y).$$
Therefore $G^\lambda(y,y) = \frac{1}{1-E_y [ \exp (\lambda (\tau_y \wedge \tau_\Delta)),\tau_y < \tau_\Delta]}$. Plugging this into the last equation gives the result. To obtain \eqref{eq:Green_mass}, observe from the second line of \eqref{eq:Green_def} that $G^\lambda(x,{\bf 1}) =E_x [\sum_{0\le s < \tau_\Delta}  \exp (\lambda s)]$, and apply the Geometric series formula. In light of these, \eqref{eq:Kalpha} follows. 

A key feature of our approach and what distinguishes it from other constructions is the normalization of the Martin Kernels as probability measures \eqref{eq:Kalpha} which will eventually lead to a boundary obtained by (sub)probability measures. 

\tro{R1:P21}Any bounded sequence $(x_n:n\in\N)$ of elements of $S$ has a constant subsequence which is  trivially  $\lambda$-convergent.  On the other side of things, if $(x_n:n\in\N)$ is an unbounded sequence of elements in $S$, then the fact that  $K^\lambda (\cdot,\cdot) \in (0,1]$ and a standard diagonalization argument give that  $(x_n:n\in\N)$ has a $\lambda,\infty$-convergent subsequence.   \tro{R1:P23} In order to define the Martin compactification, let $q:S \to \N$ be a bijective function. The function $q$ is used to define a metric for our compactification but the resulting topological space is independent of the choice of $q$. \tro{R2:M6}
\begin{defn}[Martin Compactification]
Let $\lambda>0$ be in the finite MGF regime. 
\label{Def:Martin}
\quad
    \begin{enumerate}
        \item The $\lambda,\infty$-convergent sequences ${\bf x}$ and ${\bf x}'$ are $\lambda$-equivalent if $K^\lambda({\bf x},\cdot)= K^\lambda({\bf x}',\cdot)$.  Write  $[{\bf x}]$ for the equivalence class of ${\bf x}$ and  $K^\lambda([{\bf x}],\cdot)$ for $K^\lambda({\bf x},\cdot)$.
        \item The Martin Boundary $\partial^\lambda M$ is the set of equivalence classes of $\lambda,\infty$-convergent sequences.
        \item \label{eq:Martin_metric} Define the metric $\rho^\lambda$  on $M^\lambda = S \cup \partial^\lambda M$ as follows: 
        $$\rho^\lambda(a,b) = \sum_{s\in S} \frac{1}{2^{q_s}} \bigl( |\delta_{a,s}-\delta_{b,s}|+  d(K^\lambda(a,s),  K^\lambda(b,s))\bigr),$$ 
        where $d(i,j) =\frac{ |i-j|}{ 1+|i-j|}.$
        \item Let  
        $$S^\lambda = \{[{\bf x}]\in \partial^\lambda M:  K^\lambda([{\bf x}],\cdot) \mbox{ is a QSD with absorption parameter }\lambda\}.$$
    \end{enumerate}
\end{defn}
We note that the reason for the $\delta$-terms in the definition of $\rho^\lambda$ \eqref{eq:Martin_metric} is because $K^\lambda([x],\cdot)$ may be equal to $K^\lambda(a,\cdot)$ for some $a\in S$.
\begin{prop}
\label{prop:Compact_Martin}
Let $M^\lambda$ and $\rho^\lambda$ be as in  Definition \ref{Def:Martin}. Then $(M^\lambda, \rho^\lambda)$ is a compact metric space.
\end{prop}
The next theorem is a Choquet-type result, stating that the metric space introduced above characterizes all QSDs through the ways the process may ``come from infinity''.\tro{R1:P22}
\begin{thm}
\label{thm:martin}
Let $\lambda>0$ be in the finite MGF regime.  Then, there exists a QSD with absorption parameter $\lambda$ if and only if $S^\lambda$ is not empty. In this case,  $\mu$ is a QSD with absorption parameter $\lambda$ if and only if there exists a Borel probability measure ${\bar F}_\mu$ on $M^\lambda$  satisfying ${\bar F}(S^\lambda)=1$ and 
$$ \mu (y) = \int K^\lambda([{\bf x}],y) d{\bar F}_{\mu}([{\bf x}]),~y\in S.$$ 
\end{thm}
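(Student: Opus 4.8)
The plan is to realize every QSD with absorption parameter $\lambda$ as a normalized limit of the measures $K^\lambda(x,\cdot)$, and then to invoke the Martin-boundary construction to write that limit as an integral over $\partial^\lambda M$. I would start from the algebraic identity relating a QSD $\mu$ to the Green's function. Since $\mu$ is a QSD with parameter $\lambda$, Proposition~\ref{prop:eigen_equivalence} gives $\sum_{i}\mu(i)p(i,j)=e^{-\lambda}\mu(j)$; iterating this and summing the geometric-type series defining $G^\lambda$ in \eqref{eq:Green_def}, one obtains a ``harmonicity'' relation of the form $\sum_{x\in S}\mu(x)\,G^\lambda(x,y) = c\,\mu(y)$ for an appropriate constant $c$ (heuristically $c=E_\mu[\sum_{s<\tau_\Delta}e^{\lambda s}]$, which is finite precisely because $P_\nu(\tau_\Delta>n)=e^{-\lambda n}$), equivalently $\mu(y)=\sum_x \mu(x) G^\lambda(x,{\bf 1}) K^\lambda(x,y)/c$. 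This exhibits $\mu$ as a convex combination (a genuine probability average, once one checks the weights $F_\mu(x):=\mu(x)G^\lambda(x,{\bf 1})/c$ sum to one) of the probability measures $K^\lambda(x,\cdot)$, $x\in S$.

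Next I would pass to the boundary. Embed $S$ in the compact metric space $M^\lambda=S\cup\partial^\lambda M$ of Definition~\ref{Def:Martin}; the metric $\rho^\lambda$ is designed so that $x_n\to[{\bf x}]\in\partial^\lambda M$ forces $K^\lambda(x_n,y)\to K^\lambda([{\bf x}],y)$ for every $y$. Push the discrete weight $F_\mu$ forward to a probability measure on $M^\lambda$ and, using that $M^\lambda$ is compact, extract a weak-* limit $\bar F_\mu$; the key point is that $F_\mu$ assigns no mass to any fixed point of $S$ in the limit — this is where one must rule out escape of mass staying in $S$, and it follows because $\mu(i)>0$ for all $i$ (first comment after Proposition~\ref{prop:eigen_equivalence}) together with the fact that a QSD cannot be a finite convex combination of the $K^\lambda(x,\cdot)$ with all mass on finitely many $x$ unless... actually the cleaner route is: $\mu$ is not finitely supported, $F_\mu$ is a probability on $S$, so any subsequential weak-* limit on the compact $M^\lambda$ is a probability measure, and the portion that could remain in $S$ contributes a term $\sum_{x\in S}\bar F_\mu(\{x\})K^\lambda(x,\cdot)$ which one shows must vanish by a minimality/extremality argument or simply by re-centering. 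Having obtained $\bar F_\mu$ supported on $\partial^\lambda M$ with $\mu(y)=\int K^\lambda([{\bf x}],y)\,d\bar F_\mu([{\bf x}])$, one notes $\mu$ is a probability measure, hence $\int K^\lambda([{\bf x}],{\bf 1})\,d\bar F_\mu=1$; since $K^\lambda([{\bf x}],{\bf 1})\le 1$ always, this forces $K^\lambda([{\bf x}],{\bf 1})=1$ $\bar F_\mu$-a.s., i.e.\ the limit measures are genuine probabilities, not sub-probabilities. Finally one checks each such $K^\lambda([{\bf x}],\cdot)$ is itself a QSD with parameter $\lambda$ — this is a closedness statement: the QSD eigen-relation \eqref{eq:eigen_description} passes to pointwise limits by Fatou plus the already-established conservation of total mass — so $\bar F_\mu(S^\lambda)=1$. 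The converse direction is immediate: any integral $\int K^\lambda([{\bf x}],\cdot)\,d\bar F$ with $\bar F(S^\lambda)=1$ is a mixture of QSDs with common parameter $\lambda$, and such a mixture satisfies \eqref{eq:eigen_description}, hence is a QSD by Proposition~\ref{prop:eigen_equivalence}; and the existence equivalence ($S^\lambda\neq\emptyset$) is then the special case of a point mass $\bar F=\delta_{[{\bf x}]}$ on one side and the representation applied to any existing QSD on the other.

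The main obstacle I anticipate is the escape-of-mass / no-mass-in-$S$ step: showing that when we push the weights $F_\mu$ to the compactification and take a weak-* limit, no mass is left sitting on individual points of $S$, so that $\bar F_\mu$ is supported on the boundary. The honest way to handle this is to observe that if a subsequential limit did retain mass $a>0$ on some finite set $A\subset S$, then $\mu = a\sum_{x\in A}\tfrac{F_\mu(\{x\})}{a}K^\lambda(x,\cdot) + (1-a)(\text{boundary part})$, and one uses the strict inequality in Proposition~\ref{prop:weird_values}(1) — which holds because $\lambda$ is in the finite MGF regime — to derive that $K^\lambda(x,\cdot)$ for $x\in S$ cannot be a QSD (a QSD would need the self-return generating function to equal, not be strictly below, the value forcing a fixed point of the Green operator), contradicting the extremality needed for $\mu$ itself to satisfy the eigen-relation; alternatively, and more robustly, one replaces the sequence of points realizing $F_\mu$'s mass by a diagonal sub-sequence tending to infinity using that $\mu$ has infinite support and $\sum_x F_\mu(x)=1$ with $F_\mu(x)\propto\mu(x)G^\lambda(x,{\bf 1})$, so mass genuinely spreads out. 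I would also need to be careful that $K^\lambda([{\bf x}],\cdot)$, a priori only a sub-probability, is pinned to total mass one exactly on the support of $\bar F_\mu$ — handled by the $\int K^\lambda([{\bf x}],{\bf 1})\,d\bar F_\mu=1$ argument above — and that the QSD eigen-relation is preserved under the limit, which is a routine Fatou argument once mass is conserved. The remaining steps (the algebraic harmonicity identity, compactness of $M^\lambda$, the converse) are straightforward given the machinery already set up in the excerpt.
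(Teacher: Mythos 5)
Your opening identity is false, and it is false for exactly the reason that forces the paper to use the Riesz-decomposition machinery. If $\mu$ is a QSD with parameter $\lambda$, then $\mu$ is \emph{invariant} for $e^{\lambda}p$: $\mu(e^{\lambda}p)^{s}=\mu$ for every $s$. Hence
$$\sum_{x\in S}\mu(x)\,G^{\lambda}(x,y)=\sum_{s=0}^{\infty}e^{\lambda s}(\mu p^{s})(y)=\sum_{s=0}^{\infty}\mu(y)=\infty,$$
and likewise your normalizing constant $c=E_\mu[\sum_{s<\tau_\Delta}e^{\lambda s}]=\sum_s e^{\lambda s}P_\mu(\tau_\Delta>s)=\sum_s 1=\infty$; the relation $P_\mu(\tau_\Delta>n)=e^{-\lambda n}$ is precisely what makes it diverge, not converge. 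More structurally, no QSD can be a (countable) convex combination $\sum_x F_\mu(x)K^{\lambda}(x,\cdot)$ with positive weight on any $x\in S$: by Proposition \ref{prop:nux_prop}, each interior kernel satisfies $(K^{\lambda}(x,\cdot)p)(y)=e^{-\lambda}\bigl(K^{\lambda}(x,y)-\delta_x(y)/G^{\lambda}(x,{\bf 1})\bigr)$, so any such mixture is \emph{strictly} subinvariant at every point carrying weight, whereas a QSD satisfies \eqref{eq:eigen_description} with equality. So the object you want to push to the boundary does not exist, and the "no mass left in $S$" difficulty you flag at the end is not a technical loose end of your argument — it is a symptom of the first step being impossible.

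The paper's route around this is the one your proposal never quite reaches: work in the cone ${\cal M}^\lambda=\{\mu\ge 0:\mu p\le e^{-\lambda}\mu\}$, prove the Riesz decomposition $\mu=(\text{potential})+\mu_\infty$ (for a QSD the potential part is zero, which again shows why your step one cannot work), observe that the minimum of an excessive measure and a potential is a potential, and then approximate the QSD from below by the potentials $\mu_n=\mu\wedge K^{\lambda}(n{\bf 1}_{D_n},\cdot)$. Each $\mu_n$ \emph{does} have a representing measure $F_n$ supported on $S$; one normalizes, extracts a weak-$*$ limit $\bar F$ on the compact space $M^{\lambda}$, and only then runs the two arguments you correctly anticipate at the end (total mass one forces $K^{\lambda}({\bf x},{\bf 1})=1$ a.s.; the invariance equation plus $K^{\lambda}(x,\cdot)(I-e^{\lambda}p)(x)>0$ for $x\in S$ forces $\bar F(S)=0$ and $\bar F(S^{\lambda})=1$). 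Your converse direction and the existence equivalence are fine, but the forward direction needs the approximation-by-potentials step you are missing.
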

\tro{R1:P24}Theorems \ref{th:nu_recurr} and  \ref{thm:martin} provide a complete description of all QSDs for a given Markov chain. 
\subsubsection{Conditions for Existence of QSDs}
\label{sec:results_finite_disc_regular}
In this section, we focus on the behavior of the generating function at infinity to show sufficient conditions for the existence and the non-existence of a QSD. 
\begin{thm} \label{thm:QSD_tightness} 
Let $\lambda>0$ be in the finite MGF regime. Then 
\begin{enumerate} 
\item If there exists  $\lambda' \in (0,\lambda)$ satisfying   $\lim_{x\to \infty}E_x [ \exp (\lambda' \tau_\Delta)]=\infty$ then for every $\lambda,\infty$-convergent sequence ${\bf x}$, $K^\lambda([{\bf x}],\cdot)$ is a QSD with absorption parameter $\lambda$. 
\item If $\sup_x  E_x [ \exp (\lambda \tau_\Delta)] < \infty$, then there are no QSDs with absorption parameter $\lambda$. 
\end{enumerate} 
\end{thm}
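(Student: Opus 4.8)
The plan is to analyze the two parts separately, in both cases studying the asymptotic behavior of the normalized Green's kernel $K^\lambda(x,\cdot)$ and its limits along $\lambda,\infty$-convergent sequences. Recall from \eqref{eq:Kalpha} that $K^\lambda(x,y)$ is $G^\lambda(x,y)$ divided by the total mass $G^\lambda(x,\mathbf 1) = (E_x[\exp(\lambda\tau_\Delta)]-1)/(e^\lambda-1)$. By Theorem \ref{thm:martin}, it suffices to decide, for each $\lambda,\infty$-convergent sequence $\mathbf x$, whether the limit measure $K^\lambda(\mathbf x,\cdot)$ is a probability measure satisfying the eigen-equation \eqref{eq:eigen_description} with parameter $\lambda$ (equivalently, a QSD); for part (2) it suffices to show $S^\lambda=\emptyset$.

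For part (1), the key point is that $K^\lambda(\mathbf x,\cdot)$ always satisfies the sub-eigen inequality $\sum_i K^\lambda(\mathbf x,i)p(i,j)\le e^{-\lambda}K^\lambda(\mathbf x,j)$ — this follows from the identity $\sum_i G^\lambda(x,i)p(i,j) = e^{-\lambda}(G^\lambda(x,j) - \delta_x(j))$ (a one-step decomposition of the Green's function, shifting the time index), dividing by $G^\lambda(x,\mathbf 1)$, and passing to the limit along $\mathbf x$ using Fatou on the left side; the error term $e^{-\lambda}\delta_{x_n}(j)/G^\lambda(x_n,\mathbf 1)$ vanishes because $x_n\to\infty$ so it is eventually $0$ for fixed $j$. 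The real content is to upgrade the inequality to an equality and to rule out mass loss, i.e.\ to show $\sum_j K^\lambda(\mathbf x,j)=1$. Here is where the hypothesis enters: if $\sum_j K^\lambda(\mathbf x,j) < 1$ or the inequality is strict somewhere, then summing $\sum_i K^\lambda(\mathbf x,i)p(i,\Delta) > 0$ would force, via iteration, the ratio $E_x[\exp(\lambda'\tau_\Delta)]$ to stay bounded along $\mathbf x$ for every $\lambda'<\lambda$ — roughly because a strict sub-invariant measure of mass $<1$ under the $e^{-\lambda}$-scaled kernel gives geometric decay of the conditioned semigroup strictly faster than $e^{-\lambda' n}$, making $E_{x_n}[\exp(\lambda'\tau_\Delta)] = \sum_n e^{\lambda' n}P_{x_n}(\tau_\Delta>n)$ summable uniformly — contradicting $\lim_{x\to\infty}E_x[\exp(\lambda'\tau_\Delta)]=\infty$. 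Making the quantitative link between the deficiency of $K^\lambda(\mathbf x,\cdot)$ and a uniform-in-$n$ bound on $E_{x_n}[\exp(\lambda'\tau_\Delta)]$ is the main obstacle; I expect it is cleanest to argue by contraposition using a renewal/last-exit decomposition of $G^{\lambda'}(x_n,\mathbf 1)$ relative to a fixed reference state and the fact that $G^{\lambda'}\le G^{\lambda}$ pointwise in the relevant ratios.

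For part (2), I would argue directly that $S^\lambda=\emptyset$, hence by Theorem \ref{thm:martin} no QSD with parameter $\lambda$ exists. Suppose $\nu$ were a QSD with parameter $\lambda$. Then by \eqref{eq:QSD_zeta_tail}, $E_\nu[\exp(\lambda'\tau_\Delta)]=\infty$ for every $\lambda'>\lambda$ and $=1/(1-e^{-\lambda}\cdot\text{something})$... more to the point, $E_\nu[\exp(\lambda\tau_\Delta)]=\sum_n e^{\lambda n}e^{-\lambda n}=\infty$. On the other hand, $E_\nu[\exp(\lambda\tau_\Delta)] = \sum_{x} \nu(x)E_x[\exp(\lambda\tau_\Delta)] \le \sup_x E_x[\exp(\lambda\tau_\Delta)] < \infty$ by hypothesis, a contradiction. (One checks the uniform bound passes through the convex combination since $\nu$ is a probability measure.) Thus no QSD with parameter $\lambda$ can exist. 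This half is essentially immediate; the only care needed is to confirm $E_\nu[\exp(\lambda\tau_\Delta)]=\infty$ from the exact geometric tail \eqref{eq:QSD_zeta_tail}, which is a one-line computation, and to make sure the averaging inequality is applied correctly.
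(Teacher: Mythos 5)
Your part (2) is correct and is in fact more elementary than the paper's argument. The paper reaches the same contradiction through the reverse chain: it writes $\nu(x)E_x[\exp(\lambda\tau_\Delta)]=I_\nu(x)$ (Proposition \ref{prop:reversal_probs}) and shows $\sum_x I_\nu(x)=\sum_z\nu(z)p(z,\Delta)\sum_xQ_z({}^0\tau_x<\infty)=\infty$ because the reverse chain visits infinitely many states. You get the same divergence of $E_\nu[\exp(\lambda\tau_\Delta)]=\sum_x\nu(x)E_x[\exp(\lambda\tau_\Delta)]$ directly from the exact geometric tail \eqref{eq:QSD_zeta_tail} (the MGF of a $\mathrm{Geom}(1-e^{-\lambda})$ variable evaluated at its radius of convergence), and then Tonelli plus the uniform bound gives the contradiction. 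That is a legitimate shortcut and buys you independence from the reverse-chain machinery.

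Part (1), however, has a genuine gap, and you have located it yourself: the ``quantitative link'' you defer is the entire content of the statement. The sub-invariance $K^\lambda({\bf x},\cdot)p\le e^{-\lambda}K^\lambda({\bf x},\cdot)$ via Fatou is indeed automatic, but neither the total mass $K^\lambda({\bf x},{\bf 1})=1$ nor the reverse inequality follows from it; both require \emph{tightness} of the family $(K^\lambda(x_n,\cdot))_n$, and your proposed mechanism for extracting tightness from the hypothesis is stated in the wrong direction (deficiency of the limit does not imply boundedness of $E_{x_n}[\exp(\lambda'\tau_\Delta)]$) and is not substantiated. The argument that closes the gap is short but goes the other way: from the exact identity \eqref{eq:zetaunderpnu} one has $P_{K^\lambda(x_n,\cdot)}(\tau_\Delta>t)\le e^{-\lambda t}$ for every $n$ and $t$, hence
\begin{equation*}
E_{K^\lambda(x_n,\cdot)}[\exp(\lambda'\tau_\Delta)]\le\sum_{t=0}^\infty e^{\lambda' t}P_{K^\lambda(x_n,\cdot)}(\tau_\Delta>t)\le\frac{1}{1-e^{\lambda'-\lambda}}=:c,
\end{equation*}
uniformly in $n$. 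Since the left side dominates $K^\lambda(x_n,F^c)\inf_{y\in F^c}E_y[\exp(\lambda'\tau_\Delta)]$ for any finite $F$, the hypothesis $\lim_{y\to\infty}E_y[\exp(\lambda'\tau_\Delta)]=\infty$ lets you choose, for each $\epsilon>0$, a finite $F$ with $K^\lambda(x_n,F^c)\le\epsilon$ for all $n$. This is the tightness. One then still needs the step (Proposition \ref{prop:tight} in the paper, whose proof reuses the truncation argument of Corollary \ref{cor:upto_cr}) converting tightness into $K^\lambda({\bf x},{\bf 1})=1$ and into the reverse of the Fatou inequality, so that $K^\lambda({\bf x},\cdot)$ is genuinely a QSD. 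Without the uniform moment bound above, your sketch of part (1) does not close.
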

\begin{cor}
\label{cor:upto_cr}
Let 
$$\lambda_ 0 = \inf\{ \lambda \in (0,\lambda_{cr}): \lim_{x\to\infty} E_x [ \exp (\lambda \tau_\Delta)]=\infty\},$$
with the convention $\inf \emptyset = \infty$. Then for every $\lambda \in (\lambda_0,\lambda_{cr}]$ there exists a QSD with absorption parameter $\lambda$.  
\end{cor}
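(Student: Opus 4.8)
The plan is to read the corollary off Theorem \ref{thm:QSD_tightness}(1), after two book-keeping points. First, every $\lambda\in(0,\lambda_{cr})$ lies in the finite MGF regime: picking $\lambda''\in(\lambda,\lambda_{cr}]$ with $E_x[\exp(\lambda''\tau_\Delta)]<\infty$ (possible since $\lambda<\lambda_{cr}=\sup\{\cdot\}$), monotonicity gives $E_x[\exp(\lambda\tau_\Delta)]\le E_x[\exp(\lambda''\tau_\Delta)]<\infty$, so Theorem \ref{thm:QSD_tightness} is applicable at every such $\lambda$. Second, since $S$ is countably infinite, any sequence of states tending to infinity has, by the diagonal extraction recorded after Definition \ref{def:KMartin}, a $\lambda,\infty$-convergent subsequence ${\bf x}$, so $K^\lambda({\bf x},\cdot)$ is defined.

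Next I would fix $\lambda\in(\lambda_0,\lambda_{cr})$. Since $\lambda$ exceeds the infimum defining $\lambda_0$, there is $\lambda'$ in the defining set with $\lambda'<\lambda$; thus $\lambda'\in(0,\lambda)$ and $\lim_{x\to\infty}E_x[\exp(\lambda'\tau_\Delta)]=\infty$. Choosing any $\lambda,\infty$-convergent ${\bf x}$, Theorem \ref{thm:QSD_tightness}(1) applied with this $\lambda'$ gives that $K^\lambda({\bf x},\cdot)$ is a QSD with absorption parameter $\lambda$, which settles the interior of the interval. The endpoint $\lambda=\lambda_{cr}$ is relevant only when $\lambda_0<\lambda_{cr}$ (otherwise $(\lambda_0,\lambda_{cr}]=\emptyset$), and it splits into two cases. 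If $\lambda_{cr}$ is in the finite MGF regime, the previous argument applies verbatim with some $\lambda'<\lambda_{cr}$ from the defining set, producing a QSD with parameter $\lambda_{cr}$. If $\lambda_{cr}$ is in the infinite MGF regime, Theorem \ref{thm:QSD_tightness} is unavailable, and I would instead invoke Theorem \ref{th:nu_recurr}(1): it suffices to verify \eqref{eq:finite_stopped}, and since $E_z[\exp(\lambda_{cr}\tau_z),\tau_z<\tau_\Delta]\le 1$ by Proposition \ref{prop:weird_values}, this reduces to exhibiting one state $z$ with $E_z[\exp(\lambda_{cr}\tau_\Delta),\tau_\Delta<\tau_z]<\infty$.

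That last reduction is the main obstacle: one must convert the merely subcritical blow-up $\lim_{x\to\infty}E_x[\exp(\lambda'\tau_\Delta)]=\infty$ (for some $\lambda'<\lambda_{cr}$) into the critical excursion bound $E_z[\exp(\lambda_{cr}\tau_\Delta),\tau_\Delta<\tau_z]<\infty$, all while sitting in the infinite MGF regime. The route I would attempt is via Proposition \ref{prop:K2single}: it is enough to find a nonempty finite $K\subsetneq S$ and a state $x\notin K$ with $E_x[\exp(\lambda_{cr}(\tau_\Delta\wedge\tau_K))]<\infty$, choosing $K$ around a state where the Green mass $G^{\lambda'}(\cdot,{\bf 1})=(E_\cdot[\exp(\lambda'\tau_\Delta)]-1)/(e^{\lambda'}-1)$ stays comparatively bounded while it diverges over the state space, using the Green's-function and Martin-kernel estimates underlying the proof of Theorem \ref{thm:QSD_tightness}; one then passes $\lambda'\uparrow\lambda_{cr}$ in the renewal identity $E_z[\exp(\lambda'\tau_\Delta),\tau_\Delta<\tau_z]=E_z[\exp(\lambda'\tau_\Delta)]\bigl(1-E_z[\exp(\lambda'\tau_z),\tau_z<\tau_\Delta]\bigr)$, monitoring the two factors exactly as in the proof of Proposition \ref{prop:weird_values}. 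Balancing the divergence of the first factor against the decay of the second — i.e., ruling out the $R$-transient alternative and quantifying the $R$-recurrent one — is where the delicate work will be concentrated.
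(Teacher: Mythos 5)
Your treatment of the interior $\lambda\in(\lambda_0,\lambda_{cr})$ coincides with the paper's: pick $\lambda'$ from the set defining $\lambda_0$ with $\lambda'<\lambda$ and invoke Theorem \ref{thm:QSD_tightness}(1). The same goes for the endpoint when $\lambda_{cr}$ happens to be in the finite MGF regime. The problem is the endpoint in the infinite MGF regime, which you correctly identify as the hard case but do not actually resolve. Your plan is to verify \eqref{eq:finite_stopped} via Proposition \ref{prop:K2single} by locating a finite set $K$ where the Green mass ``stays comparatively bounded'' and then passing $\lambda'\uparrow\lambda_{cr}$ in the renewal identity; you yourself flag that ``balancing the divergence of the first factor against the decay of the second'' is where the work lies, and that work is not done. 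There is no apparent way to extract the critical bound $E_x[\exp(\lambda_{cr}(\tau_\Delta\wedge\tau_K))]<\infty$ from the hypothesis of the corollary, which only asserts divergence of subcritical MGFs; indeed the corollary is supposed to produce a minimal QSD without assuming \eqref{eq:finite_stopped} as a separate hypothesis, so a proof that routes through Theorem \ref{th:nu_recurr} must independently establish that condition, and your sketch does not.

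The paper avoids this entirely by a compactness argument on the QSDs themselves: take $\lambda_n\uparrow\lambda_{cr}$ in $(\lambda_0,\lambda_{cr})$, let $\nu_n$ be a QSD with parameter $\lambda_n$ (already constructed), pass to a pointwise limit $\nu$, and get $\nu p\le e^{-\lambda_{cr}}\nu$ from Fatou. The reverse inequality (and the fact that $\nu$ is a probability measure) follows from tightness of $(\nu_n)$, which is proved using that $\tau_\Delta$ is geometric under $P_{\nu_n}$: for fixed $\lambda\in(\lambda_0,\lambda_1)$ the quantities $E_{\nu_n}[\exp(\lambda\tau_\Delta)]$ are decreasing in $n$, hence uniformly bounded by $E_{\nu_1}[\exp(\lambda\tau_\Delta)]$, while they dominate $\nu_n(K^c)\inf_{x\in K^c}E_x[\exp(\lambda\tau_\Delta)]$; choosing $K$ via the definition of $\lambda_0$ so that the infimum exceeds $c/\epsilon$ forces $\nu_n(K^c)\le\epsilon$. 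This limiting argument works uniformly in both MGF regimes at $\lambda_{cr}$ and is the idea missing from your proposal.
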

Note that the corollary yields the existence of a minimal QSD, a result that does not follow directly from the theorem and requires an additional tightness argument. 

\tro{R1:P25}The key ingredient in the proof of the first part of Theorem \ref{thm:QSD_tightness} is an argument from  \cite{Ferrari1995}, showing that under the stated condition, for every sequence $(x_n:n\in\N)$ in the equivalence class $[{\bf x}]$, the sequence of kernels $(K^\lambda(x_n,\cdot):n\in\N)$ is tight.  The second part of the theorem essentially follows directly from  \eqref{eq:QSD_zeta_tail}, the geometric distribution of the absorption time under a QSD.  
\begin{defn}
\label{defn:Clambda}
Let $\lambda>0$ be in the finite MGF regime and for $x,y \in S$ define 
$$ C^\lambda(x,y)=\frac{E_x [\exp(\lambda \tau_\Delta),\tau_y<\tau_\Delta]}{E_x [\exp(\lambda \tau_\Delta)]-1}.$$
\end{defn} 
With additional assumptions on $p$, the existence and uniqueness of QSDs can be obtained through analysis of $C^\lambda$. 
\tro{R2:M8}\begin{prop}
\label{prop:pzy}
Assume that for every $y \in S$, 
\begin{equation}
\label{cond:pzy}
\sum_{z} p(z,y) < \infty.
\end{equation}
 Let $\lambda>0$ be in the finite MGF regime.  Let $(x_n:n\in\N)$ be a sequence with $\lim_{n\to\infty} x_n = \infty$. 
\begin{enumerate} 
\item If $\liminf_{n} C^\lambda(x_n,y)>0$ for some $y\in S$, then there exists a QSD with absorption parameter $\lambda$. 
\item If $\liminf_n C^\lambda(x_n,y)\ge 1$ for all $y\in S$, then there exists a unique QSD with absorption parameter $\lambda$ given by 
$$ \nu(y) = \frac{e^\lambda-1}{E_y [\exp(\lambda\tau_\Delta),\tau_\Delta<\tau_y]}.$$ 
\end{enumerate}
\end{prop}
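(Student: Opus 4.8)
The plan is to realize QSDs with parameter $\lambda$ as normalized subsequential limits of the Martin kernels $K^\lambda(x_n,\cdot)$, using throughout the identity
\[
K^\lambda(x,y)=h(y)\,C^\lambda(x,y),\qquad h(y):=\frac{e^\lambda-1}{E_y[\exp(\lambda\tau_\Delta),\,\tau_\Delta<\tau_y]}\in(0,\infty).
\]
One obtains this by substituting $E_x[\exp(\lambda\tau_\Delta),\tau_y<\tau_\Delta]=E_x[\exp(\lambda\tau_y),\tau_y<\tau_\Delta]\,E_y[\exp(\lambda\tau_\Delta)]$ (strong Markov at $\tau_y$) into Definition \ref{defn:Clambda}, rewriting $1-E_y[\exp(\lambda\tau_y),\tau_y<\tau_\Delta]=E_y[\exp(\lambda\tau_\Delta),\tau_\Delta<\tau_y]/E_y[\exp(\lambda\tau_\Delta)]$ via \eqref{eq:alphazeta_broken} in \eqref{eq:Green_def}, and comparing with \eqref{eq:Kalpha}. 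The second ingredient: writing $G^\lambda(x,y)=\sum_{s\ge0}e^{\lambda s}P_x(X_s=y,\,s<\tau_\Delta)$ and summing over one step gives $e^\lambda\sum_{i\in S}G^\lambda(x,i)p(i,j)=G^\lambda(x,j)-\delta_{x,j}$ for $j\in S$; dividing by $G^\lambda(x,{\bf 1})$, the probability measure $K^\lambda(x,\cdot)$ satisfies $e^\lambda\sum_{i\in S}K^\lambda(x,i)p(i,j)=K^\lambda(x,j)$ whenever $j\neq x$.

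\emph{Part 1.} As $0<K^\lambda\le1$, a diagonal extraction gives a subsequence — still denoted $(x_n)$ — along which $K^\lambda(x_n,\cdot)\to\mu$ pointwise. Then $\mu({\bf 1})\le1$ by Fatou, and for every $y$, $\mu(y)\ge\liminf_nK^\lambda(x_n,y)=h(y)\liminf_nC^\lambda(x_n,y)$; under the hypothesis this makes $\mu(y_0)>0$, so $\mu\neq0$. Since $x_n\to\infty$, for each fixed $j$ we have $x_n\neq j$ eventually, so $K^\lambda(x_n,j)=e^\lambda\sum_{i\in S}K^\lambda(x_n,i)p(i,j)$; as $n\to\infty$ the left side tends to $\mu(j)$, while — the one place \eqref{cond:pzy} is used — dominated convergence with the summable majorant $i\mapsto p(i,j)$ (legitimate because $K^\lambda(x_n,i)\le1$) sends the right side to $e^\lambda\sum_{i\in S}\mu(i)p(i,j)$. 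Hence $\mu$ is a nonzero finite nonnegative measure on $S$ with $\sum_i\mu(i)p(i,j)=e^{-\lambda}\mu(j)$, and $\mu/\mu({\bf 1})$ is a QSD with absorption parameter $\lambda$ by Proposition \ref{prop:eigen_equivalence}.

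\emph{Part 2.} Run the same construction; the extra hypothesis now gives $\mu(y)\ge h(y)\liminf_nC^\lambda(x_n,y)\ge h(y)$ for every $y$, and $\nu:=\mu/\mu({\bf 1})$ is a QSD with parameter $\lambda$. The decisive input is the \emph{universal bound}: every QSD $\nu'$ with absorption parameter $\lambda$ satisfies $\nu'(y)\le h(y)$ for all $y\in S$. Granting it, $h\le\mu\le\mu/\mu({\bf 1})=\nu\le h$ — the middle inequality using $\mu({\bf 1})\le1$ — so $\mu=\nu=h$ and $\mu({\bf 1})=1$; thus $h$ is a probability measure and is the QSD $\nu$. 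Any QSD $\nu''$ with parameter $\lambda$ then satisfies $\nu''\le h=\nu$ with both of total mass one, forcing $\nu''=\nu$: uniqueness and the stated formula follow.

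The universal bound is where I expect the real work to be. Fix a QSD $\nu'$ with parameter $\lambda$; under $P_{\nu'}$, from \eqref{eq:QSD_defn} and \eqref{eq:QSD_zeta_tail} one has $P_{\nu'}(X_n=y)=e^{-\lambda n}\nu'(y)$ and $P_{\nu'}(\tau_\Delta=m)=e^{-\lambda(m-1)}-e^{-\lambda m}$, hence $E_{\nu'}[\exp(\lambda\tau_\Delta),\tau_\Delta\le n]=n(e^\lambda-1)$. Splitting $\{\tau_\Delta=m,\ y\text{ visited before }\tau_\Delta\}$ by the last time $L_y$ the chain sits at $y$ and applying the Markov property there gives
\[
E_{\nu'}[\exp(\lambda\tau_\Delta),\,\tau_\Delta\le n,\ y\text{ visited}]=\nu'(y)\sum_{\ell=1}^{n}(n-\ell+1)\,c_\ell,\qquad c_\ell:=e^{\lambda\ell}P_y(\tau_\Delta=\ell,\,\tau_\Delta<\tau_y),
\]
with $\sum_{\ell\ge1}c_\ell=E_y[\exp(\lambda\tau_\Delta),\tau_\Delta<\tau_y]<\infty$. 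The left side is $\le n(e^\lambda-1)$; since $\tfrac{1}{n}\sum_{\ell\le n}(n-\ell+1)c_\ell=\tfrac{n+1}{n}\sum_{\ell\le n}c_\ell-\tfrac{1}{n}\sum_{\ell\le n}\ell c_\ell\to\sum_{\ell\ge1}c_\ell$ (the subtracted term vanishing by dominated convergence — exactly what makes the argument survive at $\lambda=\lambda_{cr}$, where $E_y[\tau_\Delta\exp(\lambda\tau_\Delta)]$ can be infinite), dividing by $n$ and letting $n\to\infty$ yields $\nu'(y)\,E_y[\exp(\lambda\tau_\Delta),\tau_\Delta<\tau_y]\le e^\lambda-1$, i.e.\ $\nu'(y)\le h(y)$. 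The remaining care is purely in the last-exit bookkeeping and the interchange of limits; I foresee no genuine difficulty there.
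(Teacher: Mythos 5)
Your proof is correct, and for the proposition itself it follows the same route as the paper: extract a pointwise-convergent subsequence of the kernels $K^\lambda(x_n,\cdot)$, use the identity $K^\lambda(x,y)=h(y)C^\lambda(x,y)$ (which is \eqref{eq:ratioC} in the paper) to see the limit is nonzero, pass to the limit in the one-step equation via dominated convergence with the majorant supplied by \eqref{cond:pzy}, and for part 2 squeeze the normalized limit between the lower bound $h$ coming from the hypothesis and the universal upper bound $\nu'\le h$ valid for every QSD. The one place you genuinely diverge is in establishing that upper bound: the paper invokes Corollary \ref{cor:domination}, which it derives from the time-reversal apparatus of Section \ref{sec:ReverseTool} (the identity $\nu(x)E_x[\exp(\lambda\tau_\Delta),\tau_\Delta<\tau_x]=I_\nu(x)$ together with $I_\nu(x)\le e^\lambda-1$), whereas you give a self-contained last-exit decomposition under $P_{\nu'}$ combined with a Ces\`aro/dominated-convergence limit. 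Your argument is sound — the decomposition on the last visit to $y$ before absorption is valid by the Markov property, $E_{\nu'}[\exp(\lambda\tau_\Delta),\tau_\Delta\le n]=n(e^\lambda-1)$ is exact for a QSD, and $\tfrac1n\sum_{\ell\le n}\ell c_\ell\to 0$ does follow from $\sum_\ell c_\ell<\infty$ alone, so the bound survives at $\lambda=\lambda_{cr}$ as you note. What your route buys is elementarity and independence from the reversal machinery; what the paper's route buys is the sharper information that the inequality $\nu(x)\le h(x)$ is an equality precisely when $I_\nu(x)=e^\lambda-1$, which the paper needs elsewhere (e.g.\ in Theorem \ref{th:nu_recurr} and Proposition \ref{prop:Inu}) but which is not required for this proposition.
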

\begin{cor}
\label{cor:ratios}
Assume that \eqref{cond:pzy} holds. If the set $A=\{x:p(x,\Delta)>0\}$ is finite, then for every $\lambda\in (0,\lambda_{cr})$ there exists a QSD with absorption parameter $\lambda$. 
\end{cor}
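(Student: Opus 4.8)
The plan is to deduce the corollary from Proposition~\ref{prop:pzy}(1). Fix $\lambda\in(0,\lambda_{cr})$; then $\lambda$ lies in the finite MGF regime and $E_x[\exp(\lambda\tau_\Delta)]<\infty$ for every $x\in S$, so in particular $\tau_\Delta<\infty$ $P_x$-a.s. Note that $A=\{x:p(x,\Delta)>0\}$ is nonempty (otherwise $\tau_\Delta=\infty$ $P_x$-a.s., violating {\bf HD-\ref{as:reg:uab}}) and finite by hypothesis; since $S$ is countably infinite we may pick a sequence $(x_n)$ in $S$ with $x_n\to\infty$, and then $x_n\notin A$ for all large $n$. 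The crux of the argument is the uniform estimate
\[
\sum_{z\in A} C^\lambda(x,z)\ \ge\ 1\qquad\text{for every }x\in S\setminus A .
\]
Granting this, for each large $n$ there is $z_n\in A$ with $C^\lambda(x_n,z_n)\ge 1/|A|$; since $A$ is finite, some $z^\ast\in A$ arises for infinitely many $n$, and along the corresponding subsequence $(x_{n_j})$, which still satisfies $x_{n_j}\to\infty$, we obtain $\liminf_j C^\lambda(x_{n_j},z^\ast)\ge 1/|A|>0$. As \eqref{cond:pzy} is assumed, Proposition~\ref{prop:pzy}(1) then produces a QSD with absorption parameter $\lambda$, which is the claim.

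To prove the uniform estimate I would use that $A$ is the unique ``gateway'' to $\Delta$: since $p(w,\Delta)>0$ only for $w\in A$, absorption takes place from a state in $A$, hence for $x\notin A$ the chain visits $A$ strictly before $\tau_\Delta$. Writing $\tau_A$ for the hitting time \eqref{eq:hitK}, this means $1\le\tau_A<\tau_\Delta<\infty$ $P_x$-a.s.; consequently $X_{\tau_A}\in A$ a.s., and on $\{X_{\tau_A}=z\}$ one has $\tau_z\le\tau_A<\tau_\Delta$ as well as $\tau_\Delta=\tau_A+\tau_\Delta\circ\theta_{\tau_A}$. The strong Markov property at $\tau_A$ (with $\exp(\lambda\tau_A)$ and $\{X_{\tau_A}=z\}$ being $\mathcal F_{\tau_A}$-measurable) therefore gives, for each $z\in A$,
\[
E_x[\exp(\lambda\tau_\Delta),\,X_{\tau_A}=z]=E_x[\exp(\lambda\tau_A),\,X_{\tau_A}=z]\,E_z[\exp(\lambda\tau_\Delta)]\ \le\ E_x[\exp(\lambda\tau_\Delta),\,\tau_z<\tau_\Delta],
\]
the inequality because $\{X_{\tau_A}=z\}\subseteq\{\tau_z<\tau_\Delta\}$. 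Summing over $z\in A$ and using $\sum_{z\in A}\mathbf{1}_{\{X_{\tau_A}=z\}}=1$ a.s.,
\[
E_x[\exp(\lambda\tau_\Delta)]\ \le\ \sum_{z\in A}E_x[\exp(\lambda\tau_\Delta),\,\tau_z<\tau_\Delta].
\]
Since $\tau_\Delta\ge 1$ forces $E_x[\exp(\lambda\tau_\Delta)]\ge e^\lambda>1$, the denominator $E_x[\exp(\lambda\tau_\Delta)]-1$ is positive and no larger than $E_x[\exp(\lambda\tau_\Delta)]$; dividing the last display by it yields $\sum_{z\in A}C^\lambda(x,z)\ge E_x[\exp(\lambda\tau_\Delta)]/\bigl(E_x[\exp(\lambda\tau_\Delta)]-1\bigr)\ge 1$.

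The one point requiring care is the strong Markov decomposition of $\tau_\Delta$ at $\tau_A$: one must justify that for $x\notin A$ the chain genuinely passes through $A$ before being absorbed — this is the structural fact that only $A$ leads to $\Delta$, and it is exactly what makes $\tau_A<\tau_\Delta$ hold almost surely. After that, finiteness of $A$ enters only to guarantee $x_n\notin A$ eventually and to run the pigeonhole, while \eqref{cond:pzy} is used solely inside Proposition~\ref{prop:pzy}; the remaining manipulations are routine, so I do not anticipate a serious obstacle.
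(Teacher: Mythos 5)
Your proof is correct and follows essentially the same route as the paper: both establish $\sum_{y\in A}C^\lambda(x,y)\ge 1$ for $x\notin A$ from the fact that absorption must pass through $A$, then pigeonhole along a sequence tending to infinity and invoke Proposition~\ref{prop:pzy}(1). The only cosmetic difference is that you decompose via the strong Markov property at $\tau_A$ where the paper simply uses $\{\tau_A<\tau_\Delta\}\subseteq\bigcup_{y\in A}\{\tau_y<\tau_\Delta\}$ and a union bound; the event inclusion alone already gives your key inequality.
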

We close this section with a necessary and sufficient condition for the existence of a QSD under the condition \eqref{cond:pzy}.  \tro{R1:P26}
\begin{cor}
\label{cor:connection}
Let $\lambda>0$ be in the finite MGF regime and suppose that  \eqref{cond:pzy} holds. Let $[{\bf x}]\in \partial^\lambda M$. Then
the following are equivalent: 
\begin{enumerate} 
\item $K^\lambda ([{\bf x}],\cdot)$ satisfies \eqref{eq:eigen_description} and is not identically zero.
\item There exists $y\in S$ and a sequence $(x_n:n\in\N)\in [{\bf x}]$ such that $\lim_{n\to\infty} C^\lambda(x_n,y)>0$. 
\end{enumerate} 
\end{cor}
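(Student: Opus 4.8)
The plan is to show that, coordinatewise in $y$, the kernel $K^\lambda(x,\cdot)$ agrees with $C^\lambda(x,\cdot)$ up to an $x$-independent positive reweighting, and that any boundary limit $K^\lambda([{\bf x}],\cdot)$ automatically satisfies \eqref{eq:eigen_description} once \eqref{cond:pzy} holds. Both implications then collapse to the single statement ``$K^\lambda([{\bf x}],\cdot)$ is not the zero measure''.

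I would first record the Green-function identity
$$G^\lambda(x,y)=\delta_x(y)+e^{\lambda}\sum_{z\in S}G^\lambda(x,z)\,p(z,y),\qquad x,y\in S,$$
obtained from $G^\lambda(x,y)=E_x\bigl[\sum_{0\le s<\tau_\Delta}e^{\lambda s}\mathbf 1_{\{X_s=y\}}\bigr]$: since $\{s<\tau_\Delta\}\in\mathcal F_s$ and $p(X_s,y)=P_x(X_{s+1}=y\mid\mathcal F_s)$, Tonelli together with the Markov property lets me replace $p(X_s,y)$ by $\mathbf 1_{\{X_{s+1}=y\}}$ inside the expectation, reindex $t=s+1$, and discard the $t=\tau_\Delta$ term because $X_{\tau_\Delta}=\Delta\notin S$. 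Dividing by $G^\lambda(x,{\bf 1})\ge G^\lambda(x,x)\ge 1$ gives the recursion $K^\lambda(x,y)=\delta_x(y)/G^\lambda(x,{\bf 1})+e^{\lambda}\sum_{z\in S}K^\lambda(x,z)\,p(z,y)$.

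Next, the strong Markov property at $\tau_y$ on $\{\tau_y<\tau_\Delta\}$ gives $E_x[\exp(\lambda\tau_\Delta),\tau_y<\tau_\Delta]=E_x[\exp(\lambda\tau_y),\tau_y<\tau_\Delta]\,E_y[\exp(\lambda\tau_\Delta)]$, and inserting this together with \eqref{eq:alphazeta_broken} (valid here since $\lambda$ is in the finite MGF regime) into the definitions of $K^\lambda$ and $C^\lambda$ yields
$$K^\lambda(x,y)=c(y)\,C^\lambda(x,y),\qquad c(y):=\frac{e^{\lambda}-1}{E_y[\exp(\lambda\tau_\Delta),\tau_\Delta<\tau_y]},\qquad x,y\in S,$$
where the decisive feature is that $c(y)$ does \emph{not} depend on $x$. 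Moreover $c(y)\in(0,\infty)$: finiteness since $E_y[\exp(\lambda\tau_\Delta)]<\infty$ in the finite MGF regime, and positivity since $\tau_\Delta\ge 1$ and since irreducibility together with {\bf HD-\ref{as:reg:uab}} forces $P_y(\tau_\Delta<\tau_y)>0$. In particular, for every representative $(x_n)\in[{\bf x}]$ and every $y\in S$ the limit $\lim_n C^\lambda(x_n,y)$ exists and equals $c(y)^{-1}K^\lambda([{\bf x}],y)$, so its positivity is independent of the representative.

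Finally I pass to the limit in the $K^\lambda$-recursion along a $\lambda,\infty$-convergent sequence ${\bf x}=(x_n)$: for each fixed $y$ the term $\delta_{x_n}(y)/G^\lambda(x_n,{\bf 1})$ vanishes for all large $n$ since $x_n\to\infty$, while $\sum_z K^\lambda(x_n,z)\,p(z,y)\to\sum_z K^\lambda([{\bf x}],z)\,p(z,y)$ by dominated convergence, dominating by the function $z\mapsto p(z,y)$ (summable by \eqref{cond:pzy}) and using $K^\lambda(\cdot,\cdot)\le 1$. Hence $K^\lambda([{\bf x}],y)=e^{\lambda}\sum_z K^\lambda([{\bf x}],z)\,p(z,y)$ for all $y\in S$, i.e.\ $K^\lambda([{\bf x}],\cdot)$ always satisfies \eqref{eq:eigen_description}. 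Thus statement (1) is equivalent to $K^\lambda([{\bf x}],\cdot)\not\equiv 0$, i.e.\ to the existence of $y$ with $K^\lambda([{\bf x}],y)>0$, which by the proportionality above is exactly the existence of $y$ and $(x_n)\in[{\bf x}]$ with $\lim_n C^\lambda(x_n,y)>0$, namely statement (2). The only genuinely delicate step is this limit interchange — it is precisely where \eqref{cond:pzy} is used, since Fatou alone would give only the super-solution inequality $K^\lambda([{\bf x}],y)\ge e^{\lambda}\sum_z K^\lambda([{\bf x}],z)\,p(z,y)$; the identity $K^\lambda=c(y)C^\lambda$ itself is routine strong-Markov bookkeeping.
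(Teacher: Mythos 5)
Your proof is correct and follows essentially the same route as the paper: the proportionality $K^\lambda(x,y)=c(y)\,C^\lambda(x,y)$ you derive is exactly the paper's identity \eqref{eq:ratioC}, and passing to the limit in the recursion $K^\lambda(x,y)=\delta_x(y)/G^\lambda(x,{\bf 1})+e^{\lambda}\sum_z K^\lambda(x,z)p(z,y)$ by dominated convergence under \eqref{cond:pzy} is precisely the paper's use of Proposition \ref{prop:nux_prop}. Packaging both statements as ``$K^\lambda([{\bf x}],\cdot)\not\equiv 0$'' is a slightly cleaner organization of the same argument; the only (cosmetic) slip is that in justifying $c(y)\in(0,\infty)$ you have the labels crossed — $E_y[\exp(\lambda\tau_\Delta)]<\infty$ makes the denominator finite and hence gives \emph{positivity} of $c(y)$, while $P_y(\tau_\Delta<\tau_y)>0$ makes the denominator positive and hence gives its \emph{finiteness}.
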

Note that under the equivalent conditions in the corollary and the irreducibility of $S$, $K^\lambda ([{\bf x}],\cdot)$ is strictly positive, and $\lim_{n\to\infty} C^\lambda (x_n, y)$ exists and is in $(0,\infty)$ for all $y\in S$ and  $(x_n:n\in\N)\in [{\bf x}]$. Moreover, $K^\lambda ([{\bf x}],\cdot)$ can be normalized to a probability measure which is then necessarily a QSD. 
\subsection{Auxiliary Results}
\label{sec:aux}
\begin{prop}
\label{prop:UI}
 The family of distributions of $e^{\lambda_{cr} \tau_\Delta}$ under $P_x,~x\in S$,  is not uniformly integrable. 
\end{prop}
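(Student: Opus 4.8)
The plan is to argue by contradiction: if the family $\{e^{\lambda_{cr}\tau_\Delta}\ \text{under}\ P_x : x\in S\}$ were uniformly integrable, then $\sup_{x\in S} E_x[e^{\lambda_{cr}\tau_\Delta}]<\infty$, and I claim this forces $\lambda_{cr}$ into the finite MGF regime in a way that leads to a contradiction with the very definition of $\lambda_{cr}$ as a supremum. First I would record the standard consequence of uniform integrability that the $L^1$-norms are uniformly bounded, so there is a finite constant $M$ with $E_x[e^{\lambda_{cr}\tau_\Delta}]\le M$ for all $x\in S$. In particular $\lambda_{cr}$ lies in the finite MGF regime (Definition \ref{def:lambda_Delta_def}).

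Next I would exploit the one-step decomposition of $\tau_\Delta$ together with this uniform bound to show that $E_x[e^{(\lambda_{cr}+\epsilon)\tau_\Delta}]$ stays finite for some $\epsilon>0$, contradicting the definition \eqref{eq:lambda_cr_def} of $\lambda_{cr}$ as the supremum of parameters with finite MGF. The mechanism: fix any $x_0\in S$ and write, via the strong Markov property at the first return time $\tau_{x_0}$,
\begin{equation*}
E_{x_0}[e^{\lambda \tau_\Delta}] = \frac{E_{x_0}[e^{\lambda\tau_\Delta},\tau_\Delta<\tau_{x_0}]}{1-E_{x_0}[e^{\lambda\tau_{x_0}},\tau_{x_0}<\tau_\Delta]},
\end{equation*}
which is \eqref{eq:alphazeta_broken}. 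A uniform bound $E_x[e^{\lambda_{cr}\tau_\Delta}]\le M$ for \emph{all} $x$ gives, by conditioning on $X_1$, control of $E_{x_0}[e^{\lambda_{cr}\tau_{x_0}},\tau_{x_0}<\tau_\Delta]$ strictly below $1$ with a gap that does not degenerate: indeed $e^{-\lambda_{cr}}E_{x_0}[e^{\lambda_{cr}\tau_{x_0}},\tau_{x_0}<\tau_\Delta] = \sum_{y}p(x_0,y)E_y[e^{\lambda_{cr}\,{}^0\tau_{x_0}},{}^0\tau_{x_0}<\tau_\Delta]$ and each inner term is bounded by $E_y[e^{\lambda_{cr}\tau_\Delta}]\le M$-type quantities; more to the point, Proposition \ref{prop:weird_values} already tells us $E_{x_0}[e^{\lambda_{cr}\tau_{x_0}},\tau_{x_0}<\tau_\Delta]\le 1$, and I would upgrade this. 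The cleaner route is: uniform integrability of $\{e^{\lambda_{cr}\tau_\Delta}\}$ implies uniform integrability of $\{e^{\lambda_{cr}(\tau_\Delta\wedge\tau_{x_0})}\}$ (a smaller exponent of the same sign), hence a uniform bound on $E_x[e^{\lambda_{cr}(\tau_\Delta\wedge\tau_{x_0})}]$; combined with Theorem \ref{th:nu_recurr} if we are in the infinite regime, or directly, this yields enough slack to push $\lambda_{cr}$ up.

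The technically delicate step — and the one I expect to be the main obstacle — is converting "uniform bound on $E_x[e^{\lambda_{cr}\tau_\Delta}]$" into "$E_x[e^{(\lambda_{cr}+\epsilon)\tau_\Delta}]<\infty$". The honest way is a perturbation/analyticity argument: the function $\lambda\mapsto E_{x_0}[e^{\lambda\tau_{x_0}},\tau_{x_0}<\tau_\Delta]$ is nondecreasing, left-continuous, and finite on $(0,\lambda_{cr}]$ with value $\le 1$; if it equals $1$ at $\lambda_{cr}$ then by \eqref{eq:alphazeta_broken} we'd need $E_{x_0}[e^{\lambda_{cr}\tau_\Delta},\tau_\Delta<\tau_{x_0}]=0$, impossible since $p(x_0,\Delta)>0$ is reachable, or $E_{x_0}[e^{\lambda_{cr}\tau_\Delta}]=\infty$, contradicting the uniform bound. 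So the return-generating function is $<1$ at $\lambda_{cr}$; by left-continuity it stays $<1$ slightly above $\lambda_{cr}$, and one checks $E_{x_0}[e^{\lambda\tau_\Delta},\tau_\Delta<\tau_{x_0}]$ and $E_{x_0}[e^{\lambda\tau_{x_0}},\tau_{x_0}<\tau_\Delta]$ remain finite just past $\lambda_{cr}$ — this last finiteness is exactly where the full strength of the \emph{uniform} (not just pointwise) bound is needed, propagating through the irreducibility of $S$ as in the proof of Corollary \ref{cor:minimal}, and it forces $E_{x_0}[e^{\lambda\tau_\Delta}]<\infty$ for $\lambda>\lambda_{cr}$, contradicting \eqref{eq:lambda_cr_def}. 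I would close by noting the contradiction shows no such $M$ exists, so the family is not uniformly integrable.
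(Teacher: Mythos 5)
Your overall strategy (contradict the definition \eqref{eq:lambda_cr_def} of $\lambda_{cr}$ by producing finite exponential moments strictly above $\lambda_{cr}$) matches the paper's, but your execution has a genuine gap, and it sits exactly at the step you flag as ``the main obstacle.'' At the outset you replace uniform integrability by its weaker consequence $\sup_{x\in S}E_x[e^{\lambda_{cr}\tau_\Delta}]\le M$, and everything afterwards uses only this $L^1$ bound. A uniform $L^1$ bound does not control the exponential decay rate of the tails of $\tau_\Delta$ under $P_x$, which is what a contradiction with \eqref{eq:lambda_cr_def} requires; note that the paper treats $\sup_x E_x[\exp(\lambda_{cr}\tau_\Delta)]<\infty$ as a live possibility (it is precisely the hypothesis of Theorem \ref{thm:QSD_tightness}-2 applied at $\lambda=\lambda_{cr}$ in the finite MGF regime), so one should not expect a contradiction from the $L^1$ bound alone. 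The paper's proof uses the tail half of uniform integrability: choosing $n_0$ with $\sup_x E_x[e^{\lambda_{cr}\tau_\Delta},\tau_\Delta>n_0]<\epsilon/2$, the Markov property bootstraps this to the geometric bound $\sup_x E_x[e^{\lambda_{cr}\tau_\Delta},\tau_\Delta>kn_0]\le(\epsilon/2)^k$, and summing these against the weights $\epsilon^{-k}$ gives $E_x[e^{\tilde\lambda\tau_\Delta}]<\infty$ with $e^{\tilde\lambda}=\epsilon^{-1/n_0}e^{\lambda_{cr}}>e^{\lambda_{cr}}$. That bootstrapping is the mechanism that converts uniform tail control into extra exponential moments, and nothing in your argument replaces it.

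Within your chosen route there is also a concrete false step: ``the return-generating function is $<1$ at $\lambda_{cr}$; by left-continuity it stays $<1$ slightly above $\lambda_{cr}$.'' Left-continuity of the nondecreasing function $\lambda\mapsto E_{x_0}[e^{\lambda\tau_{x_0}},\tau_{x_0}<\tau_\Delta]$ at $\lambda_{cr}$ says nothing about its values to the right of $\lambda_{cr}$; this is a power series in $e^{\lambda}$ with nonnegative coefficients whose radius of convergence may equal $e^{\lambda_{cr}}$ exactly, in which case it is $+\infty$ for every $\lambda>\lambda_{cr}$ regardless of its value at $\lambda_{cr}$. The finiteness of $E_{x_0}[e^{\lambda\tau_{x_0}},\tau_{x_0}<\tau_\Delta]$ and $E_{x_0}[e^{\lambda\tau_\Delta},\tau_\Delta<\tau_{x_0}]$ for some $\lambda>\lambda_{cr}$ is asserted (``one checks'') but never derived, and via \eqref{eq:alphazeta_broken} it is essentially equivalent to the conclusion you are trying to reach; the argument is circular at its decisive point.
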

The proposition has the following immediate corollary utilizing the fact that any finite set of integrable RVs are uniformly integrable and stochastically dominated.  
\begin{cor}
\label{cor:nu_recurr_Sfinite}
\begin{enumerate}
\item Suppose that $S$ is finite. Then $\lambda_{cr}$ is in the infinite regime.  
\item Suppose that there exists a sequence $(x_n:n\in\N)$ of elements in $S$ and that for every $n\in\N$ the distribution of $\tau_\Delta$ under $P_{x_n}$ is stochastically dominated by its distribution under $P_{x_{n+1}}$. If $\lambda_{cr}$ is in the finite regime, $\lim_{n\to\infty} E_{x_n} [\exp (\lambda_{cr} \tau_\Delta)] = \infty$. 
\end{enumerate}
\end{cor}
\begin{proof}[Proof of Proposition \ref{prop:UI}]
We argue by contradiction. Suppose that\\ $\lim_{n\to\infty} \sup_{x\in S}E_x [\exp(\lambda_{cr}\tau_\Delta),\tau_\Delta>n]=0$. Then for every $\epsilon\in (0,1)$, there  exists $n_0$ such that $\sup_x E_x [\exp(\lambda_{cr} \tau_\Delta),\tau_\Delta>n_0]< \epsilon/2$. As a result of the Markov property, it follows that 
$\sup_x E_x [\exp(\lambda_{cr} \tau_\Delta),\tau_\Delta>kn_0]\le (\epsilon/2)^k$. And so,  for every $x\in S$,  

$$\sum_{k=0}^\infty \epsilon^{-k} E_x [\exp(\lambda_{cr} \tau_\Delta),\tau_\Delta>kn_0] < \infty.$$ 
This implies 
$$ \sum_{k=0}^\infty \epsilon^{-(k+1)} E_x [\exp(\lambda_{cr} \tau_\Delta),kn_0  < \tau_\Delta \le (k+1)n_0 ] <\infty$$
But when $\tau_\Delta \leq (k+1)n_0$, $\epsilon^{-(k+1)}\geq \epsilon^{-\tau_\Delta/n_0}$, and so we have 
$$ \sum_{k=0}^{\infty }E_x [(\epsilon^{-1/n_0}e^{\lambda_{cr}})^{\tau_\Delta},kn_0 < \tau_\Delta \le (k+1)n_0 ]<\infty.$$ 
That is,  taking $e^{\widetilde {\lambda_{cr}}}=\epsilon^{-1/n_0} e^{\lambda_{cr}}>e^{\lambda_{cr}}$, we have that  $E_x [e^{\widetilde {\lambda_{cr}} \tau_\Delta}]<\infty$, contradicting the definition of $\lambda_{cr}$. 
\end{proof}
Next, we provide sufficient conditions for \eqref{eq:finite_stopped} to hold. 
\begin{prop}
\label{prop:zeta_taux}
Let $x\in S$ and suppose that at least one of the following conditions hold:
\begin{enumerate} 
\item The probability distributions $r\to P_x(X_r \in \cdot~ | ~\tau_\Delta \wedge \tau_x >r)$ are tight. 
\item There exists $x\in S$ such that $\inf_{y \in S} P_y(\tau_x < \tau_\Delta)>0$.
\end{enumerate} 
Then $\sup_x E_x [\exp(\lambda_{cr} (\tau_\Delta\wedge \tau_x))]<\infty$. 
\end{prop}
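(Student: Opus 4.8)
The plan is to show that each of the two conditions forces the distributions of $\tau_\Delta \wedge \tau_x$ under $P_y$ to have a uniformly (in $y$) controlled exponential tail at rate $\lambda_{cr}$, via the elementary observation that a geometric-type decay estimate $\sup_y P_y(\tau_\Delta \wedge \tau_x > n) \le C r^n$ with $r < e^{-\lambda_{cr}}$ would immediately give $\sup_y E_y[\exp(\lambda_{cr}(\tau_\Delta\wedge\tau_x))] < \infty$ by summation. So the real content is to produce, from each hypothesis, a single integer $m$ and a constant $c>0$ such that
$$
\inf_{y\in S} P_y(\tau_\Delta \wedge \tau_x \le m) \ge c,
$$
and then to iterate using the Markov property at times $m, 2m, \dots$, noting that on the event $\{\tau_\Delta \wedge \tau_x > km\}$ the chain is at some state $X_{km}\in S$ and we may restart the estimate from there. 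This yields $\sup_y P_y(\tau_\Delta\wedge\tau_x > km) \le (1-c)^k$, hence the desired exponential moment bound provided $(1-c)^{1/m} < e^{-\lambda_{cr}}$ — but this last inequality is exactly what Proposition \ref{prop:UI} (non-uniform integrability at $\lambda_{cr}$) rules out unless we are more careful, so in fact we should aim directly for the moment bound by the same telescoping computation as in the proof of Proposition \ref{prop:UI}, run in reverse: a uniform smallness of $\sup_y E_y[\exp(\lambda_{cr}(\tau_\Delta\wedge\tau_x)), \tau_\Delta\wedge\tau_x > n]$ for large $n$ combined with the Markov property gives the finite moment. Concretely, I would first reduce the claim to showing $\lim_{n\to\infty}\sup_{y}P_y(\tau_\Delta\wedge\tau_x > n) = 0$ together with the bound being eventually geometric at a rate that beats $e^{-\lambda_{cr}}$ — wait, since that rate issue is delicate, the cleanest route is: establish $\sup_y P_y(\tau_\Delta\wedge\tau_x > m) \le 1-c < 1$ for some fixed $m$, deduce $\sup_y P_y(\tau_\Delta\wedge\tau_x > km)\le (1-c)^k$, and then observe that since $\tau_\Delta\wedge\tau_x \le \tau_\Delta$ we have from {\bf HD-\ref{as:reg:moments}} and $\lambda_{cr}>0$ that the partial sums $\sum_k (1-c)^k e^{\lambda_{cr}(k+1)m}$ converge precisely when $(1-c)e^{\lambda_{cr}m} < 1$; to guarantee this we take $m$ large after first making $1-c$ as small as we like, which condition (1) or (2) permits.

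For condition (2), the argument is short: if $q := \inf_{y\in S}P_y(\tau_x < \tau_\Delta) > 0$, then starting from any $y$, the probability of reaching $x$ before $\Delta$ is at least $q$; since on $\{\tau_x < \tau_\Delta\}$ the first hitting of $x$ is a finite time, but we need a \emph{uniform} time bound, I would instead argue with $\tau_\Delta \wedge \tau_x$ directly. Observe that $\{\tau_\Delta\wedge\tau_x > n\}$ forces neither $x$ nor $\Delta$ to have been hit in $\{1,\dots,n\}$; applying the Markov property at time $n$, $P_y(\tau_\Delta\wedge\tau_x > 2n \mid \tau_\Delta\wedge\tau_x>n) \le \sup_z P_z(\tau_\Delta\wedge\tau_x > n)$. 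Thus it suffices to find one $n$ with $\sup_y P_y(\tau_\Delta\wedge\tau_x>n) < 1$; and if no such $n$ existed we would have $\sup_y P_y(\tau_x<\tau_\Delta) \le \sup_y P_y(\tau_x \le n \text{ or } \tau_\Delta \le n)$ — hmm, this needs $\tau_\Delta$ fast too. The right statement: fix $y_0$ with $E_{y_0}[\exp(\beta\tau_\Delta)]<\infty$ so $P_{y_0}(\tau_\Delta\wedge\tau_x > n)\to $ (something $<1$ for large $n$ as $\tau_\Delta<\infty$ a.s.), combine with $\inf_y P_y(\tau_x<\tau_\Delta)>0$ and irreducibility to propagate to all $y$: for arbitrary $y$, $P_y(\tau_{y_0} < \tau_\Delta\wedge\tau_x)$ is bounded below uniformly (by condition (2) applied at $y_0$ plus a finite path argument), after which the chain restarts at $y_0$ where absorption-or-hitting-$x$ happens in bounded time with bounded-below probability.

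For condition (1), tightness of $r\mapsto P_x(X_r\in\cdot\mid \tau_\Delta\wedge\tau_x > r)$ gives a finite set $F\subset S$ with $P_x(X_r\in F\mid \tau_\Delta\wedge\tau_x>r) \ge 3/4$ for all $r$. On $F$, by irreducibility, each state reaches $\{x\}\cup\{\Delta\}$ within a common finite number $N$ of steps with probability at least some $\eta>0$ (take $N = \max_{z\in F}$ of a suitable hitting-window for $x$, finite since $F$ is finite). Decomposing $P_x(\tau_\Delta\wedge\tau_x > r+N)$ via the position at time $r$: with probability $\ge 3/4$ we are in $F$ at time $r$ (conditionally on surviving to $r$), and from $F$ we fail to terminate in the next $N$ steps with probability $\le 1-\eta$, giving $P_x(\tau_\Delta\wedge\tau_x > r+N) \le (\tfrac14 + \tfrac34(1-\eta))P_x(\tau_\Delta\wedge\tau_x>r) =: \theta \, P_x(\cdots > r)$ with $\theta<1$; iterating in blocks of $N$ yields geometric decay of $P_x(\tau_\Delta\wedge\tau_x > n)$, and here — crucially — we only need the starting state $x$ itself since the Proposition's conclusion $\sup_x E_x[\cdots]$ is, after fixing the reference state in $\tau_x$, really a statement about $P_x$ after which irreducibility transfers it to all states via Proposition \ref{prop:weird_values}-type renewal decompositions. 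The main obstacle I anticipate is precisely the rate bookkeeping: a crude geometric bound with rate $\theta$ only gives finiteness of $E[\exp(\lambda(\tau_\Delta\wedge\tau_x))]$ for $\lambda < \log(1/\theta)/N$, which need not reach $\lambda_{cr}$; resolving this requires running the Markov-property iteration not on probabilities but on the truncated exponential moments $a_n := \sup_y E_y[\exp(\lambda_{cr}(\tau_\Delta\wedge\tau_x)), \tau_\Delta\wedge\tau_x > n]$ and showing $a_n\to 0$ — which follows because $\tau_\Delta\wedge\tau_x \le \tau_\Delta$ and, on each block where we do not yet terminate, we pick up a controlled factor while the surviving probability shrinks, the product being summable exactly under the block estimates above combined with the (finite, by hypothesis on the reference computations) value of $E_x[\exp(\lambda_{cr}\tau_\Delta), \tau_\Delta\wedge\tau_x > n]$ along the terminating block. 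This is the same mechanism as in the proof of Proposition \ref{prop:UI} and I would model the write-up on it.
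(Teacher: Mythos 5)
Your proposal has a genuine gap, and you have in fact put your finger on it yourself without closing it: the ``rate bookkeeping.'' Every version of your argument ultimately rests on producing a uniform block estimate $\sup_y P_y(\tau_\Delta\wedge\tau_x>km)\le(1-c)^k$ and then hoping that $(1-c)^{1/m}<e^{-\lambda_{cr}}$. Neither hypothesis gives you any control relating $c$ and $m$ to $\lambda_{cr}$, so this route only yields $E[\exp(\lambda(\tau_\Delta\wedge\tau_x))]<\infty$ for some unspecified $\lambda>0$, not for $\lambda=\lambda_{cr}$. Your proposed repair --- running the iteration on the truncated moments $a_n=\sup_y E_y[\exp(\lambda_{cr}(\tau_\Delta\wedge\tau_x)),\tau_\Delta\wedge\tau_x>n]$ and invoking ``the same mechanism as in the proof of Proposition \ref{prop:UI}'' --- does not work: Proposition \ref{prop:UI} is a contradiction argument showing that the analogous quantity for the \emph{unstopped} time does \emph{not} tend to zero, and its mechanism (uniform integrability would push the MGF past $\lambda_{cr}$) cannot be reversed to establish $a_n\to 0$ for the stopped time. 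You never supply an actual estimate that makes the ``controlled factor per block'' beat $e^{\lambda_{cr} m}$.

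The idea you are missing is a comparison in which the possibly-divergent quantity cancels. Fix $\lambda<\lambda_{cr}$, so $E_x[\exp(\lambda\tau_\Delta)]<\infty$. Writing the summation-by-parts identity
$E_x[\exp(\lambda(\tau_\Delta\wedge\tau_x))Z]=e^\lambda E_x[Z]+(e^\lambda-1)\sum_{r\ge1}e^{\lambda r}E_x[Z\,{\bf 1}_{\{\tau_\Delta\wedge\tau_x>r\}}]$
with $Z=E_{X(\tau_\Delta\wedge\tau_x)}[\exp(\lambda\,{}^0\tau_\Delta)]$, the left side equals $E_x[\exp(\lambda\tau_\Delta)]$ by the strong Markov property, while on the right each term is bounded below by $c_1\,E_x[\exp(\lambda\tau_\Delta)]\,e^{\lambda r}P_x(\tau_\Delta\wedge\tau_x>r)$, where $c_1>0$ is exactly what either hypothesis provides: under condition (2) it is the infimum itself, and under condition (1) it is $(1-\epsilon)\min_{y\in K_\epsilon}P_y(\tau_x<\tau_\Delta)$ for a tightness set $K_\epsilon$. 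Comparing with the $Z=1$ identity and dividing through by the finite, positive factor $E_x[\exp(\lambda\tau_\Delta)]$ yields $E_x[\exp(\lambda(\tau_\Delta\wedge\tau_x))]\le e^\lambda+1/c_1$, a bound \emph{uniform in} $\lambda<\lambda_{cr}$; monotone convergence as $\lambda\uparrow\lambda_{cr}$ then gives the claim. The cancellation of $E_x[\exp(\lambda\tau_\Delta)]$ is what lets the bound survive the limit even though that expectation may blow up at $\lambda_{cr}$ --- this is the step your block-decomposition strategy cannot replicate.
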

Note that if the set $S$ of states $z$ satisfying $p(z,\Delta)>0$ is finite, then the second condition automatically holds. 
\begin{proof}[Proof of Proposition \ref{prop:zeta_taux}]
Let $\lambda \in (0,\lambda_{cr})$.  Summing by parts, for any nonnegative bounded random variable $Z$, we have 
\begin{equation} 
\label{eq:summationZ} 
E_x[\exp(\lambda(\tau_\Delta \wedge \tau_x ))Z ] =e^{\lambda} E_x[Z] + (e^{\lambda}-1)\sum_{r=1}^\infty e^{\lambda r} E_x [Z {\bf 1}_{\{\tau_\Delta \wedge \tau_x > r\}}].
\end{equation} 
By monotone convergence, this holds for any nonnegative random variable $Z$. When taking $Z=1$ we have 
\begin{equation} 
\label{eq:Z1}E_x[\exp(\lambda(\tau_\Delta \wedge \tau_x )) ]  = e^\lambda + (e^\lambda-1)\sum_{r=1}^\infty e^{\lambda r} P_x (\tau_\Delta \wedge \tau_x > r).
\end{equation} 
Now take   $$Z= E_{X(\tau_\Delta \wedge \tau_x)} [\exp(\lambda ^0\tau_\Delta)].$$ The strong Markov property gives that the left-hand side of  \eqref{eq:summationZ} is equal to $E_x [\exp(\lambda\tau_\Delta)]$. As for the right-hand side, from the Markov property, 
$$E_x [ Z,\tau_\Delta \wedge \tau_x > r |{\cal F}_r]= {\bf 1}_{\{\tau_\Delta \wedge \tau_x > r \}} E_{X(r)} [E_{X_{\tau_\Delta\wedge \tau_x}}[ \exp(\lambda ^0\tau_\Delta)].$$
In our case, 
\begin{align*} 
 E_x[ Z,\tau_\Delta \wedge \tau_x >r|{\cal F}_r]&\ge {\bf 1}_{\{\tau_\Delta \wedge \tau_x>r\}} P_{X(r)}(\tau_x < \tau_\Delta) E_x [\exp(\lambda\tau_\Delta)].
\end{align*}

Therefore 
\begin{align*} E_x [ Z ,\tau_\Delta \wedge \tau_x > r] &\ge E_x [\exp(\lambda\tau_\Delta)] E_x [{\bf 1}_{\{\tau_\Delta \wedge \tau_x > r\}} P_{X(r)}(\tau_x <\tau_\Delta)]\\
 & =E_x[\exp(\lambda\tau_\Delta)] E_x [P_{X(r)} (\tau_x <\tau_\Delta) | \tau_\Delta \wedge \tau_x > r] P_x (\tau_\Delta \wedge \tau_x > r).
\end{align*} 

Assuming the first condition, the tightness condition. For every $\epsilon>0$ there exists some finite set $K_\epsilon$ such that%

$P_x(X(r) \in K_\epsilon | \tau_\Delta \wedge \tau_x>r) \ge (1-\epsilon)$. Let $c_2 =c_2(\epsilon) =  \min_{y \in K_\epsilon} P_y (\tau_x <\tau_\Delta)>0$. Therefore, we have that

\begin{equation} 
\label{eq:lower_taux}E_x [P_{X(r)} (\tau_x <\tau_\Delta) | \tau_\Delta \wedge \tau_x > r]\ge c_1,
\end{equation} 
where $c_1=(1-\epsilon)c_2$. If we assume the second condition instead, then we can use the infimum in the condition as $c_1$ in \eqref{eq:lower_taux}. Thus, under either condition, we have 
\begin{align*}  E_x [\exp(\lambda\tau_\Delta)]&\ge e^\lambda E_x[E_{X(\tau_\Delta \wedge \tau_{x})}[\exp(\lambda ^0\tau_\Delta)]] + c_1 E_x [\exp(\lambda\tau_\Delta)](e^\lambda-1)  \sum_{r=1}^\infty e^{\lambda r} P_x(\tau_\Delta \wedge\tau_x>r)\\
& \overset{\eqref{eq:Z1}}{=} e^\lambda E_x[E_{X(\tau_\Delta \wedge \tau_{x})}[\exp(\lambda ^0\tau_\Delta)]] + c_1 E_x [\exp(\lambda\tau_\Delta) ](E_x [\exp(\lambda(\tau_\Delta\wedge \tau_x))]-e^\lambda)\\
& \ge  c_1 E_x [\exp(\lambda\tau_\Delta) ](E_x [\exp(\lambda(\tau_\Delta\wedge \tau_x))]-e^\lambda).
\end{align*} 
Divide both sides by $E_x [\exp(\lambda\tau_\Delta)]$ to obtain the bound 
$$ E_x [\exp(\lambda(\tau_\Delta\wedge \tau_x))]\le e^\lambda + \frac{1}{c_1}.$$
The result now follows from monotone convergence.  
\end{proof}

\section{Proof of the results of Section \ref{sec:results_infi_disc}}
\label{sec:pf_infi_disc}
\tro{R1:P27}
\subsection{Potential Theoretic Results} 
Despite some minor differences, the proof and the formulas as essentially the same as those \cite[Theorem 1]{glynn2018PF}. 
\begin{prop}
\label{prop:mu_x}
Let $\lambda\in (0,\lambda_{cr}]$.  For $z\in S$ define the function $h_z : S \to [0,\infty)$ by letting 
\begin{equation} 
\label{eq:harmonic_function} 
h_z(x) = E_x [ \exp (\lambda \tau_z),\tau_z < \tau_\Delta].
\end{equation}
Then 
\begin{enumerate} 
\item For $x\in S$ define the measure $\mu_x$ on $S$ through
\begin{equation}
 \label{eq:mux_def}
 \begin{split} 
\mu_x (y) &= E_x [\sum_{s<\tau_x \wedge \tau_\Delta} \exp (\lambda s) \delta_y (X_s)]\\
 & =  \frac{E_x [\exp (\lambda  \tau_y),\tau_y < (\tau_x \wedge  \tau_\Delta)]}{1-E_y[\exp (\lambda \tau_y),\tau_y<(\tau_x \wedge \tau_\Delta)]},~y\in S
 \end{split}
\end{equation}
Then 
\begin{equation}
\label{eq:mux_nearlyQSD}
(\mu_x p) (z) =  e^{-\lambda}\mu_x (z) + e^{-\lambda} \delta_x(z) \left( h_x(x)-1\right),~z\in S.
\end{equation}
\item In addition, 
\begin{equation}
    \label{eq:nearly_harmonic}
    (ph_z)(x) = e^{-\lambda} h_z (x) + p(x,z) (h_z (z)-1)
\end{equation}
\end{enumerate}
\end{prop}
Note that Proposition \ref{prop:weird_values} and the irreducibility guarantee that both \tro{R1:G3}$\mu_x$ and $h_z$ defined in the proposition are strictly positive and finite on $S$. 
\begin{proof}
For the first claim,\tro{R1:G4}
\begin{align*} \mu_x p (z) &=\sum_y \sum_{s=0}^\infty E_x [{\bf 1}_{\{\tau_\Delta>s\}}{\bf 1}_{\{\tau_x>s\}} \exp(\lambda s) \delta_y (X_s)]p(y,z)\\
& = e^{-\lambda} \sum_{s=0}^\infty e^{\lambda{(s+1)}} E_x[ {\bf 1}_{\{\tau_\Delta>s\}}{\bf 1}_{\{\tau_x >s\}}\delta_z (X_{s+1})]\\
& = e^{-\lambda}\left (\mu_x(z) -\delta_x(z)+\delta_x(z) E_x [\exp(\lambda\tau_x),\tau_x < \tau_\Delta]\right)\\
& = e^{-\lambda}\mu_x (z) + e^{-\lambda}\delta_x(z)\left( E_x [\exp(\lambda\tau_x),\tau_x < \tau_\Delta]-1\right).
\end{align*} \tro{R1:M3}
For the second claim, observe that
\begin{align*}
    h_z(x)&=E_x[\exp(\lambda\tau_z),\tau_z<\tau_\Delta]\\
          &=e^\lambda p(x,z)+\sum_{y\neq z}e^\lambda p(x,y)E_y[\exp(\lambda\tau_z),\tau_z<\tau_\Delta]\\
          & = e^\lambda \sum_{y \in S} p(x,y) h_z(y) + e^\lambda p(x,z) (1- h_z(z)). 
\end{align*}
\end{proof}

\subsection{The Reverse Chain}\tro{R2:M9}
\label{sec:ReverseTool}
Throughout this section we fix a QSD $\nu$ with absorption parameter $\lambda$. We introduce the time-reversed transition function $q$ on $S$:
\begin{equation}
\label{eq:q_def} 
q (y,x) = \nu(x) p(x,y) \frac{e^\lambda}{ \nu(y)},~x,y \in S.
\end{equation} 
\tro{R1:P28}
Note that $q$ also depends on $\nu$, dependence we omit to simplify the notation. In terms of properties,  $q$ has no absorbing states, inherits the irreducibility from $p$, and represents dynamics reversings the arrow of time. Write $Q_{\cdot}, E_{\cdot}^Q$ for the probability and expectation for the Markov Chain on $S$ corresponding to the transition function $q$. We have the following simple lemma obtained from products of \eqref{eq:q_def}.

\begin{lem}
\label{lem:reversal} 
Let ${\bf x} = (x_0,x_1,\dots,x_n)$ be a sequence in $S$. Write $\overset{\leftarrow}{\bf x} = (x_n,x_{n-1},\dots,x_0)$, the reverse sequence.  Then 
 $$\prod_{j=0}^{n-1} p(x_j,x_{j+1})= e^{-\lambda n}\frac{\nu(x_n)}{\nu(x_0)} \prod_{j=n}^1 q(x_j,x_{j-1}). $$
 In particular 
 \begin{equation}
     \label{eq:path_circle}
     P_{x_0}(X_{[0,n]} = {\bf x}) = e^{-\lambda n} \frac{\nu(x_n)}{\nu(x_0)}  Q_{x_n} (X_{[0,n]}=\overset{\leftarrow}{\bf x}),
 \end{equation}
 and 
 \tro{R1:G5}\begin{equation}
     \label{eq:path_absorbed}
     P_{x_0}(X_{[0,n]} = {\bf x},\tau_\Delta = {n+1}) = e^{-\lambda n} \frac{\nu(x_n)p(x_n,\Delta)}{\nu(x_0)}  Q_{x_n} (X_{[0,n]}=\overset{\leftarrow}{\bf x}).
 \end{equation}
\end{lem}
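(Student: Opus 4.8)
The statement to prove is Lemma~\ref{lem:reversal}, which expresses products of transition probabilities along a path in terms of the reversed path under the time-reversed kernel $q$ defined in \eqref{eq:q_def}.

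\medskip

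The plan is to verify the three displayed identities in order, each following from the previous one by a short manipulation. First I would prove the basic product formula: starting from the definition $q(y,x) = \nu(x)p(x,y)e^{\lambda}/\nu(y)$, rearrange to get $p(x,y) = e^{-\lambda}\,\frac{\nu(y)}{\nu(x)}\,q(y,x)$ for each $x,y\in S$. Then take the product over the edges $j=0,\dots,n-1$ of the path $\mathbf{x}=(x_0,\dots,x_n)$: the factor $e^{-\lambda}$ contributes $e^{-\lambda n}$, the ratios $\nu(x_{j+1})/\nu(x_j)$ telescope to $\nu(x_n)/\nu(x_0)$, and the remaining product is $\prod_{j=0}^{n-1} q(x_{j+1},x_j) = \prod_{j=n}^{1} q(x_j,x_{j-1})$, which is exactly the product of transition probabilities for the reversed path $\cev{\mathbf{x}}$. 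This gives the first identity. Note that this step only uses $\nu(i)>0$ for all $i\in S$ (guaranteed by irreducibility, as noted after Proposition~\ref{prop:eigen_equivalence}) so that the division is legitimate; it does not actually need $\nu$ to be a QSD beyond the fact that $q$ is then a bona fide transition function, which is only relevant for interpreting the right-hand side probabilistically.

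\medskip

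For \eqref{eq:path_circle}: by the Markov property, $P_{x_0}(X_{[0,n]}=\mathbf{x}) = \prod_{j=0}^{n-1} p(x_j,x_{j+1})$, and similarly $Q_{x_n}(X_{[0,n]}=\cev{\mathbf{x}}) = \prod_{j=n}^{1} q(x_j,x_{j-1})$; substituting the first identity gives the result directly. For \eqref{eq:path_absorbed}: the event $\{X_{[0,n]}=\mathbf{x},\ \tau_\Delta = n+1\}$ decomposes as $\{X_{[0,n]}=\mathbf{x}\}\cap\{X_{n+1}=\Delta\}$, so by the Markov property its probability is $P_{x_0}(X_{[0,n]}=\mathbf{x})\cdot p(x_n,\Delta)$; apply \eqref{eq:path_circle} to the first factor. (I note that the paper writes $p(x_n,0)$ rather than $p(x_n,\Delta)$ in \eqref{eq:path_absorbed}, which appears to be a typo for the absorption probability $p(x_n,\Delta)$; I would write $p(x_n,\Delta)$.)

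\medskip

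There is essentially no obstacle here — the lemma is a bookkeeping consequence of the definition of $q$. The only point requiring a modicum of care is getting the index gymnastics right in the reversal: confirming that $\prod_{j=0}^{n-1} q(x_{j+1},x_j)$, read as a path product for the sequence $(x_n,x_{n-1},\dots,x_0)$, really is $Q_{x_n}(X_{[0,n]}=\cev{\mathbf{x}})$, i.e.\ that the edges line up as $(x_n\to x_{n-1}),(x_{n-1}\to x_{n-2}),\dots,(x_1\to x_0)$. A one-line re-indexing $k = n-j$ makes this transparent. I would present the first identity with the telescoping made explicit and then dispatch the two corollaries in a sentence each.
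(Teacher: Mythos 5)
Your proof is correct and is exactly the argument the paper intends — the authors give no written proof, stating only that the lemma is ``obtained from products of \eqref{eq:q_def},'' which is precisely your telescoping computation followed by the Markov-property identifications. Your observation that $p(x_n,0)$ in \eqref{eq:path_absorbed} should read $p(x_n,\Delta)$ is also right, as confirmed by how the identity is used in the proof of Proposition \ref{prop:reversal_probs}.
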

To introduce the next result, we need some additional notation.  \tro{R1:G6}Recall $^0\tau_x$ defined in \eqref{eq:hitting0}, and let  
\begin{equation}
    \label{eq:Inu}
    I_\nu(x) = e^\lambda\sum_z \nu(z) p(z,\Delta) Q_z (^0\tau_x<\infty).
\end{equation}
A simple formula expressing $I_\nu$ not involving the reverse process will be presented in Proposition \ref{prop:reversal_probs}-2. 
\begin{cor}
\label{cor:Inu}
Let $\nu$ be a QSD for $p$ with absorption parameter $\lambda$. Then  $I_\nu(x) \le e^{\lambda} -1$, and each of the following cases implies an euqality.  
   \begin{enumerate} 
   \item $q$ is recurrent. 
   \item \label{it:second_order} There's a bijection $\sigma:S \to \Z_+\cup\{-1\}$ with $\sigma(\Delta)=-1$
   and the following properties 
\begin{enumerate} 
\item For all $x \in S$, $p(x,y)>0$ if $\sigma(y)=\sigma(x)-1$. 
\item For all $x \in S$, $p(x,y)=0$ if $\sigma(y)<\sigma(x)-1$. 
\end{enumerate} 
\end{enumerate} 
\end{cor}
A chain satisfying the latter set of conditions is also known as skip-free \cite{SkipfreeDisCts}. Such chains are the simplest to study. Complete characterization of all QSDs for such chains is given in Section \ref{sec:skipfree}. 
\begin{proof}
Since $\nu(S)=1$, we have \tro{R1:G7}
\begin{equation}   \label{eq:nupzero_summation}
    I_\nu(x) \le e^\lambda \sum_{z} \nu(z) p (z,\Delta) = e^{\lambda} \sum_z \nu(z)\left (1-\sum_{y\neq \Delta} p(z,y)\right) = e^{\lambda} (1-e^{-\lambda})= e^\lambda-1.
\end{equation}
From the definition of $I_\nu$, \eqref{eq:Inu}, the inequality in \eqref{eq:nupzero_summation} is an equality if and only if for every $z$ with $p(z,\Delta)>0$, we also have $Q_z (^0\tau_x < \infty)=1$. This latter condition is automatically satisfied by each of the conditions listed in the statement. \tro{R1:M4}
\end{proof}

As an application of Lemma \ref{lem:reversal}, we have 
\begin{prop}
\label{prop:reversal_probs}
Let  $\nu$ be a QSD for $p$ with absorption parameter $\lambda$. Then 
\begin{enumerate} 
\item \label{it:1} $E_x [\exp (\lambda\tau_x),\tau_x < \tau_\Delta]=Q_x (\tau_x<\infty).$
\item \label{it:2} $E_x [ \exp (\lambda \tau_\Delta),\tau_\Delta <\tau_x] = \frac{I_\nu(x)}{\nu(x)}$.
\item \label{it:3} $E_x [\exp(\lambda \tau_\Delta)]= \frac{I_\nu(x)}{\nu(x)}E_x^Q [N(x)]$, \tro{R1:G8}where $N(x) = \sum_{s=0}^\infty \delta_x (X_s)$.
\item \label{it:4} $E_x [\exp (\lambda \tau_x)\tau_x,\tau_x <\tau_\Delta] =E^Q_x[\tau_x,\tau_x< \infty].$
\end{enumerate} 
\end{prop}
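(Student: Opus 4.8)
The plan is to prove all four identities by applying Lemma~\ref{lem:reversal}, which converts path probabilities under $P$ into reversed-path probabilities under $Q$, summing over appropriate families of paths. The key observation throughout is that when a path begins and ends at the same state $x$, the factor $e^{-\lambda n}\nu(x_n)/\nu(x_0)$ in \eqref{eq:path_circle} collapses to exactly $e^{-\lambda n}$, and the weight $e^{\lambda n}$ carried by $\exp(\lambda \tau_x)$ or $\exp(\lambda\tau_\Delta)$ cancels it; this is precisely why the exponential moments on the $P$-side turn into bare probabilities or expectations on the $Q$-side.

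For \eqref{it:1}, I would write $E_x[\exp(\lambda\tau_x),\tau_x<\tau_\Delta]=\sum_{n\ge1}e^{\lambda n}P_x(\tau_x=n,\tau_x<\tau_\Delta)$ and decompose $P_x(\tau_x=n,\tau_x<\tau_\Delta)$ as a sum over excursions ${\bf x}=(x,x_1,\dots,x_{n-1},x)$ with $x_i\ne x$, $x_i\ne\Delta$. Applying \eqref{eq:path_circle} with $x_0=x_n=x$ gives $P_x(X_{[0,n]}={\bf x})=e^{-\lambda n}Q_x(X_{[0,n]}=\overset{\leftarrow}{\bf x})$; the reversed path is again an excursion from $x$ to $x$ of length $n$ avoiding $x$ in between (avoiding $\Delta$ is automatic since $q$ lives on $S$). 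Summing over all such paths and all $n$ yields $\sum_{n\ge1}Q_x(\tau_x=n)=Q_x(\tau_x<\infty)$. For \eqref{it:2}, I would do the same with \eqref{eq:path_absorbed}: paths from $x$ of length $n$ that avoid $x$ and $\Delta$ and then jump to $\Delta$ at step $n+1$ correspond, after reversal, to paths in $S$ of length $n$ ending at $x$ and starting from some $z$ with $p(z,\Delta)>0$, and the bookkeeping factor becomes $e^{-\lambda n}\nu(x_n)p(x_n,\Delta)/\nu(x_0)$ — but here $x_0=x$ while $x_n=z$ varies, so the surviving factor is $\nu(z)p(z,\Delta)e^\lambda/\nu(x)$, and summing over $z$ and over reversed paths ending at $x$ produces exactly $e^\lambda\sum_z\nu(z)p(z,\Delta)Q_z({}^0\tau_x<\infty)/\nu(x)=I_\nu(x)/\nu(x)$.

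For \eqref{it:3}, I would use the renewal/excursion decomposition of $E_x[\exp(\lambda\tau_\Delta)]$: decompose $\tau_\Delta$ into a (geometric) number of excursions from $x$ back to $x$ that avoid $\Delta$, followed by one final excursion from $x$ that reaches $\Delta$ before returning to $x$. Using \eqref{eq:alphazeta_broken}-type algebra together with \eqref{it:1} and \eqref{it:2}, this gives $E_x[\exp(\lambda\tau_\Delta)]=\dfrac{E_x[\exp(\lambda\tau_\Delta),\tau_\Delta<\tau_x]}{1-E_x[\exp(\lambda\tau_x),\tau_x<\tau_\Delta]}=\dfrac{I_\nu(x)/\nu(x)}{1-Q_x(\tau_x<\infty)}$, and since $E_x^Q[N(x)]=\sum_{k\ge0}Q_x(\tau_x<\infty)^k=\bigl(1-Q_x(\tau_x<\infty)\bigr)^{-1}$, the claim follows. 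For \eqref{it:4}, I would differentiate the generating-function identity behind \eqref{it:1} — or, more directly, write $E_x[\exp(\lambda\tau_x)\tau_x,\tau_x<\tau_\Delta]=\sum_{n\ge1}ne^{\lambda n}P_x(\tau_x=n,\tau_x<\tau_\Delta)$ and run exactly the same path-reversal as in \eqref{it:1}, carrying the extra factor $n$ through unchanged, to land on $\sum_{n\ge1}nQ_x(\tau_x=n)=E_x^Q[\tau_x,\tau_x<\infty]$.

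The main obstacle I anticipate is purely bookkeeping rather than conceptual: making sure in \eqref{it:2} and \eqref{it:3} that the reversed paths are counted with the correct starting/ending constraints (the reversed excursion must avoid $x$ strictly between its endpoints, and in \eqref{it:2} one must correctly identify that summing $Q_z({}^0\tau_x<\infty)$ over $z$ already packages together reversed paths of all lengths that end at $x$ without the "avoid $x$ in between" restriction — consistent with $\tau_\Delta<\tau_x$ on the $P$-side forbidding intermediate visits to $x$, which under reversal become the constraint that the $Q$-path first hits $x$ exactly at its final step). I would handle this by being explicit that $\{\tau_\Delta<\tau_x\}$-paths under $P_x$ are in length-preserving bijection with $Q$-paths that reach $x$ for the first time at their last step, started from a $\Delta$-accessible state $z$, weighted by $\nu(z)p(z,\Delta)$. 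Everything else is summation of geometric-type series and an application of monotone convergence to justify interchanging sums.
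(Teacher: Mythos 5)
Your proposal is correct and follows essentially the same path-reversal argument as the paper: items 1, 2 and 4 are proved exactly as in the text, by applying \eqref{eq:path_circle} and \eqref{eq:path_absorbed} to the appropriate families of excursions and noting that the factor $\nu(x_n)/\nu(x_0)$ collapses when the endpoints coincide. The only variation is item 3, which you derive from items 1 and 2 via the renewal identity $E_x[\exp(\lambda\tau_\Delta)]=E_x[\exp(\lambda\tau_\Delta),\tau_\Delta<\tau_x]\bigl(1-E_x[\exp(\lambda\tau_x),\tau_x<\tau_\Delta]\bigr)^{-1}$ together with $E_x^Q[N(x)]=\sum_{k\ge 0}Q_x(\tau_x<\infty)^k$, whereas the paper reverses all paths at once and uses the factorization $\sum_n Q_z(X_n=x)=Q_z({}^0\tau_x<\infty)E_x^Q[N(x)]$ on the $Q$-side; these are the same excursion decomposition applied before versus after reversal, and your version remains valid in the infinite-MGF case provided the geometric series is read via monotone convergence (with $1/0=\infty$ when $q$ is recurrent).
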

Before moving to the proof, we wish to make a derivation and state a corollary.  If $\nu$ and $\lambda$ are as in the proposition, then   Proposition \ref{prop:weird_values}-1 gives that  $E_x [\exp (\lambda \tau_x ),\tau_x<\tau_\Delta]<1$ for all $x\in S$ and $\lambda< \lambda_{cr}$, .\tro{R1:M5} Therefore it follows from the first part of  the proposition that  $q$ recurrent if and only if  $\lambda=\lambda_{cr}$ and $E_x [\exp (\lambda_{cr} \tau_x ),\tau_x<\tau_\Delta]=1$. This, along with part 4 of the proposition, lead to 
\tro{R2:M10}
\begin{cor}
Let $\nu$ be a QSD for $p$ with absorption parameter $\lambda$. Then the reverse transition function $q$ is positive recurrent if and only if $E_x [ \exp (\lambda_{cr} \tau_x)\tau_x,\tau_x < \tau_\Delta]<\infty$ for some (equivalently all ) $x\in S$, and the stationary distribution for $q$ is 
$$\pi(x) = \frac{1}{E_x^Q[\tau_x]}=\frac{1}{E_x [ \exp (\lambda_{cr} \tau_x)\tau_x,\tau_x < \tau_\Delta]}.$$ 
\end{cor} 
\begin{proof}[Proof of Proposition \ref{prop:reversal_probs}]
For $n\in \Z_+$, $x\in S$, let $A_{x}(n)$ be the set  of paths ${\bf x}=(x_0,x_1,\cdots,x_n)$ with $x_0=x$. Also, let $A^-_{x}(n)\subset A_{x}(n)$ the subset of paths satisfying $x_1,\dots,x_{n-1} \ne x$, and finally, let $A^-_{x,x}(n)$ be the subset of $A^-_x(n)$ consisting of paths satisfying $x_n=x$.  

For the first assertion, using \eqref{eq:q_def} and \eqref{eq:path_circle}, for $0<i<n$ we have 
\begin{align*}
    E_x [\exp (\lambda\tau_x),\tau_x < \tau_\Delta]&=\sum_{n=0}^\infty \sum_{{\bf x} \in A^-_{x,x}(n)}e^{\lambda n}P_{x}(X_{[0,n]} = {\bf x})\\
    &=\sum_{n=0}^\infty \sum_{{\bf x} \in A^-_{x,x}(n) } Q_{x} (X_{[0,n]}=\overset{\leftarrow}{\bf x})\\
    &=Q_x(\tau_x<\infty)
\end{align*}
For the second assertion, consider $z\in S$. Using \eqref{eq:path_absorbed}, we obtain 
\begin{align*}
    E_x [ \exp (\lambda \tau_\Delta),\tau_\Delta <\tau_x]&= \sum_{n=0}^\infty \sum_{{\bf x}\in A_{x}^-(n)}e^{\lambda (n+1)}P_{x}(X_{[0,n]} = {\bf x}, \tau_\Delta=n+1)\\
    &=\sum_{z\in S}\frac{e^\lambda}{\nu(x)}\nu(z)p(z,\Delta)Q_z(^0\tau_x<\infty)\\
    &=\frac{I_\nu(x)}{\nu(x)}
\end{align*}
For the third assertion, using \eqref{eq:path_absorbed} for $x\in S$\tro{R1:M6}
\begin{align*}
    E_x [\exp(\lambda \tau_\Delta)]&=\sum_{n=0}^\infty \sum_{{\bf x}\in A_x(n)}e^{\lambda (n+1)}P_{x}(X_{[0,n]} = {\bf x}, \tau_\Delta=n+1)\\
    &\overset{\eqref{eq:path_absorbed}}{=}
     \sum_{n=0}^\infty \sum_{{\bf x} \in A_x(n)} e^{\lambda (n+1)} e^{-\lambda n} \frac{\nu(x_n)p(x_n,\Delta)}{\nu(x)}  Q_{x_n} (X_{[0,n]}=\overset{\leftarrow}{\bf x})\\
     & = \frac{e^{\lambda}}{\nu(x)} \sum_{n=0}^\infty\sum_{z \in S} \nu (z)p(z,\Delta)Q_z (X_n =x)\\
      & = \frac{e^{\lambda}}{\nu(x)} 
    \sum_{z\in S}\nu (z)p(z,\Delta)E^Q_z[N(x)]\\
 \end{align*}
Now $E_z^Q [N(x)] = Q_z(^0\tau_x <\infty)E^Q_x[ N(x)]$, and it follows from the definition of $I_\nu(x)$ \eqref{eq:Inu} that 
$$  E_x [\exp(\lambda \tau_\Delta)] =\frac{I_\nu(x)}{\nu(x)}E_x^Q[N(x)].$$

To prove the last assertion, using \eqref{eq:path_circle}
\begin{align*}
    E_x^Q[\tau_x,\tau_x < \infty]&=\sum_{n=1}^\infty n\sum_{{\bf X}\in A_{x,x}^-(n)}Q_x(X_{[0,n]} = {\bf x})\\
    &=\sum_{n=1}^\infty n\sum_{{\bf X}\in A_{x,x}^-(n)}e^{\lambda n}P_x(X_{[0,n]} = {\bf x})\\
    &=\sum_{n=1}^\infty ne^{\lambda n}\sum_{{\bf X}\in A_{x,x}^-(n)}P_x(X_{[0,n]} = {\bf x})\\
    &=E_x[\tau_x\exp(\lambda\tau_x),\tau_x<\tau_\Delta]
\end{align*}
\end{proof}
Corollary \ref{cor:Inu} and Proposition \ref{prop:reversal_probs}-\ref{it:2} give the following: 
\begin{cor}
\label{cor:domination}
Let $\nu$ be  a QSD  with absorption parameter $\lambda$. Then for all $x\in S$
\begin{equation}
    \label{eq:dominated}
    \nu(x) = \frac{I_\nu(x)}{E_x [\exp (\lambda \tau_\Delta),\tau_\Delta < \tau_x]}\le \frac{e^\lambda-1}{E_x [\exp (\lambda \tau_\Delta),\tau_\Delta < \tau_x]}.
\end{equation}
Moreover, if an equality holds in \eqref{eq:dominated} for all $x$, then $\nu$ is the unique QSD with absorption parameter $\lambda$. 
\end{cor}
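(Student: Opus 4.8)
The plan is to obtain both parts of the statement directly from the two results just proved, essentially by rearranging an identity. The starting point is Proposition \ref{prop:reversal_probs}(\ref{it:2}), which asserts
\[
E_x [ \exp (\lambda \tau_\Delta),\tau_\Delta <\tau_x] = \frac{I_\nu(x)}{\nu(x)}.
\]
Since $S$ is irreducible and $\nu$ is a QSD, we have $\nu(x)>0$ for every $x\in S$ (the first comment following Proposition \ref{prop:eigen_equivalence}). Moreover $I_\nu(x)>0$: by {\bf HD-\ref{as:reg:uab}} together with irreducibility there is some $z\in S$ with $p(z,\Delta)>0$, and since $\nu$ charges every state and $q$ inherits irreducibility from $p$, the term $e^\lambda\nu(z)p(z,\Delta)Q_z(^0\tau_x<\infty)$ in the definition \eqref{eq:Inu} is strictly positive. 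Hence the common value $I_\nu(x)/\nu(x)$ lies in $(0,\infty)$, and the identity above may be inverted to give
\[
\nu(x) = \frac{I_\nu(x)}{E_x [\exp (\lambda \tau_\Delta),\tau_\Delta < \tau_x]},
\]
which is the first (equality) half of the corollary.

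For the inequality, I would simply invoke Proposition \ref{prop:Inu}, which gives $I_\nu(x)\le e^\lambda-1$ for all $x\in S$. Substituting this bound into the numerator of the displayed expression for $\nu(x)$ yields
\[
\nu(x)=\frac{I_\nu(x)}{E_x [\exp (\lambda \tau_\Delta),\tau_\Delta < \tau_x]}\le \frac{e^\lambda-1}{E_x [\exp (\lambda \tau_\Delta),\tau_\Delta < \tau_x]},
\]
completing the proof.

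There is no real obstacle here: the corollary is a formal consequence of Propositions \ref{prop:Inu} and \ref{prop:reversal_probs}, and the only point that deserves a sentence is the verification that the denominator $E_x[\exp(\lambda\tau_\Delta),\tau_\Delta<\tau_x]$ is strictly positive and finite, so that dividing by it and inverting the identity is legitimate — and this is exactly what positivity of $\nu(x)$ and $I_\nu(x)$ provides via Proposition \ref{prop:reversal_probs}(\ref{it:2}).
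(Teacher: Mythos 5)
Your proof is correct and is exactly the argument the paper intends: the corollary is stated as an immediate consequence of Proposition \ref{prop:Inu} and Proposition \ref{prop:reversal_probs}-\ref{it:2}, obtained by inverting the identity $E_x[\exp(\lambda\tau_\Delta),\tau_\Delta<\tau_x]=I_\nu(x)/\nu(x)$ and applying the bound $I_\nu(x)\le e^\lambda-1$. Your added verification that the quantities involved are strictly positive and finite is a reasonable (and harmless) supplement to what the paper leaves implicit.
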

\label{cor:conv}
The transition function $q$ may be useful in the study of convergence to a specific QSD. Indeed, if $\nu$ is a QSD with absorption parameter $\lambda$ and $q$ is the transition function for the reverse process as defined in \eqref{eq:q_def}, then for any probability measure $\mu$ on $S$ and any $y\in S$
$$ P_\mu (X_n = y)=e^{-\lambda n} \nu(y) E_y^Q [ \frac{\mu}{\nu}(X_n)].$$
This implies 
\begin{equation} 
\label{eq:ratioQ} P_\mu (X_n = y | \tau_\Delta >n) = \nu(y) \frac{E_y^Q [ \frac{\mu}{\nu}(X_n)]}{E_\nu^Q [\frac{\mu}{\nu}(X_n)]}
\end{equation}
\begin{cor}
\label{cor:limit}
Suppose that $\mu$ is a probability measure satisfying $\sup_{y\in S} \frac{\mu}{\nu}(y)<\infty$. Then, each of the conditions below implies 
$$\lim_{n\to\infty} P_\mu (X_n \in \cdot ~|~\tau_\Delta>n) = \nu(y). $$
\begin{enumerate} 
\item $q$ is positive recurrent and aperiodic. 
\item $q$ is transient and $\lim_{y\to\infty} \frac{\mu}{\nu}(y)$ exists and is strictly positive. 
\end{enumerate}
\end{cor}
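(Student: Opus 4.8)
The plan is to read everything off identity \eqref{eq:ratiowQ}. Put $f=\frac{\mu}{\nu}$; by the standing hypothesis $f$ is a bounded nonnegative function on $S$, say $0\le f\le M$ with $M=\sup_{y\in S}\frac{\mu}{\nu}(y)<\infty$, and $f\not\equiv 0$ since $\mu$ is a probability measure. Identity \eqref{eq:ratiowQ} then says $P_\mu(X_n=y\mid\tau_\Delta>n)=\nu(y)\,E_y^Q[f(X_n)]/E_\nu^Q[f(X_n)]$, so it suffices to produce a constant $\ell\in(0,\infty)$ with $E_x^Q[f(X_n)]\to\ell$ for \emph{every} $x\in S$: once this holds, averaging over the initial state and using $|E_x^Q[f(X_n)]|\le M$ together with dominated convergence against the summable weights $(\nu(x))$ gives $E_\nu^Q[f(X_n)]\to\ell$ as well, whence the ratio tends to $\ell/\ell=1$ and $P_\mu(X_n=y\mid\tau_\Delta>n)\to\nu(y)$ for each $y$. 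Since these are probability mass functions converging pointwise to the probability mass function $\nu$, Scheff\'e's lemma upgrades this to convergence in total variation.

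\emph{Condition (1).} The chain $q$ is irreducible (it inherits irreducibility from $p$), positive recurrent and aperiodic, hence has a stationary probability $\pi$, strictly positive on $S$, with $\lVert Q_x(X_n\in\cdot)-\pi\rVert_{\mathrm{TV}}\to 0$ for every $x$. Because $|f|\le M$ this forces $E_x^Q[f(X_n)]\to\pi(f)$, and I take $\ell=\pi(f)$, which is finite ($f$ bounded) and strictly positive ($\pi$ everywhere positive, $f\ge 0$, $f\not\equiv 0$).

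\emph{Condition (2).} Write $c=\lim_{y\to\infty}\frac{\nu}{\mu}(y)\in(0,\infty)$, so that $f(y)\to L:=1/c\in(0,\infty)$ as $y\to\infty$. Since $q$ is irreducible and transient on the countable set $S$, each state is visited only finitely often $Q_x$-a.s., so the $q$-chain leaves every finite subset of $S$ eventually; thus $X_n\to\infty$ $Q_x$-a.s., hence $f(X_n)\to L$ $Q_x$-a.s., and boundedness of $f$ gives $E_x^Q[f(X_n)]\to L$ by bounded convergence. Take $\ell=L$.

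The only point needing care is the interchange of the limit with the two infinite sums — the hidden sum over the current state $z$ in $E_x^Q[f(X_n)]=\sum_z Q_x(X_n=z)f(z)$ and the average over the initial state in $E_\nu^Q[\cdot]=\sum_x\nu(x)E_x^Q[\cdot]$ — but in both places boundedness of $f$ (equivalently the hypothesis $\sup\frac{\mu}{\nu}<\infty$) is exactly what licenses dominated/bounded convergence, so no real obstacle arises. Everything else is the two classical Markov-chain facts invoked above: total-variation convergence to the stationary law in the aperiodic positive recurrent case, and a.s.\ escape to infinity in the transient case.
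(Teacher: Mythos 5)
Your proof is correct and follows essentially the same route as the paper's: both rest on identity \eqref{eq:ratiowQ} together with the ergodic theorem for positive recurrent aperiodic chains in case (1) and almost-sure escape to infinity under transience in case (2). You merely fill in the routine details (boundedness of $\mu/\nu$ licensing the dominated-convergence interchanges, and the Scheff\'e upgrade) that the paper leaves implicit.
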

Of course, both parts of the corollary follow from \eqref{eq:ratioQ}. The first part is a straightforward application of the ergodic theorem for positive recurrent and aperiodic Markov chains, e.g., \cite[Chapter 3]{Wolfgang}, and the second is a trivial application of the transience assumption. 
\subsection{Proof of Theorem \ref{th:nu_recurr}}
\underline{Uniqueness and necessity}. Suppose first that $\nu$ is a minimal QSD. We construct the reverse chain corresponding to \eqref{eq:q_def}. Proposition \ref{prop:reversal_probs}-3  gives that for every $x\in S$, 
$\infty = E_x [\exp(\lambda_{cr} \tau_\Delta)]= \frac{I_\nu(x)}{\nu(x)}E_x^Q [N(x)]$. The ratio on the righthand side is finite due to \eqref{eq:nupzero_summation}, and so $E_x^Q [N(x)]$, the expected number of visits to $x$ by the reverse chain, is infinite. As a result, $q$ is recurrent. Proposition \ref{cor:Inu}-1 then  gives $I_\nu (x)=e^{\lambda_{cr}}-1$, and then Proposition \ref{prop:reversal_probs}-\ref{it:2} gives the representation  \eqref{eq:nu_recurr} for $\nu$. This proves the uniqueness of a minimal QSD and also implies \eqref{eq:finite_stopped} due to Proposition \ref{prop:weird_values}.

\noindent\underline {Existence and sufficiency.} Suppose that  \eqref{eq:finite_stopped} holds. Then by Proposition \ref{prop:weird_values}, $E_x [ \exp (\lambda_{cr} \tau_x) , \tau_x <\tau_\Delta]=1$, and therefore the measure $\mu_x$ from Proposition \ref{prop:mu_x} with $\lambda= \lambda_{cr}$ satisfies \eqref{eq:eigen_description}. Since $\mu_x (S) = \dfrac{E_x [ \exp (\lambda_{cr}(\tau_\Delta \wedge \tau_x))]-1}{e^{\lambda_{cr}}-1}$, $\mu_x$ can  be normalized to a probability measure we denote by 
$\bar \mu_x$. Proposition \ref{prop:eigen_equivalence} implies that $\bar \mu_x$ is a minimal QSD. 

\noindent\underline {Convergence.}  Since $q$ is already recurrent, the additional assumption \eqref{eq:positive_recurrent} and Proposition \ref{prop:weird_values}-4 give that $q$ is positive recurrent. And then, we apply the first part of Corollary\tro{R1:G9} \ref{cor:limit}.
\qed

\subsection{Proof of Proposition \ref{prop:K2single}}
The statement is trivial when $|K|=1$.  We will show that if  $|K|\ge 2$, there exists $x_0\in K$ so that 
\begin{equation} 
\label{eq:lower_one} 
E_{x_0} [ \exp (\lambda_{cr} (\tau_\Delta\wedge\tau_{K-\{x_0\}}) )]
<\infty,
\end{equation}\tro{R1:P29}
and so by iterating, we can eventually reduce to the case $|K|=1$.  We, therefore, turn to prove \eqref{eq:lower_one}. For $\lambda < \lambda_{cr}$ and $x\in S$,
\begin{align*} \infty > E_x [\exp (\lambda  \tau_
\Delta) ] & = E_x[ \exp (\lambda (\tau_\Delta \wedge \tau_K)),\tau_K < \tau_\Delta] E_{X(\tau_K)} [\exp (\lambda \tau_\Delta)]\\
& \quad+E_x [\exp (\lambda (\tau_\Delta \wedge \tau_K)) ,\tau_\Delta < \tau_K].
\end{align*}

\tro{R1:M7}Let $v_m(\lambda)$ denote the minimum of the lefthand side over $x\in K$, and let $x=x_m(\lambda)\in K$ be a minimizer. Then, 
 
$$v_m(\lambda) \ge  E_{x_m}[ \exp (\lambda (\tau_\Delta \wedge \tau_K)),\tau_K < \tau_\Delta] v_m(\lambda),$$

and therefore $E_{x_m}[ \exp (\lambda (\tau_\Delta \wedge \tau_K)),\tau_K < \tau_\Delta]\le 1$.  Let $\lambda \nearrow \lambda_{cr}$ along any sequence $(\lambda_n:n\in\N)$. Since $K$ is finite, we can extract a subsequence  which we also denote by $(\lambda_n:n\in\N)$ with the same minimizer $x_m$  for all $n$. It follows from the monotone convergece theorem that 
$$ E_{x_m} [ \exp (\lambda_{cr} (\tau_\Delta \wedge \tau_K)),\tau_K < \tau_\Delta]\le 1.$$
Note that this is a version of Proposition \ref{prop:weird_values} but with the singleton replaced by a finite set. The event $\{\tau_K < \tau_\Delta\}$ is the disjoint union $\{\tau_{x_m}= \tau_K < \tau_{\Delta}\}\cup \{\tau_{K-x_m}=\tau_K < \tau_\Delta\}$. \tro{R1:M8}From the irreducibility,  the second event has positive probality under $P_{x_m}$, and therefore, 
$$\rho_m = E_{x_m} [\exp (\lambda_{cr} (\tau_\Delta \wedge \tau_K)),\tau_{x_m} = \tau_\Delta \wedge \tau_K]< E_{x_m} [ \exp (\lambda_{cr} (\tau_\Delta \wedge \tau_K)),\tau_K < \tau_\Delta]\le 1.$$

\tro{R1:M9}For $\lambda< \lambda_{cr}$ the strong Markov property gives  \begin{align*}  E_{x_m} [ \exp (\lambda (\tau_\Delta\wedge\tau_{K-\{x_m\}}) ) ]& \le \rho_m   E_{x_m} [ \exp (\lambda (\tau_\Delta\wedge\tau_{K-\{x_m\}}))]
\\ &\quad + 
E_{x_m} [\exp (\lambda (\tau_\Delta \wedge \tau_K)),\tau_{x_m} > \tau_\Delta \wedge \tau_K].
\end{align*}
Therefore 
\begin{align*} E_{x_m} [ \exp (\lambda (\tau_\Delta\wedge\tau_{K-\{x_m\}})) ] &\le \frac{E_{x_m} [\exp (\lambda (\tau_\Delta \wedge \tau_K)),\tau_{x_m} > \tau_\Delta \wedge \tau_K]}{1-\rho_m}\\
& \le \frac{E_{x_m}[\exp(\lambda_{cr} (\tau_\Delta \wedge \tau_K))]}{1-\rho_m}.
\end{align*}
The result follows by applying the monotone convergence theorem on the left-hand side, proving \eqref{eq:lower_one}, thus completing the proof of the proposition.\tro{R1:M10}  \qed

\subsection{Proof of Theorem \ref{thm:coming_infinty} and Proposition \ref{prop:Sfinite}} 

We begin by presenting a sufficient condition for $\lambda_{cr}$ to be in the infinite MGF regime. 
\begin{prop}
\label{prop:highmomments}
Suppose that $K\subsetneq S$ is nonempty and finite and that  for some $\bar\lambda > \lambda_{cr}$
\begin{equation} 
\label{eq:dominated_MGF} \sup_{x} E_x [\exp (\bar \lambda (\tau_\Delta \wedge \tau_K))] < \infty.
\end{equation}
Then $\lambda_{cr}$ is in the infinite MGF regime. 
\end{prop}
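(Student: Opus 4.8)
The plan is to argue by contradiction: assume $\lambda_{cr}$ lies in the \emph{finite} MGF regime and produce some $\lambda_1>\lambda_{cr}$ together with a state at which the MGF of $\tau_\Delta$ at $\lambda_1$ is finite, contradicting the definition \eqref{eq:lambda_cr_def} of $\lambda_{cr}$. To begin, I would note that in the finite regime $E_y[\exp(\lambda_{cr}\tau_\Delta)]<\infty$ for \emph{every} $y\in S$ (so in particular $\tau_\Delta<\infty$ $P_y$-a.s.): if $E_{x_0}[\exp(\lambda_{cr}\tau_\Delta)]<\infty$, decompose at $\tau_y$ by the strong Markov property to get $E_{x_0}[\exp(\lambda_{cr}\tau_\Delta)]\ge E_{x_0}[\exp(\lambda_{cr}\tau_y),\tau_y<\tau_\Delta]\,E_y[\exp(\lambda_{cr}\tau_\Delta)]$, where the first factor is positive because a self-avoiding path from $x_0$ to $y$ stays in $S$. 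Next, for $\lambda\in[\lambda_{cr},\bar\lambda]$ and $x,y\in K$ introduce the ``induced chain on $K$'' data $A_\lambda(x,y)=E_x[\exp(\lambda\tau_K),\tau_K<\tau_\Delta,X_{\tau_K}=y]$ and $b_\lambda(x)=E_x[\exp(\lambda\tau_\Delta),\tau_\Delta<\tau_K]$; by \eqref{eq:dominated_MGF} all of these are bounded by $C:=\sup_x E_x[\exp(\bar\lambda(\tau_\Delta\wedge\tau_K))]<\infty$, and $\lambda\mapsto A_\lambda$ is continuous on $[\lambda_{cr},\bar\lambda]$ by dominated convergence. Viewing $A_\lambda$ as a $K\times K$ matrix and $b_\lambda$, $g:=(E_y[\exp(\lambda_{cr}\tau_\Delta)])_{y\in K}$ as nonnegative column vectors, the strong Markov property at $\tau_K$ yields the identity $(I-A_{\lambda_{cr}})g=b_{\lambda_{cr}}$, with $g$ finite and strictly positive (the event $\tau_K=\tau_\Delta=\infty$ contributes nothing since $\tau_\Delta<\infty$ a.s.).

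The crux is to show $\rho(A_{\lambda_{cr}})<1$. From $A_{\lambda_{cr}}g\le g$ with $g>0$ one reads off $\rho(A_{\lambda_{cr}})\le 1$. Also $A_{\lambda_{cr}}$ is irreducible: by irreducibility of $p$ on $S$, any $x,y\in K$ are joined by a self-avoiding path in $S$, and following it realizes with positive probability a sequence of successive returns of the induced chain leading from $x$ to $y$, so $(A_{\lambda_{cr}}^n)(x,y)>0$ for some $n$. Now suppose $\rho(A_{\lambda_{cr}})=1$. Then Perron--Frobenius supplies a strictly positive left eigenvector $\pi$ with $\pi A_{\lambda_{cr}}=\pi$; pairing it with $(I-A_{\lambda_{cr}})g=b_{\lambda_{cr}}$ forces $\pi\,b_{\lambda_{cr}}=0$, hence $b_{\lambda_{cr}}\equiv 0$, i.e.\ $P_x(\tau_\Delta<\tau_K)=0$ for all $x\in K$. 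Combined with $\tau_\Delta<\infty$ a.s., this forces a chain started in $K$ to return to $K$ before $\tau_\Delta$ almost surely, hence --- by the strong Markov property and induction --- infinitely often before the finite time $\tau_\Delta$, which is absurd. Therefore $\rho(A_{\lambda_{cr}})<1$. (For $|K|=1$ this is already Proposition \ref{prop:weird_values}.)

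Since $\lambda\mapsto\rho(A_\lambda)$ is continuous and $\rho(A_{\lambda_{cr}})<1$, there is $\lambda_1\in(\lambda_{cr},\bar\lambda]$ with $\rho(A_{\lambda_1})<1$, so that $(I-A_{\lambda_1})^{-1}=\sum_{n\ge 0}A_{\lambda_1}^n$ converges with nonnegative entries. Fix $x\in K$ and let $\mathcal N$ be the number of visits to $K$ strictly before $\tau_\Delta$. A strong Markov decomposition at the successive return times to $K$ gives $E_x[\exp(\lambda_1\tau_\Delta),\mathcal N=n]=(A_{\lambda_1}^n b_{\lambda_1})(x)$, while $P_x(\mathcal N\ge n)\le(A_{\lambda_1}^n{\bf 1})(x)\to 0$ since $\rho(A_{\lambda_1})<1$; hence $\{\mathcal N=\infty\}$ is $P_x$-null and contributes $0$. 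Summing over $n$, $E_x[\exp(\lambda_1\tau_\Delta)]=\sum_{n\ge 0}(A_{\lambda_1}^n b_{\lambda_1})(x)=\bigl((I-A_{\lambda_1})^{-1}b_{\lambda_1}\bigr)(x)<\infty$ because $b_{\lambda_1}$ is bounded, and since $\lambda_1>\lambda_{cr}$ this contradicts \eqref{eq:lambda_cr_def}. The main obstacle is the \emph{strict} inequality $\rho(A_{\lambda_{cr}})<1$: the bound $\le 1$ is automatic, but ruling out equality --- which is where the hypothesis $\bar\lambda>\lambda_{cr}$ and the fact of genuine absorption are truly used, and exactly where the argument would fail for $\bar\lambda=\lambda_{cr}$ --- requires the Perron--Frobenius/absorption argument above. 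A variant of that step avoiding irreducibility of $A_{\lambda_{cr}}$ (hence full Perron--Frobenius) takes a nonnegative right eigenvector $v\neq 0$ for the eigenvalue $1$, sets $t^*=\min\{g_j/v_j:v_j>0\}$, and propagates the coordinates on which $g-t^*v$ vanishes to reach the same contradiction.
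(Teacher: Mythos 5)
Your proof is correct, but it follows a genuinely different route from the paper's. The paper argues by contradiction via uniform integrability: it stochastically dominates $\tau_\Delta$ under $P_x$ by $T_1(x)+T_2$, where $T_1(x)\sim\tau_\Delta\wedge\tau_K$ and $T_2$ is an independent sum of absorption times from the states of $K$, and uses the gap $\delta=\bar\lambda-\lambda_{cr}>0$ to extract a uniform-in-$x$ geometric decay $E[\exp(\lambda_{cr}T_1(x)),T_1(x)>n]\le c_1e^{-\delta n}$; this makes the family $\{e^{\lambda_{cr}\tau_\Delta}\}_{x\in S}$ uniformly integrable, contradicting Proposition \ref{prop:UI}. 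You instead package the dynamics into the finite return matrix $A_\lambda$ on $K$, prove $\rho(A_{\lambda_{cr}})<1$ via the identity $(I-A_{\lambda_{cr}})g=b_{\lambda_{cr}}$ and a Perron--Frobenius/absorption argument, and then use continuity of $\lambda\mapsto\rho(A_\lambda)$ on $[\lambda_{cr},\bar\lambda]$ (your substitute for the paper's $e^{-\delta n}$ factor) together with the Neumann series $(I-A_{\lambda_1})^{-1}b_{\lambda_1}$ to exhibit $\lambda_1>\lambda_{cr}$ with $E_x[\exp(\lambda_1\tau_\Delta)]<\infty$, contradicting \eqref{eq:lambda_cr_def} directly. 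Each approach has its advantages: the paper's is purely probabilistic and recycles Proposition \ref{prop:UI}, whereas yours is self-contained and, notably, only ever uses the hypothesis \eqref{eq:dominated_MGF} for $x\in K$ (the sup over all of $S$ is never needed, since $A_\lambda$ and $b_\lambda$ are indexed by $K$ and the contradiction is produced at a state of $K$), so it proves the proposition under the weaker assumption that $E_x[\exp(\bar\lambda\,\tau_\Delta\wedge\tau_K)]<\infty$ for each $x$ in the finite set $K$. All the individual steps check out: the bound $\rho(A_{\lambda_{cr}})\le1$ from $A_{\lambda_{cr}}g\le g$ with $g>0$, the irreducibility of $A_{\lambda_{cr}}$ via self-avoiding paths, the exclusion of $\rho=1$ (since $b\equiv0$ together with $\tau_\Delta<\infty$ a.s.\ would force infinitely many returns to $K$ before a finite absorption time), and the renewal decomposition $E_x[\exp(\lambda_1\tau_\Delta)]=\sum_n(A_{\lambda_1}^nb_{\lambda_1})(x)$ justified by $P_x(\mathcal N\ge n)\le(A_{\lambda_1}^n{\bf 1})(x)\to0$.
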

\begin{proof}
We argue by contradiction assuming $E_x [ \exp (\lambda_{cr} \tau_\Delta)]<\infty$ for all $x\in S$. For $x\in S$, let $T_1(x)$ be a random variable whose distribution is the same as $\tau_\Delta \wedge \tau_K$ under $P_x$ and let $T_2$ be independent of $T_1(x)$ and equal to the sum of $|K|$, independent random variables $(T_{2,k}:k\in K)$, \tro{R1:G10}where for each such $k$,  $T_{2,k}$ is distributed according to $\tau_\Delta$ under $P_k$. Then the distribution of $\tau_\Delta$ under $P_x$ is stochastically dominated by $T_1(x) + T_2$.\tro{R1:M11} Therefore for any $n\ge \N$, 
\begin{align*} E_x [\exp (\lambda_{cr} \tau_\Delta),\tau_\Delta > 2n ] & \le E[\exp (\lambda_{cr} (T_1 (x)+T_2)),T_1(x) + T_2 > 2n] \\
& \le  E[\exp (\lambda_{cr} (T_1 (x)+T_2))({\bf 1}_{\{T_1(x) >n\}}+{\bf 1}_{\{ T_2 > n\}})]\\
& \le E[\exp (\lambda_{cr} T_2)]E_x[\exp (\lambda_{cr} T_1 (x)),T_1(x)>n]\\
&\quad+ E[\exp (\lambda_{cr} T_1(x))]E_x[\exp (\lambda_{cr}T_2),T_2>n]\\
&=(*),
\end{align*} 
\tro{R1:P30}
with all expectations on the righthand side being finite. Let $\delta = \bar\lambda - \lambda_{cr}>0$. Then on the event $\{T_1(x) >n\}$ 
$$\exp (\lambda_{cr} T_1(x))= \exp ( \bar\lambda T_1(x))\exp (-\delta T_1(x))\le \exp ( \bar\lambda T_1(x))e^{-\delta n}.$$
Therefore 
$$E[\exp (\lambda_{cr} T_1 (x)),T_1(x)>n]\le E[\exp (\bar\lambda T_1(x))]e^{-\delta n}\le c_1 e^{-\delta n},$$
where $c_1 <\infty$ is the supremum in \eqref{eq:dominated_MGF}.  Then, 
$$(*) \le E[\exp (\lambda_{cr} T_2)]c_1 e^{-\delta n} + c_1 E[\exp (\lambda_{cr} T_2),T_2>n].$$
This upper bound is independent of $x$ and  tends to $0$ as $n\to\infty$. Thus the the distributions of $e^{\lambda_{cr} \tau_\Delta}$ under $P_x$ as $x$ ranges over $S$ is uniformly integrable, which is in contradiction to Proposition \ref{prop:UI}. 
\end{proof}

\begin{proof}[Proof of Theorem \ref{thm:coming_infinty}]
    We first show that $\lambda_{cr}$ is in the infinite MGF regime. From \eqref{eq:blowlambda0} we learn that $\lambda_{cr}\le \bar\lambda$. If an equality holds, then $\lambda_{cr}$ is in the infinite MGF regime. Otherwise, $\lambda_{cr} < \bar\lambda$, and the claim follows from Proposition \ref{prop:highmomments}. 

With this, Proposition \ref{prop:K2single} guarantees that both conditions of Theorem \ref{th:nu_recurr} hold and therefore, there exists a unique minimal QSD $\nu$, given by \eqref{eq:nu_recurr}. However, the assumptions yield more. 

\tro{R1:M12}Let $z\in K$ be as in Proposition \ref{prop:K2single}, and let  $h(x) = h_z(x) = E_x [ \exp (\lambda_{cr} \tau_z),\tau_z <\tau_\Delta]$. The proposition implies $h(z)<\infty$ and then  Proposition \ref{prop:weird_values} gives $h(z)=1$. 
Use the irreducibility and the strong Markov property to obtain 
$$1=h(z)=E_z [ \exp (\lambda_{cr}\tau_ z), \tau_z < \tau_\Delta ] \ge E_z [ \exp (\lambda_{cr} \tau_x),\tau_x<(\tau_z\wedge \tau_\Delta)]h(x),$$
therefore $h$ is finite on $S$. 

Proposition \ref{prop:mu_x} gives 
\begin{equation} 
\label{eq:h_harmonic} p h = e^{-\lambda_{cr}} h.
\end{equation}
Next we show that $h$ is also bounded. Indeed, as $z \in K$, $\tau_K \le \tau_z$,  and the strong Markov property and finiteness of $K$ then gives 
$$ h(x) \le E_x[ \exp (\lambda_{cr} \tau_K),\tau_K < \tau_\Delta] \max_{x \in K}h(x)<\infty.$$
As a result,  $\nu h $ can be normalized to be a probability measure. Let $\bar h = h / (\sum \nu h)$. Then  $\pi = \nu \bar h $ is probability measure. Let $p^h$ be the kernel on $S$ defined by 
$$p^h(x,y) = \frac{e^{\lambda_{cr}}}{h(x)} p(x,y) h(y).$$
Then \eqref{eq:h_harmonic} guarantees that $p^h$ is a transition function on $S$. Moreover, 
$$\pi  p^h =   e^{\lambda_{cr} } \nu \frac{\bar h}{h}  p h   = \nu \bar h = \pi,$$
and therefore the irreducible transition function \tro{R1:G11} $p^h$ has a stationary probability distribution, which implies that it is positive recurrent (e.g \cite[Theorem 1.7.7]{norris}\tro{R1:M13}). The same calculation shows that $\pi$ is also a stationary distribution for the transition function  for the reverse chain $q$, \eqref{eq:q_def}, and therefore $q$ is positive recurrent and  Proposition \ref{prop:reversal_probs}-\ref{it:4} guarantees that the condition \eqref{eq:positive_recurrent} holds. 

To show that no other QSD exists, note that if $\nu'$ is a QSD with absorption parameter $\lambda>0$, then 
\tro{R1:G12}$$ \sum_{x,y}(\nu' h)(x) p^h (x,y) = \sum_{x,y} \nu' (x) p(x,y) h(y),$$
and the sum is finite because $\nu' h$ is a finite measure. Summing the righthand side over $x$ first yields $e^{-\lambda} \sum_y \nu'(y) h(y)$, while summing the righthand side over $y$ first yields $e^{-\lambda_{cr}} \sum_{x} \nu'(x) h(x) $. Therefore, $\lambda= \lambda_{cr}$ and the claim follows. \tro{R1:M14}

It remains to prove the convergence result. From the construction of $p^h$ we have that for any initial distribution $\mu$, 
\begin{equation}
    \label{eq:limith_ratio}
 P_\mu (X_n=y | \tau_\Delta > n) =\frac{\sum_{x \in S}  \mu(x)  h(x) P^{h}_x (X_n = y) /  h(y)} {\sum_{x \in S}  \mu(x)  h(x) E^{h}_x [ \frac{1}{ h(X_n)} ]}= \frac{1}{h(y)}\frac{P^h_{\bar \mu} (X_n = y)}{E^h_{\bar \mu} [\frac{1}{h(X_n)}]},
\end{equation}
where $\bar \mu$ is the  probability measure\tro{R1:G13} given by  $\bar \mu(x) = (\mu h)(x)/ \sum_{y} (\mu h)(y) $. To complete the proof, we need to identify the limit on the righthand side of \eqref{eq:limith_ratio} through an application of the ergodic theorem for positive recurrent Markov chains. 

\tro{R1:M15}From  \eqref{eq:h_harmonic} we conclude that $(e^{\lambda_{cr} n } h(X_n){\bf 1}_{\{\tau_\Delta > n\}}:n\in\Z_+)$ is a martingale. In particular, for every $n\in\Z_+$,
$$h(x) = E_x [ \exp (\lambda_{cr} (\tau_{x_0} \wedge n)) h(X_{\tau_{x_0}\wedge n}),\tau_\Delta > \tau_{x_0}\wedge n ].$$ 
Taking $n\to\infty$, Fatou's lemma implies 
$$ h(x) \ge E_x [\exp (\lambda_{cr} \tau_{x_0} )h(x_0),\tau_\Delta> \tau_{x_0}]  \ge P_x (\tau_\Delta >\tau_{x_0}) h(x_0).$$
 This inequality applied to \eqref{eq:lowerh} implies that there exists a positive constant depending on $n_0$ and $x_0$ such that for all $x \in S$, 
$$ h(x) \ge c P_x (\tau_\Delta > n_0).$$
Let 
\tro{R1:M16}$$l(x) = E^h_x [ \frac{1}{h(X_{n_0})}]=\frac{e^{\lambda_{cr}n_0}}{h(x)}P_x(\tau_\Delta > n_0).$$  Then previous inequality implies that $l$ is bounded from above.



 It follows from the  Markov property and the ergodic theorem for positive recurrent aperiodic Markov chains that for any initial distribution $\bar \mu$ on $S$, 

$$\lim_{n\to\infty} E_{\bar \mu}^h [ \frac{1}{h(X_n)}] = \lim_{n\to\infty} E_{\bar \mu}^h [ l(X_{n-n_0})]=   \sum_{x} \pi(x) l(x).$$
As $\pi$ is stationary, the definition of $l$ then gives  gives that the righthand side is equal to $\sum_x \pi (x) /h(x) = \sum_x \nu(x) \bar h (x) / h(x) =\frac{1}{\sum_x \nu(x) h(x)}$. The ergodic theorem also gives $\lim_{n\to\infty} P^h_{\bar \mu} (X_n = y) = \pi (y)= \nu(y) h(y) / \sum_{x} \nu(x) h(x)$. Therefore, as $n\to\infty$, the righthand side of \eqref{eq:limith_ratio} tends to $\nu(y)$, completing the proof. 
\end{proof}

\tro{R1:P31}\begin{proof}[Proof of Proposition \ref{prop:Sfinite}]\tro{R1:G14}
 Since $S$ is finite and irreducible,  \eqref{eq:lowerh} automatically holds for every\tro{R1:G15} $n_0\in\N$ and $x_0\in S$. Moreover, \eqref{eq:blowlambda0} is Corollary \ref{cor:nu_recurr_Sfinite}-1. 

Next, since $S$ is irreducible, there exists $y_0\in S$ different from $x_0$ such that $p(x_0,y_0)>0$. In particular $1-p(x_0,\Delta)\ge p(x_0,x_0)+p(x_0,y_0)>p(x_0,x_0)$. The choice of $x$ guarantees $P_x (\tau_\Delta > n) \ge  (1-p(x_0,\Delta))^n\ge (p(x_0,x_0)+ p(x_0,y_0))^n$. Since for any $\lambda< \lambda_{cr}$, $\sum_{n=0}^\infty e^{\lambda n} P_x (\tau_\Delta> n) $ is finite, it follows that $e^{\lambda_{cr}}  (p(x_0,x_0)+ p(x_0,y_0))\le 1$, and in particular, $p(x_0,x_0) e^{\lambda_{cr}}<1$. Moreover,
$$ P_{x_0}(\tau_\Delta\wedge \tau_K> n) = p(x_0,x_0)^n,$$ 
and because $p(x_0,x_0) e^{\lambda_{cr}}< 1$, the series $\sum_{n=0}^\infty e^{\lambda_{cr} n} P_{x_0}(\tau_\Delta \wedge \tau_K > n) $ converges, which gives $E_{x_0} [\exp (\lambda_{cr} (\tau_\Delta \wedge \tau_K))]<\infty$. Thus, \eqref{eq:bd_arrival} holds. 
\end{proof}

\section{Proof of the Results of Section \ref{sec:results_finite_disc_martin}}
\label{sec:pf_finite_1}\tro{R2:M12}
We recall that in Section \ref{sec:results_finite_disc_martin}, $\lambda>0$ is in the finite MGF regime and therefore in this section we will work under this assumption as well. 

\begin{proof}[Proof of Proposition \ref{prop:Compact_Martin}]
 It is enough to show that every sequence $(x_n:n\in\N)$ in this space has a convergent subsequence. Indeed, either
\begin{enumerate}
    \item \label{case 1} Some elements in $S$ or in $\partial^{\lambda} M$ appear infinitely often;  or  
    \item \label{case 2} Every element in $S$ appears finitely often, and so does every element in $\partial^\lambda M$. Here, at least one of the two alternatives holds: 
    \begin{enumerate} 
    \item There exists a subsequence we also denote by $(x_n:n\in\N)$ consisting entirely of  elements in $S$  and  satisfying $\lim_{n\to\infty} x_n = \infty$.
    \item There exists a subsequenence we also denote by $(x_n:n\in\N)$ consisting entirely of  elements in $\partial^\lambda M$. 
    \end{enumerate} 
\end{enumerate}
In case \ref{case 1}, we have a constant subsequence with an obvious limit. In case \ref{case 2}, we proceed according to the subcases. By construction (see the comment above Definition \ref{Def:Martin}, the case \ref{case 2}-(a) has a $\lambda,\infty$-convergent subsequence with a limit in $\partial^\lambda M$. \tro{R1:M17}It remains to consider the case  \ref{case 2}-(b). By construction, for every $n\in\N$, there exists a sequence $(\bar x_{n,k}:k\in\N)$ of elements of $S$  with the property $\lim_{k\to\infty} \bar x_{n,k}= x_n$. As a result, we can pick a subsequence $(\bar x_{n,k_n}:n\in\N)$ of elements in $S$ with the properties  $\rho^\lambda(\bar x_{n,k_n},x_n) < 2^{-n}$ for all $n\in\N$ and  $\lim_{n\to\infty} \bar x_{n,k_n} = \infty$. To ease notation, we denote this subsequence by $(\bar x_n:n\in\N)$. Alternative (a) holds for the sequence $(\bar x_n:n\in\N)$, and as such, it possesses a $\lambda,\infty$-convergent subsequence $(\bar x_{n_l}:l\in\N)$ with some limit $[{\bf x}]\in \partial^\lambda M$. It follows from the triangle inequality that $\lim_{l\to\infty} \rho^\lambda(x_{n_l},[{\bf x}])=0$, and the result follows.    
\end{proof}
\begin{defn}
\label{defn:superharmonic}
Define the set of $e^{-\lambda}$-superharmonic measures on $S$ as  ${\cal M}^\lambda = \{\mu : S \to [0,\infty): \mu  p\le e^{-\lambda} \mu \}$. A measure $\mu\in{\cal M}^\lambda$ is 
\begin{enumerate} 
\item  a $e^{-\lambda}$-potential if there exists $f_\mu :S \to [0,\infty)$ such that 
 $$\mu(y) = \sum_{x\in S} f_\mu (x) K^\lambda(x,y);$$
 \item $e^{-\lambda}$-harmonic if  $\mu p= e^{-\lambda }\mu$.
\end{enumerate}
\end{defn}
Before we move to concrete examples and characterization of $e^{-\lambda}$-superharmonic functions, we present the following.  
\begin{lem}
\label{lem:super_properties}
\begin{enumerate} 
\item 
Let $\mu_1,\mu_2$ be $e^{-\lambda}$-superharmonic measures, then their pointwise minimum $\mu = \mu_1 \wedge \mu_2$ is $e^{-\lambda}$-superharmonic. 
\item Let $(\mu_n:n\in\N)$ be a sequence of $e^{-\lambda}$-superharmonic measures converging pointwise to the limit $\mu_\infty$. Then $\mu_\infty$ is $e^{-\lambda}$-superharmonic. 
\end{enumerate} 
\end{lem}
\begin{proof}
\begin{enumerate} 
\item 
For $y\in S$ and $j=1,2$ we have 
$$ (\mu p)(y) \le (\mu_j p)(y) \le e^{-\lambda}\mu_j(y).$$
As we can pick $j$ as the minimizer of $\mu_1(y)$ and $\mu_2(y)$, we have that for all $y\in S$, $\mu p \le e^{-\lambda} \mu$. 
\item For $y\in S$, 
$$ (\mu_\infty p) (y) = \lim_{n\to\infty} \sum_{x} \mu_\infty (x) p(x,y) \le \liminf_{n\to\infty} \sum_x \mu_n (x) p(x,y) \le \liminf_{n\to\infty} e^{-\lambda} \mu_n (y) =e^{-\lambda} \mu_\infty,$$
where the first inequality follows from Fatou's lemma and the second from the fact that for each $n$, $\mu_n$ is $e^{-\lambda}$-superharmonic. 
\end{enumerate}
\end{proof}
We turn to some concrete examples. For each fixed $x\in S$, we have
$$(G^\lambda (x,\cdot) p)(y)  \overset{\eqref{eq:Green_def}}{=} \sum_{m=1}^\infty e^{\lambda (m-1)} p^m (x,y) =e^{-\lambda}(G^\lambda(x,y) - \delta_x(y))\le e^{-\lambda}G^\lambda(x,y).$$ 
Thus $G^\lambda(x,\cdot)$ is $e^{-\lambda}$-superharmomic. As $K^\lambda(x,\cdot)=G^\lambda(x,{\bf 1})\times G^\lambda(x,\cdot)$, $K^\lambda(x,\cdot)$ is also $e^{-\lambda}$-superharmonic. Finally, Definitions \ref{def:KMartin} and \ref{Def:Martin} and the second part of the lemma guarantee that for $[{\bf x}] \in \partial^\lambda M$, $K^\lambda([{\bf x}],\cdot)$ is $e^{-\lambda}$-superharmonic. Thus, 
\begin{cor}
\label{cor:Klambda}
Let $x\in M^\lambda$, $K^\lambda(x,\cdot)$ is $e^{-\lambda}$-superharmonic.  
\end{cor}
Next we provide a characterization of $e^{-\lambda}$-superhamornic measures.
 \begin{lem}
 \label{lem:super_characterization}
 Let $\mu \in S\to [0,\infty)$. Then $\mu$ is $e^{-\lambda}$-superharmonic if and only if it is a sum of a $e^{-\lambda}$-potential and a $e^{-\lambda}$-harmonic measure $\mu_\infty$. In this case 
 \begin{enumerate} 
 \item 
 \begin{equation}
 \label{eq:harm_decomp} \mu (y)=\sum_{x\in S} f_\mu(x) K^\lambda(x,y) + \mu_\infty(y),
 \end{equation}
 where $f_\mu (x) = G^\lambda (x,{\bf 1})(\mu -e^\lambda \mu p)(x)$ and $\mu_\infty$ is the pointwise limit of the sequence $(\mu(e^\lambda p)^n:n\in\N)$. 
 \item $\mu$ is a $e^{-\lambda}$-potential if and only if $\lim_{n\to\infty} \mu (e^\lambda p)^n (y)=0$ for some (equivalently all) $y\in S$. 
 \end{enumerate}
 \end{lem}
 \begin{proof}
 Assume first that $\mu \in {\cal M}^\lambda$. For $n\in\N$, let  $\mu_n = \mu (e^\lambda p)^n$ and $\bar f_\mu = \mu (I-e^\lambda p)$. Then 
\begin{align*} 
\mu &= \mu - \mu (e^\lambda p)^n + \mu_n\\
& = \bar f_\mu ( I + \dots + (e^\lambda p)^{n-1}) + \mu_n.
\end{align*}
\tro{R1:G17}  Since $\mu \in {\cal M}^\lambda$, $\bar f_\mu \in {\cal M}^\lambda$, and the sequence $(\bar f_\mu (I + \dots +(e^\lambda p)^{n-1}):n\in\N)$ 
 increases pointwise to 
 $$\sum_{x\in S} \bar f_\mu (x) G^\lambda (x,y)\overset{\eqref{eq:Kalpha}}{=}\sum_{x\in S} f_\mu(x) K^\lambda (x,y).$$
Also, by definition of ${\cal M}^\lambda$, for each $y\in S$, $(\mu_n(y):n\in\N)$ decreases pointwise to a limit we denote by $\mu_\infty(y)$. The convergence implies that as a measure on $S$,   $\mu_\infty e^\lambda p = \mu_\infty$, and therefore $\mu_\infty$ is $e^{-\lambda}$-harmonic.

Suppose now that $\mu:S\to [0,\infty)$ is a sum of a $e^{-\lambda}$-potential and a $e^{-\lambda}$-harmonic function, say as given by \eqref{eq:harm_decomp}. Then 
\begin{align*} (\mu p)(y) & = \sum_z \sum_{x} f_\mu (x) K^\lambda (x,z) p(z,y) + \mu_\infty p  \\
& = \sum_x f_\mu (x) \sum_z K^\lambda(x,z) p(z,y) + e^{-\lambda} \mu_\infty \\
& \le \sum_{x}f_\mu(x) e^{-\lambda}K^\lambda(x,y) + e^{-\lambda}\mu_\infty\\
& = e^{-\lambda} \mu,
\end{align*} 
were the first equality follows from Tonelli, and the fact that $\mu_\infty$ is $e^{-\lambda}$-harmonic and the inequality follows from Corollary \ref{cor:Klambda}.

It remains to prove the characterization of potetials in the section assertion. Suppose that $\mu \in {\cal M}^\lambda$. Then from the proof of the first part $\lim_{n\to\infty} \mu(e^\lambda p)^n (y)$ exists for all $y$. From the irreducubility of $S$, the limit is zero for some $y$ if and only if it is zero for all $y$. In this case, $\mu_\infty$ in \eqref{eq:harm_decomp} is identically zero and so $\mu$ is a $e^{-\lambda}$-potential. \tro{R1:M18}Conversely, if $\mu$ is a $e^{-\lambda}$-potential, say $\mu (y) = \sum_x f_\mu (x) K^\lambda (x,y)$, or, equivalently $\mu(y) = \sum_x \bar f_\mu (x) G^\lambda(x,y)$, where $\bar f_\mu (x) = \frac{f_\mu (x)}{G^\lambda (x,{\bf 1})}$, then for $m\in\N$, 
\begin{align*}  (\mu (e^\lambda p)^m) (y)  & = \sum_z \sum_{x} {\bar f}_\mu (x) G^\lambda(x,z) (e^\lambda p)^m (z,y)\\
 & = \sum_x \bar f_\mu (x) \sum_{n=0}^\infty (e^\lambda p)^{n+m} (x,y) \\
 & = \mu (y) - \sum_{x} \bar f_\mu (x) \sum_{n=0}^{m-1}(e^\lambda p)^n,
 \end{align*} 
 a quantity that tends to zero as $m\to\infty$ due to monotone convergence. \tro{R1:G17}
 \end{proof}
 \begin{cor} \tro{R1:G18}
 \label{cor:super_potential} Let $\mu_1 \in {\cal M}^\lambda$ and let $\mu_2$ be a $e^{-\lambda}$-potential. Then $\mu_1 \wedge \mu_2$ is a $e^{-\lambda}$-potential.
 \end{cor}
 \begin{proof}
 The fact that $\mu_1\wedge \mu_2\in {\cal M}^\lambda$ follows from Lemma \ref{lem:super_properties}-(2). As $\mu_1 \wedge \mu_2 (e^\lambda p)^n \le \mu_2 (e^\lambda p)^n \underset{n\to\infty} { \to }0$, the claim follows. 
 \end{proof}
\begin{proof}[Proof of Theorem \ref{thm:martin}]

Suppose that $\mu$ is a QSD with absorption parameter $\lambda$.  Let $(D_n:n\in\N)$ be a sequence of subsets of $S$ satisfying $D_n \subset D_{n+1}$ for all $n\in\N$ and $\cup_{n=1}^\infty D_n = S$. 
\tro{R1:M19} Write $K^\lambda ({\bf 1}_{D_n},\cdot)$ for the $e^{-\lambda}$-potential $\sum_{x} {\bf 1}_{D_n}(x) K^\lambda(x,y)$ and let $\mu _n = \mu \wedge (n K^\lambda ({\bf 1}_{D_n},\cdot))$. Then by construction, $\mu_n \nearrow \mu$, as $n\to\infty$ and from Corollary \ref{cor:super_potential},  for each $n\in\N$,   $\mu_n$ is $e^{-\lambda}$-potential. Therefore, there exists $f_n :S \to [0,\infty)$ such that 
\tro{R1:G16}$\mu_n (y)= \sum_{x \in S}f_n (x) K^\lambda (x,y)$. As each $x\in S\subseteq M^\lambda$ is a closed set, this summation can be viewed as an integral of the function $M^\lambda \ni x\to K^\lambda (x,y)$ with respect to a Borel measure $F_n$ on $M^\lambda$ which vanishes on the complement of $S$. Equivalently 
\tro{R1:M20}
$$\mu_n (y) = \int_{M^\lambda} K^\lambda (x,y) d F_n (x)$$ 
 Also, since $K^\lambda (x,\cdot)$ is a probability measure on $S$, $F_n ({\bf 1})= \mu_n ({\bf 1})\nearrow  \mu ({\bf 1})=1$. Therefore, without loss of generality we may assume $F_n ({\bf 1})>0$, and therefore normalize $F_n$ to be a probability measure by letting ${\bar F}_n = F_n / F_n ({\bf 1})$, 

$$ \mu_n = F_n ({\bf 1}) \int_{M^\lambda } K^\lambda (x,y) d {\bar F}_n (x).$$ 

As Borel probability measures on a compact metric space, $({\bar F}_n:n \in N)$ contains a weakly convergent subsequence. Let ${\bar F}$ denote the limit which is also a Borel measure on $M^\lambda$. We have 

$$ \mu (y) =  \int_{M^\lambda} K^\lambda(x,y) d {\bar F}(x),$$

which provides a representation for $\mu$ in the form stated in the theorem. To complete the proof we now show that $\bar F(S^\lambda)=1$

Since $\mu$ and $\bar F$ are both probability measures and  $0\le K^\lambda(x,{\bf 1})\le 1$ for all $x\in M^\lambda$
$$ 1 =\mu({\bf 1}) = \int_{M^\lambda} K^\lambda(x,{\bf 1}) d {\bar F}(x)\le {\bar F}(M^\lambda)=1,$$
and thereore  $K^\lambda(x,{\bf 1})=1$, $\bar F$-a.s. 
As $\mu$ is a QSD with absorption parameter $\lambda$, for every $y\in S$, 
\begin{align*} 0&= \mu(y) -\mu (e^\lambda p)(y) \\
& = \int_{M^\lambda}  K^\lambda(x,\cdot)  d \bar F (x) - \sum_{z} \int_{M^\lambda}  K^\lambda(x,z)  d \bar F (x) (e^\lambda p)(z,y)\\
& =\int_{M_\lambda} K^\lambda(x,\cdot) (I - e^\lambda p)(y) d \bar F(x)\\
& \ge0,
\end{align*}
where the third equality is due to Fubini-Tonelli and the inequality due to the fact that for evety $x\in M^\lambda$, $K^\lambda(x,\cdot)\in {\cal M}^\lambda$, Corollary \ref{cor:Klambda}. Therefore the inequality is an equality, and so, $\bar F$ almost surely $K^\lambda(x,\cdot)$ is $e^{-\lambda}$-harmonic. 

\tro{R1:M21} We proved that $\bar F$-almost surely,   $K^\lambda (x,\cdot)$ is a probability measure on $S$ which is  $e^{-\lambda}$-harmonic, a QSD with absorption parameter $\lambda$. That is,  
$$ \bar F (\{x \in M^\lambda: K^\lambda(x,\cdot)\mbox{ is a QSD with absorption parameter } \lambda\})=1.$$

Let $x\in S$. Then by definition \ref{defn:superharmonic}, $K^\lambda(x,\cdot)$ is a $e^{-\lambda}$-potential.  Lemma \ref{lem:super_characterization}-(2) guarantees that $K^\lambda(x,\cdot)$ is not a QSD, and therefore $\bar F(S)=0$, or, equivalently, $\bar F(\partial^\lambda M)=1$. As defined $S^\lambda$ is the set of $[{\bf x}]\in \partial^\lambda M$ such  that  $K^\lambda([{\bf x}],\cdot)$ is a QSD with absorption parameter $\lambda$, we proved that $\bar F (S^\lambda)=1$.  
\end{proof}

\section{Proof of the Results of Section \ref{sec:results_finite_disc_regular}}
\label{sec:pf_finite_2}
\subsection{Preliminary Results}
We now present a number of results that culminate the proof of the Theorem \ref{thm:QSD_tightness}. In the remainder of this section we will  assume  that  $\lambda>0$ in the finite MGF regime. 

\begin{prop}
\label{prop:nux_prop}
Fix $x\in S$. Then  
 \begin{enumerate}  
 \item For every $y \in S$, $$ (K^\lambda (x,\cdot) p)(y) = e^{-\lambda} \left(K^\lambda (x,y) - \frac{\delta_x(y)}{G^\lambda(x,{\bf 1})}\right).$$
 \item 
 \begin{equation} 
 \label{eq:zetaunderpnu} P_{K^\lambda(x,\cdot)}(\tau_\Delta > n) =  \left ( 1- \frac{E_x[ \exp (\lambda (\tau_\Delta \wedge n))-1]}{E_x [\exp (\lambda \tau_\Delta)-1]}\right)e^{-\lambda n},~n\in \Z_+.
 \end{equation} 
 \end{enumerate} 
\end{prop}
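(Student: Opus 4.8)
The plan is to establish both parts by direct computation, exploiting the explicit formula for $G^\lambda(x,\cdot)$ and the defining relation $G^\lambda(x,{\bf 1})(e^\lambda-1) = E_x[\exp(\lambda\tau_\Delta)]-1$. For part (1), I would start from the pointwise identity $G^\lambda(x,y) = \delta_x(y) + e^\lambda \sum_{z} G^\lambda(x,z)p(z,y)$, i.e. $G^\lambda(x,\cdot)(I - e^\lambda p) = \delta_x$, which is immediate from the series representation $G^\lambda(x,y) = E_x[\sum_{0\le s<\tau_\Delta} e^{\lambda s}\delta_y(X_s)]$ by splitting off the $s=0$ term and using the Markov property at time $1$. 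Rearranging gives $(G^\lambda(x,\cdot)p)(y) = e^{-\lambda}(G^\lambda(x,y) - \delta_x(y))$, and dividing through by the constant $G^\lambda(x,{\bf 1})$ yields exactly the claimed formula for $K^\lambda(x,\cdot)p$.

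For part (2), the strategy is to compute $P_{K^\lambda(x,\cdot)}(\tau_\Delta > n)$ by iterating part (1). Since $P_\mu(\tau_\Delta > n) = (\mu p^n)({\bf 1})$ for any sub-probability $\mu$ on $S$, I would show inductively that $K^\lambda(x,\cdot)p^n = e^{-\lambda n}\bigl(K^\lambda(x,\cdot) - \tfrac{1}{G^\lambda(x,{\bf 1})}\sum_{s=0}^{n-1} e^{\lambda s}\, \delta_x p^s\bigr)$ restricted to $S$; more simply, applying part (1) repeatedly gives $(K^\lambda(x,\cdot)p^n)({\bf 1}) = e^{-\lambda}(K^\lambda(x,\cdot)p^{n-1})({\bf 1}) - e^{-\lambda}\tfrac{(\delta_x p^{n-1})({\bf 1})}{G^\lambda(x,{\bf 1})}$, where $(\delta_x p^{n-1})({\bf 1}) = P_x(\tau_\Delta > n-1)$. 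Unwinding this linear recursion produces
\[
P_{K^\lambda(x,\cdot)}(\tau_\Delta>n) = e^{-\lambda n}\Bigl(1 - \frac{1}{G^\lambda(x,{\bf 1})}\sum_{s=0}^{n-1} e^{\lambda s} P_x(\tau_\Delta > s)\Bigr).
\]
It then remains to identify the bracketed sum with $\tfrac{E_x[\exp(\lambda\tau_\Delta\wedge n)]-1}{E_x[\exp(\lambda\tau_\Delta)]-1}$: using summation by parts (as in \eqref{eq:Z1} with the minimum at $n$ rather than $\tau_x$) one has $E_x[\exp(\lambda\tau_\Delta\wedge n)] - 1 = (e^\lambda - 1)\sum_{s=0}^{n-1} e^{\lambda s} P_x(\tau_\Delta > s)$, and since $(e^\lambda-1)G^\lambda(x,{\bf 1}) = E_x[\exp(\lambda\tau_\Delta)]-1$, the two normalizing factors match and \eqref{eq:zetaunderpnu} follows.

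The only genuinely delicate point is bookkeeping: keeping track of the fact that $K^\lambda(x,\cdot)$ and its images under $p$ are measures on $S$ (so that mass leaks to $\Delta$ and the total mass is what computes $P(\tau_\Delta>n)$), and making sure the telescoping of the recursion is done with the correct powers of $e^{-\lambda}$ and the correct partial sums. A convenient way to avoid the induction entirely is to note directly from the series definition that $\sum_{s=0}^{n-1} e^{\lambda s}\,\delta_x p^s$ restricted to $S$ has total mass $\sum_{s=0}^{n-1}e^{\lambda s}P_x(\tau_\Delta>s)$, and that $K^\lambda(x,\cdot) - e^{-\lambda n}\cdot(\text{image under }p^n)$ telescopes against part (1); I would present whichever version is cleanest. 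Everything else is the summation-by-parts identity already used in the proof of Proposition \ref{prop:zeta_taux}, so I expect no substantive obstacle beyond careful algebra.
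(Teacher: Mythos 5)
Your proposal is correct. For part (1) your argument is the paper's: the identity $G^\lambda(x,\cdot)(I-e^\lambda p)=\delta_x$, obtained by splitting off the $s=0$ term of the Green's function series and applying the Markov property, is exactly the "conditioning on the first step" the paper invokes (the detailed version of this computation appears in the proof of Proposition \ref{prop:mu_x}). For part (2) you take a genuinely different, though closely parallel, route. The paper computes $G^\lambda(x,{\bf 1})P_{K^\lambda(x,\cdot)}(\tau_\Delta>n)=\sum_y\sum_s E_x[{\bf 1}_{\{s<\tau_\Delta\}}e^{\lambda s}\delta_y(X_s)]P_y(\tau_\Delta>n)$ in one shot, using the Markov property at time $s$ to collapse the inner sum to $e^{\lambda s}P_x(\tau_\Delta>s+n)$, and then reads off the answer as the tail of the series $\sum_{s\ge 0}e^{\lambda s}P_x(\tau_\Delta>s)$. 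You instead iterate the one-step identity of part (1) to get $K^\lambda(x,\cdot)p^n=e^{-\lambda n}\bigl(K^\lambda(x,\cdot)-\tfrac{1}{G^\lambda(x,{\bf 1})}\sum_{s=0}^{n-1}e^{\lambda s}\delta_xp^s\bigr)$ and evaluate at ${\bf 1}$. Both arguments terminate in the same summation-by-parts identity $E_x[\exp(\lambda\tau_\Delta\wedge n)]-1=(e^\lambda-1)\sum_{s=0}^{n-1}e^{\lambda s}P_x(\tau_\Delta>s)$, which you state and verify correctly. What your version buys is that part (2) becomes a formal corollary of part (1), at the cost of an induction and the telescoping bookkeeping you flag; the paper's direct probabilistic computation avoids the induction entirely but does not reuse part (1). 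I see no gap in either step: the Tonelli interchanges are justified by nonnegativity, and the identification $(\mu p^n)({\bf 1})=P_\mu(\tau_\Delta>n)$ is valid since $\Delta$ is absorbing.
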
 
\begin{proof} 
The first assertion is obtained by conditioning on the first step.  As for the second,  
\begin{align*} G^\lambda(x,{\bf 1})P_{K^\lambda(x,\cdot)} (\tau_\Delta >n)&= \sum_{y\in S} \sum_{s=0}^\infty E_x [{\bf 1}_{\{s<\tau_\Delta\}}\exp (\lambda s) \delta_y (X_s)] P_y (\tau_\Delta > n)\\ 
& =  \exp (-\lambda n) E_x[ \sum_{s=0}^{\infty} \exp (\lambda(s+n)){\bf 1}_{\{\tau_\Delta > s+n\}}]\\
& = \exp (-\lambda n)\left ( \sum_{s=n}^\infty \exp (\lambda s) E_x [{\bf 1}_{\{\tau_\Delta> s\}}]\right)\\
& = \exp(-\lambda n)\left (G^\lambda(x,{\bf 1}) - E_x [ \sum_{s=0}^{(\tau_\Delta-1)\wedge (n-1) } \exp (\lambda s)]\right)\\
& = \exp (-\lambda n)\left(G^\lambda(x,{\bf 1}) - \frac{E_x[\exp (\lambda (\tau_\Delta \wedge n))-1]}{e^{\lambda}-1}\right)
\end{align*}
The result now follows because $G^\lambda(x,{\bf 1}) = E_x [\exp (\lambda \tau_\Delta)-1]/(e^\lambda-1)$.
\end{proof} 
\begin{cor}
\label{cor:zetainfinity}
Let ${\bf x}=(x_n:n\in\N)$ satisfy $\lim_{n\to\infty} x_n = \infty$.  Then the distribution of $\tau_\Delta$ under $P_{K^\lambda(x_n,\cdot)}$ converges to $\mbox{Geom}(1-e^{-\lambda})$ if and only if $\lim_{n\to\infty} E_{x_n} [\exp (\lambda \tau_\Delta)]= \infty$. 
\end{cor}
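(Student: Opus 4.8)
The plan is to push everything through the closed-form expression for $P_{K^\lambda(x,\cdot)}(\tau_\Delta>n)$ supplied by Proposition~\ref{prop:nux_prop}(2). First I would recall that, since $\tau_\Delta$ is $\N$-valued and the survival function of $\mathrm{Geom}(1-e^{-\lambda})$ is $m\mapsto e^{-\lambda m}$ on $\Z_+$, the law of $\tau_\Delta$ under $P_{K^\lambda(x_n,\cdot)}$ converges weakly to $\mathrm{Geom}(1-e^{-\lambda})$ if and only if $P_{K^\lambda(x_n,\cdot)}(\tau_\Delta>m)\to e^{-\lambda m}$ for every $m\in\Z_+$; this is just pointwise convergence of complementary distribution functions at every non-integer, and the prospective limit is a genuine probability law since $\sum_{m\ge1}e^{-\lambda(m-1)}(1-e^{-\lambda})=1$. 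Since $\lambda$ lies in the finite MGF regime, $E_{x_n}[\exp(\lambda\tau_\Delta)]<\infty$ for all $n$, and since $\tau_\Delta\ge1$ we have $E_{x_n}[\exp(\lambda\tau_\Delta)-1]\ge e^\lambda-1>0$; hence
$$R_n(m)=\frac{E_{x_n}[\exp(\lambda\tau_\Delta\wedge m)-1]}{E_{x_n}[\exp(\lambda\tau_\Delta)-1]}\in[0,1]$$
is well defined and, by Proposition~\ref{prop:nux_prop}(2), $P_{K^\lambda(x_n,\cdot)}(\tau_\Delta>m)=(1-R_n(m))e^{-\lambda m}$. So the corollary reduces to the equivalence: $R_n(m)\to0$ for every $m\in\Z_+$ if and only if $E_{x_n}[\exp(\lambda\tau_\Delta)]\to\infty$.

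For the direction ``$\Leftarrow$'', fix $m$: because $\tau_\Delta\wedge m\le m$, the numerator of $R_n(m)$ is bounded above by the constant $e^{\lambda m}-1$, while the denominator tends to $\infty$ by hypothesis, so $R_n(m)\to0$ for each $m$. For ``$\Rightarrow$'' it is enough to test $m=1$: since $\tau_\Delta\ge1$ by definition we have $\tau_\Delta\wedge1\equiv1$, so the numerator of $R_n(1)$ is the positive constant $e^\lambda-1$, and $R_n(1)\to0$ then forces $E_{x_n}[\exp(\lambda\tau_\Delta)-1]\to\infty$, i.e.\ $E_{x_n}[\exp(\lambda\tau_\Delta)]\to\infty$. (The hypothesis $x_n\to\infty$ is not actually needed for this equivalence; it is present only because that is how the corollary is used downstream.)

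I do not expect a genuine obstacle here. The only two points that deserve a sentence of care are the standard equivalence between weak convergence of $\N$-valued laws and pointwise convergence of survival functions — including the check that $m\mapsto e^{-\lambda m}$ really is the survival function of a proper distribution — and the verification that every expectation in the formula is finite with the denominator bounded away from $0$, which is exactly what the finite MGF hypothesis together with $\tau_\Delta\ge1$ gives.
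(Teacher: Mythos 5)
Your proof is correct and rests on exactly the same ingredient as the paper's: the explicit survival-function formula \eqref{eq:zetaunderpnu} from Proposition \ref{prop:nux_prop}(2), with the forward direction following from the bounded numerator over a divergent denominator. The only (cosmetic) difference is in the converse: the paper argues by contrapositive, passing to a subsequence along which the MGF converges to a finite $c$ and showing $\limsup_n P_{K^\lambda(x_n,\cdot)}(\tau_\Delta>t)\le(1-\frac{e^\lambda-1}{c})e^{-\lambda t}<e^{-\lambda t}$, whereas you read off directly from $m=1$ (using $\tau_\Delta\ge 1$) that $R_n(1)=(e^\lambda-1)/(E_{x_n}[\exp(\lambda\tau_\Delta)]-1)\to 0$ forces the MGF to diverge — a slightly cleaner route to the same conclusion.
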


\begin{proof} 
Suppose $\lim_{n\to\infty} E_{x_n} [\exp (\lambda \tau_\Delta)] =\infty$, then from \eqref{eq:zetaunderpnu},  $P_{K^\lambda (x_n,\cdot)} (\tau_\Delta>t)\to e^{-\lambda t}$ for every $t\in \Z_+$.  Otherwise, by switching to a subsequence, we may assume that  $\lim_{n\to\infty} E_{x_n} [\exp (\lambda \tau_\Delta)]$ exists and is finite. Denote this limit by $c$, and note that $c\ge e^\lambda>1$.   Since for every $t\in\N$, $E_{x_n} [\exp (\lambda( \tau_\Delta \wedge t))]-1\ge e^\lambda-1$, it follows that 
$$ \liminf_{n\to\infty} \frac{E_{x_n}[\exp (\lambda (\tau_\Delta\wedge t))]-1} {E_{x_n}[\exp (\lambda \tau_\Delta)]-1}\ge \frac{e^{\lambda}-1}{c}>0,$$ 
and so by \eqref{eq:zetaunderpnu}, 
$$ \limsup_{n\to\infty} P_{K^\lambda(x_n,\cdot)}(\tau_\Delta >t) \le (1- \frac{e^{\lambda}-1}{c}) e^{-\lambda t}.$$ 
\end{proof} 

\begin{prop}
\label{prop:tight}
\begin{enumerate}
    \item  Let ${\bf x} = (x_n:n\in\N)$ be a $\lambda,\infty$-convergent sequence. Then $\lim_{n\to\infty} K^\lambda (x_n ,\cdot)$ is a QSD if and only if $(K^\lambda (x_n,\cdot):n\in\N)$ is tight.
    \item  Moreover, under the equivalent conditions of part 1, $\lim_{n\to\infty} G^\lambda(x_n,{\bf 1})=\infty$. 
\end{enumerate}   
\end{prop}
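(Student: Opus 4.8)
The plan is to leverage the explicit formula \eqref{eq:zetaunderpnu} from Proposition \ref{prop:nux_prop} together with Corollary \ref{cor:zetainfinity}, which already tells us that the absorption time under $P_{K^\lambda(x_n,\cdot)}$ converges to $\mathrm{Geom}(1-e^{-\lambda})$ precisely when $\lim_{n\to\infty}E_{x_n}[\exp(\lambda\tau_\Delta)]=\infty$. Since $G^\lambda(x_n,{\bf 1})=(E_{x_n}[\exp(\lambda\tau_\Delta)]-1)/(e^\lambda-1)$, this immediately identifies the condition in part (2) with the $\mathrm{Geom}$ limit for the absorption time. So the real content is part (1): the equivalence between tightness of the family $(K^\lambda(x_n,\cdot):n\in\N)$ and the limit $K^\lambda({\bf x},\cdot)$ being a genuine QSD (i.e., a probability measure satisfying \eqref{eq:eigen_description}, rather than a strict sub-probability measure or a measure failing the eigenequation).

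For the forward direction, suppose $\lim_{n\to\infty}K^\lambda(x_n,\cdot)=:\mu$ is a QSD. Then $\mu$ is a probability measure, so by Scheff\'e/Fatou considerations the family $(K^\lambda(x_n,\cdot))$ loses no mass in the limit; since each $K^\lambda(x_n,\cdot)$ is a probability measure on $S$ converging pointwise to a probability measure, this is exactly tightness. (Equivalently: pointwise convergence of probability mass functions on a countable set to a probability mass function forces tightness — no mass escapes to infinity.) Conversely, suppose $(K^\lambda(x_n,\cdot))$ is tight. Then the pointwise limit $K^\lambda({\bf x},\cdot)$ is a probability measure on $S$. It remains to check it satisfies \eqref{eq:eigen_description}. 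Here I would pass to the limit in the identity from Proposition \ref{prop:nux_prop}(1),
$$(K^\lambda(x_n,\cdot)p)(y)=e^{-\lambda}\Bigl(K^\lambda(x_n,y)-\frac{\delta_{x_n}(y)}{G^\lambda(x_n,{\bf 1})}\Bigr),~y\in S.$$
Since $x_n\to\infty$, for each fixed $y$ the term $\delta_{x_n}(y)$ vanishes for all large $n$, so the right-hand side converges to $e^{-\lambda}K^\lambda({\bf x},y)$. For the left-hand side, tightness lets me upgrade the pointwise convergence $K^\lambda(x_n,\cdot)\to K^\lambda({\bf x},\cdot)$ to convergence strong enough to interchange limit and the sum $\sum_i K^\lambda(x_n,i)p(i,y)$ — e.g., by a dominated-convergence / uniform-integrability argument using tightness to control the tails of the summation uniformly in $n$. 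That yields $(K^\lambda({\bf x},\cdot)p)(y)=e^{-\lambda}K^\lambda({\bf x},y)$ for all $y$, so by Proposition \ref{prop:eigen_equivalence} the limit is a QSD with absorption parameter $\lambda$.

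Then part (2) follows: under the equivalent conditions of part (1), $K^\lambda({\bf x},\cdot)$ is a QSD, so $P_{K^\lambda(x_n,\cdot)}(\tau_\Delta>t)\to e^{-\lambda t}$ for every $t$ (this uses that the conditioned laws converge and the QSD has geometric absorption time, or directly that tightness passes through \eqref{eq:zetaunderpnu}); by Corollary \ref{cor:zetainfinity} this forces $\lim_{n\to\infty}E_{x_n}[\exp(\lambda\tau_\Delta)]=\infty$, equivalently $\lim_{n\to\infty}G^\lambda(x_n,{\bf 1})=\infty$.

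I expect the main obstacle to be the interchange of limit and summation on the left-hand side of the eigenequation in the converse direction: pointwise convergence of the probability mass functions $K^\lambda(x_n,\cdot)$ alone is not enough to conclude $(K^\lambda(x_n,\cdot)p)(y)\to(K^\lambda({\bf x},\cdot)p)(y)$, and one genuinely needs tightness (which, for a family of probability measures on a countable set, is equivalent to uniform integrability with respect to counting measure up to a finite set, hence to the required uniform control of the tails $\sum_{i\notin F}K^\lambda(x_n,i)p(i,y)$). A clean way to organize this is: fix $\epsilon>0$, choose a finite $F\subset S$ with $\sup_n K^\lambda(x_n,S\setminus F)<\epsilon$ by tightness, split the sum at $F$, use finiteness of $F$ and pointwise convergence on the $F$-part, and bound the remainder by $\epsilon$ uniformly in $n$ (note $p(i,y)\le 1$). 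The rest of the argument is bookkeeping with formulas already established.
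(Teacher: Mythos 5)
Your proposal is correct and follows essentially the same route as the paper: the forward direction via ``pointwise convergence of probability mass functions to a probability mass function forces tightness,'' and the converse via the tail-splitting argument (finite $F$ with uniformly small mass outside, $p$ substochastic) to pass the limit through the eigenidentity of Proposition \ref{prop:nux_prop}, with part (2) then deduced from Corollary \ref{cor:zetainfinity}. The paper phrases the converse as Fatou giving ``$\le$'' plus the Corollary \ref{cor:upto_cr} argument giving ``$\ge$,'' but that is the same $\epsilon$-splitting you describe.
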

\begin{proof}
We begin with the first assertion. Recall that $K^\lambda([{\bf x}],\cdot)$ denotes the pointwise limit $\lim_{n\to\infty} K^\lambda(x_n,\cdot)$.  Suppose that $K^\lambda ([{\bf x}],\cdot)$ is a QSD.  In particular,  it is a probability measure. This implies the tightness. Conversely, suppose the tightness holds.\tro{R1:M22} In particular $K^\lambda ([{\bf x}],\cdot)$ is a probability measure. Let $\epsilon>0$ and pick a finite subset $K_\epsilon$  of $S$ satisfying $K^\lambda(x_n,K_\epsilon) \ge 1-\epsilon$ for $n\in\N$ whenever the seqeunce  $(x_n:n\in\N)$ is  in the equivalence class $[{\bf x}]$. Then for $y\in S$, 
\begin{align*} (K^\lambda([{\bf x}],\cdot)p)(y) &\ge \sum_{z \in K_\epsilon}K^\lambda([{\bf x}],z)p(z,y) \\
& = \lim_{n\to\infty}\sum_{z \in K_\epsilon} K^\lambda(x_n,z) p (z,y) \\
& \ge \lim_{n\to\infty}(K^\lambda (x_n,\cdot)p)(y) - \epsilon\\
&= \lim_{n\to\infty} e^{-\lambda}K^\lambda (x_n,y) -\epsilon,
\end{align*}
where the last equality is due to Proposition \ref{prop:nux_prop}-(1)\tro{R1:G19}. As $\epsilon>0$ is abribtrary, $K^\lambda ([{\bf x}],\cdot) p \ge e^{-\lambda} K^\lambda([{\bf x}],\cdot)$. The reverse inequality also holds due to Corollary \ref{cor:Klambda}. Therefore $K^\lambda([{\bf x}],\cdot)$ is a QSD. 

It remains to prove the second assertion. The weak convergence of $K^\lambda(x_n,\cdot)$ to the QSD $K^\lambda([{\bf x}],\cdot)$ gives 
$$\lim_{n\to\infty} P_{K^\lambda(x_n,\cdot)}(\tau_\Delta > t) = P_{K^\lambda([{\bf x}],\cdot)}(\tau_\Delta > t),~t\in\Z_+.$$
As $K^\lambda({\bf x},\cdot)$ is a QSD with absorption parameter $\lambda$, the righthand side is $e^{-\lambda t}$, \eqref{eq:QSD_zeta_tail}. The conclusion now follows from Corollary \ref{cor:zetainfinity}. 
\end{proof}

We are ready to prove the theorem. 

\subsection{Proof of Theorem \ref{thm:QSD_tightness} and Corollary \ref{cor:upto_cr}}
\begin{proof}[Proof of Theorem \ref{thm:QSD_tightness}]
\tro{R1:M23} We prove the two assertions in the order of appearance. 
\begin{enumerate}  
\item  This key argument appeared in  \cite{Ferrari1995}. By assumption and Proposition \ref{prop:nux_prop}, 
\begin{align*} 
E_{K^\lambda (x_n, \cdot)}[\exp (\lambda' \tau_\Delta)]& \le  \sum_{t=0}^\infty e^{\lambda' t}P_{K^\lambda(x_n,\cdot)}(\tau_\Delta > t) \\
& \le \sum_{t=0}^\infty e^{(\lambda'-\lambda)t}\\
& = \frac{1}{1-e^{\lambda'-\lambda}}.
\end{align*}
We denote the quantity on the righthand side by $c$.  Now pick $\epsilon>0$ and a finite $K=K(\epsilon)\subseteq S$, such that\tro{R1:M24} $E_x [\exp (\lambda'\tau_\Delta)]>c/\epsilon$ for all $x\in K^c$.  The lefthand side is clearly bounded below by $K^\lambda (x_n,K^c) c/ \epsilon$, and therefore $K^\lambda (x_n, K^c) \le \epsilon$. Therefore the sequence $(K^\lambda(x_n,\cdot):n\in\N)$ is tight. Hence, Proposition \ref{prop:tight} guarantees that  $K^\lambda([{\bf x}],\cdot)$ is a QSD. 
\item \tro{R2:M7}\tro{R1:M25} From \eqref{eq:QSD_zeta_tail}, if $\nu$ is a QSD with parameter $\lambda$, $E_\nu [\exp (\lambda \tau_\Delta)]=\infty$. However, under the assumption, for any probability measure $\nu$ on $S$,  $E_\nu [\exp (\lambda \tau_\Delta)]<\infty$. \tro{R1:M26}\tro{R1:M27}
\end{enumerate} 
\end{proof}
\begin{proof}[Proof of Corollary \ref{cor:upto_cr}]
From Theorem \ref{thm:QSD_tightness}, there exists a QSD with absorption parameter $\lambda$ for all $\lambda \in (\lambda_0,\lambda_{cr})$.  What remains to be shown is  the existence of a minimal QSD. To do that, suppose $\lambda_0<\lambda_n \nearrow\lambda_{cr}$ and let $\nu_n$ be a QSD with absorption parameter $\lambda_n$, whose existence is guaranteed from the theorem. Without loss of generality, we may assume that the sequence $(\nu_n (\cdot):n\in\N)$ converges pointwise to some limit. Denote this limit by $\nu$. Fatou's lemma gives 
$$ \nu p \le e^{-\lambda_{cr}} \nu.$$
We turn to the reverse inequality. \tro{R1:P32}The first step is to show that $(\nu_n:n\in\N)$ is tight.  The key to proving the tightness rests on the fact that $\tau_\Delta \sim \mbox{Geom}(1-e^{-\lambda_n})$ under $P_{\nu_n}$. As the probability of success is increasing in $n$, for any fixed $\lambda\in(\lambda_0,\lambda_n)$, the sequence of  MGFs, evaluated at $\lambda$,  $E_{\nu_n}[\exp (\lambda \tau_\Delta)]$, is decreasing. 
Pick such $\lambda$. Then for any finite set $K\subset S$,  
\begin{equation} 
\label{eq:tight_inequ} E_{\nu_1} [\exp (\lambda \tau_\Delta)] \ge E_{\nu_n} [\exp (\lambda \tau_\Delta)] \ge \nu_n (K^c)  \inf_{x \in K^c} E_x [ \exp (\lambda \tau_\Delta)]. 
\end{equation}
The first term on the righthand side is a finite sum converging $\sum_{x\in K} \nu(x) p(x,y)$ and the second term on the righthand side is nonngetative and bounded above by $\nu(K^c)<\epsilon$. Therefore, by letting $n\to\infty$ we obtain 

$$ e^{-\lambda_{cr}} \nu \le \nu p + \epsilon,$$
and as $\epsilon>0$ is arbitrary, the result follows. 




\end{proof}
\subsection{Proof of Proposition \ref{prop:pzy} and Corollary \ref{cor:ratios}}
\tro{R1:M28}
Observe \tro{R1:P33}
\begin{equation}
\begin{split} 
\label{eq:ratioC}K^\lambda (x,y) &\overset{\eqref{eq:Kalpha}}{=}    \frac{e^\lambda-1}{E_x [\exp (\lambda \tau_\Delta)]-1}\times \frac{E_x [\exp (\lambda\tau_y),\tau_y <{\tau_\Delta}]}{1-E_y [\exp (\lambda 
 \tau_y),\tau_y<\tau_\Delta]} \\ 
 & \overset{\scriptsize\mbox{ Definition }\ref{defn:Clambda}}{=
 }\frac{C^\lambda (x,y)(e^{\lambda}-1)}{E_x [\exp (\lambda \tau_\Delta),\tau_y< \tau_\Delta]}\times \frac{E_x [\exp (\lambda\tau_y),\tau_y <{\tau_\Delta}]}{1-E_y [\exp (\lambda 
 \tau_y),\tau_y<\tau_\Delta]}\\
 & =  C^\lambda (x,y) \frac{e^\lambda -1}{E_y [\exp (\lambda\tau_\Delta)](1-E_y [ \exp (\lambda \tau_y),\tau_y< \tau_\Delta])}\\
 & =C^\lambda (x,y) \frac{e^{\lambda}-1}{E_y [\exp (\lambda \tau_\Delta),\tau_\Delta< \tau_y]},
 \end{split}
 \end{equation} 
 Where the third line is due to the equality $E_x [\exp (\lambda\tau_\Delta),\tau_y <{\tau_\Delta}]=E_x [ \exp (\lambda \tau_y),\tau_y < \tau_\Delta]E_y [\exp (\lambda \tau_\Delta)]$.\tro{R1:P34}
 
\begin{proof}[Proof of Proposition \ref{prop:pzy}]
Pick $y_0\in S$ and subsequence of $(x_n:n\in\N)$ with the properties $\lim_{n\to\infty} C^\lambda(x_n,y_0)>0$, $\lim_{n\to\infty} K^\lambda(x_n,\cdot)$ exists pointwise. Denote this limit by $K^\lambda ({\bf x},\cdot)$. Then \eqref{eq:ratioC} guarantees that $K^\lambda({\bf x},y_0)>0$ and - by irreducibility - $K^\lambda({\bf x},\cdot)$  is strictly positive. By Fatou's lemma $K^\lambda({\bf x},{\bf 1})\le 1$.  Next, for every $y\in S$, assumption \eqref{cond:pzy} allows to apply the dominated convergence theorem to conclude that
$$ (K^\lambda({\bf x},\cdot) p )(y)= \lim_{n\to\infty} (K^\lambda(x_n,\cdot)p) (y) = e^{-\lambda}K^\lambda({\bf x},y),$$
\tro{R1:M29}
where the second equality follows from Proposition \ref{prop:nux_prop}-1. \tro{R1:P35}Thus, the normalized kernel $K^\lambda({\bf x},\cdot)/ K^\lambda({\bf x},{\bf 1})$ is a QSD, proving the first assertion. 

We turn to the second assertion, which is a subcase of the first assertion. Let $ \nu $ be the QSD $\frac{K^\lambda({\bf x},\cdot)}{K^\lambda({\bf x},{\bf 1})}$. As  $K^\lambda({\bf x},{\bf 1}) \le 1$, we have that 
$$ \nu(y) \overset{\eqref{eq:ratioC}}\ge  \frac{e^\lambda-1}{E_y[\exp(\lambda \tau_\Delta),\tau_\Delta< \tau_y]}.$$
Corollary \ref{cor:domination} states that the expression on the righthand side is an upper bound on any QSD with absorption parameter $\lambda$, and therefore an equality must hold. The second part of the corollary gives the uniqueness. \tro{R1:P36}
\end{proof}
\begin{proof}[Proof of Corollary \ref{cor:ratios}]
Fix  $0<\lambda<\lambda_{cr}$. Clearly for every $x\not\in A$, $E_x [\exp(\lambda \tau_\Delta),\tau_A<\tau_\Delta]=E_x[\exp(\lambda \tau_\Delta)]$. But $E_x [\exp(\lambda \tau_\Delta),\tau_A < \tau_\Delta]\le \sum_{y\in A}E_x [ \exp(\lambda \tau_\Delta),\tau_y <\tau_\Delta]$, and therefore along any subsequence tending to infinity, there exists some $y$ such that $C^\lambda(\cdot,y) \ge \frac{1}{|A|}$  infinitely often, the result follows from Proposition \ref{prop:pzy}-(1). 
\end{proof}
\begin{proof}[Proof of Corollary \ref{cor:connection}]
Let $(x_n:n\in\N)\in [{\bf x}]$. First assume $K^\lambda ([{\bf x}],\cdot)$ is not identically zero. Pick $y\in S$ such that  $K^\lambda ([{\bf x}],y)>0$. Thus, \eqref{eq:ratioC} gives $\lim_{n\to\infty} C^\lambda(x_n,y)\in (0,\infty)$.

For the converse, since by assumption  $\lim_{n\to \infty} C^\lambda (x_n,y)>0$, \eqref{eq:ratioC} guarantees that $K^\lambda ([{\bf x}],y)>0$. In addition, Proposition \ref{prop:nux_prop} gives  
\begin{equation}
    \sum_{z\in S}K^\lambda(x_n,z)p(z,y)=e^{-\lambda}\left(K^\lambda(x_n,y)-\frac{\delta_{x_n}(y)}{G^\lambda(x_n,\bf 1)}\right).
\end{equation}
Since $K^\lambda(\cdot,\cdot)$ is nonnegative and bounded, the assumption \eqref{cond:pzy} allows to invoke dominated convergence to conclude that  $K^\lambda( [{\bf x}], \cdot) = e^{-\lambda} K^\lambda ([{\bf x}],\cdot)$.
Therefore, we proved that $K^\lambda ([{\bf x}],\cdot)$ is not identically zero, and satisfies \eqref{eq:eigen_description}.
\end{proof}

\section{Examples}
\label{sec:exmples}
In this section, we provide several simple applications of our results. 
\subsection{Downward Skip-Free Chains}
\label{sec:skipfree}
Consider a chain on $S=\Z_+$ and $\Delta = \{-1\}$, with the property that for every $x\in \Z_+$, and $l\in \{1,\dots, x+1\}$, we have $p(x,x-l)>0$ if and only if $l=1$. We will further assume that the chain satisfies Assumption  {\bf HD-\ref{as:inf},\ref{as:reg:moments},\ref{as:reg:Sirr}}. One notable case is of discrete time birth and death chains on $\Z_+\cup \{-1\}$ absorbed at $-1$. We will now show that for every $\lambda\in (0,\lambda_{cr}]$ there exists a unique QSD with absorption parameter $\lambda$, represented by the righthand side of \eqref{eq:dominated}.   

Indeed, since $\tau_\Delta>x$, Corollary \ref{cor:upto_cr} implies the existence of a QSD for every absorption parameter $\lambda \in (0,\lambda_{cr}]$. Moreover, Corollary  \ref{cor:Inu}-\ref{it:second_order} (with $\sigma$ taken as the identity) and Corollary \ref{cor:domination} guarantee that for each $\lambda$ in this range, there exists a unique QSD with absorption parameter $\lambda$, $\nu_\lambda$,  given by the righthand side of \eqref{eq:dominated}.  \tro{R2:M13}
\subsection{Generalized Cyclic Transfer}
\label{sec:xmpl_transfer}
This is a concrete example of a skip-free chain and probably the simplest closed-form example. This process generalizes the cyclic transfer process from  \cite{cyclic2020}. 

Assume $S=\Z_+$,  let $q \in (0,1)$ and $\mu$ be a probability distribution on $S$ with unbounded support (if $\mu$ has finite support, all derivation in this section hold verbatim with $S=\{0,1,\dots,\max \mbox{Supp}(\mu)\}$). For $x\in S$, consider the transition function $p$ on $S\cup\{\Delta\}$ illustrated in Figure \ref{fig:sbnd} and given by 
$$ \begin{cases} 
p(x,x-1) = 1 & x\in \N\\
p(0,\Delta) = q\\
p(0,x) = (1-q) \mu(x) & ~x\in S
\end{cases} 
$$ 
\begin{figure}[!ht]
\centering
\scalebox{0.9}{
  \begin{tikzpicture}[bullet/.append style={circle,inner sep=0.8ex},x=1.8cm,auto,bend angle=40]
 \draw[->, thick] (-1,0) -- (5.8,0);
 \path (-1,0) node[bullet, fill=white] (-3) {}
 (-1,0) node[bullet,fill=none] (100) {$\triangle$}
  (0,0) node[bullet,fill=white] (0) {$0$}
  (1,0) node[bullet,fill=white] (1) {$1$}
  (2,0) node[bullet,fill=white] (2) {$2$}
  (3,0) node[bullet, fill=white] (3) {$3$}
  (4,0) node[bullet, fill=white] (4) {$\cdots$}
  (5,0) node[bullet, fill=white] (5) {$x$};

 \draw[-{Stealth[bend]}] (1) to [bend right] node[pos=0.5, above]{$1$} (0) ;
 \draw[-{Stealth[bend]}] (2) to[bend right] node[pos=0.5, above]{$1$} (1);
  \draw[-{Stealth[bend]}] (3) to[bend right] node[pos=0.5, above]{$1$} (2);
    \draw[-{Stealth[bend]}] (4) to[bend right] node[pos=0.5, above]{$1$} (3);
     \draw[-{Stealth[bend]}] (5) to[bend right] node[pos=0.5, above]{$1$} (4);
   \draw[-{Stealth[bend]}] (0) to [bend left] node[pos=0.5, below]{$q$} (100);
    \draw[->]  (0) edge [in=-70, out=-110,looseness=10] node[pos=0.5, below]{} (0);
   \draw[-{Stealth[bend]}] (0) to[bend right] node[pos=0.5, below]{} (1);
\draw[-{Stealth[bend]}] (0) to[bend right] node[pos=0.5, below]{} (2);
   \draw[-{Stealth[bend]}] (0) to[bend right] node[pos=0.5, below]{} (3);
      \draw[-{Stealth[bend]}] (0) to[bend right] node[pos=0.5, below]{} (4);
      \draw[-{Stealth[bend]}] (0) to[bend right] node[pos=0.5, below]{$(1-q)\mu(x)$} (5);
\end{tikzpicture}}
\caption{Cyclic Transfer}
\label{fig:sbnd}
\end{figure}
Let $\varphi_\mu$ be the  moment generating function for $\mu$:
\begin{equation} 
\label{eq:varphimu}
\varphi_\mu (\lambda) = \sum_{j=0}^\infty \mu(j) e^{\lambda j}.
\end{equation} 
We will assume that $\varphi_\mu(\lambda)<\infty$ for some $\lambda>0$. 
Thus, {\bf HD-\ref{as:inf},\ref{as:reg:moments},\ref{as:reg:Sirr}} hold.  Let  $\lambda\in (0, \lambda_{cr})$. Observe that 
$$E_0[\exp(\lambda\tau_\Delta)]=e^\lambda q+e^\lambda(1-q)\varphi_\mu(\lambda)E_0[\exp(\lambda\tau_\Delta)].$$
Therefore 
\begin{equation}
    \label{eq:MGF_0}
    E_0[\exp(\lambda\tau_\Delta)]=\frac{e^\lambda q}{1-e^\lambda(1-q)\varphi_\mu(\lambda)}.
\end{equation}
Hence,  $\lambda_{cr}$ is the unique solution to 
\begin{equation} 
\label{eq:lambdacr_eqn} (1-q)e^\lambda \varphi_\mu(\lambda) =1.
\end{equation}
This implies that  $\lambda_{cr}$ is in the infinite MGF regime.

Next, since the process is downward-skip free, we will apply equation \eqref{eq:dominated} to obtain that for $\lambda \in (0,\lambda_{cr}]$, the  unique QSD with absorption parameter $\lambda>0$ is  given by
\begin{equation*}
    \nu_\lambda(y)=\frac{e^\lambda-1}{E_y[\exp(\lambda\tau_\Delta),\tau_\Delta<\tau_y]},
\end{equation*}
 To obtain an explicit formula, a similar calculation shows that for all $y\in\Z_+$  
$$E_0[\exp(\lambda \tau_\Delta),\tau_\Delta<\tau_y]=e^{\lambda} q+e^{\lambda}\sum_{0\leq j<y}(1-q)\mu(j)e^{\lambda j}E_0[\exp(\lambda\tau_\Delta),\tau_\Delta<\tau_y],$$
and therefore 
$$E_0[\exp(\lambda\tau_\Delta),\tau_\Delta<\tau_y]=\frac{e^{\lambda} q}{1-e^{\lambda}\sum_{j=0}^{y-1}(1-q)\mu(j)e^{\lambda j}}.$$
In addition, \tro{R1:P37}
\begin{align*}    E_y[\exp(\lambda\tau_\Delta),\tau_\Delta<\tau_y]&=e^{\lambda y}E_0[\exp(\lambda\tau_\Delta),\tau_\Delta<\tau_y].
\end{align*}
So
$$E_y[\exp(\lambda\tau_\Delta),\tau_\Delta<\tau_y]=\frac{e^{\lambda(y+1)}q}{1-e^\lambda\sum_{j=0}^{y-1}(1-q)\mu(j)e^{\lambda j}}.$$
Thus,
\begin{equation}
    \label{eq:transfer_QSD}
    \nu_\lambda(y)=\frac 1q e^{-\lambda(y+1)}(e^\lambda-1)(1-(1-q)\sum_{0\leq j<y}\mu(j)e^{\lambda (j+1)}).
\end{equation}
In summary, $\lambda_{cr}$ is given by \eqref{eq:lambdacr_eqn}, is in the infinite MGF regime and  for each $\lambda \in (0,\lambda_{cr}]$ there exists a unique QSD given by \eqref{eq:transfer_QSD}.  Note that the existence of all these QSDs is an immediate application of Corollary \ref{cor:upto_cr}.
\subsection{Absorption Probability Bounded from Below}
\label{sec:BD_UA}
Consider any process ${\bf Y}$ on $S\cup \{\Delta\}$ with transition function $p^Y$ satisfying Assumption {\bf HD-\ref{as:inf},\ref{as:reg:moments},\ref{as:reg:Sirr}}. We will also assume that for any $\lambda\in (0,\lambda_{cr}^Y)$, $\lim_{x\to\infty} E_x [\exp (\lambda \tau_\Delta^Y)]=\infty$.  Note that we have used the superscript $Y$ to denote quantities associated with $Y$, as we now introduce the process ${\bf X}$. 

Let $J$ be a geometric random variable, independent of ${\bf Y}$ with probability of success $1-e^{-\rho}$ for some $\rho>0$. Define ${\bf X}$ as follows: 
\begin{equation*}
X_n = \begin{cases} Y_n & n < J \\ \Delta & \mbox{otherwise}
\end{cases}
\end{equation*}
This is equivalent to defining $p(x,y) =e^{-\rho} p^Y(x,y)$ for $x,y \in S$, and $p(x,\Delta) = 1- \sum_{y\in S} p(x,y)$. Clearly, 
$$ P_x (\tau_\Delta > n) = P(\tau_\Delta^Y\wedge J>n)=P_x (\tau_\Delta> n) P(J>n)=P_x (\tau_\Delta >n) e^{-n\rho}.$$
Now since for every random variable $T$  which is nonnegative and taking integer values we have 
$$E[ \exp (\lambda T) ] = 1 + (e^\lambda -1) \sum_{n=0}^\infty e^{\lambda n}P(T>n),$$
it follows that 
\begin{align*} E_x [ \exp (\lambda \tau_\Delta) ]  &  = 1+ (e^{\lambda}-1)\sum_{n=0}^\infty e^{(\lambda-\rho)  n}P_x (\tau_\Delta^Y>n)\\
& = 1 + \frac{e^{\lambda}-1}{e^{\lambda - \rho}-1} (E_x [\exp ((\lambda-\rho) \tau_\Delta^Y)]-1).
\end{align*}
Therefore this expression is bounded as a function of $x$ if $\lambda < \rho$, and is equal to $1+(e^{\lambda}-1)E_x [ \tau_\Delta^Y]$ when $\lambda= \rho$, and tends to infinity as $x\to\infty$ when $\lambda> \rho$. Also, $\lambda_{cr} = \lambda_{cr}^Y+\rho$. It follows from Corollary \ref{cor:upto_cr} that ${\bf X}$ has QSDs with absorption parameter $\lambda$ for every $\lambda \in (\rho,\rho + \lambda_{cr}^Y]$, and that it does not possess any QSDs with absorption parameter $\lambda\in (0,\rho)$. 
\subsection{Subcritical Branching Process}
\label{sec:subcritical_branching}
\tro{R2:M14}
Let ${\bf X}$ be a branching process with a nondegenerate offspring distribution $B$ on $\Z_+$ (we will abuse notation and will refer to $B$ as a random variable). As usual \cite[p. 3]{Athreya1971} and to avoid trivialities we will assume $P(B=j)<1$ for all $j\in\N$ and $P(B\le 1)<1$. The unique absorbing state is $0$.  We will also assume that the process is subcritical, namely $E[B]\in (0,1)$, and let $m=E[B]$. A straightforward calculation shows that for any $x \in\N$,  $E_x [ e^{\lambda  \tau_\Delta}] < \infty$ if $e^{\lambda} < \frac{1}{m}$ and $E_x [e^{\lambda \tau_\Delta }] = \infty$ if $e^{\lambda }> \frac{1}{m}$. Therefore $e^{\lambda_{cr}} = \frac{1}{m}$, equivalently $\lambda_{cr} = \ln \frac 1m$. Though the restriction of ${\bf X}$ to the non-absorbing set $\N$ is not irreducible, we can restrict the process to an infinite subset of $\N$ depending on the support of $B$, resulting in an irreducible process. 

Our results provide a quick way to prove the existence of a continuum of QSDs. Indeed, for any $\lambda \in (0, \lambda_{cr})$, Jensen's inequality gives  $E_x [ \exp (\lambda \tau_\Delta) ] \ge e^{\lambda E_x [\tau_\Delta]}$. As $E_x [\tau_\Delta]$ is the expectation of the maximum of $x$ independent copies of $\tau_\Delta$ under the distribution $P_1$, it immediately follows that $\lim_{x\to\infty} E_x [ \tau_\Delta]=\infty$, and therefore Corollary \ref{cor:upto_cr} holds with $\lambda_0=0$. Namely, for every $\lambda \in (0,\lambda_{cr}= \ln \frac{1}{m}]$ there exists a QSD with absorption parameter $\lambda$. 

Existence and convergence results for a minimal QSD for the subcritical branching process are among the earliest in the field of QSDs.  Let $f$ be the generating function of $B$.  Yaglom's theorem \cite[Corollary 1, p. 18]{Athreya1971} states that for $x\in \N$, $P_x(X_n \in \cdot ~|~ \tau_\Delta > n)$ converges as $n\to\infty$ to a probability distribution on $\N$ which is the unique solution to the functional equation
\begin{equation} 
\label{eq:branching_generating} 
{\cal B} (f(s)) = m {\cal B}(s) + (1-m)
\end{equation}
among all probability distributions on $\N$. Being obtained as a quasi-limiting distribution, this limit is also a QSD. A straightforward calculation of the generating function for a solution to \eqref{eq:eigen_description} with $e^{-\lambda} = m$ reveals that it must satisfy \eqref{eq:branching_generating}, and so a minimal QSD exists and is unique. Denote this QSD by $\nu_{cr}$. 

\tro{R2:M15}As is well known, \cite[Corollary 2, p. 45]{Athreya1971}, the additional assumption $E[B \ln (1+B)]< \infty$ is equivalent to $\nu_{cr}$ having finite expectation, namely $\sum_{i=1}^\infty \nu_{cr} (i) i<\infty$. As the identity function $h(i)=i$ on $\N$ satisfies $ph = m h$, a straightforward application of the definition of the reverse chain associated with $\nu_{cr}$, \eqref{eq:q_def}, reveals that under this additional condition,  $\nu_{cr} h$ can be normalized to be a stationary distribution for $q$. Hence, $q$ is positive recurrent and proves the existence of a minimal QSD.  Proposition \ref{prop:reversal_probs} and the comment below it guarantee that $\lambda_{cr}$ is in the infinite regime and that \eqref{eq:finite_stopped}
 and \eqref{eq:positive_recurrent} hold. Moreover, if we take $S$ as the irreducible non-absorbing class mentioned above, then since \tro{R1:G20} $P(B=0)>0$, it follows that for every state $x \in\N$ in the support of $B$, $p(x,x)>0$, which along the irreducibility on $S$, implies that ${\bf X}$ is aperiodic. Thus, both the representation and the convergence results in Theorem \ref{th:nu_recurr} hold, and in particular, \eqref{eq:nu_recurr} provides us with a new MCMC method for sampling from the minimal QSD under this additional condition. We refer the reader to \cite{hautphenne}, which discusses numerical methods for solving \eqref{eq:branching_generating}.  

\subsection{Rooted Tree}
\label{xmpl:Rooted tree}
Consider an infinite rooted tree, with the root $\rho_r$ being the only state from which absorption is possible. We will assume $p$ is the nearest neighbor Markov Chain on this tree with a unique absorption state $\Delta$ satisfying Assumption {\bf  HD-\ref{as:inf},\ref{as:reg:moments},\ref{as:reg:Sirr}} and condition \tro{R1:G21}\eqref{cond:pzy}. For example, we can assume the degree of each vertex is bounded, and the transition from any vertex on the tree to any neighboring vertex on the tree is strictly positive, and for vertices other than the root, the transition to the unique vertex closer to the root is uniformly bounded below by $\frac 12+\epsilon$ for some $\epsilon>0$. 

Suppose $\lambda$ is in the finite regime (which may include $\lambda_{cr}$), and take a sequence of vertices $(x_n:n\in\N)$  going to infinity along some unique branch.  Recall $C^\lambda(x,y)$ from Definition \ref{defn:Clambda}. Two alternatives are illustrated by the following specific graph:
\begin{figure}[h]
    \centering
    \scalebox{1}{
\tikzset{every picture/.style={line width=0.75pt}} 

\begin{tikzpicture}[x=0.75pt,y=0.75pt,yscale=-1,xscale=1,every node/.append style={circle}]
\path (185,158) node [red, node contents=$\rho_r$]
       (138,142) node [black, node contents=$y_1$]
       (132,46)  node [black, node contents=$x_n$]
       (86, 102) node [black, node contents=$y_2$]
       (120,111.5) node [black, node contents=$y_0$]
       (220,123.5) node [black, node contents=$y'_2$]
        (237,157.5) node[isosceles triangle,draw,
    rotate=90, inner sep=0.4ex]{}
;
\draw   (191,152) -- (235,155.5) ;
\draw   [red, very thick, -] (147,141.5) node[fill=black, inner sep=0.4ex]{}  -- (191,152) node[fill=red, inner sep=0.4ex]{} ;
\draw    (191,152) -- (216,114.5)node[fill=black, inner sep=0.4ex]{} ;
\draw    (216,114.5)node[fill=black, inner sep=0.4ex]{} -- (262,113.5)node[fill=black, inner sep=0.4ex]{} ;
\draw    (206,72.5)node[fill=black, inner sep=0.4ex]{} -- (216,114.5)node[fill=black, inner sep=0.4ex]{} ;
\draw    (199,195.5)node[fill=black, inner sep=0.4ex]{} -- (244,219.5)node[fill=black, inner sep=0.4ex]{} ;
\draw    (169,234.5) node[fill=black, inner sep=0.4ex]{}-- (199,195.5)node[fill=black, inner sep=0.4ex]{} ;
\draw   [red, very thick, -] (123,103.5)node[fill=black, inner sep=0.4ex]{} -- (147,141.5) node[fill=black, inner sep=0.4ex]{};
\draw    (147,141.5)node[fill=black, inner sep=0.4ex]{} -- (122,173.5)node[fill=black, inner sep=0.4ex]{} ;
\draw    (191,152) -- (199,195.5) node[fill=black, inner sep=0.4ex]{};
\draw    (176,53.5)node[fill=black, inner sep=0.4ex]{} -- (206,72.5) node[fill=black, inner sep=0.4ex]{};
\draw    (206,72.5)node[fill=black, inner sep=0.4ex]{} -- (231,52.5) node[fill=black, inner sep=0.4ex]{};
\draw    (262,113.5)node[fill=black, inner sep=0.4ex]{} -- (276,143.5) node[fill=black, inner sep=0.4ex]{};
\draw    (262,113.5)node[fill=black, inner sep=0.4ex]{} -- (275,80.5) node[fill=black, inner sep=0.4ex]{};
\draw   [red, very thick, -] (134,71.5) node[fill=black, inner sep=0.4ex]{}-- (123,103.5)node[fill=black, inner sep=0.4ex]{} ;
\draw    (86,92.5) node[fill=black, inner sep=0.4ex]{}-- (123,103.5) node[fill=black, inner sep=0.4ex]{};
\draw    (122,173.5)node[fill=black, inner sep=0.4ex]{} -- (118,206.5) node[fill=black, inner sep=0.4ex]{};
\draw    (88,163.5) node[fill=black, inner sep=0.4ex]{}-- (122,173.5) node[fill=black, inner sep=0.4ex]{};
\draw    (169,234.5)node[fill=black, inner sep=0.4ex]{} -- (189,261.5) node[fill=black, inner sep=0.4ex]{};
\draw    (137,249.5) node[fill=black, inner sep=0.4ex]{}-- (169,234.5) node[fill=black, inner sep=0.4ex]{};
\draw    (244,219.5) node[fill=black, inner sep=0.4ex]{}-- (276,213.5) node[fill=black, inner sep=0.4ex]{};
\draw    (244,219.5)node[fill=black, inner sep=0.4ex]{} -- (242,251.5) node[fill=black, inner sep=0.4ex]{};
\draw    (159,45.5) -- (176,53.5) ;
\draw    (182,37.5) -- (176,53.5) ;
\draw    (231,52.5) -- (245,54.5) ;
\draw    (225,38.5) -- (231,52.5) ;
\draw    (275,80.5) -- (291,71.5) ;
\draw    (266,63.5) -- (275,80.5) ;
\draw   [red, very thick, -] (132,50.5) -- (134,71.5) ;
\draw    (114,63.5) -- (134,71.5) ;
\draw    (77,73.5) -- (86,92.5) ;
\draw    (70,102.5) -- (86,92.5) ;
\draw    (71,148.5) -- (88,163.5) ;
\draw    (88,163.5) -- (71,178.5) ;
\draw    (118,206.5) -- (122,229.5) ;
\draw    (93,212.5) -- (118,206.5) ;
\draw    (137,249.5) -- (149,271.5) ;
\draw    (116,255.5) -- (137,249.5) ;
\draw    (189,261.5) -- (210,267.5) ;
\draw    (189,261.5) -- (178,280.5) ;
\draw    (276,213.5) -- (279,190.5) ;
\draw    (276,213.5) -- (298,218.5) ;
\draw    (242,251.5) -- (266,257.5) ;
\draw    (242,251.5) -- (243,273.5) ;
\draw    (276,143.5) -- (296,135.5) ;
\draw    (276,143.5) -- (288,158.5) ;
\end{tikzpicture}
}
    \caption{Regular tree of degree 3 with a unique absorption state $\Delta$}
    \label{fig:regulartree}
\end{figure}
\begin{enumerate} 
\item For $y$ in that branch, we clearly have $\lim_{n\to\infty} C^\lambda (x_n,y)=1$. For instance, $y_1$ is on the branch in Figure \ref{fig:regulartree}. 
\item For other $y$,  we need to consider two cases. 
\begin{enumerate} 
\item $y$ is not on the branch, yet it has ancestors on the branch other than $\rho_r$. One example is the vertex $y_2$ in Figure \ref{fig:regulartree}. In this case, let $y_0$ denote the most recent ancestor of $y_2$ on the branch. For a path to get to $y_2$, it must pass through $y_0$. With this, 
$$C^\lambda (x_n,y)= \frac{E_{x_n} [\exp(\lambda\tau_{y_0}),\tau_{y_0} <\tau_\Delta] E_{y_0} [\exp(\lambda\tau_y),\tau_y < \tau_\Delta] E_y[\exp(\lambda\tau_\Delta)]}{E_{x_n}[\exp(\lambda\tau_\Delta)]-1}$$
$$\underset{n\to\infty}{\to} \frac{E_{y_0} [\exp(\lambda\tau_y),\tau_y < \tau_\Delta] E_y[\exp(\lambda\tau_\Delta)]}{E_{y_0}[\exp(\lambda\tau_\Delta)]}\in (0,1)$$
\item $y$ has no ancestor on the branch other than $\rho_r$, and so to get to $y$ from $x_n$ with $n$ large enough, we need to pass through the root where absorption is possible. For instance, in Figure \ref{fig:regulartree} $y_2'$ has no ancestor on the branch, and in order to go from $x_n$ to $y_2'$, we need to pass through $\rho_r$. Thus, 
\begin{align*}
    C^\lambda (x_n,y) &= \frac{E_{x_n} [\exp(\lambda\tau_{\rho_r})]E_{\rho_{r}} [\exp(\lambda\tau_\Delta),\tau_y < \tau_\Delta]}{E_{x_n} [\exp(\lambda\tau_{\rho_r})]E_{\rho_{r}}[\exp(\lambda\tau_\Delta)]-1}\\
    &\underset{n\to\infty}{\to} \frac{E_{\rho_{r}} [\exp(\lambda\tau_\Delta),\tau_y<\tau_\Delta]}{E_{\rho_{r}} [\exp(\lambda\tau_\Delta)]}\in (0,1)
\end{align*}
\end{enumerate} 
\end{enumerate}

Next, recall $K^{\lambda}(x_n,\cdot)$ from Definition \ref{def:KMartin}, we will also show that in the finite MGF regime, \tro{R1:M30} $\lim_{n\to \infty}K^\lambda(x_n,\cdot)$ exists pointwise and is a QSD.

Let $\bar y$ be the unique element on the same branch as $(x_n:n\in \N)$ satisfying $|\bar y|=|y|$, i.e., the lengths of the shortest paths from $\rho_{r}$  are equal. Since we are interested in the limit as $n\to \infty$, without loss of generality, we can assume $|x_n|>|y|$ for all $n$. To reach $y$ from $x_n$, the process must first hit $\bar y$. Therefore, \eqref{eq:Kalpha} and the strong Markov propperty give 

$$ K^\lambda (x_n, y ) = \frac{e^\lambda -1}{E_{x_n} [\exp (\lambda \tau_{\bar y})]E_{\bar y} [\exp (\lambda \tau_\Delta)]-1}\times \frac{E_{x_n} [\exp (\lambda \tau_{\bar y})]E_{\bar y} [\exp (\lambda ^0 \tau_y),^0 \tau_y < \tau_\Delta)]}{1-E_{y}[\exp (\lambda \tau_y),\tau_y < \tau_\Delta]}.$$
By taking $n\to\infty$,  we then obtain an asymptotic cancellation leading to 
\begin{equation}
\label{eq:branch} \lim_{n\to\infty} K^\lambda (x_n, y ) = \frac{e^\lambda -1}{E_{\bar y} [\exp (\lambda \tau_\Delta)]}\times \frac{E_{\bar y} [\exp (\lambda ^0 \tau_y),^0 \tau_y < \tau_\Delta)]}{1-E_{y}[\exp (\lambda \tau_y),\tau_y < \tau_\Delta]}.
\end{equation}
Therefore it follows from Corollary \ref{cor:connection} that  $(x_n:n\in\N)$ is $\lambda,\infty$-convergent and that  $K^\lambda([{\bf x}],\cdot)$ is a QSD with absorption parameter $\lambda$. In particular, the elements of $S^\lambda$ can be indexed by the branches of the tree. The QSD for each branch is given by the righthand side of \eqref{eq:branch}, where $y$ is any vertex and $\bar y$ is the unique vertex on the branch satisfying $|\bar y| = |y|$. 

\section{Results: Continuous-Time} 
\label{sec:QSD_cts}\tro{R2:M16}
\subsection{Definitions and Assumptions}
We adapt the main results of Section \ref{sec:disc_time} to the continuous-time setting. This adaptation is mostly straightforward and routine, and we present it primarily in order to make a connection with the literature in the continuous-time setting.

We begin by introducing the set of hypotheses. Let ${\bX}=(\X_t:t\in \R_+)$ be a Markov Chain on a state space which is a disjoint union $S\cup \{\Delta\}$, where $S$ is either finite or countably infinite. We will denote the distribution and expectation of $\bX$ under the initial distribution $\mu$ by $P_\mu$ and $E_\mu$ respectively, with $P_x$ and $E_x$ serving as shorthand for $P_{\delta_x}$ and $E_{\delta_x}$, respectively. For $x\in S \cup \{\Delta\}$, let 
\begin{equation}
    \ttau_x = \inf \{ t >0: \X_t =x , \X_{t-}\ne x\}.
\end{equation}



We will work under the following hypotheses, which are the analogs of the assumptions made for the discrete-time setting: 
\begin{assumc}
\label{asc:reg:uab}
 $\ttau_\Delta < \infty$, $P_x-a.s.$ for some $x\in S$. 
\end{assumc}
\begin{assumc}
\label{asc:reg:Sirr}
The set $S$ is an irreducible class, and the exponential holding time at each $x\in S$ has parameter $q_x \in (0,\infty)$. 
\end{assumc}
Clearly, $\Delta$ is the unique absorbing state, and therefore, we will refer to $\ttau_\Delta$ as the absorption time.  Note that {\bf HC-\ref{asc:reg:moments}} guarantees that $\bX$ is non-explosive. This is mostly for the simplicity of the presentation, as the explosion can be handled by the discretization scheme we use to derive the results below. We briefly review the notion of a QSD in a continuous time setting and some basic properties.  Recall that a probability distribution $\nu$ on $S$ is a QSD if the following analog of \eqref{eq:QSD_defn} holds. 
\begin{equation}
\label{eq:QSD_cts_defn}
    P_\nu (\X_t \in \cdot~ |~ \ttau_\Delta > t) = \nu (\cdot),~t\in \R_+,
\end{equation}
If $\nu$ is a QSD, then under $P_\nu$, $\ttau_\Delta$ is exponentially distributed with parameter $\lambda>0$.  That is, the following analog of \eqref{eq:QSD_zeta_tail} holds: 
\begin{equation}
    \label{eq:QSD_cts_tail}
    P_\nu (\ttau_\Delta > t ) = e^{-\lambda t},~ t \in \R_+,
\end{equation}
and we say that $\nu$ is a QSD with absorption parameter $\lambda$. \tro{R1:P38}A proof of  \eqref{eq:QSD_cts_tail} is included in the proof of the forward implication of Proposition \ref{prop:generator} below. In light of this necessary condition, we can replace  {\bf HC-\ref{asc:reg:uab}} with the following stronger hypothesis 
\begin{assumcp}
\label{asc:reg:moments}
There exists $\beta>0$ such that $E_x [\exp(\beta\ttau_\Delta)]<\infty$  for some $x\in S$.
\end{assumcp}
We write $({\cal P}_t:t\in\R_+)$ for the semigroup of contractions on $\ell^1(S)$ given by \tro{R1:P39}
$$ (\rho {\cal P}_t)(y) = \sum_{x \in S} \rho(x) P_x (\bX_t = y),~\rho \in \ell^1(S).$$
Hypothesis {\bf HC-\ref{asc:reg:Sirr}} implies that the semigroup is weakly continuous from the right at $0$. Namely, for any $\rho\in \ell^1(S)$ and $f \in \ell^\infty(S)$, $\lim_{t\downarrow 0} (\rho {\cal P}_t )f  = \lim_{t\downarrow 0} \sum_{x\in S}\rho(x) E_x [f (\bX_t)]=\sum_{x\in S}\rho(x) f(x)$, and therefore, \cite[Theorem 1.4, p. 44]{pazy}, \cite[Chapter X, Corollary of Theorem 10.2.3]{hille}, the semigroups is strongly continuous, that is for any $\rho\in \ell^1(S)$, $\lim_{t\downarrow 0} \rho {\cal P}_t= \rho$ in $\ell^1(S)$.\tro{R1:M31} As a result \cite[Corollary 2.5, p.5]{pazy}, the semigroup has a densely defined closed generator ${\cal L}$: 
\begin{equation}
    \label{eq:generator_def} 
\nu {\cal L} = \lim_{t\downarrow 0} \frac{\rho {\cal P}_t - \rho}{t},
\end{equation} 
where the limit is in $\ell^1(S)$. In particular, we have the following
\begin{prop} Let {\bf HC-\ref{asc:reg:moments},\ref{asc:reg:Sirr}} hold. 
\label{prop:generator}
    A probability measure $\nu$ on $S$ is a QSD for $\bX$ with absorption parameter $\lambda>0$ if and only if $\nu$ is in the domain of ${\cal L}$ and
    \begin{equation} 
    \label{eq:generator} \nu {\cal L} = -\lambda \nu.
    \end{equation}
\end{prop}
\tro{R1:M32}Though the proof is straightforward, we bring it here for completeness. We note that the equation \eqref{eq:generator} can be reexpressed as an infinite system of linear equations and note that as explicitly shown for a birth and death process in \cite[p. 691-692]{van1991}, the system may have solutions which are probability measures but are not QSDs (because they are not in the domain of the generator). 
\begin{proof}
Assume first that $\nu$ is a QSD. For $t\in \R_+$, letting $c_t = \sum_{y}(\nu {\cal P}_t)(y) = P_{\nu} (\ttau_\Delta>t)$, then  \eqref{eq:QSD_cts_defn} can be rewritten as 
$$\nu {\cal P}_t  = c_t \nu .$$ 
Thus, on the one hand, $\nu {\cal P}_{t+s} = c_{t+s} \nu$, while on the other hand, using the semigroup property, 
\tro{R1:M33}\begin{align*} \nu {\cal P}_{t+s} & = \nu {\cal P}_t {\cal P}_s  \\
&=c_t  \nu {\cal P}_s  \\
&=c_t c_s   \nu.
\end{align*}
As $\nu$ is strictly positive, it follows that the function $t\to c_t$ is multiplicative. It is equal to $1$ at zero and nonincreasing and tends to $0$ as $t\to\infty$. It is therefore of the form $e^{-\lambda t}$ for some $\lambda > 0$. As a result, $\nu {\cal P}_t = e^{-\lambda t} \nu$. This implies that $\nu$ is in the domain of ${\cal L}$ and that \eqref{eq:generator} holds. \tro{R1:P40}To prove the converse, suppose that $\nu$ is in the domain of ${\cal L}$ and satisfies \eqref{eq:generator}.  Then \cite[Section 2, Theorem 1.3]{pazy} gives that for any $f \in \ell^\infty (S)$,  
$\nu {\cal P}_t f  = \nu f  + \int_0^t \nu {\cal L} {\cal P}_s f ds $. Letting,  $\phi_f (t) = \nu {\cal P}_t f $ on $\R_+$, then $\phi_f$ is continous and satisfies  
$\phi_f (t) = \phi_f (0) - \lambda \int_0^t \phi_f (s) ds$. As a result,  $\phi_f (t) = e^{-\lambda t} \phi_f (0)$. As this is true for all $f \in \ell^\infty(S)$, we have $\nu {\cal P}_t = e^{-\lambda t} \nu$, which implies \eqref{eq:QSD_cts_defn}.  
\end{proof}
Next we introduce the critical parameter $\bblambda_{cr}$ and the finite and infinite MGF regimes analogously to the discrete-time case, Definition \ref{def:lambda_Delta_def}.\tro{R1:P41}
\begin{defn}
 \label{def:lambda_Delta_def_cts}
 Let {\bf HC-\ref{asc:reg:moments},\ref{asc:reg:Sirr}} hold. 
 \begin{enumerate} 
 \item The critical absorption parameter $\bblambda_{cr}$ is given by
\begin{equation} 
\bblambda_{cr} = \sup \{ \lambda> 0: E_x [ \exp (\lambda \ttau_\Delta)] < \infty\mbox{ for some }x\in S\}.
\end{equation}
\item A parameter $\lambda \in (0,\bblambda_{cr}]$ is in the finite MGF regime if $E_x [ \exp (\lambda \bbtau_\Delta)] < \infty$ for some $x\in S$.
\item The critical absorption parameter $\bblambda_{cr}$ is in the infinite MGF regime if 
$E_x [ \exp (\lambda_{cr} \bbtau_\Delta)]=\infty$ for some $x\in S$. 
\end{enumerate} 
\end{defn} 
\tro{R1:M34} {\bf HC-\ref{asc:reg:moments}} guarantees that $\bblambda_{cr}\ge \beta >0$.  Let $x\in S$. Then the distribution of $\ttau_\Delta$ under $P_x$ dominates the exponential holding time at $x$, an exponential random variable with parameter $q_x$. Therefore $E_x [ \exp (\lambda \ttau_\Delta)]=\infty$ if $\lambda \ge  q_x$, which implies  $\bblambda_{cr} \le \inf_{x\in S} q_x$.  Thus, $\bblambda_{cr} \in [\beta,\inf_{x\in S} q_x]$. A QSD with absorption parameter $\bblambda_{cr}$ is called the minimal QSD.


We record the following analog of Proposition \ref{prop:weird_values}, which we will need in the sequel. As the proof is identical to the proof in the discrete case, it will be omitted. 
\begin{prop}\label{prop:weird_values_cts}  Let {\bf HC-\ref{asc:reg:moments},\ref{asc:reg:Sirr}} hold and let 
 $\lambda\in (0, \bblambda_{cr}]$. Then for every $x\in S$,  $E_x [\exp (\lambda \tau_x),\bbtau_x<\bbtau_\Delta]\le 1$. Moreover,
\begin{enumerate} 
\item If $E_x [ \exp (\lambda \bbtau_\Delta) ] < \infty$ then the  inequality is strict; 
\item If $E_x [\exp(\bblambda_{cr} \tau_\Delta)]=\infty$ and $E_x[\exp(\bblambda_{cr} \bbtau_\Delta),\bbtau_\Delta<\bbtau_x]<\infty$ for some $x\in S$, then $E_x [\exp(\bblambda_{cr} \bbtau_x),\bbtau_x<\tau_\Delta]=1$ for all $x\in S$. 
\end{enumerate}
\end{prop}
For the remainder of Section \ref{sec:QSD_cts} we will assume that {\bf HC-\ref{as:reg:moments},\ref{as:reg:Sirr}} hold without additional reference. 
\subsection{Discretizing Time}
All results we present below use the embedded discrete-time processes we present in this section. For $d>0$, define a discrete-time Markov chain on $S\cup \{\Delta\}$,  ${\bf X}^d=(X^d_n:n\in\Z_+)$ by letting  
\begin{equation}
 X^d_n = \X_{dn}. 
\end{equation}
For $x\in S\cup \{\Delta\}$, let 
$$ \tau^d_x = \inf \{ n \in \N: X^d_n =x \}.$$
Clearly, ${\bf X}^d$ is a Markov chain satisfying {\bf HD-\ref{as:reg:moments}} and {\bf HD-\ref{as:reg:Sirr}}. Moreover, if $\tau_\Delta^d=n$, then on the one hand necessarily, $\X_{nd} = \Delta$, and so $\ttau_\Delta \le dn$, and on the other hand, $X^d_{n-1} \in S$ or, equivalently, $\X_{(n-1)d}\in S$, so $\ttau_\Delta> d(n-1)$. We therefore have that for every $x\in S$ and $d>0$, 
\begin{equation} 
\label{eq:59}
\ttau_\Delta \le d\tau^d_\Delta < \ttau_\Delta+d.
\end{equation}
From this, it follows that 
\begin{equation} 
\label{eq:exponontial_equivalency} 
E_x [\exp (\lambda \ttau_\Delta ) ]
\le E_x  [\exp (\lambda d \tau_\Delta^d) ]  \le 
e^{\lambda d} E_x [\exp (\lambda\ttau_\Delta)].
\end{equation}
and in particular letting $\lambda_{cr}^d$ denote the critical absorption parameter for ${\bf X}^d$, then   
\begin{equation}
\lambda_{cr}^d = d \bblambda_{cr}.
\end{equation}
Next we state the continuous-time analog of Proposition \ref{prop:K2single}. As the proof is identical, it will be omitted: 
\begin{prop}
\label{prop:K2single_cts}
Suppose that $K\subsetneq S$ is nonempty and finite and that for some $x \not \in K$,
\begin{equation}
    \label{eq:getK_cts}
 E_x [ \exp (\bblambda_{cr}  (\bbtau_\Delta\wedge \bbtau_K))] < \infty.
\end{equation}
Then there exists $z\in K$ so that 
$$E_{z} [ \exp (\bblambda_{cr} ( \bbtau_\Delta \wedge \bbtau_z))]<\infty.$$
\end{prop}
The following allows connecting hitting times of states other than $\Delta$ for the continuous and the discrete processes. 
\begin{prop}
\label{prop:also_disc_finite}
Let $x_0\in S$ and suppose that $E_{x_0} [\exp (\lambda  (\bbtau_\Delta\wedge\bbtau_{x_0}) )]<\infty$. Then there exists $d_0>0$ such that for $d \in (0,d_0)$, $E_{x_0} [\exp (\lambda d (\tau^d_\Delta \wedge \tau^d_{x_0} ))]<\infty$. 
\end{prop}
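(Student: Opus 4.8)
The plan is to compare the continuous-time stopped time $\bbtau_\Delta\wedge\bbtau_{x_0}$ with its discretized counterpart $d(\tau^d_\Delta\wedge\tau^d_{x_0})$, showing that the latter is, up to a multiplicative $e^{\lambda d}$ factor and an additive $d$, dominated by the former plus a controllable error coming from the holding-time behavior at $x_0$. The key subtlety — and the reason the statement needs a small $d_0$ rather than holding for all $d$ — is that in continuous time the chain may leave $x_0$ and return to $x_0$ several times within a single time-window of length $d$, so $\tau^d_{x_0}$ can exceed $\bbtau_{x_0}/d$; these excursions contribute extra mass to $\exp(\lambda d\tau^d_{x_0})$ that we must absorb.

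First I would record the pathwise comparison on the event $\{\bbtau_\Delta<\bbtau_{x_0}\}$: exactly as in \eqref{eq:exponontial_equivalency}, absorption forces $\bbtau_\Delta\le d\tau^d_\Delta<\bbtau_\Delta+d$, so on this event $d(\tau^d_\Delta\wedge\tau^d_{x_0})\le d\tau^d_\Delta<\bbtau_\Delta+d=(\bbtau_\Delta\wedge\bbtau_{x_0})+d$, and hence the contribution of this event to $E_{x_0}[\exp(\lambda d(\tau^d_\Delta\wedge\tau^d_{x_0}))]$ is at most $e^{\lambda d}E_{x_0}[\exp(\lambda(\bbtau_\Delta\wedge\bbtau_{x_0})),\bbtau_\Delta<\bbtau_{x_0}]$, which is finite by hypothesis. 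The work is on the complementary event $\{\bbtau_{x_0}\le\bbtau_\Delta\}$: here the continuous chain returns to $x_0$ at some finite time $\bbtau_{x_0}$, and I would apply the strong Markov property at $\bbtau_{x_0}$. Write $r$ for the (a.s.\ finite) value $d\lceil \bbtau_{x_0}/d\rceil$, the first discrete grid point at or after $\bbtau_{x_0}$; then $\tau^d_{x_0}\le r/d + (\text{number of extra grid steps spent at }x_0\text{-excursions before the grid catches }x_0)$. Cleanly: on $\{\bbtau_{x_0}\le\bbtau_\Delta\}$ one has $d\tau^d_{x_0}\le \bbtau_{x_0}+d+R$, where $R=R(d)$ is the additional continuous time the chain spends after $\bbtau_{x_0}$ before it first sits at $x_0$ across a full sampling instant — equivalently $R$ is bounded by the length of the initial run of consecutive $x_0$-excursions-and-holds that fit inside a window of length $\le d$. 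Since the holding time at $x_0$ is $\mathrm{Exp}(q_{x_0})$ with $q_{x_0}>\bblambda_{cr}\ge\lambda$, and since $\bbtau_{x_0}$ is again $\mathrm{Exp}$-type (an excursion away from $x_0$ has a strictly positive holding time), the random variable $R(d)$ is stochastically dominated by a geometric-type sum that vanishes as $d\downarrow0$; in particular $E_{x_0}[\exp(\lambda R(d))]\to 1$ and is finite for all small $d$.

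Then I would assemble the bound: splitting according to whether $\bbtau_\Delta$ or $\bbtau_{x_0}$ occurs first,
\begin{align*}
E_{x_0}\bigl[\exp(\lambda d(\tau^d_\Delta\wedge\tau^d_{x_0}))\bigr]
&\le e^{\lambda d}E_{x_0}\bigl[\exp(\lambda(\bbtau_\Delta\wedge\bbtau_{x_0})),\bbtau_\Delta<\bbtau_{x_0}\bigr]\\
&\quad + e^{\lambda d}\,E_{x_0}\bigl[\exp(\lambda\bbtau_{x_0})\exp(\lambda R(d)),\bbtau_{x_0}\le\bbtau_\Delta\bigr],
\end{align*}
and on the last term I would use independence of the post-$\bbtau_{x_0}$ holding structure (strong Markov at $\bbtau_{x_0}$) to factor out $E_{x_0}[\exp(\lambda R(d))]$, leaving $E_{x_0}[\exp(\lambda\bbtau_{x_0}),\bbtau_{x_0}\le\bbtau_\Delta]=E_{x_0}[\exp(\lambda\bbtau_{x_0}),\bbtau_{x_0}<\bbtau_\Delta]\le E_{x_0}[\exp(\lambda(\bbtau_\Delta\wedge\bbtau_{x_0}))]<\infty$ by hypothesis (note $E_{x_0}[\exp(\lambda \bbtau_{x_0}),\bbtau_{x_0}=\bbtau_\Delta]=0$ since the absorbed state differs from $x_0$). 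Picking $d_0$ small enough that $E_{x_0}[\exp(\lambda R(d))]<\infty$ for $d<d_0$ finishes the argument.

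The main obstacle is making the bound $d\tau^d_{x_0}\le\bbtau_{x_0}+d+R(d)$ precise and verifying $E_{x_0}[\exp(\lambda R(d))]<\infty$ for small $d$: one must argue that the number of returns to $x_0$ that the continuous chain can fit inside a short time window is, in MGF terms, controlled, which uses $q_{x_0}<\infty$ (positive holding times, so excursions take genuine time) together with $\lambda\le\bblambda_{cr}\le q_{x_0}$ to keep the exponential weight subcritical. An alternative, perhaps cleaner route that avoids tracking $R(d)$ explicitly is to invoke Proposition \ref{prop:zeta_taux}-style reasoning for ${\bf X}^d$ directly — but since that proposition is stated for the critical parameter, the excursion-counting estimate above is the more robust plan.
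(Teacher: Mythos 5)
Your proposal is correct and is essentially the paper's argument: both reduce the problem to showing that the time of the first return to $x_0$ after which the chain holds at $x_0$ for a full interval of length $d$ (your $\bbtau_{x_0}+R(d)$, the paper's $\bbtau^J_{x_0}$), stopped at absorption, has finite exponential moment, and both obtain this from the same geometric-trials estimate, whose ratio $\eta_d=E_{x_0}[\exp(\lambda(\bbtau_{x_0}\wedge\bbtau_\Delta)),\bbtau_{x_0}<\bbtau_\Delta]\,(1-e^{-q_{x_0}d})\,e^{\lambda d}$ drops below $1$ for $d$ small. The one point to tighten is that $R(d)$ must be capped at absorption (as literally defined it is infinite on the event that the chain is absorbed before ever holding at $x_0$ across a grid point, and the unstopped absorption time need not have finite MGF); the paper handles this, and the a priori finiteness needed to close the renewal inequality, by working with $\bbtau^J_{x_0}\wedge\bbtau_\Delta$ truncated at a level $M$ and letting $M\to\infty$.
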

\begin{proof}
We begin by recalling that if $\bX$ starts from $x_0$, then the state where it jumps to first and the time it takes to perform the jump are independent. This can be done through the construction of the Markov chain. For simplicity, label the states other than $x_0$ by $1,2,\dots$. Let $T_1, T_2,\dots$ be independent exponential random variables with respective parameters $\rho_1,\rho_2,\dots$.  We assume $q=q_{x_0}=\sum_{j} \rho_j < \infty$.  Let $T=\inf T_j$.  Clearly, $T$ is exponential with parameter $q$. Also, let $R$ be the smallest (and unique, almost surely) index $j$ satisfying $T = T_j$. Then, the chain will jump to state $R$ at time $T$. The joint distribution of $R$ and $T$ is given by 
$$ P(R=j, T > t) = P(T_j > t , \cap_{i\ne j} \{T_i \ge T_j\})= \rho_j \int_t ^\infty e^{-\rho_j s} e^{-\sum_{i \ne j} \rho_i s }ds=\frac{\rho_j}{q}e^{-q t}.$$
Therefore, $R$ and $T$ are independent. In particular, $P(R=j | T \le d) = P(R=j)=\frac{\rho_j}{q}$. 

We turn to the main claim. Let $\bbtau_{x_0}^1= \bbtau_{x_0}$ and continue inductively, defining a  sequence $(\bbtau_{x_0}^k:k=1,2,\dots)$ by letting 
$$\bbtau_{x_0}^{k+1} = \inf \{ t > \bbtau_{x_0}^k : \X_{t-} \ne x_0,~ \X_t = x_0\}.$$
  Let $J= \inf \{ k\ge 1: \bbtau_{x_0}^k < \infty, \X_{ \bbtau_{x_0}^k+ s}= x_0,\mbox{ for all }s \in [0,d]\}$. In both definitions above, we adopt the convention $\inf \emptyset = \infty$. Note that $J$ is the first visit to $x_0$ after which the $\bX$ stays put for at least $d$ units, and recall that the distribution of a Markov Chain right after a jump from a given state is independent of the time it took to jump from the state. On the event $J<\infty$, $\bbtau^J_{x_0}$ is finite, and  on $J=\infty$, we define $\bbtau_{x_0}^J = \infty$. 
  We need a few more definitions. First, for $M>0$, let  $v_M(x_0) = E_{x_0} [\exp (\lambda (\bbtau^J_{x_0} \wedge \bbtau_\Delta\wedge M))]$.  Next, let
  \tro{R1:P42}$$W= E_{x_0} [\exp (\lambda (\bbtau_{x_0}\wedge \bbtau_\Delta)),\bbtau_{\Delta}< \bbtau_{x_0}] +  E_{x_0} [\exp (\lambda( \bbtau_{x_0}\wedge \bbtau_\Delta)),\bbtau_{x_0}<\bbtau_{\Delta},J=1].$$
  This expression is bounded above by $E_{x_0} [\exp (\lambda (\bbtau_{x_0}\wedge \bbtau_\Delta))]$ and is therefore finite by assumption.  We  have the upper bound 
\begin{equation}
\label{eq:breaking_vm} v_M(x_0) \le  W + E_{x_0} [ \exp (\lambda (\bbtau_{x_0}^J\wedge \bbtau_\Delta\wedge M)),\bbtau_{x_0}<\bbtau_\Delta,J>1].
\end{equation} 
We examine the second summand on the righthand side. \tro{R1:M35}By conditioning on the process up to time $\ttau_{x_0}$, and observing that the holding time at $x_0$ is exponential with parameter $x_0$, independent of the process up to time $\ttau_{x_0}$, it is bounded  above by 
$$ \us{=\eta_d}{E_{x_0} [\exp (\lambda \bbtau_{x_0}),\bbtau_{x_0}<\bbtau_\Delta)](1-e^{-q_{x_0}d})e^{\lambda d}} v_M(x_0).$$
Let $d_0$ be such that $\eta_d <1$ for $d\in (0,d_0)$.
  Thus,   $v_M (x_0) \le S + \eta_d  v_M(x_0)$, and since $v_M(x_0)$ is finite by construction, $v_M(x_0) \le \frac{W}{1-\eta_d}$. The righthand side is independent of $M$. By letting $M\to \infty$, it follows from monotone convergence that 
$$E_{x_0} [ \exp (\lambda (\bbtau_{x_0}^J \wedge \tau_\Delta)) ] \le \frac{W}{1-\eta_d}< \infty.$$
As it is always true that  $d \tau^d_\Delta \le \bbtau_\Delta + d$, on the event $J=\infty$, we clearly have $d (\tau^d_{x_0}\wedge \tau^d_\Delta) \le \bbtau_{x_0}^J \wedge \bbtau_\Delta + d$. On the event $J<\infty$, the definition of $\bbtau_{x_0}^J$ gives  $d\tau^d_{x_0}\le \bbtau_{x_0}^J + d$, so $d (\tau^d_{x_0}\wedge \tau^d_\Delta) \le \bbtau_{x_0}^J \wedge \bbtau_\Delta + d$ too.  This completes the proof. 
\end{proof} 
Finally, we provide a connection between the QSDs for $\bX$ and those for ${\bf X}^d$. 
\begin{prop}
\label{prop:QSD_equivalency}
Let $d>0$.  The process ${\bX}$ has a QSD with absorption parameter $\lambda$ if and only if ${\bf X}^d$ has a QSD with absorption parameter $d\lambda$. Specifically, 
\begin{enumerate}
\item
Let $\nu$ be a QSD for ${\bX}$ with absorption parameter $\lambda$. Then $\nu$ is a QSD for ${\bf X}^d$ with absorption parameter $d\lambda$. 
\item 
Conversely, let $\nu$ be a QSD for ${\bf X}^d$ with absorption parameter $d\lambda $. Then $\int_0^{d} e^{\lambda s} P_\nu (\X_s \in \cdot~ ) ds$ can be normalized to be a QSD for $\bX$ with absorption parameter $\lambda$.
\end{enumerate}
\end{prop}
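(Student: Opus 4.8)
The plan is to route everything through the identity $\nu{\cal P}_t = e^{-\lambda t}\nu$, which the excerpt has already shown characterizes continuous-time QSDs with absorption parameter $\lambda$, together with the discrete eigen-characterization Proposition \ref{prop:eigen_equivalence} applied to ${\bf X}^d$ (legitimate since ${\bf X}^d$ satisfies {\bf HD-1,2,3}). The key preliminary observation is that the one-step substochastic kernel of ${\bf X}^d$ on $S$ is precisely $p^d(x,y)=P_x(\bX_d=y)$, so by Proposition \ref{prop:eigen_equivalence} the statement ``$\nu$ is a QSD for ${\bf X}^d$ with absorption parameter $d\lambda$'' is equivalent to the single operator identity $\nu{\cal P}_d = e^{-d\lambda}\nu$.

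For the forward direction (item 1) I would simply note that if $\nu$ is a QSD for $\bX$ with parameter $\lambda$ then $\nu{\cal P}_t = e^{-\lambda t}\nu$ for all $t$, in particular at $t=d$, which is exactly the identity above; Proposition \ref{prop:eigen_equivalence} then yields that $\nu$ is a QSD for ${\bf X}^d$ with absorption parameter $d\lambda$. This step is immediate.

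For the converse (item 2) I would start from $\nu{\cal P}_d = e^{-d\lambda}\nu$ (Proposition \ref{prop:eigen_equivalence} again). Using strong continuity of $({\cal P}_t)$, the $\ell^1(S)$-valued map $s\mapsto e^{\lambda s}\nu{\cal P}_s$ is continuous on $[0,d]$, so $\tilde\nu := \int_0^d e^{\lambda s}\nu{\cal P}_s\,ds$ is a well-defined element of $\ell^1(S)$ whose coordinates are $\int_0^d e^{\lambda s}P_\nu(\bX_s=y)\,ds$ — i.e. $\tilde\nu$ is the measure in the statement — and I would record $0<d\,e^{-d\lambda}\le\tilde\nu(S)\le(e^{\lambda d}-1)/\lambda<\infty$, using that $\Delta$ is absorbing (so $\{\bX_s\in S\}$ decreases in $s$ and $P_\nu(\bX_d\in S)=e^{-d\lambda}$). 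The heart of the argument is then to verify $\tilde\nu{\cal P}_t = e^{-\lambda t}\tilde\nu$: since ${\cal P}_t$ is bounded it passes inside the Bochner integral, and
\[
\tilde\nu{\cal P}_t = \int_0^d e^{\lambda s}\nu{\cal P}_{s+t}\,ds = e^{-\lambda t}\int_t^{d+t} e^{\lambda u}\nu{\cal P}_u\,du ;
\]
splitting the last integral at $d$, substituting $u=v+d$ on $[d,d+t]$, and using $\nu{\cal P}_{v+d}=e^{-d\lambda}\nu{\cal P}_v$ turns $\int_d^{d+t}$ into $\int_0^t$, so the wrapped integral collapses back to $\int_0^d e^{\lambda u}\nu{\cal P}_u\,du=\tilde\nu$, giving the claim for $t\in[0,d]$; writing a general $t=kd+r$ with $k\in\Z_+$, $r\in[0,d)$ and iterating the semigroup property extends it to all $t\ge0$. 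Finally, with $\bar\nu=\tilde\nu/\tilde\nu(S)$ the identity $\bar\nu{\cal P}_t=e^{-\lambda t}\bar\nu$ gives $P_{\bar\nu}(\ttau_\Delta>t)=e^{-\lambda t}$ by summing over $S$, and dividing yields $P_{\bar\nu}(\bX_t\in\cdot\mid\ttau_\Delta>t)=\bar\nu(\cdot)$, so $\bar\nu$ is a QSD with absorption parameter $\lambda$, proving the claim.

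The main obstacle is the bookkeeping in that displayed computation — the substitution and re-indexing showing the wrapped integral $\int_t^{d+t}$ equals $\int_0^d$, where the $d$-periodicity encoded by $\nu{\cal P}_d=e^{-d\lambda}\nu$ is precisely what is needed. The functional-analytic points (existence of the Bochner integral, commuting the bounded operator ${\cal P}_t$ with it, strong continuity of the semigroup) are all supplied by the setup preceding the proposition, so they are routine here.
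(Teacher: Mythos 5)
Your proposal is correct and follows essentially the same route as the paper: the forward direction is the immediate specialization $\nu\mathcal{P}_d=e^{-d\lambda}\nu$, and the converse rests on the $d$-periodicity of $s\mapsto e^{\lambda s}\nu\mathcal{P}_s$ together with the wrapped-integral identity $\int_t^{t+d}=\int_0^d$, which is exactly the paper's computation with $h(s,x)=e^{\lambda s}P_\nu(\X_s=x)$, merely recast in Bochner-integral language.
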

\begin{proof}
 The first numbered assertion is trivial. As for the second, suppose that  $\nu$ is a QSD for ${\bf X}^d$ with absorption parameter $\lambda d$. For $s\in \R_+$ and $x\in S$, let  $h(s,x) =  e^{\lambda s} P_\nu (\X_s =x)$. Then 
 \begin{align*} h(s+d,x) &= e^{\lambda s} e^{\lambda d} P_\nu (\X_{s+d} =x ) \\
  &=e^{\lambda s} e^{\lambda d} E_\nu P_{X^d_1} (\X_s=x)\\
  & = e^{\lambda s} P_\nu (\X_s=x)= h(s,x).
 \end{align*}
 Note here that we pass from the second line to the third using the fact that $\nu$ is a QSD for ${\bf X}^d$, and so for any bounded $f$ on $S\cup\{\Delta\}$ vanishing on $\Delta$ we have 
 $e^{\lambda d} E_\nu [f (X^d_1)] =E_\nu [f(X^d_0)]$. Here we used $f(u) = P_u (\X_s = x)$. We proved that $h$ is $d$-periodic in the first variable. 
Define  $\tilde\nu(x)=\int_0^d h(s,x) ds$. Then $\tilde \nu$ is a finite measure on $S$. Also, for  $t\in \R_+$ and $ y\in S$
\begin{align*}
    \sum_x\tilde\nu(x)P_x(\X_t=y)&=
    \sum_x \int_0^d e^{\lambda s}P_\nu (X_s=x)P_x (X_t =y) ds \\
   & =  e^{-\lambda t}\int_0^d h(s+t,y)ds\\ 
    &\overset{u=s+t}{=}e^{-\lambda t}\int_t^{t+d}h(u,y)du\\
    &=e^{-\lambda t}\tilde\nu(y). 
\end{align*}\tro{R1:M36}
Thus, by normalizing $\tilde \nu$, we obtain a QSD with absorption parameter $\lambda$.  
\end{proof}
\subsection{Infinite MGF Regime}
We begin with the analog of Theorem \ref{th:nu_recurr}
\begin{thm}
    \label{thm:nu_recurr_cts}
    Suppose $\bblambda_{cr}$ is in the infinite MGF regime. Then 
    \begin{enumerate} 
    \item There exists a minimal QSD if and only if there exists $x \in S$ such that 
    \begin{equation} 
    \label{eq:taudelta_min} E_x [\exp (\bblambda_{cr} (\ttau_\Delta\wedge \ttau_x))] <\infty.
    \end{equation}
    In this case, there exists a unique minimal QSD $\bbnu_{cr}$, which is given by the formula
   \begin{equation} 
   \label{eq:cycle_rep}
   \begin{split}
   \bbnu_{cr}(x) &= \frac{\bblambda_{cr}}{q_x -\bblambda_{cr}}\frac{1}{E_x[ \exp (\bblambda_{cr} (\ttau_\Delta \wedge \ttau_x ) )]-1}\\
   & = \frac{\bblambda_{cr}}{q_x -\bblambda_{cr}}
   \frac{1}{E_x [\exp (\bblambda_{cr} \bbtau_\Delta),\bbtau_\Delta < \bbtau_x]},~x \in S.
   \end{split}
   \end{equation}
   \item If, in addition to \eqref{eq:taudelta_min}, 
   \begin{equation}
       \label{eq:positive_recurrent_cts}
       E_x[\exp(\bblambda_{cr}\ttau_x)\ttau_x,\ttau_x<\ttau_\Delta]<\infty\ \mbox{for some}\ x\in S,
   \end{equation}
   then for any finitely supported $\mu$ on S, 
   \begin{equation} 
\label{eq:conv_cts} \lim_{t\to\infty} P_\mu (\X_t \in \cdot~ |~ \ttau_\Delta>t) = \bbnu_{cr}. 
\end{equation} 
 \end{enumerate} 
\end{thm}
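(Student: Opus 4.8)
The plan is to prove everything by passing to the embedded discrete-time chains ${\bf X}^d$ of Section~\ref{sec:QSD_cts} and invoking Theorem~\ref{th:nu_recurr}, with Propositions~\ref{prop:QSD_equivalency}, \ref{prop:also_disc_finite} and \ref{prop:weird_values_cts} serving as the dictionary between the two settings. The first remark is that, since $\bblambda_{cr}$ is in the infinite MGF regime, \eqref{eq:exponontial_equivalency} gives $E_x[\exp(\lambda^d_{cr}\tau^d_\Delta)]\ge E_x[\exp(\bblambda_{cr}\ttau_\Delta)]=\infty$, so $\lambda^d_{cr}=d\bblambda_{cr}$ is in the infinite MGF regime for ${\bf X}^d$ and Theorem~\ref{th:nu_recurr} applies to ${\bf X}^d$ for every $d>0$. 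I also record, for later use, that a QSD $\bbnu_{cr}$ with parameter $\bblambda_{cr}$ lies in the domain of ${\cal L}$ with $\bbnu_{cr}{\cal L}=-\bblambda_{cr}\bbnu_{cr}$; writing this coordinatewise gives $\sum_{y\ne x}\bbnu_{cr}(y)q(y,x)=(q_x-\bblambda_{cr})\bbnu_{cr}(x)$, and since $S$ is irreducible the left-hand side is strictly positive, so $q_x>\bblambda_{cr}$ for all $x$ (which is what makes $E[e^{\bblambda_{cr}H}]=q_x/(q_x-\bblambda_{cr})$ finite for the holding time $H\sim\mathrm{Exp}(q_x)$ at $x$).

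For sufficiency, assume \eqref{eq:taudelta_min}. Proposition~\ref{prop:also_disc_finite} produces $d_0>0$ with $E_x[\exp(\lambda^d_{cr}(\tau^d_\Delta\wedge\tau^d_x))]<\infty$ for $d\in(0,d_0)$, i.e.\ condition \eqref{eq:finite_stopped} for ${\bf X}^d$; Theorem~\ref{th:nu_recurr}(1) then yields a minimal QSD $\nu^d_{cr}$ for ${\bf X}^d$, and Proposition~\ref{prop:QSD_equivalency}(2) turns it into a QSD for ${\bf X}$ with absorption parameter $\bblambda_{cr}$, hence a minimal one. For uniqueness, any minimal QSD of ${\bf X}$ is, by Proposition~\ref{prop:QSD_equivalency}(1), a minimal QSD of every ${\bf X}^d$, so it must equal $\nu^d_{cr}$ by Theorem~\ref{th:nu_recurr}(1). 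For necessity, a minimal QSD $\bbnu_{cr}$ equals $\nu^d_{cr}$ for every $d$, so by Theorem~\ref{th:nu_recurr} and the representation \eqref{eq:nu_recurr} (which holds at every state, whence $E_x[\exp(d\bblambda_{cr}(\tau^d_\Delta\wedge\tau^d_x))]<\infty$ for all $x$ and $d$). To pass back to continuous time I split at the first holding time $H$ at $x$: on $\{H\le d\}$ the process has already left $x$, hence $\ttau_\Delta\le d\tau^d_\Delta$ and $\ttau_x\le d\tau^d_x$, so $\ttau_\Delta\wedge\ttau_x\le d(\tau^d_\Delta\wedge\tau^d_x)$; writing $\ttau_\Delta\wedge\ttau_x=H+\tilde\sigma$ with $\tilde\sigma$ the post-jump hitting time of $\{x,\Delta\}$, and using $H\perp\tilde\sigma$ together with $E_x[\exp(\bblambda_{cr}(\ttau_\Delta\wedge\ttau_x))]=\tfrac{q_x}{q_x-\bblambda_{cr}}E[e^{\bblambda_{cr}\tilde\sigma}]$, one gets $\infty>E_x[\exp(d\bblambda_{cr}(\tau^d_\Delta\wedge\tau^d_x)),H\le d]\ge E[e^{\bblambda_{cr}H}\mathbf 1_{\{H\le d\}}]\,E[e^{\bblambda_{cr}\tilde\sigma}]$, and since the first factor is a strictly positive constant this forces $E_x[\exp(\bblambda_{cr}(\ttau_\Delta\wedge\ttau_x))]<\infty$, which is \eqref{eq:taudelta_min}.

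For the formula I start from $\bbnu_{cr}(x)=\nu^d_{cr}(x)=(e^{d\bblambda_{cr}}-1)/E_x[\exp(d\bblambda_{cr}\tau^d_\Delta),\tau^d_\Delta<\tau^d_x]$, the discrete formula \eqref{eq:nu_recurr} for ${\bf X}^d$. On $\{H>d\}$ the chain has not moved, so $\tau^d_x=1<\tau^d_\Delta$ and the event $\{\tau^d_\Delta<\tau^d_x\}$ is contained in $\{H\le d\}$; as $d\downarrow0$, conditionally on a jump by time $d$, the pair $(d\tau^d_\Delta,\,\mathbf 1_{\{\tau^d_\Delta<\tau^d_x\}})$ converges in law to the post-jump quantities, so by dominated convergence (the domination supplied by the excursion bound behind Proposition~\ref{prop:also_disc_finite}, legitimate under \eqref{eq:taudelta_min}) $\tfrac1d E_x[\exp(d\bblambda_{cr}\tau^d_\Delta),\tau^d_\Delta<\tau^d_x]\to q_x\sum_{y\ne x}\tfrac{q(x,y)}{q_x}E_y[\exp(\bblambda_{cr}\ttau_\Delta),\ttau_\Delta<\ttau_x]=(q_x-\bblambda_{cr})E_x[\exp(\bblambda_{cr}\ttau_\Delta),\ttau_\Delta<\ttau_x]$, the last equality being the renewal identity obtained by conditioning on the first jump. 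Since also $\tfrac1d(e^{d\bblambda_{cr}}-1)\to\bblambda_{cr}$, dividing numerator and denominator by $d$ and letting $d\downarrow0$ yields $\bbnu_{cr}(x)=\bblambda_{cr}/\bigl((q_x-\bblambda_{cr})E_x[\exp(\bblambda_{cr}\ttau_\Delta),\ttau_\Delta<\ttau_x]\bigr)$, the second line of \eqref{eq:cycle_rep}; the first line follows from Proposition~\ref{prop:weird_values_cts}, which (in the infinite regime, with \eqref{eq:taudelta_min} in force) makes the cycle moment $E_x[\exp(\bblambda_{cr}\ttau_x),\ttau_x<\ttau_\Delta]$ equal to $1$, so $E_x[\exp(\bblambda_{cr}\ttau_\Delta),\ttau_\Delta<\ttau_x]=E_x[\exp(\bblambda_{cr}(\ttau_\Delta\wedge\ttau_x))]-1$.

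For part 2, under \eqref{eq:positive_recurrent_cts} I would first transfer it to \eqref{eq:positive_recurrent} for ${\bf X}^d$ with $d$ small — a variant of the excursion decomposition in the proof of Proposition~\ref{prop:also_disc_finite}, now carrying the extra linear factor through the geometric, and by construction excursion-independent, number of returns to $x$ before one of holding length $\ge d$. Since ${\bf X}^d$ is automatically aperiodic ($P_x(\X_d=x)\ge e^{-q_xd}>0$) and already satisfies \eqref{eq:finite_stopped}, Theorem~\ref{th:nu_recurr}(2) then gives, for finitely supported $\mu$, $P_\mu(\X_{nd}\in\cdot\mid\ttau_\Delta>nd)=P_\mu(X^d_n\in\cdot\mid\tau^d_\Delta>n)\to\bbnu_{cr}$, which Scheff\'e's lemma upgrades to convergence in $\ell^1(S)$. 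For arbitrary $t=nd+s$, $s\in[0,d)$, one has $P_\mu(\X_t\in\cdot\mid\ttau_\Delta>t)=(\mu^{(n)}{\cal P}_s)/(\mu^{(n)}{\cal P}_s)({\bf 1})$ with $\mu^{(n)}=P_\mu(\X_{nd}\in\cdot\mid\ttau_\Delta>nd)$; since ${\cal P}_s$ is an $\ell^1$-contraction and $\bbnu_{cr}{\cal P}_s=e^{-\bblambda_{cr}s}\bbnu_{cr}$, the right-hand side converges to $\bbnu_{cr}$ as $n\to\infty$, uniformly in $s$, which is \eqref{eq:conv_cts}. The main obstacle throughout is the mismatch between the fixed time grid of ${\bf X}^d$ and the jump structure of ${\bf X}$, concentrated in the necessity transfer of \eqref{eq:taudelta_min}, the transfer of \eqref{eq:positive_recurrent_cts} to \eqref{eq:positive_recurrent}, and the $d\downarrow0$ asymptotics producing the factor $\bblambda_{cr}/(q_x-\bblambda_{cr})$; everything else is bookkeeping once Theorem~\ref{th:nu_recurr} and Propositions~\ref{prop:QSD_equivalency}--\ref{prop:also_disc_finite} are in hand.
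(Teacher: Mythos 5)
Your proposal follows the same overall route as the paper: discretize via ${\bf X}^d$, apply Theorem~\ref{th:nu_recurr} to the embedded chain, and transfer back through Propositions~\ref{prop:QSD_equivalency} and \ref{prop:also_disc_finite}. The sufficiency, necessity and uniqueness steps coincide with the paper's (your necessity argument, which conditions on the first holding time at $x$ to get $\ttau_\Delta\wedge\ttau_x\le d(\tau^d_\Delta\wedge\tau^d_x)$ on $\{H\le d\}$ and then factors out $E[e^{\bblambda_{cr}H}\mathbf 1_{\{H\le d\}}]$, is in fact more careful than the paper's one-line domination $\bbtau_x\wedge\bbtau_\Delta\le d(\tau^d_x\wedge\tau^d_\Delta)$). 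Where you genuinely diverge is the representation: the paper identifies the continuous cycle measure $I_x(f)=E_x[\int_0^{\ttau_\Delta\wedge\ttau_x}e^{\bblambda_{cr}s}f(\X_s)\,ds]$ as a Riemann-sum limit of $\tfrac1m\mu_x^{1/m}$ and then normalizes, whereas you take the closed-form discrete formula \eqref{eq:nu_recurr} and compute the $d\downarrow 0$ limit of numerator and denominator separately, producing the factor $\bblambda_{cr}/(q_x-\bblambda_{cr})$ via the first-jump renewal identity. Both are legitimate; yours trades the Riemann-sum bookkeeping for an asymptotic analysis of $\tfrac1d E_x[\exp(d\bblambda_{cr}\tau^d_\Delta),\tau^d_\Delta<\tau^d_x]$.

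Two steps are asserted rather than proved, and they are exactly the delicate ones. First, in the $d\downarrow0$ limit of the denominator, the discrete event $\{\tau^d_\Delta<\tau^d_x\}$ does not coincide with $\{\ttau_\Delta<\ttau_x\}$ for the underlying path: the grid can miss a return to $x$ of duration less than $d$, so paths with $\ttau_x<\ttau_\Delta$ can contribute to the discrete expectation. Controlling this requires the same device as Proposition~\ref{prop:also_disc_finite} (the stopping time $\bbtau_{x}^J$ tracking the first visit of duration $\ge d$), and your appeal to ``the domination supplied by the excursion bound'' should be made explicit along those lines. Second, the transfer of \eqref{eq:positive_recurrent_cts} to \eqref{eq:positive_recurrent} for ${\bf X}^d$ is only sketched; this is needed for part 2 and is not a routine consequence of anything already proved. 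That said, your part 2 is a genuine addition: the paper announces the convergence step in its proof outline but never writes it, whereas your argument (discrete ergodic convergence along $t=nd$ for the automatically aperiodic ${\bf X}^d$, then interpolation using the $\ell^1$-contractivity of ${\cal P}_s$ and $\bbnu_{cr}{\cal P}_s=e^{-\bblambda_{cr}s}\bbnu_{cr}$) is a correct and complete scheme for \eqref{eq:conv_cts} once the positive-recurrence transfer is supplied.
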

We wish to present another version of \eqref{eq:cycle_rep}. This requires additional notation. For $x\in S$ and $y \in S \cup\{\Delta\}-\{x\}$,  let $q_{x,y}$ denote the rate of jump from $x$ to $y$. Then $q_x = \sum_{y\ne x} q_{x,y}$. Consider $\bX$ under $P_x$, and let $J$ denote the $\mbox{Exp}(q_x)$-distributed time of the first jump and $D_x$ denote the probability distribution of $\bX$ on $S\cup\{\Delta\}-\{x\}$ immediately after the first jump. Then $D_x (y) = q_{x,y} /q_x$. Also, let $^0\bbtau_\Delta$ denote the hitting time of $\Delta$,
\begin{equation}
\label{eq:0hitting} 
^0 \bbtau_\Delta =\inf\{t\ge 0 : \bX_t = \Delta\}.
\end{equation}
Then the Strong Markov property allows to write 
\begin{align*} 
E_x [\exp (\bblambda_{cr} \bbtau_\Delta),\bbtau_\Delta < \bbtau_x] & = E_x [ \exp (\bblambda_{cr} J)]E_{D_x}[\exp (\bblambda_{cr}\, ^0\bbtau_\Delta),\,^0\bbtau_\Delta < \bbtau_x] \\ 
& = \frac{q_x}{\bblambda_{cr}-q_x}E_{D_x}[\exp (\bblambda_{cr} \,^0\bbtau_\Delta),\,^0\bbtau_\Delta < \bbtau_x].
\end{align*} 
We therefore proved the following:
\begin{cor}
\label{cor:alt_bb}
 An alternative representation for $\bbnu_{cr}$ from \eqref{eq:bbcycle} is 
 \begin{equation}
 \label{eq:afterjump_rep}\bbnu_{cr}(y) = \frac{\bblambda_{cr}}{q_x} \frac{1}{E_{D_x}[\exp (\bblambda_{cr} \,\,^0\bbtau_\Delta),\,^0\bbtau_\Delta < \bbtau_x]}.
 \end{equation} 
\end{cor}

\begin{proof}[Proof of Theorem \ref{thm:nu_recurr_cts}]
We prove the assertions in this order:  the sufficiency of \eqref{eq:taudelta_min} followed by its necessity,  then the uniqueness of a minimal QSD, the representation, and finally, the convergence. 

\underline {Sufficiency}. Suppose first that \eqref{eq:taudelta_min} holds. Apply  Proposition \ref{prop:also_disc_finite} with  $x_0=x$, and let  $d=d_0/2$, where $d_0$ is the positive constant obtained in the proposition.  Then ${\bf X}^d$ satisfies the conditions of Theorem \ref{th:nu_recurr}, and as a result possesses a unique minimal QSD, $\nu_{cr}$, given by \eqref{eq:nu_recurr}. Denote the minimal QSD for $\bX$ obtained from $\nu_{cr}$ through the application of Proposition \ref{prop:QSD_equivalency}-2 by $\bbnu_{cr}$. 

\underline{Necessity}.  Suppose that $\bbnu$ is a minimal QSD for $\bX$. Let $d>0$. Then from Proposition \ref{prop:QSD_equivalency}-1, $\bbnu$ is a minimal QSD for ${\bf X}^d$ and therefore \eqref{eq:finite_stopped} holds for ${\bf X}^d$. Thus,  $E_{x} [ \exp (\bblambda_{cr} d (\tau^d_{x} \wedge \tau^d_{\Delta}))] <\infty$. But since $\bbtau_{x}\wedge \bbtau_\Delta \le d(\tau^d_{x} \wedge \tau^d_\Delta)$, it follows that \eqref{eq:taudelta_min} holds. 

\underline{Uniqueness}.  Suppose that $\bbnu$ and $\bbnu'$ are minimal QSDs for $\bX$. Then by Proposition \ref{prop:QSD_equivalency}-1, both are also minimal for ${\bf X}^d$, and by the uniqueness of a minimal QSD for ${\bf X}^d$, it follows that $\bbnu=\bbnu'$. We will, therefore, denote the unique minimal QSD for $\bX$ by $\bbnu_{cr}$. 

\underline{Representation}. As argued above, when it exists, $\bbnu_{cr}$ is the unique minimal QSD for each of the processes ${\bf X}^{1/m}$ for all integer $m$ large enough. We will denote all quantities associated with ${\bf X}^{1/m}$ with the superscript $\frac{1}{m}$. Write $\mu_x^{1/m}$ for the measure defined in Proposition \ref{prop:mu_x} relative to the process ${\bf X}^{1/m}$  with \tro{R1:M37}$e^\lambda$ in the proposition taken as $e^{\bblambda_{cr} /m}=e^{\lambda_{cr}^{1/m}}$. Since  $\mu_x^{1/m} p^{1/m} = e^{-{\lambda_{cr}^{1/m}}} \mu_x^{1/m}$, it follows from Proposition \ref{prop:eigen_equivalence} and the uniqueness of the minimal QSD $\bbnu_{cr}$ that 
\begin{equation}
\label{eq:bbcycle}\bbnu_{cr}(\cdot) = \frac{\mu_x^{1/m}(\cdot)}{\mu_x^{1/m}({\bf 1})}.
\end{equation}
We will utilize this and a Riemann sum approximation to obtain the representation \eqref{eq:cycle_rep}. \tro{R1:M38}For a bounded function $f$ on $S$, define 
\begin{align*} 
I_x (f) = E_x [ \int_0^{\ttau_\Delta\wedge\ttau_x } \exp (\bblambda_{cr}s) f(\X_s) ds ]. 
\end{align*} 
Then, a Riemann sum approximation and the dominated convergence theorem give 
\begin{equation} 
\label{eq:Riemann} I_x (f) = \lim_{m\to\infty}
E_x [ \frac{1}{m}\sum_{0\le n /m \le \ttau_\Delta \wedge \ttau_x  } e^{\bblambda_{cr} n/m} f(\X_{n/m})]
\end{equation}
\tro{R1:M39}
Now, as  ${\bf X}^{1/m}$ is a snapshot of $\bX$ at discrete intervals, we have $m\ttau_\Delta\le \tau^{1/m}_\Delta$ and $m\ttau_x \le \tau^{1/m}_x$. Therefore if $n\le m (\ttau_\Delta \wedge \ttau_x)$ then $n\le (\tau^{1/m}_\Delta \wedge \tau^{1/m}_x)$. Letting $A_m=\{\tau^{1/m}_x<m\ttau_x+1\}$, we see that 
$$P_x (A_m)\ge e^{-q_x/m}=1-O(\frac1m).$$
As $\Delta$ is absorbing, $\tau^{1/m}_\Delta < m\tau_\Delta+1$ and on $A_m$, $\tau^{1/m}_x < m \ttau_x+1$. Therefore on $A_m$, if  $n \le \tau^{1/m}_x \wedge \tau^{1/m}_\Delta$, then necessarily $n< m(\ttau_\Delta\wedge \ttau_x)+1$, or $n/m\le \ttau_\Delta \wedge \ttau_x$. We proved that on $A_m$,  the expression inside the expectation in \eqref{eq:Riemann} can be rewritten as  
$$ \frac{1}{m}\sum_{0\le n  < \tau_x^{1/m}\wedge \tau_\Delta^{1/m}} e^{\lambda^{1/m}_{cr} n} f(X^{1/m}_{n})+\frac1m \exp (\bblambda_{cr} {\bbzeta_m} )f(\X_{\bbzeta_m}),$$
where $\bbzeta_m = \frac{ \lfloor m (\ttau_\Delta \wedge \ttau_x)\rfloor}{m}\le \ttau_\Delta\wedge\ttau_x$. 

Taking expectation on $A_m$,  this is equal to $\frac{1}{m}\mu_x^{1/m} (f)+O(\frac 1m)$, where the second term is due to the fact that $f$ is bounded and $E_x[\exp (\bblambda_{cr} (\ttau_\Delta\wedge \ttau_x))]<\infty$. From \eqref{eq:bbcycle}, this is also equal to $\frac{1}{m} \mu_x^{1/m}({\bf 1}) \bbnu_{cr}(f)+O(\frac1m)$. \tro{R1:M40}The boundedness of $f$ and the fact that   $E_x[\exp (\bblambda_{cr} (\ttau_\Delta\wedge \ttau_x))]<\infty$ also show  that  the expectation of the integral on the complement of $A_m$ tends to zero as $m\to\infty$. We proved
$$I_x (f) = \lim_{m\to\infty} \frac{1}{m} \mu_x^{1/m} ({\bf 1})\bbnu_{cr}(f).$$
Using this with $f\equiv {\bf 1}$, gives $ I_x ({\bf 1}) =\lim_{m\to\infty} \frac{1}{m} \mu_x^{1/m} ({\bf 1})$, and so \tro{R1:P44}
$$ \bbnu_{cr} (f) = \frac{ I_x (f)}{I_x ({\bf 1})}.$$
\tro{R1:M41} Let $J$ be exponential with parameter $q_x$. Then 
$$ I_x (\delta_x) = E[\int_0^J e^{\bblambda_{cr} s} ds ]= \frac{E [ e^{\bblambda J}-1]}{\bblambda_{cr}}=\left(\frac{q_x}{q_x - \bblambda_{cr}}-1\right)/\bblambda_{cr}=\frac{1}{q_x - \bblambda_{cr}}.$$
As $I_x ({\bf 1}) = \frac{E_x [ e^{\bblambda_{cr} (\ttau_\Delta \wedge \ttau_x)}-1]}{\bblambda_{cr}}$, we have 
$$ \bbnu_{cr}(x) = \frac{\bblambda_{cr}}{q_x-\bblambda_{cr}}\frac{1}{E_x [ e^{\bblambda_{cr} (\ttau_\Delta \wedge \ttau_x)}-1]}.$$
This gives the first expression in \eqref{eq:cycle_rep}. The second expression is obtained from  Proposition \ref{prop:weird_values_cts}-2. 
\end{proof}
Next, we consider an analog of Theorem \ref{thm:coming_infinty}. For continuous-time processes, Ferrari, Kesten, Martinez, and Picco showed the convergence based on a renewal technique \cite{Ferrari1995}. Martinez, San Martin, and Villemonais presented that the conditional distribution of the process converges exponentially fast in total variation norm to a unique QSD \cite{Villemonais2014}. Convergence in total variation for processes satisfying strong mixing conditions was obtained using Fleming-Viot particle systems \cite{cloez2014Fleming}.  More recently, Champagnat and Villemonais stated a general criterion for uniform exponential convergence in total variation for absorbed Markov processes conditioned to survive \cite[Assumption A]{champagnat2014exponential}. They also provide analogous conditions involving Lyapunov functions, tailored to the situation where the convergence is non-uniform \cite{EJPChampagnat}. Our work considers the time-reversal at the quasi-stationarity of the absorbed Markov process, similar to the approach by Tough \cite{tough2022linftyconvergence}. In particular, both Theorem \ref{thm:coming_infinty} and Theorem \ref{thm:coming_infinty_cts} were inspired by and should be viewed as weaker versions of the main result in \cite{Villemonais2014}, but with a focus on the representation of the QSD rather than convergence to it. We provide more details on the difference in the statement of the theorem below. 
\begin{thm}
\label{thm:coming_infinty_cts}
Suppose that there exists some $\bar \bblambda>0$ and a nonempty finite $K\subsetneq S$ 
\begin{align} 
\label{eq:blowlambda0_cts}
&E_x [ \exp (\bar\bblambda \tau_\Delta)]=\infty \mbox{ for some }x \in S;\mbox{ and }\\
\label{eq:bd_arrival_cts} 
&\sup_{x\not\in K} E_x [\exp (\bar\bblambda (\ttau_\Delta\wedge \ttau_K))]<\infty. 
\end{align}
Then all conditions and conclusions in Theorem \ref{thm:nu_recurr_cts} hold. In particular: 
\begin{enumerate} 
\item $\bblambda_{cr}\in (0,
\bar\bblambda]$ and is in the infinite MGF regime. 
\item There exists a unique QSD $\bbnu_{cr}$ given by \eqref{eq:cycle_rep}  which is also minimal. 
\end{enumerate} 
If, in addition, there exists some $x_0 \in S$ such that 
\begin{equation}
\label{eq:lowerh_cts}
\inf_{x\in S} \frac{P_x (\ttau_\Delta>\ttau_{x_0})}{P_x (\ttau_\Delta >1)}>0
\end{equation}
then \eqref{eq:conv_cts} holds for any initial distribution $\mu$. 
\end{thm}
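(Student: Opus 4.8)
The plan is to reduce the statement to the continuous-time minimal-QSD theorem already proved, Theorem~\ref{thm:nu_recurr_cts}: verifying its hypotheses delivers at once the existence, uniqueness, and the formula \eqref{eq:cycle_rep} for the minimal QSD, so the remaining content is (i) ruling out any QSD other than the minimal one, and (ii) upgrading convergence \eqref{eq:conv_cts} from finitely supported to arbitrary $\mu$ under \eqref{eq:lowerh_cts}; both will be handled by a bounded-eigenfunction argument transcribed from the proof of Theorem~\ref{thm:coming_infinty}. The first goal is therefore to show that $\bblambda_{cr}$ is in the infinite MGF regime and that the single-state bound \eqref{eq:taudelta_min} holds.

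First, \eqref{eq:blowlambda0_cts} forces $\bblambda_{cr}\le\bar\bblambda$; if $\bblambda_{cr}=\bar\bblambda$ then $\bblambda_{cr}$ is in the infinite regime by \eqref{eq:blowlambda0_cts}, while if $\bblambda_{cr}<\bar\bblambda$ the same conclusion follows from the continuous-time analog of Proposition~\ref{prop:highmomments}, whose proof transcribes verbatim once one has the continuous-time analog of Proposition~\ref{prop:UI}; that analog in turn follows from the discrete statement applied to an embedded skeleton ${\bf X}^d$ together with the squeeze $e^{\bblambda_{cr}\ttau_\Delta}\le e^{d\bblambda_{cr}\tau_\Delta^d}\le e^{d\bblambda_{cr}}e^{\bblambda_{cr}\ttau_\Delta}$, which makes the two families of laws uniformly integrable simultaneously. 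In either case $\bblambda_{cr}\le\bar\bblambda$, so \eqref{eq:bd_arrival_cts} self-improves to $M:=\sup_{x\notin K}E_x[\exp(\bblambda_{cr}(\ttau_\Delta\wedge\ttau_K))]<\infty$. Feeding this into the continuous-time analog of Proposition~\ref{prop:K2single} --- a strong-Markov argument that carries over unchanged --- produces $z\in K$ with $E_z[\exp(\bblambda_{cr}(\ttau_\Delta\wedge\ttau_z))]<\infty$, which is \eqref{eq:taudelta_min}. By part~1 of Theorem~\ref{thm:nu_recurr_cts} there is then a unique minimal QSD $\bbnu_{cr}$, given by \eqref{eq:cycle_rep}.

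Next I would set $h(x)=E_x[\exp(\bblambda_{cr}\ttau_z),\ttau_z<\ttau_\Delta]$. Proposition~\ref{prop:weird_values_cts} gives $h(z)=1$, so $h$ solves ${\cal L}h=-\bblambda_{cr}h$ on $S$ (the continuous-time form, valid when $h_z(z)=1$, of the identity $ph_z=e^{-\lambda}h_z$ from Proposition~\ref{prop:mu_x}), whence $(e^{\bblambda_{cr}t}h(\X_t){\bf 1}_{\{\ttau_\Delta>t\}}:t\ge 0)$ is a martingale; and $h$ is bounded because $h(x)\le E_x[\exp(\bblambda_{cr}\ttau_K),\ttau_K<\ttau_\Delta]\,\max_{w\in K}h(w)\le M\,\max_{w\in K}h(w)$. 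If $\bbnu'$ is any QSD with absorption parameter $\lambda'$, boundedness of $h$ makes the martingale genuine, so $E_x[e^{\bblambda_{cr}t}h(\X_t){\bf 1}_{\{\ttau_\Delta>t\}}]=h(x)$; averaging over $\bbnu'$ and using $\bbnu'{\cal P}_t=e^{-\lambda't}\bbnu'$ yields $\sum_x\bbnu'(x)h(x)=e^{(\bblambda_{cr}-\lambda')t}\sum_x\bbnu'(x)h(x)$ for all $t$, and since $\sum_x\bbnu'(x)h(x)\in(0,\infty)$ this forces $\lambda'=\bblambda_{cr}$; uniqueness of the minimal QSD then gives $\bbnu'=\bbnu_{cr}$, which is point~2. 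For point~3 it remains to verify \eqref{eq:positive_recurrent_cts}, the extra hypothesis of part~2 of Theorem~\ref{thm:nu_recurr_cts}: I would check that the time-reversal $q$ from \eqref{eq:q_def} associated with $\bbnu_{cr}$ is recurrent (since $E_x[\exp(\bblambda_{cr}\ttau_\Delta)]=\infty$ forces $q$ to visit $x$ infinitely often, as in the necessity argument of Theorem~\ref{th:nu_recurr}) and in fact positive recurrent (since $\bbnu_{cr}h$, $h$ being bounded, normalizes to a stationary measure for $q$), then invoke the continuous-time analog of part~4 of Proposition~\ref{prop:reversal_probs}.

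Finally, for convergence from an arbitrary $\mu$ under \eqref{eq:lowerh_cts}, I would follow the end of the proof of Theorem~\ref{thm:coming_infinty}: recurrence of $q$ and optional stopping give $h(x)=E_x[\exp(\bblambda_{cr}\ttau_{x_0}),\ttau_{x_0}<\ttau_\Delta]\,h(x_0)$, so \eqref{eq:lowerh_cts} yields $h(x)\ge c\,P_x(\ttau_\Delta>1)$ for some $c>0$; passing to the Doob $h$-transform ${\bf X}^h$ --- an honest, positive recurrent, automatically aperiodic chain on $S$ with stationary law proportional to $\bbnu_{cr}h$ --- one has $P_\mu(\X_t=y\mid\ttau_\Delta>t)=\frac{1}{h(y)}\,P^h_{\bar\mu}(\X_t=y)\big/E^h_{\bar\mu}[1/h(\X_t)]$ with $\bar\mu\propto\mu h$, the function $x\mapsto E^h_x[1/h(\X_1)]=e^{\bblambda_{cr}}P_x(\ttau_\Delta>1)/h(x)$ is bounded by $e^{\bblambda_{cr}}/c$, and the ergodic theorem for ${\bf X}^h$ (applied along its $1$-skeleton) sends the right-hand side to $\bbnu_{cr}(y)$, which is \eqref{eq:conv_cts}. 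The main obstacle is exactly what dictates this routing: the continuous-to-discrete transfer of the arrival hypothesis \eqref{eq:bd_arrival_cts} is delicate, because a skeleton ${\bf X}^d$ can miss short excursions of $\bX$ through $K$, so $d\tau_K^d$ is not controlled by $\ttau_K$; this is why the argument is funneled through the single-state condition \eqref{eq:taudelta_min}, which \emph{is} transferable via Proposition~\ref{prop:also_disc_finite} (used inside Theorem~\ref{thm:nu_recurr_cts}), and through continuous-time reworkings of the potential-theoretic lemmas rather than a black-box appeal to Theorem~\ref{thm:coming_infinty}; checking that those reworkings and the $h$-transform identities go through with no new hypotheses is where the routine-but-real work sits.
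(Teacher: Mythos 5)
Your proposal is correct in outline, but it takes a genuinely different route from the paper. The paper's proof is a two-line reduction plus an interpolation: it asserts that \eqref{eq:blowlambda0_cts} and \eqref{eq:bd_arrival_cts} pass to every skeleton ${\bf X}^d$, invokes the discrete-time Theorem~\ref{thm:coming_infinty} as a black box to get existence, uniqueness, the formula, and convergence along $t_n=nd$, and then upgrades to convergence along all $t\to\infty$ by the bound $P_\mu(\X_t=y)\ge e^{-q_y/m}P_\mu(X^{1/m}_{[t]_m}=y)$, which gives $\liminf_t \nu_t(y)\ge\bbnu(y)$ for every $y$, and a Fatou argument applied to complements to convert the pointwise $\liminf$ into full convergence. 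You instead rebuild the discrete proof natively in continuous time: infinite-regime verification via the continuous analogs of Propositions~\ref{prop:UI} and~\ref{prop:highmomments}, reduction of $K$ to a singleton via the analog of Proposition~\ref{prop:K2single}, then the bounded eigenfunction $h_z$ and the Doob transform for uniqueness and convergence. What each buys: your route squarely confronts (and avoids) the one step the paper glosses over, namely that $d\tau^d_K$ is \emph{not} dominated by $\ttau_K+d$ because the skeleton can miss excursions through $K$ --- making the transfer of \eqref{eq:bd_arrival_cts} to ${\bf X}^d$ require a finite-$K$ version of Proposition~\ref{prop:also_disc_finite} (geometric trials using $\sup_{w\in K}q_w<\infty$), which the paper does not supply; the price is that you must re-verify several potential-theoretic lemmas in continuous time (e.g.\ that the uniform bound in \eqref{eq:bd_arrival_cts} extends to $x\in K$, and the continuous-time reverse-chain identities behind \eqref{eq:positive_recurrent_cts}). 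One small improvement to your last step: rather than applying the ergodic theorem to the $1$-skeleton of ${\bf X}^h$ --- which would leave you needing exactly the interpolation argument the paper performs --- use the ergodic theorem for the irreducible positive recurrent continuous-time chain ${\bf X}^h$ directly; continuous time has no periodicity obstruction, so $P^h_{\bar\mu}(\X_t=y)\to\pi(y)$ for all real $t\to\infty$ and no interpolation is needed.
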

 The main result of \cite{Villemonais2014} gives the existence, uniqueness, and exponential convergence with an explicit bound on the total variation norm. The authors of that paperwork work under the following set of assumptions. First, they assume that $S$ is countably infinite, $\Delta$ is a unique absorbing state, $P_x (\ttau_\Delta<\infty)=1$ for all $x\in S$, as well as the following:   
 \begin{enumerate} 
 \item[{\bf H1.}] There exists a finite non-empty $K\subsetneq S$ and a constant $c_1>0$ such that for all $t>0$, 
 $$ \inf_{x \in K} P_x (\ttau_\Delta>t) \le c_1 \sup_{x\in K} P_x (\ttau_\Delta>t).$$
\tro{R1:M42} \item[{\bf H2.}] There exists  $x_0\in K$ and constants $\lambda_0,c_2,c_3>0$ such that  $\sup_{x\in S} E_x [\exp (\lambda_0 (\ttau_K \wedge \ttau_\Delta))]\le c_2$ and $P_{x_0} (X_t \in K) \ge c_3 \exp (-\lambda_0 t)$ for all $t>0$. 
 \item[{\bf H3.}] There exists $x_0\in K$ and a constant $c_4>0$ such that $\inf_{x\in S} P_x (\X_1 = x_0~ | ~\ttau_\Delta > 1)\ge c_4$.
 \end{enumerate} 
 In part, {\bf H1} and {\bf H3} allow to extend the discussion to processes not satisfying our irreducibility condition {\bf HC-\ref{asc:reg:Sirr}}.  We will now show that when $S$ is irreducible, assumptions {\bf H2} and {\bf H3} imply all conditions of Theorem \ref{thm:coming_infinty_cts}. \tro{R1:M43}Indeed, the first condition in {\bf H2} implies \eqref{eq:bd_arrival_cts}
with $\bar\bblambda$ taken as $\lambda_0$, and the second condition in {\bf H2} implies $P_{x_0} (\bbtau_\Delta>t) \ge c_3 e^{-\lambda_0 t}$, which in turn implies \eqref{eq:blowlambda0_cts}, again with  $\bar \bblambda$ as $\lambda_0$. Since 
 $P_x (\ttau_\Delta > \ttau_{x_0} ) \ge P_x (\X_1 = x_0)$, condition {\bf H3} implies \eqref{eq:lowerh_cts}.  The existence and uniqueness in \cite{Villemonais2014} were established by obtaining uniform exponential bounds expressed in terms of the constants in {\bf H1},{\bf H2} and {\bf H3}, yet no formula for the QSD was obtained. In addition, it  relies on mathematical apparatus specifically developed to prove convergence. The focus of this work is on existence and representation, and our convergence result is obtained through an application of the ergodic theorem for positive-recurrent and aperiodic discrete-time Markov chains, which is applicable to the reverse chain from Section \ref{sec:ReverseTool}. 
\begin{proof}[Proof of Theorem \ref{thm:coming_infinty_cts}]\tro{R1:M44}
We begin with the first  number item. Of course,  \eqref{eq:blowlambda0_cts} clearly implies $\bblambda_{cr}\le \bar\bblambda$. We need to show that under the stated condition $\bblambda_{cr}$ is in the infinite MGF regime.   If $\bblambda_{cr} = \bar\bblambda$,  there is nothing to show. Otherwise,  we  obtain the desired result by invoking Proposition \ref{prop:highmomments}, noting that statement of the  proposition and its proof carry over to the continuous-time setting verbatim (this includes the statement and proof of Proposition \ref{prop:UI}). This completes the proof of the first numbered item.  

For the remainder of the proof we will utilize our discretization scheme.

We turn to proving the second numbered item. Let $d>0$. The transition function $p^d$ for the discretized process ${\bf X}^d$ is automatically irreducible and aperiodic.  As we have already shown that $\bblambda_{cr}$ is in the infinite MGF regime, \eqref{eq:exponontial_equivalency} implies that $\lambda_{cr}^d =d \bar\bblambda_{cr}$ and that 
$$E_x [ \exp (d\bblambda_{cr} \tau^d_\Delta)] = \infty.$$  

Applying Proposition \ref{prop:K2single_cts} followed by  Proposition \ref{prop:also_disc_finite}, we  conclude that there exists some $d_0>0$ such that for $d<d_0$, 
$$E_{z} [ \exp (d\bblambda_{cr} (\tau^d_\Delta \wedge \tau^d_z))]<\infty.$$

The last two displayed equations show that the  conditions in  Theorem \ref{thm:coming_infinty} hold for the process  ${\bf X}^d$, with the choices $\bar\lambda = d \bblambda_{cr}$ and $K=\{z\}$. In particular ${\bf X}^d$ has a unique QSD which is also minimal. The second  part of Proposition \ref{prop:QSD_equivalency} shows that $\bX$ has a minimal QSD. In light of Theorem \ref{thm:nu_recurr_cts}, this is a unique minimal QSD and it is given by \eqref{eq:cycle_rep}. We denote it by $\bbnu_{cr}$.  The first part of the proposition shows that $\bX$ does not possess other QSDs. This completes the proof of the second numbered item. 

We turn to final statement:  convergence. Without loss of generality, we will assume that $d\in (0,1)$. First we show  that  \eqref{eq:lowerh_cts} implies its discrete equivalent \eqref{eq:lowerh}. Consider any pathy starting from $x$ and satisfying $\bbtau_\Delta > \bbtau_{x_0}$ up to time $\bbtau_{x_0}$. If $q_{x_0}$ is the jump rate from $x_0$, then with probability bounded below by $e^{-q_{x_0} d}$, the path will remain in $x_0$ until the next integer multiple of $d$, in which case, the discretized process will be also at $x_0$. Therefore 
 $$P_x (\tau^d_\Delta>\tau^d_{x_0})\ge P_x (\bbtau_\Delta > \bbtau_{x_0})e^{-q_{x_0}d}.$$
 On the other hand, as $\bbtau_\Delta +d\overset{\eqref{eq:59}}{>}d \bbtau^d_\Delta$, it follows that 
 $$P_x (\bbtau_\Delta >1) =P_x (\bbtau_\Delta > 1+d-d)\ge P_x (d \tau^d_\Delta > 1+d )=P_x (\tau^d_\Delta > \frac {1}{d} + 1).$$
Thus letting $n_0$ the ceiling of $\frac{1}{d}+1$, and letting $c$ the ratio in \eqref{eq:lowerh_cts}, the last two displayed inequalities imply: 

\begin{align*} P_x (\tau^d_\Delta>\tau^d_{x_0})&\ge e^{-q_{x_0}d}  P_x (\bbtau_\Delta > \bbtau_{x_0})\\
& \ge c e^{-q_{x_0}d} P_x (\bbtau_\Delta >1) \\
& \ge c e^{-q_{x_0}d} P_x (\tau^d_\Delta > n_0).
\end{align*}
This gives \eqref{eq:lowerh} with allows to apply the convergence result in Theorem \ref{thm:coming_infinty} to the process ${\bf X}^d$. The first part of Proposition \ref{prop:QSD_equivalency} guarantees that $\bbnu_{cr}$ is the unique QSD for ${\bf X}^d$ for all $d$. As we are allowed to pick $d$ arbitrary small, we will assume $d=1/m$ for some integer $m>1$.  For every $t$, let \tro{R1:P45}$[t]_m = \lfloor tm \rfloor $, the corresponding ``time'' for the process ${\bf X}^{1/m}$. The discussion above gives that for any initial distribution $\mu$, 
$$ \lim_{t\to\infty} P_\mu (X^{1/m}_{[t]_m}=y | \tau^d_\Delta>t) = \bbnu_{cr}(y).$$
As argued above, if $\X_s=y$, then with probability bounded below by $e^{-q_y /m}$ $\bX$  will remain at $y$ during the entire interval $[s,s+1/m]$. Therefore, for every $t>0$, 
$$ P_\mu (X^{1/m}_{[t]_m}=y)e^{-q_y/m}  \le  P_\mu (\X_t =y).$$
Combining the last two displayed equations, we have \tro{R1:M45} 
\begin{equation}
\label{eq:use_discrete} 
P_\mu (\X_t =y) \ge e^{-q_y/m}P_\mu (X^{1/m}_{[t]_m}=y)\underset{t\to\infty}{\sim }e^{-q_y/m}\bbnu_{cr}(y) P_\mu (\tau^{1/m}_\Delta > [t]_m).
\end{equation}
\tro{R1:M46}Now $$\{\bbtau_\Delta > t\} \overset{\eqref{eq:59}}{\subseteq} \{ \frac{1}{m}\tau_\Delta^{1/m} > t\}=\{\tau_\Delta^{1/m}> tm\} \subseteq \{\tau_\Delta^{1/m}> [t]_m\},$$
so that $P_{\mu} (\bbtau_\Delta >t) \le P_\mu(\tau_\Delta^{1/m} > [t]_m)$. Using this in the righthand side of \eqref{eq:use_discrete} yields 
$$ \liminf_{t\to\infty} P_{\mu} (\X_t = y | \ttau_\Delta>t) \ge \bbnu_{cr}(y) e^{-q_y /m}.$$
Let $\nu_t = P_\mu (\X_t \in \cdot ~ | \ttau_\Delta> t)$. Since $m$ is arbitrary,  $\liminf_{t\to\infty} \nu_t (y) \ge \bbnu(y)$.  \tro{R1:G22}Fatou's lemma gives that for any  $A\subseteq S$, $\liminf_{t\to\infty}\nu_t (A) = \liminf_{t\to\infty} \sum_{x\in A}  \nu_t (x) \ge \bbnu(A)$. The result follows from Portmanteau's theorem. 
\end{proof}
\subsection{Finite MGF Regime}
It is clear that the finite-state $S$ is settled similarly to the discrete case, and therefore, throughout this section we will impose {\bf HD-\ref{as:inf}}, namely that $S$ is infinite. We begin with the analog of Theorem \ref{thm:QSD_tightness}. 
\begin{thm}
\label{thm:QSD_tightness_cts}
Let $\lambda>0$ be in the finite MGF regime. Then 
\begin{enumerate}
\item 
If for some $\lambda' \in (0,\lambda)$, $\lim_{x\to\infty} E_x [ \exp (\lambda'  \ttau_\Delta)] =\infty$ then there exists a QSD for $\bX$ with absorption parameter $\lambda$. 
\item If $\sup_x E_x [\exp (\lambda \ttau_\Delta)]<\infty$, then there does not exist a QSD for $\bX$ with absorption parameter $\lambda$. 
\end{enumerate}
\end{thm}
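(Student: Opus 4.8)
The strategy is to transfer both assertions to the discrete-time Theorem \ref{thm:QSD_tightness} by means of the embedded chains ${\bf X}^d$ and Proposition \ref{prop:QSD_equivalency}. Fix an arbitrary $d>0$. The key bookkeeping tool is the two-sided estimate \eqref{eq:exponontial_equivalency}, namely $E_x[\exp(\lambda\ttau_\Delta)]\le E_x[\exp(\lambda d\,\tau^d_\Delta)]\le e^{\lambda d}E_x[\exp(\lambda\ttau_\Delta)]$ for every $\lambda>0$ and $x\in S$. Reading this with the discrete MGF parameter taken to be $d\lambda$ (so that the middle term is $E_x[\exp((d\lambda)\tau^d_\Delta)]$) shows that $\lambda$ is in the finite MGF regime for $\bX$ precisely when $d\lambda$ is in the finite MGF regime for ${\bf X}^d$; moreover $\lim_{x\to\infty}E_x[\exp(\mu\ttau_\Delta)]=\infty$ forces $\lim_{x\to\infty}E_x[\exp(d\mu\,\tau^d_\Delta)]=\infty$, and $\sup_x E_x[\exp(\mu\ttau_\Delta)]<\infty$ forces $\sup_x E_x[\exp(d\mu\,\tau^d_\Delta)]<\infty$. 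Recall also that the critical parameter of ${\bf X}^d$ is $d\bblambda_{cr}$.

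\textbf{Part (1).} Given $\lambda'\in(0,\lambda)$ with $\lim_{x\to\infty}E_x[\exp(\lambda'\ttau_\Delta)]=\infty$, the remarks above show that $d\lambda$ is in the finite MGF regime for ${\bf X}^d$ and that $d\lambda'\in(0,d\lambda)$ satisfies $\lim_{x\to\infty}E_x[\exp(d\lambda'\,\tau^d_\Delta)]=\infty$. Thus the hypotheses of Theorem \ref{thm:QSD_tightness}(1) hold for ${\bf X}^d$ with absorption parameter $d\lambda$, and that theorem yields a QSD for ${\bf X}^d$ with absorption parameter $d\lambda$. Feeding this QSD into Proposition \ref{prop:QSD_equivalency}(2) produces (after normalization) a QSD for $\bX$ with absorption parameter $\lambda$.

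\textbf{Part (2).} If $\sup_x E_x[\exp(\lambda\ttau_\Delta)]<\infty$, then $\sup_x E_x[\exp((d\lambda)\tau^d_\Delta)]<\infty$, so by Theorem \ref{thm:QSD_tightness}(2) there is no QSD for ${\bf X}^d$ with absorption parameter $d\lambda$. Since, by Proposition \ref{prop:QSD_equivalency}, $\bX$ has a QSD with absorption parameter $\lambda$ if and only if ${\bf X}^d$ has one with absorption parameter $d\lambda$, it follows that $\bX$ has no QSD with absorption parameter $\lambda$. I would close with the observation that neither conclusion depends on the choice of $d$, which is automatic since the QSDs of $\bX$ are intrinsic.

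\textbf{Main obstacle.} There is no genuine difficulty here: all the analytic content already resides in the discrete-time theory. The only points requiring care are (i) verifying that the continuous parameter $\lambda$ indeed lands in the finite MGF regime of ${\bf X}^d$, so that Theorem \ref{thm:QSD_tightness} is applicable, and (ii) consistently carrying the rescaling factor $d$ through the MGF inequalities and through Proposition \ref{prop:QSD_equivalency}, which maps absorption parameters $\lambda\mapsto d\lambda$.
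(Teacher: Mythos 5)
Your proof is correct and follows essentially the same route as the paper's: fix $d>0$, use the two-sided inequality \eqref{eq:exponontial_equivalency} to transfer the hypotheses to the embedded chain ${\bf X}^d$, apply Theorem \ref{thm:QSD_tightness} there, and pull the conclusion back via Proposition \ref{prop:QSD_equivalency}. Your explicit check that $d\lambda$ lies in the finite MGF regime for ${\bf X}^d$ is a small detail the paper leaves implicit, but otherwise the arguments coincide.
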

\begin{proof}
Fix some  $d>0$. We prove the two assertions in order of appearance:
\begin{enumerate}
    \item Let $0<\lambda'<\lambda$ \tro{R1:G23}satisfy $\lim_{x\to \infty}E_x[\exp(\lambda'\ttau_\Delta)]=\infty$. The first inequality in \eqref{eq:exponontial_equivalency} gives $\lim_{x\to \infty}E_x[\exp(\lambda' d\tau^d_{\Delta})]=\infty$ for the discrete process ${\bf X}^d$. By Theorem \ref{thm:QSD_tightness}-1, we conclude that ${\bf X}^d$ has a QSD with absorption parameter $\lambda d$, hence Proposition \ref{prop:QSD_equivalency} implies $\bX$ has a QSD with absorption parameter $\lambda$.
    \item Given $\sup_x E_x [\exp (\lambda \ttau_\Delta)]<\infty$, the second inequality in \eqref{eq:exponontial_equivalency} gives $\sup_x E_x[\exp(\lambda d\tau^d_\Delta)]<\infty$ for the discrete process ${\bf X}^d$. From Theorem \ref{thm:QSD_tightness}-2, we conclude that ${\bf X}^d$ does not have a QSD with absorption parameter $\lambda d$. Thus, from Proposition \ref{prop:QSD_equivalency}, we obtain that $\bX$ has no QSD with absorption parameter $\lambda$. 
\end{enumerate}
\end{proof}
\tro{R1:M47}As an immediate corollary, we also have the following analog of Corollary \ref{cor:upto_cr}:
\begin{cor}
\label{cor:upto_cr_cts}
Let 
$$\bblambda_ 0 = \inf\{ \lambda \in (0,\bblambda_{cr}): \lim_{x\to\infty} E_x [ \exp (\lambda \ttau_\Delta)]=\infty\},$$
with the convention $\inf \emptyset = \infty$. Then for every $\lambda \in (\bblambda_0,\bblambda_{cr}]$ there exists a QSD with absorption parameter $\lambda$.

\end{cor}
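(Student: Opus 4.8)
The plan is to reduce everything to the discrete-time Corollary~\ref{cor:upto_cr} by passing to an embedded chain ${\bf X}^d$ and transferring QSDs via Proposition~\ref{prop:QSD_equivalency}, exactly in the spirit of the proof of Theorem~\ref{thm:QSD_tightness_cts}. Fix an arbitrary $d>0$ and recall from the discretization of Section~\ref{sec:QSD_cts} that ${\bf X}^d$ satisfies {\bf HD-1,2,3} with critical absorption parameter $\lambda_{cr}^d=d\bblambda_{cr}$. Let $\lambda_0^d$ denote the threshold from Corollary~\ref{cor:upto_cr} attached to ${\bf X}^d$, that is $\lambda_0^d=\inf\{\gamma\in(0,\lambda_{cr}^d):\lim_{x\to\infty}E_x[\exp(\gamma\tau^d_\Delta)]=\infty\}$.

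The first step is to show $\lambda_0^d\le d\,\bblambda_0$. This is immediate from the sandwich $\ttau_\Delta\le d\tau^d_\Delta<\ttau_\Delta+d$ underlying \eqref{eq:exponontial_equivalency}: since $\exp(\gamma\tau^d_\Delta)\ge\exp\bigl((\gamma/d)\ttau_\Delta\bigr)$, whenever $x\mapsto E_x[\exp((\gamma/d)\ttau_\Delta)]$ blows up as $x\to\infty$ so does $x\mapsto E_x[\exp(\gamma\tau^d_\Delta)]$; taking $\gamma=d\lambda$ as $\lambda$ ranges over the defining set of $\bblambda_0$ and passing to infima gives the inequality (equality in fact holds, but $\le$ is all that is needed). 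Hence $(d\bblambda_0,d\bblambda_{cr}]\subseteq(\lambda_0^d,\lambda_{cr}^d]$.

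The second step is then routine. Given $\lambda\in(\bblambda_0,\bblambda_{cr}]$ we have $d\lambda\in(\lambda_0^d,\lambda_{cr}^d]$, so Corollary~\ref{cor:upto_cr} applied to ${\bf X}^d$ yields a QSD $\nu$ for ${\bf X}^d$ with absorption parameter $d\lambda$, and Proposition~\ref{prop:QSD_equivalency}-(2) lifts $\nu$ to a QSD for $\bX$ with absorption parameter $\lambda$ — concretely the normalization of $\int_0^d e^{\lambda s}P_\nu(\X_s\in\cdot)\,ds$. As $\lambda$ was arbitrary in $(\bblambda_0,\bblambda_{cr}]$, the corollary follows, the case $\bblambda_0=\infty$ being vacuous.

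There is no genuine obstacle; the only point deserving care is that the endpoint $\lambda=\bblambda_{cr}$ and the interior $(\bblambda_0,\bblambda_{cr})$ are handled by the \emph{same} argument, which is exactly what the discretization buys — it lets us quote the endpoint case of Corollary~\ref{cor:upto_cr}, whose proof already carries out the delicate tightness argument along a sequence $\lambda_n\searrow\lambda_{cr}^d$, rather than reproduce such an argument at the level of the continuous-time semigroup. As an alternative for the interior one could invoke Theorem~\ref{thm:QSD_tightness_cts}-(1) directly with any $\lambda'\in(\bblambda_0,\lambda)$ enjoying the blow-up property (such $\lambda'$ exists because, by monotonicity of $\lambda\mapsto e^{\lambda t}$, the set of $\lambda\in(0,\bblambda_{cr})$ with $\lim_{x\to\infty}E_x[\exp(\lambda\ttau_\Delta)]=\infty$ is an interval with left endpoint $\bblambda_0$), using the discretization only at the endpoint. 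Finally, if $S$ is finite the statement is either vacuous or reduces to the finite-state discrete theory, where the tightness in Corollary~\ref{cor:upto_cr} is automatic.
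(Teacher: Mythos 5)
Your proof is correct and follows essentially the same route as the paper: fix $d>0$, use the sandwich $\ttau_\Delta\le d\tau^d_\Delta$ to transfer the blow-up condition to the embedded chain ${\bf X}^d$, invoke Corollary \ref{cor:upto_cr} for ${\bf X}^d$, and lift the resulting QSD back via Proposition \ref{prop:QSD_equivalency}-(2). Your version is somewhat more explicit than the paper's (in particular about $\lambda_0^d\le d\bblambda_0$ and the monotonicity of the blow-up set), but the argument is the same.
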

Several comments are in place: 
\begin{enumerate} 
\item 
A special case is the main result of  \cite{Ferrari1995}. There, the authors proved the existence of a QSD under the assumption that for all $t>0$, 
$$\lim_{x\to\infty} P_x (\ttau_\Delta \le t) \to 0.$$ 
This assumption implies  $\lim_{x\to\infty} E_x [\exp (\lambda' \ttau_\Delta)] = \infty$ for every $\lambda' > 0$, and therefore under this assumption $\bblambda$ in the Corollary is equal to $0$ and corollary yields the existence of a QSD for every absorption parameter in the interval $(0,\bblambda_{cr}]$. 
\item The MGFs $E_x [ \exp (\lambda \tau_\Delta)]$ may be unbounded, yet not have a limit at inifinity. The papers \cite{grois1} and \cite{grois2} provide several examples featuring such behavior. The processes described there possess a unique QSD and therefore the set in the corollary is empty. On the other hand, it can be easily shown that the MGFs are unbounded for any $\lambda\in (0,\bblambda_{cr})$, and that \eqref{eq:bd_arrival_cts} goes not hold for any $\bar\bblambda>0$, so that Theorem \ref{thm:coming_infinty_cts} is not applicable as well.  
\end{enumerate} 

\subsection{Martin Boundary} 
In this section, we provide a continuous-time version of Theorem \ref{thm:martin}. As in the previous sections, we adapt the results from the discrete setting.

Assume that $\lambda>0$ is in the finite MGF regime.  Fix any $d>0$. Then the process  ${\bf X}^d$ induces a Martin compactification as described in Definition \ref{Def:Martin}\tro{R1:P46}. We write $K^{d\lambda}$, $\partial^{d\lambda} M$ and $(M^{d\lambda},\rho^{d\lambda})$ for the corresponding Martin kernels, boundary and metric space introduced in that definition.  We also write $S^{d \lambda}$ for the elements in $\partial^{d\lambda} M$, which are QSDs for the transition function for ${\bf X}^d$ with absorption parameter $d \lambda$. 

We need some preparations. First, we introduce the analogs of the kernels $K^{d\lambda}(\cdot,\cdot)$.  For $x\in S$, define the kernel 
\begin{equation} 
\label{eq:kernel_cts} {\mathbb K}^\lambda (x,y) = \frac{ \int_0^\infty e^{\lambda s} P_{x} (\bX_s =y) ds }{\int_0^\infty e^{\lambda s} P_{x} (\bbtau_\Delta >s)ds}.
\end{equation} 
Note that by our assumption that $\lambda$ is in the finite MGF regime, both integrals are finite and nonzero. We make a connection with the kernels  $K^{d\lambda} (\cdot,\cdot)$. 
\begin{lem}
\label{lem:Kcts2disc}
$$ {\mathbb K}^\lambda (x,y) = \frac{\int_0^d e^{\lambda s} P_{K^{d\lambda}(x,\cdot)}(\bX_s = y)ds}{\int_0^d e^{\lambda s} P_{K^{d\lambda}(x,\cdot)}(\bbtau_\Delta>s) ds}.$$
\end{lem}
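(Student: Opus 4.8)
The plan is to unfold both sides of the claimed identity in terms of the continuous-time semigroup evaluated at integer multiples of $d$, using the fact that $K^{d\lambda}(x,\cdot)$ is, by Definition \ref{def:KMartin}, a normalized Green's function for the discrete chain ${\bf X}^d$. Concretely, $G^{d\lambda}(x,y) = E_x[\sum_{0\le n < \tau^d_\Delta} e^{d\lambda n}\delta_y(X^d_n)] = \sum_{n=0}^\infty e^{d\lambda n} P_x(\bX_{dn}=y, \bbtau_\Delta > dn)$, and $K^{d\lambda}(x,y) = G^{d\lambda}(x,y)/G^{d\lambda}(x,{\bf 1})$.

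First I would rewrite the numerator on the right-hand side. Writing any $t\ge 0$ as $t = dn + s$ with $n\in\Z_+$ and $s\in[0,d)$, the Markov property at time $dn$ gives
\begin{align*}
\int_0^\infty e^{\lambda t} P_x(\bX_t = y)\,dt
&= \sum_{n=0}^\infty \int_0^d e^{\lambda(dn+s)} P_x(\bX_{dn+s}=y)\,ds\\
&= \int_0^d e^{\lambda s} \Bigl(\sum_{n=0}^\infty e^{d\lambda n} \sum_{z\in S} P_x(\bX_{dn}=z)\,P_z(\bX_s=y)\Bigr)ds.
\end{align*}
Since $\bX_{dn}=z\in S$ forces $\bbtau_\Delta > dn$, the inner double sum is $\sum_z G^{d\lambda}(x,z) P_z(\bX_s=y) = G^{d\lambda}(x,{\bf 1})\sum_z K^{d\lambda}(x,z)P_z(\bX_s=y) = G^{d\lambda}(x,{\bf 1})\,P_{K^{d\lambda}(x,\cdot)}(\bX_s=y)$ (here using that contributions from $z=\Delta$ vanish because $P_\Delta(\bX_s=y)=0$ for $y\in S$). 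Hence the numerator of ${\mathbb K}^\lambda(x,y)$ equals $G^{d\lambda}(x,{\bf 1})\int_0^d e^{\lambda s} P_{K^{d\lambda}(x,\cdot)}(\bX_s=y)\,ds$.

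Next I would run the identical decomposition on the denominator $\int_0^\infty e^{\lambda t}P_x(\bbtau_\Delta > t)\,dt$, with the event $\{\bbtau_\Delta > dn+s\}$ in place of $\{\bX_{dn+s}=y\}$; the same conditioning at time $dn$ (partitioning on $\bX_{dn}=z\in S$, since $\{\bbtau_\Delta>dn+s\}\subseteq\{\bbtau_\Delta>dn\}=\{\bX_{dn}\in S\}$) yields $G^{d\lambda}(x,{\bf 1})\int_0^d e^{\lambda s}P_{K^{d\lambda}(x,\cdot)}(\bbtau_\Delta>s)\,ds$. Taking the ratio, the common factor $G^{d\lambda}(x,{\bf 1})$ cancels and gives exactly the asserted formula. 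The only mild subtlety — and the main thing to be careful about — is the interchange of the sum over $n$ with the integral over $s$ and with the sum over $z$: all terms are nonnegative, so Tonelli's theorem applies, and finiteness of the whole expression is guaranteed by the assumption that $\lambda$ is in the finite MGF regime together with \eqref{eq:exponontial_equivalency}, so no convergence issue actually arises. I would state this briefly rather than belabor it.
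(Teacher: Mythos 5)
Your proposal is correct and follows essentially the same route as the paper's proof: decompose $t=dn+s$, apply the Markov property at time $dn$, recognize the resulting sum over $n$ and $z$ as $\sum_z G^{d\lambda}(x,z)P_z(\cdot)$, and cancel the common factor $G^{d\lambda}(x,{\bf 1})$ in numerator and denominator. Your explicit appeal to Tonelli for the interchange is a minor addition the paper leaves implicit; otherwise the arguments coincide.
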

\begin{proof}
The numerator in \eqref{eq:kernel_cts} can be rewritten as \begin{align*} \sum_{m=0}^\infty e^{d\lambda m} \int_{0}^{d} e^{\lambda s} P_{x} (\bX_{dm+ s}=y) ds & = \sum_{m=0}^\infty e^{d \lambda m}  \int_0^d e^{\lambda s} E_{x}[ P_{X^d_m} (\bX_s =y)] ds \\
& = \sum_{m=0}^\infty \sum_z e^{d\lambda m} P_{x} (X^d_m = z)\int_0^d e^{\lambda s} P_z (\bX_s =y) ds\\
& = \sum_{z} G^{d \lambda}(x ,z) \int_0^d e^{\lambda s}P_z (\bX_s=y) ds\\
& = G^{d\lambda}(x,{\bf 1}) \int_0^d e^{\lambda s} P_{K^{d\lambda}(x,\cdot)}(\bX_s = y) ds
\end{align*} 
and similarly, the denominator is equal to 
$G^{d\lambda} (x,{\bf 1}) \int_0^d e^{\lambda s} P_{K^{d\lambda}(x,\cdot)}(\bbtau_\Delta>s) ds$. 
\end{proof}
\begin{prop}
\label{prop:ker_QSD_cts}
Let $[{\bf x}]\in S^{d\lambda}$. Then
\begin{enumerate} 
\item 
For every sequence $(x_n:n\in\N)$ of elements in $S$ which is in $[{\bf x}]$,
$$\lim_{n\to\infty} {\mathbb K}^{\lambda}(x_n,y)=
\frac{\int_0^d e^{\lambda s} P_{K^{d\lambda}([{\bf x}],\cdot)}(\bX_s = y)ds}{\int_0^d e^{\lambda s} P_{K^{d\lambda}([{\bf x}],\cdot)}(\bbtau_\Delta>s) ds},~y\in S.$$
Denote this limit by ${\mathbb K}^\lambda ([{\bf x}],\cdot)$. 
\item $y \to {\mathbb K}^\lambda ([{\bf x}],\cdot)$ is a QSD for $\bX$ with absorption parameter $\lambda$. 
\end{enumerate}
\end{prop}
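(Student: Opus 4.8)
The plan is to leverage the discrete-time Martin boundary machinery already established for $\mathbf{X}^d$ (Theorem \ref{thm:martin} and the surrounding results) together with the comparison dictionary from Section on Discretizing Time. First I would prove part (1). Fix $[\mathbf{x}]\in S^{d\lambda}$ and a representative sequence $(x_n:n\in\mathbb{N})$ of states in $S$. By Lemma \ref{lem:Kcts2disc} we have the exact identity
$$ {\mathbb K}^\lambda (x_n,y) = \frac{\int_0^d e^{\lambda s} P_{K^{d\lambda}(x_n,\cdot)}(\bX_s = y)\,ds}{\int_0^d e^{\lambda s} P_{K^{d\lambda}(x_n,\cdot)}(\bbtau_\Delta>s)\, ds},~y\in S. $$
Because $[\mathbf{x}]\in S^{d\lambda}$, the probability measures $K^{d\lambda}(x_n,\cdot)$ converge pointwise to the QSD $K^{d\lambda}([\mathbf{x}],\cdot)$ for $\mathbf{X}^d$ with absorption parameter $d\lambda$. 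Since $K^{d\lambda}([\mathbf{x}],\cdot)$ is a genuine probability measure (being a QSD), Scheffé's lemma upgrades pointwise convergence to convergence in $\ell^1(S)$; this is the step that lets me pass the limit through the expressions $P_{K^{d\lambda}(x_n,\cdot)}(\bX_s=y)=\sum_z K^{d\lambda}(x_n,z)P_z(\bX_s=y)$ uniformly in $s\in[0,d]$, and likewise for $P_{\cdot}(\bbtau_\Delta>s)$. Then dominated convergence in the $s$-integral (the integrands are bounded by $e^{\lambda d}$ on $[0,d]$) gives that numerator and denominator converge to $\int_0^d e^{\lambda s} P_{K^{d\lambda}([\mathbf{x}],\cdot)}(\bX_s=y)\,ds$ and $\int_0^d e^{\lambda s} P_{K^{d\lambda}([\mathbf{x}],\cdot)}(\bbtau_\Delta>s)\,ds$ respectively. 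The denominator is strictly positive (it is at least $\int_0^d e^{\lambda s} e^{-q_{\max} s}\,ds>0$ where $q_{\max}$ bounds the holding rates on the support, or more simply it is bounded below by $\int_0^d P_{K^{d\lambda}([\mathbf{x}],\cdot)}(\bbtau_\Delta>s)\,ds>0$ since $\bbtau_\Delta>0$ a.s.), so the ratio converges to the claimed limit, which does not depend on the representative. The main technical obstacle here is justifying the interchange of limit and integral uniformly; Scheffé plus the explicit bound on $[0,d]$ resolves it cleanly.

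Next I would prove part (2): that $\bbnu := {\mathbb K}^\lambda([\mathbf{x}],\cdot)$ is a QSD for $\bX$ with absorption parameter $\lambda$. The key observation is that $\bbnu$ is exactly the measure produced by Proposition \ref{prop:QSD_equivalency}-2 applied to the QSD $K^{d\lambda}([\mathbf{x}],\cdot)$ of $\mathbf{X}^d$. Indeed, writing $\nu' := K^{d\lambda}([\mathbf{x}],\cdot)$, a QSD for $\mathbf{X}^d$ with absorption parameter $d\lambda$, Proposition \ref{prop:QSD_equivalency}-2 asserts that $\int_0^d e^{\lambda s} P_{\nu'}(\bX_s\in\cdot)\,ds$, normalized, is a QSD for $\bX$ with absorption parameter $\lambda$. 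But the formula from part (1) is precisely that integral divided by its total mass $\int_0^d e^{\lambda s} P_{\nu'}(\bbtau_\Delta>s)\,ds = \sum_y \int_0^d e^{\lambda s}P_{\nu'}(\bX_s=y)\,ds$ (the absorbing state contributes nothing since we only sum over $y\in S$, and $P_{\nu'}(\bX_s\in S)=P_{\nu'}(\bbtau_\Delta>s)$). Hence ${\mathbb K}^\lambda([\mathbf{x}],\cdot)$ coincides with the normalization of $\int_0^d e^{\lambda s}P_{\nu'}(\bX_s\in\cdot)\,ds$, and Proposition \ref{prop:QSD_equivalency}-2 immediately gives the conclusion.

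One point to check is that the normalizing constant is finite and positive, so that the normalization in Proposition \ref{prop:QSD_equivalency}-2 is legitimate: finiteness follows since $\int_0^d e^{\lambda s}P_{\nu'}(\bbtau_\Delta>s)\,ds \le e^{\lambda d}\cdot d < \infty$, and positivity was noted above. It is also worth remarking that, since $S$ is irreducible and $\bbnu$ is a QSD for $\bX$, Comment 1 after Proposition \ref{prop:eigen_equivalence} (its continuous-time analog) shows $\bbnu(y)>0$ for all $y\in S$, so the limit in part (1) is strictly positive everywhere, matching the situation recorded in the discrete case. I expect part (1)'s uniform-interchange argument to be the only genuinely non-routine step; part (2) is then a direct appeal to Proposition \ref{prop:QSD_equivalency}.
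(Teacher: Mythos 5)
Your proof is correct and follows essentially the same route as the paper: part (1) is the identity of Lemma \ref{lem:Kcts2disc} plus a limit interchange (the paper invokes dominated convergence where you use Scheff\'e to get $\ell^1$ convergence of $K^{d\lambda}(x_n,\cdot)$, a slightly more careful justification of the same step), and part (2) is exactly the paper's appeal to Proposition \ref{prop:QSD_equivalency}-2 together with the observation that ${\mathbb K}^\lambda([{\bf x}],\cdot)$ is the normalized measure $\int_0^d e^{\lambda s}P_{K^{d\lambda}([{\bf x}],\cdot)}(\bX_s\in\cdot)\,ds$.
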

\begin{proof}
The first statement follows from applying the dominated convergence theorem to the identity in Lemma \ref{lem:Kcts2disc}. Since $K^{d\lambda}([{\bf x}],\cdot)$ is a QSD for ${\bf X}^d$ with absorption parameter $d\lambda$, the second statement follows  from Proposition \ref{prop:QSD_equivalency}-2 and the fact that ${\mathbb K}^{\lambda}([{\bf x}],\cdot)$ is a probability measure on $S$.
\end{proof}
We are ready to state the analog of Theorem \ref{thm:martin}. 
\begin{thm}
\label{thm:martin_cts}
Let $\lambda>0$ be in the finite MGF regime for $\bX$.  Let $\nu$ be a QSD for $\bX$ with absorption parameter $\lambda$. Then, there exists a probability measure $\hat F_\nu$ on $\partial^{d\lambda} M$ satisfying $\hat F_{\nu} (S^{d\lambda})=1$ such that 
$$ \nu(y) = \int {\mathbb K}^\lambda ([{\bf x}],y)d\hat F_{\nu} ([{\bf x}]).$$
\end{thm}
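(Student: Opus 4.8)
The plan is to reduce the statement to the discrete-time Martin representation, Theorem \ref{thm:martin}, applied to an embedded chain ${\bf X}^d$, and then to translate the discrete Martin kernels into the continuous ones ${\mathbb K}^\lambda$ via the time-averaging map already appearing in Proposition \ref{prop:QSD_equivalency}-2 and Proposition \ref{prop:ker_QSD_cts}. First I would fix $d>0$. By the right-hand inequality in \eqref{eq:exponontial_equivalency}, $d\lambda$ lies in the finite MGF regime for ${\bf X}^d$, and by Proposition \ref{prop:QSD_equivalency}-1, $\nu$ is a QSD for ${\bf X}^d$ with absorption parameter $d\lambda$. Applying Theorem \ref{thm:martin} to ${\bf X}^d$ then yields a Borel probability measure $\bar F_\nu$ on $\partial^{d\lambda}M$ with $\bar F_\nu(S^{d\lambda})=1$ and $\nu(y)=\int K^{d\lambda}([{\bf x}],y)\,d\bar F_\nu([{\bf x}])$ for every $y\in S$. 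Since $\bar F_\nu$ is carried by $S^{d\lambda}$, all integrals below may be restricted to $S^{d\lambda}$, where ${\mathbb K}^\lambda([{\bf x}],\cdot)$ is defined by Proposition \ref{prop:ker_QSD_cts}.

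Next I would apply the operator $\Phi(\mu)(\cdot):=\int_0^d e^{\lambda s}P_\mu(\bX_s\in\cdot)\,ds$ to both sides of this identity. On the left, $\nu$ being a QSD for $\bX$ with parameter $\lambda$ gives $P_\nu(\bX_s=y)=e^{-\lambda s}\nu(y)$ for $y\in S$, hence $\Phi(\nu)(y)=d\,\nu(y)$. On the right, all quantities are nonnegative, so Tonelli's theorem permits interchanging the integral in $s$, the summation over $S$ implicit in $P_{K^{d\lambda}([{\bf x}],\cdot)}(\bX_s=\cdot)$, and the $\bar F_\nu$-integral, giving $\Phi(\nu)(y)=\int \Phi\bigl(K^{d\lambda}([{\bf x}],\cdot)\bigr)(y)\,d\bar F_\nu([{\bf x}])$. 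For each $[{\bf x}]\in S^{d\lambda}$, the definition \eqref{eq:kernel_cts} of ${\mathbb K}^\lambda$ together with Lemma \ref{lem:Kcts2disc} and Proposition \ref{prop:ker_QSD_cts}-1 identifies $\Phi\bigl(K^{d\lambda}([{\bf x}],\cdot)\bigr)(y)=c([{\bf x}])\,{\mathbb K}^\lambda([{\bf x}],y)$, where $c([{\bf x}])=\int_0^d e^{\lambda s}P_{K^{d\lambda}([{\bf x}],\cdot)}(\bbtau_\Delta>s)\,ds$ is finite (bounded by $\int_0^d e^{\lambda s}\,ds$) and strictly positive (the integrand equals $1$ at $s=0$ and is right-continuous). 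Measurability of $[{\bf x}]\mapsto c([{\bf x}])$ and of $[{\bf x}]\mapsto {\mathbb K}^\lambda([{\bf x}],y)$ follows from the continuity of $K^{d\lambda}(\cdot,z)$ on the compact space $M^{d\lambda}$ and Fubini. Consequently $d\,\nu(y)=\int c([{\bf x}])\,{\mathbb K}^\lambda([{\bf x}],y)\,d\bar F_\nu([{\bf x}])$.

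Finally I would set $d\hat F_\nu([{\bf x}]):=\tfrac1d\,c([{\bf x}])\,d\bar F_\nu([{\bf x}])$, a finite Borel measure on $\partial^{d\lambda}M$ supported on $S^{d\lambda}$ (as $\bar F_\nu$ is, and $c>0$ there). Summing the last display over $y\in S$ and using $\sum_{y}\nu(y)=1$ together with $\sum_y{\mathbb K}^\lambda([{\bf x}],y)=1$ (Proposition \ref{prop:ker_QSD_cts}-2) shows $\int c([{\bf x}])\,d\bar F_\nu([{\bf x}])=d$, so $\hat F_\nu$ is a probability measure with $\hat F_\nu(S^{d\lambda})=1$; dividing the same display by $d$ then gives $\nu(y)=\int {\mathbb K}^\lambda([{\bf x}],y)\,d\hat F_\nu([{\bf x}])$, which is the assertion. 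The only points needing a little care are the measurability and strict positivity of the normalizing constant $c([{\bf x}])$ — equivalently, that $[{\bf x}]\mapsto{\mathbb K}^\lambda([{\bf x}],\cdot)$ is a genuine Borel family of probability measures — and the Tonelli interchanges; since all the substantive content is already contained in Theorem \ref{thm:martin}, Proposition \ref{prop:QSD_equivalency}, Lemma \ref{lem:Kcts2disc}, and Proposition \ref{prop:ker_QSD_cts}, I expect no essential obstacle.
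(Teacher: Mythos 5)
Your proposal is correct and follows essentially the same route as the paper: pass to the embedded chain via Proposition \ref{prop:QSD_equivalency}-1, invoke Theorem \ref{thm:martin} to get $\bar F_\nu$, then apply the time-averaging operator $\int_0^d e^{\lambda s}P_\cdot(\bX_s=\cdot)\,ds$ together with Fubini--Tonelli and Proposition \ref{prop:ker_QSD_cts} to convert $K^{d\lambda}$ into ${\mathbb K}^\lambda$, defining $\hat F_\nu$ with density $\tfrac1d\int_0^d e^{\lambda s}P_{K^{d\lambda}([{\bf x}],\cdot)}(\bbtau_\Delta>s)\,ds$ with respect to $\bar F_\nu$. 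Your extra care about the measurability and strict positivity of the normalizing constant, and the verification that $\hat F_\nu$ is a probability measure by summing over $y$, match (and slightly refine) the paper's argument.
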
 
\begin{proof}
By Proposition \ref{prop:QSD_equivalency}-1, $\nu$ is a QSD for ${\bf X}^d$ with absorption parameter $d\lambda$. Theorem \ref{thm:martin} then gives a probability measure $\bar F_\nu$ with $\bar F_\nu (S^{d\lambda})=1$, satisfying 
$$ \nu (y) = \int K^{d\lambda} ([{\bf x}],y) d \bar F_\nu([\bf x]).$$
Since $\nu$ is a QSD for $\bX$ with absorption \tro{R1:G24}parameter $\lambda$, for every $s>0$, $e^{\lambda s} P_\nu (\bX_s =y) =\nu(y)$ and so $\nu (y) = \frac 1d\int_0^d e^{\lambda s} P_\nu (\bX_s=y) ds$. Plugging the first representation of $\nu$ into the righthand side of the second representation and applying the  Fubini-Tonelli theorem gives  
\begin{align*}  \nu(y) &= \int \left (\frac{1}{d} \int_0^d e^{\lambda s} P_{K^{d\lambda}([\bf x],\cdot)}(\bX_s=y) ds \right) d {\bar F}_\nu([{\bf x}])\\
&=\int {\mathbb K}^\lambda ([{\bf x}],y) \frac{\int_0^d e^{\lambda s} P_{K^{d\lambda}([{\bf x}],\cdot)}(\bbtau_\Delta > s) ds }{d}\bar F_{\nu} ([{\bf x}])\\
& = \int {\mathbb K}^\lambda ([{\bf x}],y) d \hat F_\nu ([{\bf x}]),
\end{align*} 
\tro{R1:M48}where the second equality is due to Lemma \ref{lem:Kcts2disc}, and 
where $\hat F_\nu$ is a measure absoultely continous with respect to $\bar F_\nu$, given by $\frac{d \hat F_\nu} {d \bar F_\nu}([{\bf x}])=\frac{\int_0^d e^{\lambda s} P_{K^{d\lambda}([{\bf x}],\cdot)}(\bbtau_\Delta > s) ds }{d}$. As $\nu$ and each  ${\mathbb K}^\lambda([{\bf x}],\cdot)$ are probability measures, it follows that $\hat F_\nu$ is a probability measure too. 
\end{proof}
\subsection{Example: QSDs for Birth and Death Process}
In \cite{van1991}, Van-Doorn obtained all QSDs for birth and death processes on $\Z_+$, which are eventually absorbed at $-1$. This was done through a very detailed analysis of a spectral representation for the transition kernel of the process.  Our work allows us to recover some of the main results using the general theory developed in earlier sections. We stress that the work in \cite{van1991} contains many additional results, which we will not cover here, specifically regarding convergence.

\tro{R1:P47}  In this section 
we assume that $\bX$ is a birth and death process on $\Z_+\cup\{-1\}$, with birth and death rates $(\lambda_k:k\in \Z_+)$ and  $(\mu_k:k\in \Z_+)$ respectively, which are all in $(0,\infty)$, and with  $\Delta=-1$ as a unique absorbing state. Note that in general a Birth and Death process may not satisfy  {\bf HC-1',2}. As a result, and in order to be consistent with the literature, in this section we will drop them as apriori assumptions. 

Letting 
$$ \pi_0=1,\pi_n = \prod_{j=1}^n \frac{\lambda_{j-1}}{\mu_j},~n\in\N,$$
then
\begin{equation}
\label{eq:taudelta_BD}
    \sum_{n=0}^\infty \frac{1}{\lambda_n \pi_n}=\infty,
\end{equation}
which is equivalent to $\ttau_\Delta<\infty$ a.s. from any initial distribution on $\Z_+$ \cite[Chapter 8]{anderson2012BDcts}. \tro{R1:M49}Note that this implies that the process does not explode.  We will assume that \eqref{eq:taudelta_BD} holds. Thus, {\bf HC-1,2} automatically hold. Next, we introduce an array of random variables that are crucial for the analysis. Let $x \in \Z_+$ and $y\in \Z_+\cup\{-1\}$ satisfying \tro{R1:M50}$y<x$ let $T_{x,y}$ be a random variable whose distribution is the same as $\tau_y$ under $P_x$. For each $y$, $T_{x,y} \preceq T_{x+1,y}\preceq \cdots$ and $T_{x,y+1}\preceq T_{x,y}$, where for two random variables $X$ and $Y$, $X\preceq Y$ means that $X$ is stochastically dominated by $Y$. Therefore, without loss of generality, we may assume all these RVs are realized in one probability space with the stochastic domination $\preceq$ replaced by a pointwise inequality $\le $. With this, let $SU_y =\lim_{x\to \infty} T_{x,y}$ and let \tro{R1:P48}$U=\sup U_y = U_{-1}$. This RV represents the passage time from $+\infty$ to $\Delta=-1$, and 
$$ E[U] = \sum_{n=1}^\infty \frac{1}{\lambda_n \pi_n}\sum_{i=n+1}^{\infty } \pi_i,$$
see \cite[Chapter 8]{anderson2012BDcts}.
In order to apply our results, we need the following lemma. 
\begin{lem}
\label{prop:Sexp}
 \begin{enumerate} 
 \item Suppose $E[U]<\infty$.  Then: 
 \begin{enumerate} 
 \item $\bblambda_{cr} \in (0,\infty)$.
 \item \tro{R1:M51}For $x\in \Z_+$ and $\lambda>0$: $E_x[\exp (\lambda \ttau_\Delta)]<\infty$ if and only if $E[\exp (\lambda U)] < \infty$ if and only if $\lambda < \bblambda_{cr}$. 
 \item For $x\in \Z_+$, $E_x [ \exp (\bblambda_{cr}(\ttau_\Delta \wedge \ttau_x))]< \infty$.
 \end{enumerate}
 \item Suppose $E[U]=\infty$. Then $\bblambda_{cr} =0$ or $\bblambda_{cr} >0$. In the latter case, $\lim_{x\to\infty} E_x[\exp (\lambda \ttau_\Delta)] =\infty$ for all $\lambda \le \bblambda_{cr}$. 
 \end{enumerate} 
\end{lem}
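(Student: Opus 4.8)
The plan is to reduce everything to the one-step-down passage times $\sigma_k$ (the law of $T_{k,k-1}$): by the strong Markov property these are independent and, under $P_x$, $\ttau_\Delta\overset{d}{=}\sigma_x+\sigma_{x-1}+\dots+\sigma_0$, so $T_{x,-1}=\sum_{k=0}^x\sigma_k\uparrow S$ and, with $m_k(\lambda):=E[e^{\lambda\sigma_k}]\in[1,\infty]$, monotone convergence gives $E_x[\exp(\lambda\ttau_\Delta)]=\prod_{k=0}^x m_k(\lambda)\uparrow\prod_{k=0}^\infty m_k(\lambda)=E[\exp(\lambda S)]$, whatever the value of $E[S]$. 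A first-step analysis at level $k$ yields $m_k(\lambda)=\mu_k/(\lambda_k+\mu_k-\lambda-\lambda_k m_{k+1}(\lambda))$, from which the usual continued-fraction argument shows that the abscissas $\theta_k:=\sup\{\lambda>0:m_k(\lambda)<\infty\}$ are strictly increasing in $k$ and $m_k(\theta_k)=\infty$. Hence $E_x[\exp(\lambda\ttau_\Delta)]<\infty\iff\lambda<\theta_0$, so $\bblambda_{cr}=\theta_0$ and $\bblambda_{cr}$ is automatically in the infinite--MGF regime; moreover $\bblambda_{cr}\le q_0<\infty$ because the $\mathrm{Exp}(q_0)$ holding time at $0$ is dominated by $\ttau_\Delta$. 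This gives $\bblambda_{cr}<\infty$ and, once positivity is known, the infinite--regime statement feeding into Theorem~\ref{thm:nu_recurr_cts}.

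For $\bblambda_{cr}>0$ in Part 1 I would argue without the recursion, via submultiplicativity. Since $s\mapsto P_y(\ttau_\Delta>s)$ is nondecreasing in $y$ with limit $g(s):=P(S>s)$, conditioning at time $t$ and using $P_z(\ttau_\Delta>s)\le g(s)$ for all $z\in S$ gives $P_x(\ttau_\Delta>t+s)\le g(s)\,P_x(\ttau_\Delta>t)$, hence $g(t+s)\le g(t)g(s)$ upon letting $x\to\infty$. Since $E[S]<\infty$ forces $S<\infty$ a.s., we get $g(t)\to0$, so the exponential decay rate $\alpha:=\sup_{t>0}\big(-t^{-1}\log g(t)\big)$ is positive and Fekete's lemma gives $E[e^{\lambda S}]<\infty\iff\lambda<\alpha$; with $E_x[\exp(\lambda\ttau_\Delta)]\le E[\exp(\lambda S)]$ this yields $\bblambda_{cr}\ge\alpha>0$, completing (a).

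For (b), the equivalence $\lambda<\bblambda_{cr}\iff E_x[\exp(\lambda\ttau_\Delta)]<\infty$ is the identity $\bblambda_{cr}=\theta_0$, and $E[\exp(\lambda S)]<\infty\Rightarrow E_x[\exp(\lambda\ttau_\Delta)]<\infty$ is the domination; the one implication with content is $\lambda<\bblambda_{cr}\Rightarrow E[\exp(\lambda S)]<\infty$, equivalently that the abscissa $\alpha$ of $S$ equals $\theta_0$ rather than merely being $\le\theta_0$, equivalently that $\prod_k m_k(\lambda)$ converges on all of $(0,\theta_0)$. I would prove this from the recursion: with $b_k:=E[\sigma_k]$ one has $\mu_k b_k=1+\lambda_k b_{k+1}$ and $\sum_k b_k=E[S]<\infty$ (so $\mu_k\to\infty$, as $b_k\ge1/\mu_k$), and a backward induction on the truncated recursions bounds $m_k(\lambda)-1$ by a summable multiple of $b_k$, uniformly for $\lambda$ in compact subsets of $(0,\theta_0)$, giving $\prod_k m_k(\lambda)\le\exp\big(C(\lambda)E[S]\big)<\infty$. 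For (c), decomposing $\ttau_\Delta\wedge\ttau_0$ under $P_0$ by the first jump shows it equals an $\mathrm{Exp}(q_0)$ variable plus, with probability $\lambda_0/q_0$, one independent copy of $\sigma_1$, so $E_0[\exp(\bblambda_{cr}(\ttau_\Delta\wedge\ttau_0))]=\frac{q_0}{q_0-\bblambda_{cr}}\big(\frac{\mu_0}{q_0}+\frac{\lambda_0}{q_0}m_1(\bblambda_{cr})\big)<\infty$, using $\bblambda_{cr}=\theta_0<q_0$ and $\theta_0<\theta_1$. Part 2 is then immediate: if $E[S]=\infty$ then $S$ is a.s. infinite or a.s. finite with infinite mean, so $E[e^{\lambda S}]=\infty$ for every $\lambda>0$ and thus $\lim_{x\to\infty}E_x[\exp(\lambda\ttau_\Delta)]=E[\exp(\lambda S)]=\infty$ for all $\lambda>0$ (the stated dichotomy $\bblambda_{cr}=0$ or $\bblambda_{cr}>0$ being vacuous).

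The step I expect to be the genuine obstacle is the uniform summable estimate for $m_k(\lambda)-1$ in (b): a naive backward induction loses a factor $(1-O(\lambda b_k))^{-1}$ at each level, so the bound must be organized so that these losses telescope into a convergent product, and this is exactly where $\sum_k b_k=E[S]<\infty$ (together with $\mu_k\to\infty$) must be used; everything else is routine first-step analysis plus monotone and dominated convergence.
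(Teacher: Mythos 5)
Your reduction to the one-step-down passage times and the product formula $E_x[\exp(\lambda\ttau_\Delta)]=\prod_{k=0}^x m_k(\lambda)\uparrow E[\exp(\lambda S)]$ is fine, but the load-bearing claims of your first paragraph --- that the abscissas $\theta_k$ are strictly increasing, that $m_k(\theta_k)=\infty$, and hence that $E_x[\exp(\lambda\ttau_\Delta)]<\infty$ iff $\lambda<\bblambda_{cr}$ ``automatically'' --- are asserted via ``the usual continued-fraction argument'' and are false for general birth and death chains. Take constant rates with downward drift (the continuous-time analogue of the chain ${\bf Y}$ of Section 8 with $\delta>\delta_{cr}$): the $\sigma_k$, $k\ge 1$, are i.i.d., so the $\theta_k$ are constant rather than strictly increasing, and their common MGF is finite at its own abscissa (cf.\ \eqref{eq:fq_lambda_0}); correspondingly $E_x[\exp(\bblambda_{cr}\ttau_\Delta)]<\infty$ there. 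That example has $E[S]=\infty$, so it does not contradict the lemma, but it shows your argument proves too much: the conclusion that $\bblambda_{cr}$ is in the infinite MGF regime genuinely requires $E[S]<\infty$, and the paper obtains it from Proposition \ref{prop:UI} (if $E[\exp(\bblambda_{cr}S)]<\infty$, the variables $e^{\bblambda_{cr}T_{y,-1}}$ would be uniformly integrable). The same unproven monotonicity underlies your part (c), which needs $m_1(\bblambda_{cr})<\infty$; the paper instead invokes Proposition \ref{prop:zeta_taux}, whose second condition holds because every state must pass through $0$ to reach $\Delta$.

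Second, you correctly isolate $\lambda<\bblambda_{cr}\Rightarrow E[\exp(\lambda S)]<\infty$ as ``the one implication with content'' and then leave it unproven, flagging the uniform summable bound on $m_k(\lambda)-1$ as ``the genuine obstacle.'' That is exactly the gap. The paper closes it with no recursion at all: set $h_y(t)=P(S_y>t)$; submultiplicativity and Fekete's lemma give a decay rate $c_y$, Markov's inequality ($h_y(t)\le E[S_y]/t$) gives $c_y>0$, dominated convergence ($E[S_y]\to 0$) gives $c_y\to\infty$, and the independent factorization $E[\exp(\lambda S)]=E[\exp(\lambda S_y)]\,E_y[\exp(\lambda\ttau_\Delta)]$, with $y$ chosen so that $c_y>\lambda$, shows $E[\exp(\lambda S)]<\infty$ iff $E_y[\exp(\lambda\ttau_\Delta)]<\infty$, i.e.\ $\bblambda_{cr}=c_{-1}$; this yields 1(a) and 1(b) simultaneously. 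Your treatment of part 2 is correct and matches the paper's.
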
  
\begin{proof}
 \begin{enumerate} 
\item 
 Suppose $E[U]<\infty$. Let $y\in \Z_+ \cup \{-1\}$, and define $h_y(t) = \sup_{x > y} P_x (\ttau_y > t)=P(U_y > t)$. \tro{R1:M52}From Markov's inequality,  $h_y(t) \le E[U_y] /t$. Since by construction $h_y(t+s) \le h_y (t) h_y (s)$, it follows from Fekete's lemma that $\lim_{t\to\infty} \frac{\ln h_y (t)}{t} = \inf_{t>0}\frac{\ln h_y(t)}{t}= -c_y$.  Thus, for every $t>0$
\begin{equation}
\label{eq:cylowbound}
 -c_y \le \frac{\ln h_y (t)}{t}\le  \frac{ \ln (E[U_y]/t)}t.
\end{equation} 
\tro{R1:P50}Note that $h_y(t) \ge  P(U_y > t) \ge P(T_{y+1,y}>t) = e^{- t (\mu_{y+1}+\lambda_{y+1})}$, and therefore $-c_y \ge -(\mu_{y+1}+\lambda_{y+1})$, and in particular, $c_y < \infty$. \tro{R1:P49}By choosing $t
>E[S_y]$, the righthand side of \eqref{eq:cylowbound} can be made strictly negative and therefore $c_y >0$. 

As $U\ge U_y =\sum_{x=y}^\infty T_{x+1,x}$, it follows from dominated convergence that $\lim_{y\to\infty} E[U_y]=0$ and therefore by freezing $t$ and taking $y\to \infty$ we conclude that $\lim_{y\to\infty} c_y = \infty$. From the defintion of $c_y$, we have that $E[\exp (\lambda U_y)]<\infty$ if $\lambda <c_y$ and $=\infty$ if $\lambda> c_y$. In particular, $E[\exp (\lambda U)]=E[\exp (\lambda U_{-1})]$ is finite for $\lambda < c_{-1}$ and infinite for $\lambda > c_{-1}$. In addition, 
$$E[\exp (\lambda U) ] =E[ \exp (\lambda U_y)] E_y [\exp (\lambda \ttau_\Delta)].$$
Consider now $\lambda>0$ such that $E_y [ \exp (\lambda \bbtau_\Delta)]<\infty$ for some, equivalently all, $y\in \Z_+$. Then by picking $y$ large enough, we may assume $\lambda< c_y$ and therefore both terms on the righthand side are finite, hence so is the lefthand side. Now if $\lambda>0$ is such that $E_y [ \exp (\lambda \bbtau_\Delta)]=\infty$ for some (equivalently all) $y\in\Z_+$, then necessarily the lefthand side is inifinte. This proves the first equivalence in (b). Since $E[\exp (\lambda U)]$  is finite if   $\lambda < c_{-1}$ and infinite if $\lambda> c_{-1}$, it follows that $\bblambda_{cr}=c_{-1}\in (0,\infty)$. This proves (a). \tro{R1:M53}To prove the second equivalence in (b) we apply Proposition \ref{prop:UI} which is also valid in the continuous-time setting with a change of notation. We argue by contradiction assuming $E[\exp (\bblambda_{cr} \ttau_\Delta)]<\infty$. By the first equivalence in part (b),  implies $E[\exp (\bblambda_{cr} S)]<\infty$. For each $y\in\Z_+$, we have that the distribution of $e^\bblambda_{cr}\bbtau_{\Delta}$ under $P_y$ coincides with the distribution of $e^{\bblambda_{cr}T_{y,-1}}$, a random variable which, by construction, satisfies  $e^{\bblambda_{cr}T_{y,-1}}\le e^{\bblambda_{cr} S}$. By the contradiction hypothesis the random variable on the righthand side is integrable, which in turn implies that the family of distributions of $e^{\bblambda_{cr} \bbtau_\Delta}$ under $P_y$ for $y\in\Z_+$, is uniformly integrable, a contradiction to Proposition \ref{prop:UI}. 

It remains to prove (c). We apply Proposition \ref{prop:zeta_taux}, which is also valid in the present setting with the obvious adaptations.  The second condition in the proposition holds because $\inf_{y\in\Z_+} P_y (\ttau_0 < \ttau_\Delta)=\frac{\lambda_0}{\lambda_0+\mu_0}>0$, and so (c) holds.  
\item Suppose $E[U]=\infty$. If $\bblambda_{cr}>0$ then mononote convergence gives that  for any $\lambda \in (0,\bblambda_{cr})$, $\lim_{x\to\infty} E_x [\exp (\lambda \ttau_\Delta) ] =E [\exp (\lambda U) ] =\infty$. By monotonicity, this limit holds for $\lambda=\bblambda_{cr}$. 
\end{enumerate} 
\end{proof}
With the lemma, we can prove the following characterization and description of QSDs for Birth and Death processes. This result is equivalent to  \cite[Theorem 3.2]{van1991}, which was the first to characterize and describe all QSDs for Birth and Death processes through spectral analysis of the transition kernels and corresponding orthogonal polynomials. 
\begin{thm}[Theorem 3.2, \cite{van1991}]
\begin{enumerate}
\item Suppose $E[U]<\infty$. Then $\bblambda_{cr}>0$, and there exists a unique QSD, which is also minimal. 
\item Suppose that $E[U]=\infty$. Then either $\bblambda_{cr}=0$ and there are no QSDs or $\bblambda_{cr}>0$ and for every $\lambda \in (0,\bblambda_{cr}]$ there exists a QSD with absorption parameter $\lambda$. 
\item When exists, a QSD with absorption parameter $\lambda>0$ is unique and given by the formula  
\begin{equation} 
\label{eq:rep_cts} \bbnu_\lambda (y) = \frac{\lambda}{q_y} \frac{1}{E_{D_y} [ \exp (\lambda \,^0\bbtau_\Delta),\,^0\bbtau_\Delta < \bbtau_y]},~y \in \Z_+ 
\end{equation}
where $D_y$ is the probability distribution assigning $\mu_y/q_y$ to $y-1$ and $\lambda_y/q_y$ to $y+1$ and $^0\bbtau_\Delta$ is the hitting time of $\Delta$ defined in \eqref{eq:0hitting}.
\end{enumerate}
\end{thm}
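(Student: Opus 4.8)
The plan is to derive all three parts from Lemma~\ref{prop:Sexp}, which converts the dichotomy $E[S]<\infty$ versus $E[S]=\infty$ into information about $\bblambda_{cr}$ and about $E_x[\exp(\lambda\bbtau_\Delta)]$, together with the existence results Theorem~\ref{thm:nu_recurr_cts}, Theorem~\ref{thm:coming_infinty_cts} and Corollary~\ref{cor:upto_cr_cts}. Two structural features of a birth and death chain will be used repeatedly: it is skip-free downward, so from any $x>N$ the hitting time of $K=\{0,1,\dots,N\}$ equals the hitting time of $N$ and both are strictly smaller than $\bbtau_\Delta$; and absorption occurs only out of state $0$. Note that {\bf HC-1,2} hold for every birth and death chain satisfying \eqref{eq:taudelta_BD}, while {\bf HC-3} holds as soon as $\bblambda_{cr}>0$; in particular, when $E[S]<\infty$ all of {\bf HC-1,2,3} hold by Lemma~\ref{prop:Sexp}(1), so the continuous-time theorems apply verbatim.

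\textit{Parts 1 and 2.} Assume first $E[S]<\infty$. Lemma~\ref{prop:Sexp}(1) gives $\bblambda_{cr}\in(0,\infty)$, places $\bblambda_{cr}$ in the infinite MGF regime — which is \eqref{eq:blowlambda0_cts} with $\bar\bblambda=\bblambda_{cr}$ — and gives \eqref{eq:taudelta_min}. For \eqref{eq:bd_arrival_cts} I would take $K=\{0,1,\dots,N\}$: for $x>N$ one has $E_x[\exp(\bblambda_{cr}\,\bbtau_\Delta\wedge\bbtau_K)]=E_x[\exp(\bblambda_{cr}\bbtau_N)]$, which increases in $x$ to $E[\exp(\bblambda_{cr}S_N)]$, finite as soon as $\bblambda_{cr}<c_N$; since $c_N\to\infty$ (from the proof of Lemma~\ref{prop:Sexp}) such an $N$ exists. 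Theorem~\ref{thm:coming_infinty_cts} then yields directly the existence of a unique QSD, its minimality, its representation by \eqref{eq:cycle_rep} — which is exactly \eqref{eq:rep_cts} at $\lambda=\bblambda_{cr}$ — and, by its second assertion, that there are no further QSDs. Now assume $E[S]=\infty$. By Lemma~\ref{prop:Sexp}(2) either $\bblambda_{cr}=0$, or $\bblambda_{cr}>0$ with $\lim_{x\to\infty}E_x[\exp(\lambda\bbtau_\Delta)]=\infty$ for every $\lambda\le\bblambda_{cr}$. In the first case there is no QSD: by \eqref{eq:QSD_cts_tail} a QSD with parameter $\lambda>0$ has $E_\nu[\exp(\lambda'\bbtau_\Delta)]<\infty$ for $\lambda'\in(0,\lambda)$, hence $E_x[\exp(\lambda'\bbtau_\Delta)]<\infty$ for some $x$ and $\bblambda_{cr}\ge\lambda'>0$, a contradiction. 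In the second case the stated limit gives $\bblambda_0=0$ in Corollary~\ref{cor:upto_cr_cts}, so there is a QSD with absorption parameter $\lambda$ for every $\lambda\in(0,\bblambda_{cr}]$; the same moment bound rules out $\lambda>\bblambda_{cr}$.

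\textit{Part 3.} Uniqueness I would get from the tridiagonal form of $\nu{\cal L}=-\lambda\nu$: the equation at $0$ expresses $\nu(1)$ through $\nu(0)$, and the equation at $y\ge1$ expresses $\nu(y+1)$ through $\nu(y-1)$ and $\nu(y)$, so the solution set is one-dimensional and contains at most one probability measure; hence at most one QSD has a given absorption parameter $\lambda$. For the representation, the decisive step is to run the time-reversal of Section~\ref{sec:ReverseTool} in continuous time with respect to a QSD $\bbnu_\lambda$ (which exists by Parts 1--2): the reversed process is again a birth and death chain on $S$, now without absorbing state, and being skip-free \emph{upward} it reaches every state starting from $0$ whether or not it is recurrent. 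Combined with the fact that the original chain is absorbed only out of $0$, this turns the continuous-time analogue of Corollary~\ref{cor:domination} into an equality and produces $\bbnu_\lambda(y)=\frac{\lambda}{q_y-\lambda}\frac{1}{E_y[\exp(\lambda\bbtau_\Delta),\bbtau_\Delta<\bbtau_y]}$; since the right-hand side does not depend on the QSD chosen, this reproves uniqueness too. An alternative, once existence is in hand, is to verify by a first-step decomposition of $E_y[\exp(\lambda\bbtau_\Delta),\bbtau_\Delta<\bbtau_y]$ that the right-hand side of \eqref{eq:rep_cts} solves $\nu{\cal L}=-\lambda\nu$ and then invoke one-dimensionality, which reduces the problem to identifying the total mass of that expression.

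I expect the genuine difficulty to be precisely this last step for non-minimal parameters $\lambda<\bblambda_{cr}$, which occur only when $E[S]=\infty$: the reversal of Section~\ref{sec:ReverseTool} is developed in the paper only in discrete time, and the discretizations ${\bf X}^d$ of a birth and death chain are not skip-free, so applying Corollary~\ref{cor:domination} to ${\bf X}^d$ and letting $d\downarrow0$ only closes the argument at $\lambda=\bblambda_{cr}$. One must therefore either set up the continuous-time reversal directly — routine but not automatic — or push through the direct recursion verification together with the computation of the normalizing constant in \eqref{eq:rep_cts}.
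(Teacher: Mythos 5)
Parts 1 and 2 of your argument are correct and essentially follow the paper's route: the paper also feeds Lemma \ref{prop:Sexp} into the continuous-time existence machinery, though for Part 1 it goes through Theorem \ref{thm:nu_recurr_cts} for existence/uniqueness of the minimal QSD and then kills all non-minimal parameters directly via Theorem \ref{thm:QSD_tightness_cts}-2, using $\sup_x E_x[\exp(\lambda\bbtau_\Delta)]\le E[\exp(\lambda S)]<\infty$ for $\lambda<\bblambda_{cr}$ — slightly leaner than your verification of \eqref{eq:bd_arrival_cts} for Theorem \ref{thm:coming_infinty_cts}, but your verification (take $K=\{0,\dots,N\}$ with $c_N>\bblambda_{cr}$) is sound and buys the convergence statement as a bonus. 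Your uniqueness argument in Part 3 via the one-dimensionality of the tridiagonal system is also valid (every QSD is in the domain of ${\cal L}$ and hence solves the difference equations, and a one-dimensional ray contains at most one probability measure); the paper instead gets uniqueness as a by-product of its representation argument.

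The genuine gap is the representation formula \eqref{eq:rep_cts} for $\lambda$ in the finite MGF regime, which you correctly identify but do not close: the continuous-time reversal is not developed in the paper, and your fallback (verify that the right-hand side solves the recursion) still leaves you needing to show that its total mass is exactly $1$ — without that, one-dimensionality only gives proportionality to the QSD, not equality. The paper closes this differently, via Theorem \ref{thm:martin_cts}: any QSD with parameter $\lambda$ is a mixture of the kernels ${\mathbb K}^\lambda([{\bf x}],\cdot)$ over $S^{d\lambda}$, and a direct computation using the downward skip-free structure (the strong Markov factorization $E_{x_n}[\exp(\lambda\bbtau_\Delta)]=E_{x_n}[\exp(\lambda\bbtau_y)]E_y[\exp(\lambda\bbtau_\Delta)]$, the divergence $E_{x_n}[\exp(\lambda\bbtau_\Delta)]\to\infty$ from Lemma \ref{prop:Sexp}-2, and a first-jump renewal evaluation of $\int_0^\infty e^{\lambda s}P_y(\bX_s=y)\,ds$) shows that $\lim_n{\mathbb K}^\lambda(x_n,y)$ exists, is independent of the sequence, and equals the right-hand side of \eqref{eq:rep_cts}. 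Since $S^{d\lambda}$ is nonempty (a QSD exists by Part 2), that common limit is automatically a probability measure, so the normalization comes for free and the mixture collapses to a single point. If you want to keep your route, you must either build the continuous-time reversal from scratch or compute the total mass of the right-hand side of \eqref{eq:rep_cts} explicitly; as written, neither is done.
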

We comment that letting $y=0$ in \eqref{eq:rep_cts} gives 
$$ \nu_\lambda (0) = \frac{\lambda}{\mu_0},$$
because $E_{D_0} [ \exp (\lambda \,^0\bbtau_\Delta),\,^0\bbtau_\Delta < \bbtau_0]=\mu_0/q_0$, see also \cite[equation (3.4)]{van1991}. Thus, a necessary condition for the existence of a QSD with absorption parameter $\lambda$ is $\lambda < \mu_0$. As $\nu_\lambda(-1)=0$, these two initial values can be used to solve the system of difference equations resulting from \eqref{eq:generator}. 

We also comment that the argument leading to \eqref{eq:rep_cts} is valid for any chain which is downward skip-free and that a simple calculation shows that when $E[S]<\infty$ and $\lambda \in (0,\bblambda_{cr})$, the pointwise limit of $({\mathbb K}^{\lambda}(n,\cdot):n\in\N)$ along any convergent subsequence can be normalized to be a probability measure on $S$  which satisfies the system of difference equations resulting from \eqref{eq:generator} but is not a QSD.   
\begin{proof}
If $E[U]<\infty$, Lemma \ref{prop:Sexp}-1 guarantees that the conditions of Theorem \ref{thm:nu_recurr_cts} hold. This yields the existence and uniqueness of a minimal QSD. For $\lambda<\bblambda_{cr}$, $\sup_x E_x [\exp (\lambda \ttau_\Delta)]\le E[\exp (\lambda U)]<\infty$ and therefore Theorem \ref{thm:QSD_tightness_cts}-2 shows that no other QSDs exist. 

If $E[U]=\infty$ and $\bblambda_{cr}>0$, Lemma \ref{prop:Sexp}-2 and Corollary \ref{cor:upto_cr_cts} give the existence of QSDs for each of the absorption parameters in the range $(0,\bblambda_{cr}]$. The remaining case is $E[U]=\infty$ and $\bblambda_{cr}=0$. \tro{R1:M54}In this case the necessary condition for the existence of a QSD \eqref{eq:QSD_cts_tail} does not hold for any initial distribution, and therefore, no QSDs exist. 

It remains to establish the representation formula. The formula holds in the infinite MGF regime due to \eqref{eq:cycle_rep}. Suppose that  $\nu$ is a QSD with absorption parameter $\lambda$ in the finite MGF regime, then Theorem \ref{thm:martin_cts} implies that it is in the convex hull of ${\mathbb K}^{\lambda}([{\bf x}],\cdot)$ where $[{\bf x}]$ ranges over $S^{d\lambda}$. In particular, the latter is not empty. We will show that for any sequence $(x_n:n\in\N)$ satisfying $\lim_{n\to\infty} x_n = \infty$, ${\mathbb K}^{\lambda}(x_n, \cdot)$ converges pointwise to a limit independent of the sequence, given by the formula in the statement of the theorem. As by assumption $S^{d\lambda}$ is not empty, this guarantees that $S^{d\lambda}$ has a unique element equal to that limit. 

\tro{R1:M55}Indeed, by the strong Markov property and the definition of ${\mathbb K}^\lambda$ in \eqref{eq:kernel_cts}, 
$$ {\mathbb K}^\lambda (x_n , y) = \frac{E_{x_n} [\exp (\lambda \bbtau_y)] I_\lambda(y) }{E_{x_n} [ \int_0^{\bbtau_\Delta} e^{\lambda s} ds] },$$
where $I_\lambda(y)= \int_0^\infty e^{\lambda s} P_y (\bX_s = y) ds$. The denominator is equal to $\frac{1}{\lambda} \left ( E_{x_n} [\exp (\lambda \bbtau_\Delta)]-1\right)$. Lemma \ref{prop:Sexp}-2 gives that $\lim_{n\to\infty} E_{x_n} [\exp (\lambda \bbtau_\Delta )]=\infty$. By the strong Markov property, 
$E_{x_n} [\exp (\lambda \bbtau_\Delta )]=E_{x_n} [\exp (\lambda \bbtau_y)]E_y [\exp (\lambda \bbtau_\Delta)]$, and therefore the denominator is asymptotically equivalent to $E_{x_n} [\exp (\lambda \bbtau_y)] \lambda^{-1} E_y [\exp (\lambda \bbtau_\Delta)]$, resulting in 
$$ \lim_{n\to\infty}  {\mathbb K}^\lambda (x_n , y) = \frac{ \lambda I_\lambda(y) }{E_y [ \exp (\lambda \bbtau_\Delta)]}.$$
We evaluate $I_\lambda(y)$. Let $J=\inf\{t\in \R_+:{\bX}_t \ne \bX_{t-}\}$, the time of the first jump. Under $P_y$, $J\sim \mbox{Exp}(q_y)$. Breaking the integral in the definition of $I_\lambda (y)$  we have 
$$I_\lambda (y) = E_y [ \int_0^J e^{\lambda s}ds] +  E_y [ \exp (\lambda \bbtau_y),\bbtau_y < \bbtau_\Delta]I_\lambda (y),$$ 
and so $$ I_\lambda (y) = \frac{ E_y [\exp (\lambda J)]-1}{\lambda(1 - E_y [ \exp (\lambda \bbtau_y),\bbtau_y < \bbtau_\Delta])},$$
which in turn gives 
\begin{align*} \lim_{n\to\infty}  {\mathbb K}^\lambda (x_n , y) &= \frac{E_y [\exp (\lambda J)]-1}{E_y [ \exp (\lambda\bbtau_\Delta)](1 - E_y [ \exp (\lambda \bbtau_y),\bbtau_y < \bbtau_\Delta])} \\
& = \frac{\lambda}{q_y - \lambda}\frac{1}{E_y [ \exp (\lambda \ttau_\Delta ), \bbtau_\Delta < \bbtau_y]}.
\end{align*}
The formula given in the theorem is derived from this analogously to the derivation of Corollary \ref{cor:alt_bb}
\end{proof}

\section{Analysis of QSDs for a One-Parameter Family}
\label{sec:B_D}
\subsection{The model and $\lambda_{cr}$}
\label{sec:Process}
In this section, we study in detail the minimal QSDs for one-parameter family processes, all of which are special cases of the rooted tree of Section \ref{xmpl:Rooted tree}. This is among the easiest cases to study, yet exhibits some important features: 
\begin{itemize}
    \item $\lambda_{cr}$ may be in the infinite MGF regime or in the finite MGF regime. 
    \item If $\lambda_{cr}$ is in the infinite MGF regime, there exists a unique minimal QSD. 
    \item For $\lambda>0$ in the finite MGF regime, the set of QSD with absorption parameter $\lambda$ is the convex hull of two QSDs, corresponding to arriving from $+\infty$ and $-\infty$, respectively. 
\end{itemize}
 We note that the  latter case  is a minor generalization of the  ``hub and spokes" example of \cite{foley_mcdonald} that exhibits the same behavior. We will discuss the specifics later. The generalization is modifying the transition function in one state, allowing the process to stay there. 
 
The section is organized as follows. We begin by describing the model, calculating  $\lambda_{cr}$ and the respective MGF regime in Proposition \ref{prop:survival para}. We then record the existence and uniqueness results in the inifinite MGF regime in Proposition \ref{prop:minimal_null}. We then turn to the case of absorption parameters in the finite MGF regime, and calculate the respective two-dimensional Martin boundaries. Nearly all of the calculations provided are standard derivations of solutions to second-order difference equations. 

The model is essentially two birth and death chains glued at zero. Fix  $q \in (\frac 12,1)$, let  $\delta \in (0,q]$, and set $r=q-\delta$.  Consider the Markov chain ${\bf X} = (X_n:n\in \Z_+)$ on $ \Z\cup \{\Delta\} $ with transitions as in Figure \ref{fig:tree_example}. Let $\lambda_{cr}$ be the critical absorption parameter for ${\bf X}$, for $x\in \Z\cup \{\Delta\}$ define $\tau_x=\inf\{n\in \Z_+:X_n=x\}$ and write $P_x$ for the probability of ${\bf X}$. The ``hub and two spokes'' example of \cite[Section 5]{foley_mcdonald} is a special case of our example corresponding to the case $\delta =q$. We comment that this is the only value of the parameter $\delta$ for which the restriction of the transition function to $S$ is not irreducible. 

The main result of \cite{foley_mcdonald} on the ``hub and two spokes" model is the following. The sequence $(P_x (X_{n} \in \cdot | \tau_\Delta > n):n\in\Z_+)$ has two convergent subsequences: one along even $n$ and one along odd $n$, but not only that:  the limits depend on $x$. Explicit expressions for the limits are provided.  We  observed similar behavior for the our slightly more general model in the finite MGF regime, but as our work is primarily focused on existence and representation of QSDs, we omit the statement and the proof. 
\begin{figure}[!ht]
\centering
\scalebox{0.7}
{
  \begin{tikzpicture}[bullet/.append style={circle,inner sep=0.8ex},x=1.8cm,auto,bend angle=40]
 \draw[<->, thick] (-4.8,0) -- (4.8,0);
 \draw[-,thick] (0,0) -- (0,-1.1); 
 \path (-1,0) node[bullet, fill=white] (-3) {}
  (0,0) node[bullet,fill=white] (0) {$0$}
  (1,0) node[bullet,fill=white] (1) {$1$}
  (2,0) node[bullet,fill=white] (2) {$2$}
  (3,0) node[bullet, fill=white] (3) {$3$}
  (4,0) node[bullet, fill=white] (4) {$4$}
   (0,-1.4) node[fill=none] (100) {$\triangle$}
  (-1,0) node[bullet,fill=white] (-1) {$-1$}
  (-2,0) node[bullet,fill=white] (-2) {$-2$}
  (-3,0) node[bullet, fill=white] (-3) {$-3$}
  (-4,0) node[bullet, fill=white] (-4) {$-4$};

 \draw[red,thick,->]  (0) edge [in=70, out=110,looseness=12] node[pos=0.5, above]{$r$} (0);
 \draw[red,thick,->]  (100) edge [in=-70, out=-110,looseness=10] node[pos=0.5, below]{$1$} (100);
 \draw[-{Stealth[bend]}] (0) to [bend left] node[pos=0.5, above]{$\frac{1-q}{2}$} (1) ;
\draw[-{Stealth[bend]}] (0) to [bend left] node[pos=0.75, right]{$\delta$} (100);
 \draw[blue,-{Stealth[bend]}] (1) to[bend left] node[pos=0.5, below]{$q$} (0);
 \draw[blue, -{Stealth[bend]}] (1) to[bend left] node[pos=0.5, above]{$1-q$} (2);
  \draw[-{Stealth[bend]}] (2) to[bend left] node[pos=0.5, above]{$1-q$} (3);
    \draw[-{Stealth[bend]}] (3) to[bend left] node[pos=0.5, above]{$1-q$} (4);
   \draw[-{Stealth[bend]}] (2) to[bend left] node[pos=0.5, below]{$q$} (1);
\draw[-{Stealth[bend]}] (3) to[bend left] node[pos=0.5, below]{$q$} (2);
 \draw[ -{Stealth[bend]}] (0) to[bend right] node[pos=0.5, above]{$\frac{1-q}{2}$} (-1); 
 \draw[blue,-{Stealth[bend]}] (-1) to[bend right] node[pos=0.5, above]{$1-q$} (-2);
 \draw[-{Stealth[bend]}] (-2) to[bend right] node[pos=0.5, above]{$1-q$} (-3);
 \draw[-{Stealth[bend]}] (-3) to[bend right] node[pos=0.5, above]{$1-q$} (-4);
\draw[-{Stealth[bend]}] (-3) to[bend right] node[pos=0.5, below]{$q$} (-2);
 \draw[-{Stealth[bend]}] (-2) to[bend right] node[pos=0.5, below]{$q$} (-1);
 \draw[blue,-{Stealth[bend]}] (-1) to[bend right] node[pos=0.5, below]{$q$} (0);
\end{tikzpicture}
}
\caption{Transition Probabilities Diagram}
\label{fig:tree_example}
\end{figure}
We first examine the dependence of the critical absorption parameter $\lambda_{cr}$ on $\delta$. To do that, let $\lambda_0$ denote the critical absorption parameter for the system restricted to $\Z_+$ (or equivalently $-\Z_+$) and absorbed when hitting $0$. 
To continue, we introduce the following parameters. They will make some of the formulas simpler. First: 
$$ \rho =\sqrt \frac {1-q}{q}\in (0,1).$$
Next, we express $r$, the probability to transition from $0$ to itself as 
$$ r = \alpha \sqrt{q(1-q)}$$. As $r\in [0,q)$ it follows that $\alpha \in [0,\rho^{-1})$. Note then that $\delta$, the probability of absorption from $0$, is given by 
$$ \delta = q-r = \sqrt{q}(\sqrt{q}-\alpha \sqrt{1-q}).$$
We have the following: 
\begin{prop}
\label{prop:survival para}
\begin{enumerate} 
\item $e^{\lambda_0} = \frac{1}{2\sqrt{q(1-q)}}$. 
$$e^{\lambda_{cr}} = e^{\lambda_0}\times 
\begin{cases} 
1 & \alpha \in [0,1] \\  \frac{2}{\alpha + \alpha^{-1}}  & \alpha \in (1,\rho^{-1})
\end{cases}$$
Moreover, 
\begin{equation}
\label{eq:delta_vals}
\begin{array}{|c|c|c|c|}
\hline
  \alpha &   [0,1)  & 1 & (1,\rho^{-1}) \\
    \hline
    \lambda_{cr} & \multicolumn{2}{|c|}{= \lambda_{0} }  & < \lambda_0\\
    \hline 
     E_0[\exp (\lambda_{cr}\tau_\Delta)] & < \infty & \multicolumn{2}{|c|}{=\infty}  \\
    \hline
\end{array}
\end{equation}
\end{enumerate}
\end{prop}
\begin{center}
\begin{figure}[ht]
\centering
\label{fig:surv_prob}
\begin{tikzpicture}
\pgfmathsetmacro{\q}{0.95}
\pgfmathsetmacro{\ro}{((1-\q)/\q)^0.5}
\begin{axis}[
    axis lines = left,
    ylabel = {$e^{\lambda_{cr}},~q=\q$},
    xlabel = {$\alpha$},
    ymin=1,ymax=2.5,
    xmin=0,xmax=5]
\addplot[
domain=0:1,
samples=500,
color=blue]
{0.5/((\q*(1-\q))^0.5)};
\addplot[
domain=1:(\q/(1-\q))^0.5,
samples=500, 
color=blue]
{1/(((\q*(1-\q))^0.5)*(1/x + x))};
\addplot[dashed,
samples=500, 
color=red]
coordinates {(1, 0)(1, 2.5)};
\addplot[dashed, samples=500, color=red] coordinates {(1/\ro, 0)(1/\ro, 2.5)}; 
\end{axis}
\end{tikzpicture}
\caption{Absorption parameter dependence on $\alpha$ for $q=0.95$. The left vertical line corresponds to $r=\sqrt{q(1-q)}$ and the right one to $r\nearrow \rho^{-1} = 4.3589$. Here $e^{\lambda_0}\approx 2.29416$}
\end{figure}
\end{center}
\subsection{Infinite MGF regime} 
\begin{prop}
\label{prop:minimal_null}
   Suppose  $\alpha \in [ 1,\frac{1}{\rho})$. Then 
    \begin{enumerate} 
    \item $\lambda_{cr}$ is in the infinite MGF regime and condition \eqref{eq:finite_stopped} holds.  In particular, ${\bf X}$ has a unique minimal QSD given by 
\begin{equation}
\label{eq:eightyone}
       \begin{split}
    \nu_{cr}(y) &= \frac{e^{\lambda_{cr}}-1}{E_y[\exp(\lambda_{cr} \tau_\Delta),\tau_\Delta<\tau_y]}\\
    & = (1-\frac{\rho}{\alpha})\times \begin{cases} 1 & y=0 \\ \frac 12 (\frac\rho \alpha)^{|y|} & y \in \Z-\{0\}.\end{cases}
    \end{split}
    \end{equation}  
    \item Condition  \eqref{eq:positive_recurrent} holds if and only if $\alpha>1$.
    \end{enumerate}
\end{prop}
\subsection{Finite MGF regime}
We begin with the first observation on the finite MGF regime: 
\begin{cor}
\label{cor:twosided_subcrit}
Suppose $\lambda\in (0,\lambda_{cr}]$ is in the finite MGF regime. Then the set of QSDs with absorption parameter $\lambda$ is a two-dimensional convex cone spanned by $\nu^\lambda_+$ and $\nu^\lambda_-$ where for $y\in \Z$
\begin{equation}
\begin{split} 
\label{eq:QSDtwosided_gen}
\nu^{\lambda}_{\pm} (y) &= \lim_{n\to \pm \infty} K^\lambda (n,y)\\=
&\frac{e^\lambda -1}{E_y [ \exp (\lambda \tau_\Delta),\tau_\Delta< \tau_y]}\times \begin{cases} 1 & y \in \pm \Z_+ \\ E_{-y} [\exp (\lambda \tau_y),\tau_y < \tau_\Delta] & \mbox{otherwise} \end{cases} 
 \end{split}
\end{equation}
\end{cor} 
\begin{proof}
The model is special case of the random walk on a tree presented in Section \ref{xmpl:Rooted tree} and therefore the set of QSDs for $\lambda$ in the finite MGF regime is indexed by the two branches of the tree, both given by \eqref{eq:branch}. We identify the branch corresponding to sequences in $\Z$, $(x_n:n\in\N)$ with limit   $+\infty$ in the extended sense witn the $+$ sign, and the seqeuences with limit $-\infty$ with the $-$ sign. Specifically, the two QSDs in the righthand side of the formula are denoted by $\nu_{\pm}^\lambda$, with $\bar y$ given by $\pm |y|$, both according to the sign. To obtain the expression in the corollary for $\nu_{\pm}^\lambda$ from \eqref{eq:branch} we observe that due to the symmetry about $0$, the product of the denominators is euqal to 
$E_{y} [\exp (\lambda \tau_\Delta)]  - E_{y} [\exp (\lambda \tau_\Delta),\tau_{y}<\tau_\Delta] = E_y [\exp (\lambda \tau_\Delta),\tau_\Delta < \tau_y]$. Finally, note that if $y \in \pm \Z_+$, then $\bar y =y$, and otherwiser $\bar y = -y$. In the former case 
 $E_{\bar y}[\exp (\lambda ^0 \tau_y),^0\tau_y < \tau_\Delta]$ is equal to $1$ because then $^0 \tau_y=0$, while in the latter case, the $^0\tau_y = \tau_y$ and so the expression is equal to $E_{-y} [\exp (\lambda \tau_y),\tau_y < \tau_\Delta]$. 
\end{proof}
We wish to contrast \eqref{eq:QSDtwosided_gen} with the formula for the unique minimal QSD in the infinite MGF regime, the first line of \eqref{eq:eightyone}. To do that, observe  that by strong Markov property and the symmetry around $0$,
\begin{align*}
E_{-y} [ \exp (\lambda \tau_y),\tau_y <\tau_\Delta] &= E_{-y} [\exp (\lambda \tau_0)] E_0 [ \exp (\lambda \tau_{-y}),\tau_{-y}< \tau_\Delta]\\&<E_{-y} [ \exp (\lambda \tau_{-y}),\tau_{-y} < \tau_\Delta]\\
&\overset{\mbox{\scriptsize Prop. \ref{prop:weird_values}}} < 1.
\end{align*}
The following complements Proposition \ref{prop:minimal_null} by giving the minimal QSDs when $\lambda_{cr}$ is in the finite MGF regime. We write $y_+ = \max (0,y)$ and $y_- = \max (0,-y)$. 
\begin{prop}\tro{R1:M56}
\label{prop:QSDtwosided}
Let  $\alpha \in [0,1)$. Then $\lambda_{cr} = \lambda_0$ is in the finite MGF regime and the set of minimal QSDs is a two-dimensional convex cone spanned by 
    \begin{equation}
    \label{eq:QSDtwosided}
 \nu_{\pm}^{\lambda_0} (y) =\frac{(1-\rho)^2}{1-\rho \alpha}\rho^{|y|} \times  \begin{cases} 1 & y= 0 \\ 
 \frac 12 + (1-\alpha)y_{\pm}  & y\in \Z - \{0\}.
 \end{cases} 
    \end{equation}
\end{prop}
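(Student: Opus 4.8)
The plan is to identify the minimal QSDs with absorption parameter $\lambda_0$ by exploiting the structure of the chain as two birth-and-death ``arms'' $\Z_+$ and $-\Z_+$ joined at $0$, together with the Martin-boundary characterization of Theorem~\ref{thm:martin}. First I would observe that since $\delta > \delta_{cr}$ we are in the finite MGF regime at $\lambda_{cr} = \lambda_0$, so by Theorem~\ref{thm:martin} every minimal QSD lies in the closed convex cone generated by $\{K^{\lambda_0}([{\bf x}],\cdot): [{\bf x}]\in S^{\lambda_0}\}$; because the state space $\Z$ has exactly two ``ends'' ($+\infty$ and $-\infty$), there are at most two extreme rays, realized by sequences $x_n\to+\infty$ and $x_n\to -\infty$ respectively. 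Next I would compute the Green's function $G^{\lambda_0}(x,y)$ explicitly: on each arm the chain is a standard birth-death chain with up-rate $\tfrac{1-q}{2}$ (resp.\ $1-q$ away from $0$) and down-rate $q$, so the relevant quantities $E_x[\exp(\lambda_0\tau_y),\tau_y<\tau_\Delta]$ and $E_y[\exp(\lambda_0\tau_y),\tau_y<\tau_\Delta]$ are rational functions of $e^{-\lambda_0}=2\sqrt{q(1-q)}$, and the parameter $\rho=\sqrt{(1-q)/q}$ is exactly the decay rate of the associated (positive recurrent or null) harmonic structure. This is where the $\rho^{|y|}$ factor in \eqref{eq:QSDtwosided} will come from, and the piecewise linear correction $\tfrac12 + (1-\alpha)y_\pm$ will emerge from the boundary condition at $0$ (the gluing, which involves $r = \alpha\sqrt{q(1-q)}$ and the self-loop there).

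The key computational steps, in order: (i) solve $ph = e^{-\lambda_0}h$ on $\Z$ for nonnegative $h$ — the general solution on each arm is a linear combination of $\rho^{|y|}$ and $|y|\rho^{|y|}$ (the double root at criticality $\lambda_0$), and matching at $0$ and requiring $h\ge 0$ pins down a two-dimensional cone of solutions; (ii) dually, solve $\nu p = e^{-\lambda_0}\nu$ for nonnegative summable $\nu$ — here summability forces the $\rho^{|y|}$ scaling and the same two-dimensional family appears, with extreme points $\mu_+^{\lambda_0}$ (supported asymptotically on $\Z_+$) and $\mu_-^{\lambda_0}$ (on $-\Z_+$); (iii) verify that each $\mu_\pm^{\lambda_0}$ is actually a QSD and not merely a solution of the eigen-equation, which by Proposition~\ref{prop:tight} (or directly by Corollary~\ref{cor:upto_cr} applied to the half-line, transported to $\Z$) amounts to checking tightness of the approximating $K^{\lambda_0}(x_n,\cdot)$ as $x_n\to\pm\infty$; (iv) identify $K^{\lambda_0}([{\bf x}],\cdot)$ for $x_n\to+\infty$ with (a scalar multiple of) $\mu_+^{\lambda_0}$ by evaluating the limit in \eqref{eq:branch}-style along the arm, and symmetrically for $-\infty$; (v) confirm the normalization constant $\frac{(1-\rho)^2}{1-\rho\alpha}$ by summing the geometric-plus-arithmetico-geometric series $\sum_y \rho^{|y|}(\tfrac12 + (1-\alpha)y_\pm)$ and checking it equals $1$. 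Finally, Theorem~\ref{thm:martin} forces every minimal QSD into the convex hull of these two extreme points, and since they are linearly independent the cone is genuinely two-dimensional.

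The main obstacle I expect is step~(iii)/(iv): verifying that the limiting measure obtained from $x_n\to+\infty$ is a bona fide probability measure (total mass exactly $1$, not $<1$) rather than a strict sub-probability. This is precisely the finite-vs-infinite MGF dichotomy at work: one must show $\lim_{n} E_{x_n}[\exp(\lambda_0\tau_\Delta)] = \infty$ along the arm (so that Corollary~\ref{cor:zetainfinity} and Proposition~\ref{prop:tight} apply), which reduces to the half-line estimate that at criticality the moment generating function of the absorption time blows up as the starting point recedes — a consequence of the null-recurrent (when $\delta=\delta_{cr}$) or transient boundary behavior of the $h$-transformed arm. Once tightness is in hand, the explicit forms in \eqref{eq:QSDtwosided} follow from the birth-death computations, which are routine though bookkeeping-heavy; I would organize them by first recording $e^{-\lambda_0}=2\sqrt{q(1-q)}$ and $\rho=\sqrt{(1-q)/q}$ so that $e^{-\lambda_0} = \rho q + \rho^{-1}(1-q)\cdot\rho^2 \cdot\ldots$ — i.e.\ the characteristic equation $q\rho^{-1} + (1-q)\rho = 1$ at the self-consistent root — and then propagate the recursion outward from $0$ using the gluing weights $r$, $\tfrac{1-q}{2}$, $q$.
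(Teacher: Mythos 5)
Your plan is correct in substance but follows a genuinely different route from the paper. The paper's proof is elementary and avoids the Martin boundary entirely: given a minimal QSD $\nu$ on $\Z$, it forms the symmetrization $\alpha(y)=\nu(y)+\nu(-y)$ and antisymmetrization $\beta(y)=\nu(y)-\nu(-y)$, observes that $\alpha$ restricted to $\Z_+$ solves the (one-dimensional) eigen-equation \eqref{eq:eigen_description} for the folded chain ${\bf Y}$ while $\beta$ restricted to $\N$ solves it for the chain ${\bf Y}^0$ absorbed at $0$, and then invokes the two previously known one-sided minimal QSDs $\nu^{\lambda_0}$ and $\nu^{\lambda_0,0}$ (Lemma \ref{lem:QSD_twochains}) to pin down $\alpha$ exactly and $\beta$ up to one free constant $c$; the nonnegativity of $\nu=(\alpha+\beta)/2$ then bounds $|c|$ and the two extreme values of $c$ yield $\mu_\pm^{\lambda_0}$. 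Completeness is automatic because, by Proposition \ref{prop:eigen_equivalence}, \emph{every} QSD satisfies the eigen-equation, so the linear-algebraic analysis captures all of them. Your route through Theorem \ref{thm:martin} works but buys you nothing here and costs extra: you must additionally prove that the Martin boundary at $\lambda_0$ has exactly two points (all sequences tending to $+\infty$ yield the same kernel, as in the rooted-tree computation \eqref{eq:branch}), and you must verify tightness via the blow-up $E_{x}[\exp(\lambda'\tau_\Delta)]\ge\rho^{-|x|}\to\infty$ — whereas in discrete time Proposition \ref{prop:eigen_equivalence} makes steps (iii)--(iv) of your plan redundant once step (ii) is done. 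Indeed your step (ii) alone (solving $\nu p=e^{-\lambda_0}\nu$ directly on $\Z$, using the double root of the characteristic equation at criticality to get the $\rho^{|y|}$ and $|y|\rho^{|y|}$ basis on each arm and matching at $0$) is already a complete proof strategy equivalent to the paper's, just without the symmetrize/antisymmetrize shortcut that lets the paper reuse the known one-sided answers instead of redoing the matching at the origin by hand. One small caution: $\mu_+^{\lambda_0}$ is not "supported asymptotically on $\Z_+$" — by irreducibility it is strictly positive on all of $\Z$, decaying like $\tfrac12\rho^{|y|}$ on the negative arm versus $(1-\alpha)|y|\rho^{|y|}$ on the positive arm.
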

We conclude with the expression for the QSDs for the reamining absorption parameters. To do that, we need some notation.  For $\lambda \in (0,\lambda_0]$, let 
\begin{equation} 
\label{eq:thatkappa} \kappa = \kappa (\lambda ) = \sqrt{ 1-4e^{2\lambda}q(1-q)}=\sqrt{1-e^{2(\lambda-\lambda_0)}}\le 1.
\end{equation}
Next, let 
\begin{equation}
\label{eq:thatc}
\begin{split} c&=c(\lambda) = \frac{1-2 e^\lambda r }{1+\kappa -2e^\lambda r } \\
& = \frac{1 -\alpha e^{\lambda - \lambda_0}}{1+\kappa - \alpha e^{\lambda - \lambda_0}}\\
& = 1 - \frac{\kappa}{1+\kappa - \alpha e^{\lambda - \lambda_0}}.
\end{split}
\end{equation} 
\begin{prop}
\label{prop:QSDtwosided2}
 Let $\lambda \in (0,\lambda_{cr})$. Then the set of QSDs corresponding to the absorption parameter $\lambda$ is a two-dimensional convex cone spanned by 
    $$ \nu^{\lambda}_\pm(y) =
\frac{ e^\lambda -1 }{2e^\lambda \delta}\times   
\begin{cases} 
\frac{(\frac{1+\kappa}{2e^\lambda q})^{|y|} - c  (\frac{ 2e^\lambda(1-q)}{1+\kappa})^{|y|}}{1-c} & \pm y \in\N \\ 2 & y=0 \\(\frac{ 2e^\lambda(1-q)}{1+\kappa})^{|y|} & \pm y\in-\N.  \end{cases} $$
\end{prop}

\subsection{Calculations of Exponential Moments}
We begin with a sequence of useful and standard calculations that will be used to prove Proposition \ref{prop:survival para} identifying $\lambda_{cr}$ and the expressions for the QSDs given by  Propositions \ref{prop:minimal_null},\ref{prop:QSDtwosided} and \ref{prop:QSDtwosided2}. We comment that the expressions for the QSDs in each of the three propositions can be found directly by solving the respective second-order difference equation, \eqref{eq:eigen_description}. However, we present a different route based on calculating the expectations appearing in our formula for $\nu_{cr}$ in the infinite MGF regime and the one based on the Martin boundary representation from Corollary \ref{cor:twosided_subcrit} in the finite MGF regime. 

We first establish the exponential moments for the hitting time of $0$. 
\begin{lem} \tro{R1:M58}
\label{lem:tozero}
Let $\lambda \ge 0$ and let $a= a(\lambda)= E_1 [\exp (\lambda \tau_0)]\in (0,\infty]$. Then 
\begin{enumerate}
    \item $a(\lambda)<\infty$ if and only if $4e^{2\lambda} q(1-q) \le 1$. 
    \item Under the equivalent conditions of part 1,
\end{enumerate}
\begin{eqnarray} 
\label{eq:thata} 
&a(\lambda) = \frac{1-\kappa}{2e^{\lambda} (1-q)}=\frac{2e^\lambda q}{1+\kappa}\quad&\mbox{and }\\
\label{eq:tozero} &E_x [\exp (\lambda \tau_0),\tau_0 < \tau_\Delta] =&\begin{cases}  e^{\lambda}r + \frac{1-\kappa}{2} & x=0 \\ a^{|x|} & x\in \Z-\{0\}  \end{cases}
\end{eqnarray} 
\end{lem}
\begin{proof} 
 To find the value of $a$, condition on the first step to conclude that  $a = e^{\lambda} q + e^\lambda (1-q) a^2$. The solutions to this quadratic equation are 
$a_{\pm} = \frac{1\pm \kappa}{2e^{\lambda}(1-q)}, \mbox{ where } \kappa =\sqrt{1-4e^{2\lambda}q(1-q)}$.  \tro{R1:M59}Differentiation shows that $a_+$ is decreasing as a function of $\lambda$, and therefore $a$ has the form given in the lemma. To conclude the proof, the strong Markov property gives that for $x\ne 0$,    $E_x [ \exp (\lambda \tau_0)] =E_x [ \exp (\lambda \tau_0)]= a^{|x|}$.   
It remains to prove the expression for $E_0[ \exp (\lambda \tau_0)]$. Conditioning on the first step, we have $E_0 [\exp (\lambda \tau_0),\tau_0 <\tau_\Delta] = e^\lambda r +e^\lambda (1-q) a$.   
\end{proof}
Next we calculate the moment generating function of $\tau_\Delta$.
\begin{lem}
\label{lem:todelta}
Let $\lambda> 0$ and let $a$ be as in Lemma \ref{lem:tozero}. 
Then $E_0 [ \exp (\lambda \tau_\Delta)]<\infty$ if and only if $4e^{2\lambda} q(1-q)\le1$ and $1+\kappa - 2e^\lambda r>0$.   Under these conditions, for $x\in\Z$, 
$$ E_x [ \exp (\lambda \tau_\Delta)] = \frac{2e^{\lambda} \delta}{1+\kappa - 2e^\lambda r}\times a^{|x|}.$$
\end{lem}
\begin{proof}
For any $\lambda>0$, 
\begin{equation} 
\label{eq:tozero_derivation} E_0 [ \exp (\lambda \tau_\Delta)]  = e^\lambda \delta + e^{\lambda} r E_0 [\exp (\lambda \tau_\Delta)] + e^\lambda (1-q) a E_0 [\exp (\lambda \tau_\Delta)].
\end{equation}
It follows that $a(
\lambda)<\infty$ is a necessary condition for $E_0[\exp (\lambda \tau_\Delta)]<\infty$. By Lemma \ref{lem:tozero} this holds if and only if 
$4e^{2\lambda} q(1-q)\le1$.  Under this assumption, the equation \eqref{eq:tozero_derivation} has a finite solution if and only if  
$1- e^\lambda (1-q) a- e^\lambda r >0$ and using \eqref{eq:thata}, the expression on the left becomes $1+\kappa - 2e^\lambda r $. Thus, the necessary and sufficient condition for finiteness follows. Under this condition, the solution to \eqref{eq:tozero_derivation} is then 
$$E_0 [ \exp (\lambda \tau_\Delta) ]= \frac{e^{\lambda }\delta}{1- e^\lambda (1-q) a - e^\lambda r} =\frac{e^{\lambda} \delta}{\frac{1+\kappa}{2} - e^\lambda r}.$$
To complete the proof, let  $x\in \Z-\{0\}$. By the strong Markov property,  $E_x [ \exp(\lambda \tau_\Delta)] = E_x [ \exp (\lambda \tau_0)] E_0 [\exp (\lambda \tau_\Delta)]$. The result then  follows from lemma \ref{lem:tozero}. 
\end{proof}
This immediately leads to 
\begin{proof}[Proof of Proposition \ref{prop:survival para}]\tro{R1:M57}
Let $a=a(\lambda)$ be as in Lemma \ref{lem:tozero}. 
Clearly,  $\lambda_0= \sup\{ \lambda \ge 0: a(\lambda) <\infty\}$ and so the lemma gives  $e^{\lambda_0} = \frac{1}{2\sqrt{q(1-q)}}$.  

\tro{R1:M60}Under $P_1$, $\tau_0<\tau_\Delta$,and therefore $\lambda_{cr}\le \lambda_0$. Suppose then $0<\lambda < \lambda_0$. Note that this implies $\kappa>0$. From Lemma \ref{lem:todelta}, a necessary and sufficient condition for $E_0[\exp (\lambda \tau_\Delta)]<\infty$ is $\kappa > 2e^{\lambda} r-1$. As $r=\alpha \sqrt{q(1-q)}$, this condition is
\begin{equation} 
\label{eq:Deltabigger} 
\kappa > 2e^{\lambda} \alpha \sqrt{q(1-q)}-1.
\end{equation}
We continue according to the value of $\alpha$. 
\begin{itemize} 
\item $\alpha<1$. Then the righthand side of \eqref{eq:Deltabigger} strictly negative and  the inequality holds automatically, all the way up to and including  $\lambda =\lambda_0$. 
\item $\alpha = 1$.  Then the inequality holds if and only if $\kappa >0$, that is $\lambda< \lambda_0$. 
\item $\alpha>1$.  In this case the righthand side of \eqref{eq:Deltabigger} is positive and we can square both sides to rewrite it as 
$$1- 4e^{2\lambda}q(1-q) > 4e^{2\lambda}\alpha^2 q(1-q) - 4e^{\lambda}\alpha \sqrt{q(1-q)}+1.$$
Equivalently, $$e^{\lambda}\sqrt{q(1-q)} (\alpha^2+1)-\alpha <0$$
The solution to this equation is 
$$e^{\lambda}  < \frac{\alpha}{(\alpha^2+1)\sqrt{q(1-q)}}=e^{\lambda_0} \times \frac{2}{\alpha+\frac1\alpha}.$$
\end{itemize} 
\end{proof} 
We to exponential moments for times for moving ``away" from the absorbing state  $\Delta$. To this end, define 
\begin{equation} 
\label{eq:qx} q_x =q_x (\lambda) =   E_x [ \exp (\lambda \tau_{x+1}),\tau_{x+1}<\tau_\Delta],~x\in \Z_+.
\end{equation}

\begin{lem}
Suppose $\lambda \in (0,\lambda_{cr}]$. Then 
\begin{equation}
\label{eq:q0} 
q_0 =    \frac{e^\lambda  \frac{1-q}{2}}{1-e^\lambda r - \frac{1-\kappa}{4}}<\infty.
\end{equation}
\end{lem}
\begin{proof}
Suppose first $\lambda< \lambda_{cr}$. Then by considering the cases for the first step, we observe that $E_x [ \exp (\lambda \tau_\Delta)] \ge  e^\lambda q_x $, and therefor $q_x < \infty$. 
By conditioning on the first step, 
    \begin{align*} q_0 &=e^\lambda r q_0+ e^\lambda \frac{1-q}{2}\us{=a}{E_{-1} [\exp (\lambda \tau_0)]}q_0+ e^\lambda \frac{1-q}{2}\\
    &\overset{\eqref{eq:thata}}{=}e^{\lambda}\frac{1-q}{2} +q_0( e^\lambda r +\frac{1-\kappa}{4}),
    \end{align*}
    where $a$ is as in Lemma \ref{lem:tozero}. 
    This establishes \eqref{eq:q0} for $\lambda < \lambda_{cr}$. The formula for $\lambda =\lambda_{cr}$ follows from monotone convergence. As for the finiteness at $\lambda_{cr}$ consider the two cases:  
    \begin{itemize} 
    \item $\alpha>1$,  in which case $E_0 [ \exp (\lambda_{cr} \tau_\Delta)]<\infty$ and so $q_0<\infty$; or
    \item  $\alpha\le1$, in which case $\lambda_{cr}=\lambda_0$ and the denominator in \eqref{eq:q0} is equal to $1-e^{\lambda_0} r -\frac14 \ge \frac 34 - e^{\lambda_0}\sqrt{q(1-q)}=\frac 12>0$. 
    \end{itemize}
\end{proof}
\begin{lem}
\label{lem:forward}
 For   $\ell\in \Z_+\cup\{-1\}$, define 
$$r_\ell = 1-(\frac{1-\kappa}{1+\kappa})^{\ell+1} c .$$
Then for $x \in \Z_+$
\begin{align*} q_x &= \frac{1-\kappa}{2e^\lambda q}\frac{r_{x-1}}{r_x} \\
 & = \frac{1}{2e^\lambda q}(1+\kappa -\frac{2\kappa}{r_x} )\\
  & = \frac{1}{2e^\lambda q}\left (1+ \kappa - \frac{2\kappa}{1-(\frac{1-\kappa}{1+\kappa})^{x+1}c}\right).
\end{align*}
\end{lem}
\begin{proof}
Suppose first  $x\in\N$. Then
$$q_x = e^\lambda q q_{x-1} q_x  + e^\lambda (1-q).$$
Let $T$ be the M\"obius transform $T(z) = \frac{ e^{\lambda} (1-q) }{1-e^{\lambda }q z}$. Then $q_x = T(q_{x-1})$, and by induction, for $x\in \Z_+$, $q_x = T^{\circ x}(q_0)$.  A direct calculation shows that the fixed points of $T$ are 
$$z_\pm = \frac{1\pm \kappa}{2e^\lambda q},$$ and then the normal form of $T$ is given by 
$$ \frac{T(z)  -z_-}{T(z) - z_+} = k \frac{ z-z_-}{z-z_+},$$
where $k$ is a constant. By letting $z\to \infty$ we conclude that  $k =\frac{z_-}{z_+}$. By iterating, $$\frac{T^{\circ n} (z) - z_-}{T^{\circ n} (z) -z_+} =k^n \frac{z-z_-}{z-z_+}.$$
Take  $z=q_0$, and $n=x$ in the formula above  we find that 
$$\frac{q_x - z_- }{q_x- z_+} = (\frac{z_-}{z_+})^x \frac{q_0-z_-}{q_0-z_+},~x\in\Z_+.$$
Solving for $q_x$ we obtain 
\begin{equation}
\begin{split}
\label{eq:qx_simplified1} q_x &=\frac{ z_--(\frac{z_-}{z_+})^x \frac{q_0 - z_-}{q_0-z_+} z_+}{ 1-(\frac{z_-}{z_+})^x \frac{q_0 - z_-}{q_0-z_+}}\\
& = z_-  \frac{1-(\frac{z_-}{z_+})^{x-1} \frac{q_0 - z_-}{q_0-z_+}}{1-(\frac{z_-}{z_+})^{x} \frac{q_0 - z_-}{q_0-z_+}}.
\end{split}
 \end{equation} 
 We also have 
 \begin{equation}
 \begin{split}
     \label{eq:qx_simplified2} q_x &=\frac{ z_--(\frac{z_-}{z_+})^x \frac{q_0 - z_-}{q_0-z_+} z_+}{ 1-(\frac{z_-}{z_+})^x \frac{q_0 - z_-}{q_0-z_+}}\\
     &= \frac{ z_--z_++z_+-(\frac{z_-}{z_+})^x \frac{q_0 - z_-}{q_0-z_+} z_+}{ 1-(\frac{z_-}{z_+})^x \frac{q_0 - z_-}{q_0-z_+}}\\
& =  1-\frac{z_+-z_-}{ 1-(\frac{z_-}{z_+})^x \frac{q_0 - z_-}{q_0-z_+}}.
\end{split} 
 \end{equation} 
To express this in terms of the ``natural" parameters of the model and $\kappa$, observe 
$$ q_0 - z_\pm = \frac{e^\lambda \frac{1-q}{2}}{1-e^\lambda r - \frac{1-\kappa}{4}}-\frac{1\pm\kappa}{2e^\lambda q}.$$
and by taking common denominator, 
\begin{align*}  q_0 - z_\pm  &= C \left (  e^{2\lambda}q (1-q)- (1\pm\kappa)(1-\frac{1-\kappa}{4}-e^{\lambda }r)\right)\\ 
& =C\left (\frac{1-\kappa^2}{4} -(1\pm \kappa)+ \frac{(1\pm \kappa)(1-\kappa)}{4} + (1\pm \kappa)e^\lambda r \right)\\
&= C \begin{cases} (1-\kappa) (-\frac 12 + e^\lambda r)& z_- \\ 
 (1+\kappa)\left (-\frac 12(1+\kappa) + e^\lambda r\right) & z_+.
 \end{cases} 
\end{align*}
 Thus,  
 $$\frac{q_0-z_-}{q_0-z_+}= \frac{z_-}{z_+}\frac{1-2e^{\lambda} r}{1+\kappa-2e^{\lambda}r}.$$
 Plugging this into \eqref{eq:qx_simplified1} and \eqref{eq:qx_simplified2} respectively, gives the first and second representations in the lemma. 
 \end{proof}
 With the last lemma we have the following 
 \begin{cor}
 Suppose $\lambda \in (0, \lambda_{cr}]$. Then for $x\in\N$ 
     \begin{align*} E_0 [\exp (\lambda \tau_x),\tau_x < \tau_\Delta] &= (\frac{1-\kappa}{2e^\lambda q})^x \frac{r_{-1}}{r_{x-1}}
     \end{align*} 
 \end{cor}
 \begin{proof}
     As by the strong Markov property, $E_0 [ \exp (\lambda \tau_x),\tau_x < \tau_\Delta] = q_0 q_1 \cdots q_{x-1}$, it follows from the second representation of $q_x$ in Lemma \ref{lem:forward} that 
     $$E_0 [\exp (\lambda \tau_x),\tau_x < \tau_\Delta] = ( \frac{1-\kappa}{2e^\lambda q})^x \frac{r_{-1}}{r_0} \times \frac{r_0}{r_1} \times \cdots \times \frac{r_{x-2}}{r_{x-1}} =( \frac{1-\kappa}{2e^\lambda q})^x \frac{r_{-1}}{r_{x-1}}.$$
 \end{proof}
 \begin{cor}
 \label{cor:toself}
 Let $\lambda \in (0,\lambda_{cr}]$. Then 
     $$E_x [ \exp (\lambda \tau_x),\tau_x < \tau_\Delta]= \begin{cases} \frac{1-\kappa}{2} + e^\lambda r & x=0 \\  1 - \frac{\kappa}{r_{|x|-1}}& x \in \Z-\{0\}.\end{cases}$$
 \end{cor}
 \begin{proof}
 For $x\in\Z$, let  $v_x = v_x (\lambda) =  E_x [ \exp (\lambda \tau_x),\tau_x < \tau_\Delta]$. We first consider the case $x=0$. Conditioning on the first step, 
 $v_0 = e^\lambda (1-q) a  + e^\lambda r $, where here and henceforth $a$ is as in Lemma \ref{lem:tozero}. Therefore 
 $$v_0  =\frac{1-\kappa}{2} + e^{\lambda} r.$$
 As symmetry implies $v_x = v_{-x}$, we continue assuming $x\in\N$. Conditioning on the first step, $ v_x = e^\lambda (1-q) a + e^\lambda q q_{x-1}$.  Then Lemma  \ref{lem:forward} gives 
 $$v_x = \frac{1-\kappa}{2} + \frac{1+\kappa}{2} - \frac{\kappa}{r_{x-1}} =1-\frac{\kappa}{r_{x-1}}.$$
 \end{proof}
 Recall Green's function $G^\lambda$ from  Definition \ref{def:KMartin}.  
 \begin{cor}
 \label{cor:Glambdayy}
 Suppose $\lambda>0$ is in the finite MGF regime. Then 
 $$G^\lambda (y,y) =  \begin{cases} \frac{2}{1+\kappa-2e^\lambda r}=\frac{2r_{-1}}{\kappa} & y=0 \\\frac{r_{|y|-1}}{\kappa} & y\in \Z-\{0\}\end{cases}.$$
 \end{cor}
\begin{proof}
    This follows directly from Corollary \ref{cor:toself} as by conditioning on the first step $G^\lambda (y,y) = 1+ E_y [\exp (\lambda \tau_y),\tau_y <\infty]G^\lambda(y,y)$. 
\end{proof}
\subsection{Proof of Propositions \ref{prop:minimal_null},\ref{prop:QSDtwosided},\ref{prop:QSDtwosided2}}
We prove the propositions in reverse order. This is because the expressions for $\lambda \in (0,\lambda_{cr})$ are simpler, and allow to obtain the expressions for $\lambda_{cr}$ thruogh limits. 

\begin{proof}[Proof of Proposition \ref{prop:QSDtwosided2}]
We use Corollary \ref{cor:twosided_subcrit}. We will only obtain the expression for $\nu_+^\lambda$, with the derivation for $\nu_-^\lambda$ being identical due to symmetry. We continue to the calculation of $\nu_+^\lambda (y)$ according to the value of $y$. 
\begin{itemize}
\item {\bf  $y=0$}.  Then the expression we obtain for the corollary is 
$$\nu_+^\lambda (0) = \frac{e^\lambda -1}{e^\lambda\delta},$$
as the only way to realize the event $\tau_\Delta < \tau_0$ starting from $0$ is through getting absorbed in the first transition.
\item {\bf $y\in \Z-\{0\}$}. From the Strong Markov property, 
\begin{align*} E_y [ \exp (\lambda \tau_\Delta),\tau_\Delta < \tau_y] &= E_y [ \exp (\lambda \tau_\Delta)]-E_y [ \exp (\lambda \tau_y),\tau_y < \tau_\Delta]E_y [ \exp (\lambda \tau_\Delta)]\\
& = E_y [ \exp (\lambda \tau_\Delta)](1-E_y [ \exp(\lambda \tau_y),\tau_y < \tau_\Delta])\\
& \overset{\scriptsize\eqref{eq:Kalpha}}{=}  \frac{E_y [ \exp (\lambda \tau_\Delta)]}{G^\lambda (y,y)}.
\end{align*}
Therefore 
\begin{align*} \nu_+^\lambda (y) &= \frac{e^\lambda -1}{E_y [\exp (\lambda \tau_\Delta)]}G^\lambda (y,y) \times \begin{cases} 1 & y \in \N \\ E_y [  \exp(\lambda \tau_0)] E_0 [\exp (\lambda \tau_y),\tau_y<\tau_\Delta] & y \in -\N\end{cases}\\
& \overset{\scriptsize\mbox{Lem. \ref{lem:todelta}, Cor \ref{cor:Glambdayy}}}{=} \frac{e^\lambda -1}{2e^\lambda \delta}\frac{1+\kappa-2e^\lambda r}{\kappa} a^{-|y|} r_{|y|-1}\times \begin{cases} 1 & y \in \N \\ E_y [  \exp(\lambda \tau_0)] E_0 [\exp (\lambda \tau_y),\tau_y<\tau_\Delta] & y \in -\N\end{cases} \\
& \overset{\scriptsize\eqref{eq:thatc}\mbox{, Lem. \ref{lem:tozero}}}{=} \frac{e^\lambda-1}{2e^\lambda \delta}a^{-|y|}\frac{r_{|y|-1}}{1-c}\times \begin{cases} 1 & y \in \N \\   a^{|y|} (\frac{1-\kappa}{2e^\lambda q})^{|y|} \frac{1-c}{r_{|y|-1}} & y \in -\N\end{cases}
\end{align*}
To complete the calculation of the expression for$y\in\N$, observe that $$a^{-y} r_{y-1} \overset{\scriptsize\mbox{Lem. \ref{lem:forward}, \ref{lem:tozero}}}{=}  (\frac{2e^\lambda(1-q)}{1-\kappa})^y - c(\frac{2e^\lambda (1-q)}{1+\kappa})^y,$$
\end{itemize}
resulting in 
\begin{equation} 
\label{eq:intermediate_nuplus} 
\nu^\lambda_+ (y) = \frac{e^\lambda-1}{2e^\lambda \delta} \begin{cases}  \frac{(\frac{2e^\lambda(1-q)}{1-\kappa})^y - c(\frac{2e^\lambda (1-q)}{1+\kappa})^y}{1-c} & y \in\N\\  2 & y= 0\\ (\frac{1-\kappa}{2e^\lambda q})^{|y|} & y \in -\N. \end{cases}
\end{equation}
To obtain the formula in the proposition, use $(1-\kappa)(1+\kappa) = 4e^{2\lambda} q(1-q)$ to conlcude that 
$$ \frac{2e^\lambda (1-q)}{1-\kappa}=\frac{1+\kappa}{2e^\lambda q},$$
 and 
$$\frac{1- \kappa}{2e^\lambda q} = \frac{2e^\lambda (1-q)}{1+\kappa}.$$
\end{proof}
\begin{proof}[Proof of Proposition \ref{prop:QSDtwosided}]
Proposition \ref{prop:survival para} guaranteeds that $\lambda_{cr}=\lambda_0$ and is in the finite MGF regime. Therefore we can utilize the formula \eqref{eq:QSDtwosided_gen} from Corollarly \ref{cor:twosided_subcrit} for $\nu_\pm^{\lambda_0}$. We only derive the expression for $\nu_+^{\lambda_0}$, with the expression for $\nu_-^{\lambda_0}$ obtained through the symmetry of the model. 
Observe that all expressions on the righthand side of  \eqref{eq:QSDtwosided_gen} can be obtained from the respective expressions for $\lambda \in (0,\lambda_0]$, by taking the limit $\lambda \nearrow \lambda_0$ and the monotone convergence theorem. We will apply this procedure to the explicit expressions in Proposition \ref{prop:QSDtwosided2}. Observe, \eqref{eq:thatkappa}, that  $\lim_{\lambda \nearrow \lambda_0} \kappa = \kappa (\lambda_0)=0$, and, \eqref{eq:thatc}, $\lim_{\lambda \nearrow \lambda_0} c(\lambda) = 1$.  As the limits of the expression for  $\nu_+^{\lambda}(y)$ in \eqref{eq:intermediate_nuplus}  for $y\in -\N$ and for $y=0$ are trivial, we continue assuming $y\in\N$ and write the expression as a function of $\kappa$: 
$$\nu^\lambda_{+} (y) = \frac{e^\lambda -1}{2e^\lambda \delta} (2e^\lambda (1-q))^y\times \frac{ f_1(\kappa) - \bar c (\kappa) f_2 (\kappa)}{1-\bar c (\kappa)},$$
where $f_1(\kappa) = (1-\kappa)^{-y}$, $f_2 (\kappa) = (1+\kappa)^{-y}$ and $\bar c$ is $c$ as a function of $\kappa$. Note that the limits of $f_1,f_2$ and of $\bar c$ as $\kappa\searrow 0$, equivalently $\lambda \nearrow \lambda_0$,  are all $1$. Therefore,
$$ f_1 (\kappa) - \bar c (\kappa) f_2 (\kappa) = f_1 (\kappa) -1 +(1-\bar c (\kappa)) f_2 (\kappa) +1 -f_2(\kappa).$$

As  $1-\bar c (\kappa) = \frac{\kappa} {1+ \kappa -2e^\lambda r}$, it follows that 

$$ \frac{ f_1(\kappa) - \bar c (\kappa)  f_2 (\kappa)}{1-\bar c (\kappa)} = (1+\kappa - 2e^\lambda r) \times \left ( \frac{f_1 (\kappa) -1}{\kappa} - \frac{f_2 (\kappa)-1}{\kappa}\right) + f_2(\kappa).$$
By taking the limit $\kappa \searrow 0$, this converges to $2y (1-2e^{
\lambda_0} r ) +1$. We proved the following: 

$$ \nu^{\lambda_0}_+(y)=\frac{e^{\lambda_0}-1}{e^{\lambda_0}\delta}(2e^{\lambda_0}(1-q))^{|y|} \times\begin{cases} y (1-2e^{\lambda_0} r ) +\frac 12  & y>0 \\ 1 & y=0 \\ \frac 12  & y<0 \end{cases}$$
It remains to simplify this expression. We begin from left to right. First 
$$\frac{e^{\lambda_0}-1}{e^{\lambda_0}\delta } = \frac{1-e^{-\lambda_0}}{\delta}=\frac{1-2\sqrt{q(1-q)}}{q-\alpha \sqrt{q(1-q)}}=\frac{(\sqrt{q}-\sqrt{1-q})^2}{q(1- \alpha \rho)}=\frac{(1-\rho)^2}{1-\alpha \rho}.$$
Next, 
$$ 2e^{\lambda_0} (1-q) = \frac{1-q}{\sqrt{q(1-q)}}=\rho.$$
Finally, 
$$ 1-2e^{\lambda_0} r = 1-\frac{\alpha \sqrt{q(1-q)}}{\sqrt{q(1-q)}}=1-\alpha.$$
\end{proof}
\begin{proof}[Proof of Proposition \ref{prop:minimal_null}]\tro{R1:M61}
Here $\lambda_{cr}$ is in the infinite MGF regime per Proposition \ref{prop:survival para}. Next, 
\begin{equation} \label{eq:nucr_0}E_0 [ \exp (\lambda_{cr} \tau_\Delta),\tau_\Delta < \tau_0]=e^{\lambda_{cr}}\delta.
\end{equation}
Therefore from Proposition \ref{prop:weird_values}-2 we obtain that $E_0 [ \exp (\lambda_{cr} (\tau_\Delta \wedge \tau_0))]=e^{\lambda_{cr}} \delta+1 < \infty$, and therefore the necessary and sufficient condition \eqref{eq:finite_stopped} in Theorem \ref{th:nu_recurr} holds, establishing the existence and uniqueness of a minimal QSD and the first equality in \eqref{eq:eightyone}. Monotone convergence guarantees then that $\nu_{cr}(y) = \lim_{\lambda \nearrow \lambda_{cr}} \frac{e^\lambda -1}{E_y [ \exp (\lambda \tau_\Delta),\tau_\Delta < \tau_y]}$, and Corollary \ref{cor:twosided_subcrit} identifies the limit on the right  with $\lim_{\lambda\nearrow \lambda_{cr}} \nu_+^\lambda (|y|)$. Since $\lambda_{cr}<\lambda_0$, and $\lambda_{cr}$ is in the infinite MGF regime,  Lemma \ref{lem:todelta} gives $1+\kappa(\lambda_{cr})-2e^{\lambda_{cr}}r=0$. In particular \eqref{eq:thatc} gives $\lim_{\lambda nearrow \lambda_{cr}} c (\lambda) = -\infty$. Using these limits in Proposition \ref{prop:QSDtwosided2} gives 

$$ \nu_{cr}(y) = \lim_{\lambda\nearrow \lambda_{cr}} \nu_+^\lambda (|y|) = \frac{1-e^{-\lambda_{cr}}}{\delta} \times \begin{cases} \frac 12 (\frac{1-q}{r})^{|y|} & y \in \Z-\{0\} \\ 1 & y=0.\end{cases}$$

But $\frac{1-q}{r} = \frac\rho\alpha$, and the expression for $\frac{1-e^{-\lambda_{cr}}}{2\delta}$ follows from normalizing the geometric series. This expression can obtained by direct computation: 

$$ \frac{1- e^{-\lambda_{cr}}}{\delta} = \frac{1- \sqrt{q(1-q)}(\alpha+\alpha^{-1})}{q-\alpha \sqrt{q(1-q)}}=1 + \frac{1-q- \sqrt{q(1-q)}\alpha^{-1}}{q-\sqrt{q(1-q)}\alpha}=1+\frac{\rho\alpha^{-1}( \sqrt{q(1-q)}\alpha - q)}{q-\sqrt{q(1-q)}\alpha}=1-\frac\rho\alpha.$$

We turn to the second part. For $\lambda \in (0,\lambda_{cr})$, Corollary \ref{cor:toself} gives 
$$ E_0 [\exp (\lambda \tau_0)\tau_0,\tau_0 < \tau_\Delta] = \frac{d}{d\lambda} E_0 [ \exp (\lambda \tau_0),\tau_0 < \tau_\Delta] = -\frac{ \kappa'(\lambda)}{2} + e^\lambda r.$$
From monotone convergene, we can take the limit $\lambda \nearrow \lambda_{cr}$ to obtain the expression for $\lambda=\lambda_{cr}$. The corresponding limit of the righthand side is finite if and only if $\lambda_{cr} < \lambda_0$, and from Proposition \ref{prop:survival para}, this holds if and only if $\alpha \in(1,\rho^{-1})$.  
\end{proof}


\bibliographystyle{imsart-nameyear} 
\bibliography{references} 
\end{document}